\newtheorem{theorem}{Theorem}
\newtheorem*{theorem*}{Theorem}
\newtheorem{lemma}[theorem]{Lemma}
\newtheorem{corollary}[theorem]{Corollary}
\newtheorem*{corollary*}{Corollary}
\newtheorem{definition}[theorem]{Definition}
\newtheorem{proposition}[theorem]{Proposition}
\newtheorem{remark}[theorem]{Remark}
\newtheorem{example}[theorem]{Example}
\newenvironment{claim}[1]{\par\noindent\textbf{Claim.}\space#1}{}
\newenvironment{claimproof}[1]{\par\noindent\textit{Proof of the claim.}\space#1}{\hfill $\blacksquare$}
\def\subsubsection{\@startsection{subsubsection}{3}%
  \z@{1em\@plus 7em}{-.5em}%
  {\normalfont\itshape}}
\newcommand{\hsobdual}{\dot H^{-m}(\BBR^n)}
\newcommand{\sob}{{H}^m(\BBR^n)}
\newcommand{\hlions}{\dot{W}^m(0,\infty)}
\newcommand{\testfunctions}{\mathscr C_c^\infty(\BBR^n)}
\newcommand{\nontan}{ {\mathcal N}_m}
\newcommand{\nontanbeta}{ {\mathcal N}_{m,\beta}}
\newcommand{\tentspace}{T^{p,2}_{m}}
\newcommand{\pol}{\mathcal{P}_{m-1}}
\newcommand{\Mmaxreg}{\tilde{\mathcal M}_L}
\DeclareMathOperator*{\esssup}{ess\,sup}
\def\BBN {{\mathbb N}}
\def\BBZ {{\mathbb Z}}
\def\BBR {{\mathbb R}}
\def\BBC {{\mathbb C}}
\begin{document}
 	
\begin{frontmatter}
\title{Tent space well-posedness for parabolic Cauchy problems\\ with rough coefficients\tnoteref{t1}}
\tnotetext[t1]{This work contains parts of the author's Master Thesis from the University of Bonn. I gratefully acknowledge the financial support of the DAAD and the Studienstiftung des Deutschen Volkes.}
\author{Wiktoria Zato\'n}
\ead{wiktoria.zaton@math.uzh.ch}

\address{Universit\"at Z\"urich, Institut f\"ur Mathematik, Winterthurerstrasse 190, 8057 Z\"urich, Switzerland.}

\begin{abstract}
	We study the well-posedness of Cauchy problems on the upper half space $\BBR^{n+1}_+$ associated to higher order systems $\partial_t u =(-1)^{m+1}\mbox{div}_m A\nabla ^m u$ with bounded measurable and uniformly elliptic coefficients. We address initial data lying in $L^p$ ($1<p<\infty$) and $BMO$ ($p=\infty$) spaces and work with weak solutions. Our main result is the identification of a new well-posedeness class, given for $p\in(1,\infty]$ by distributions satisfying $\nabla^m u \in T^{p,2}_m$, where $\tentspace$ is a parabolic version of the tent space of Coiffman--Meyer--Stein. In the range $p\in [2,\infty]$, this holds without any further constraints on the operator and for $p=\infty$ it provides a Carleson measure characterization of $BMO$ with non-autonomous operators. We also prove higher order $L^p$ well-posedness, previously only known
for the case $m = 1$. The uniform $L^p$ boundedness of propagators of energy solutions plays an important role in the well-podesness theory and we discover that such bounds hold for $p$ close to $2$. This is a consequence of local weak solutions being locally H\"older continuous with values in spatial $L^p_{loc}$ for some $p>2$, what is also new for the case $m>1$.
\end{abstract}
\begin{keyword}
Higher order parabolic equations \sep Non-autonomous elliptic operators \sep The polyharmonic operator \sep Well-posedness of parabolic Cauchy problems \sep Carleson measures\sep Tent spaces.

\MSC{Primary: 35K46; Secondary: 35B30}
\end{keyword}

\end{frontmatter}
\setcounter{section}{0}
\tableofcontents        
\section{Introduction}

	\subsection{Setup and main result} For fixed positive integers $N$ and $m$, consider a homogeneous divergence form elliptic operator $L$ of order $2m$ with bounded measurable complex coefficients, that is 
	\begin{equation*}
	(Lu)_i(t,x)= (-1)^m \sum_{\substack{|\alpha|=|\beta|=m \\ 1\leq j\leq N}} \partial^\alpha (a_{\alpha,\beta}^{i,j}(t,x) \partial^\beta u_j)(t,x) \quad\mbox{for } (t,x)\in\BBR^{n+1}_+ \mbox{ and } i=1,\dots,N, 
	\end{equation*}
	where $u=(u_1,\dots,u_N)$. We assume that the ellipticity estimates in the sense of the G{\aa}rding inequality hold uniformly in $t>0$ (cf.\ Section \ref{sec:ellipticity} for the precise definition and a discussion of related ellipticity assumptions). A prototype example is the polyharmonic operator $L=(-1)^m\Delta^m$.
	We study the associated parabolic equation (or system if $N>1$)
	\begin{equation}
	\label{eq:the parabolic equation}
	\partial_t u_i(t,x) = - (Lu)_i(t,x) \quad\mbox{for } (t,x)\in\BBR^{n+1} \mbox{ and } i=1,\dots,N
	\end{equation}
	interpreted in the weak sense (this notion is recall in Definition \ref{def:weak_sol}) and are particularly interested in the well-posedness of the associated Cauchy problem  
	\begin{equation} \label{eq:Cauchy problem}
	u \in X \mbox{ is a global weak solution to }  \eqref{eq:the parabolic equation} \quad\mbox{and}\quad u(0,\cdot) = u_0 \in Y
	\end{equation}	
	for an initial data space $Y\subseteq  L^1_{loc}(\BBR^n)$ and some solution space $X$. Typical choices for $Y$ are the $L^p$ spaces and the space of bounded mean oscillations $BMO(\BBR^n)$, the latter of which allows rough data, see Section \ref{sec:Campanato} for its definition. 
	
	We say that \eqref{eq:Cauchy problem} is well-posed for the pair of semi-normed spaces $(Y,X)$ if the following holds
	\begin{enumerate}[label=\upshape{(\roman{*})}]
		\item For every weak solution $u\in X$ of \eqref{eq:the parabolic equation} there exists $u_0\in L^1_{loc}(\BBR^n)$, such that for any compact set $K\subseteq \BBR^n$
		\[\lim_{t\to 0}\|u(t,\cdot)-u_0\|_{L^1(K)}=0.\]
		This $u_0\in L^1_{loc}(\BBR^n)$ is necessarily unique and we call it the \textit{trace} of $u$ at $t=0$ or the \textit{initial datum}. Note that it suffices to find the trace in some $L^p_{loc}(\BBR^n)$ for $p\ge1$.
		\item Given $u_0 \in Y$, the Cauchy problem \eqref{eq:Cauchy problem} can be solved uniquely in $X$.
		\item There exists a constant $C>0$ with $\|u\|_X\leq C\|u_0\|_Y$ for any initial data $u_0\in Y$ and the corresponding solution $u\in X$.
	\end{enumerate}
	
	In their simplest form our well-posedness results can be summarized as follows, cf.\ Theorems \ref{thm:global_well}, \ref{thm:wp pol}, \ref{thm:wp p big}, \ref{thm:tent space wp small p}, \ref{thm:uniform_bdd_p_close_to_2}.
	\begin{theorem}\label{thm:main thm}
		There exists $\varepsilon>0$ depending only on the ellipticity constants, $m$, $N$ and the dimension, such that the Cauchy problem \eqref{eq:Cauchy problem} is well-posed for 
		\begin{enumerate}[label={\upshape(\roman*)}]
			\item $p\in (2-\varepsilon, 2+\varepsilon)$ and $Y=L^p(\BBR^n)$ with $X=L^\infty(0,\infty;L^p(\BBR^n))$.
			\item $p\in[2,\infty)$ and $Y=L^p(\BBR^n)$ with $X=\{u\in \mathscr D'(\BBR^{n+1}_+)\mid\nabla^mu\in \tentspace\}$.
			\item $p=\infty$ and $Y=BMO(\BBR^n)$ with $X=\{u\in \mathscr D'(\BBR^{n+1}_+)\mid\nabla^mu\in \tentspace\}$.
		\end{enumerate} 
		Here, $\tentspace$ denotes a prabolic version of the tent space of Coifman--Meyer--Stein, see Definition \ref{def:tent}. 
		In (i) the unique solution satisfies $u\in \mathscr C_0([0,\infty); L^p(\BBR^n))$. Furthermore, (ii) holds also for $p\in (2-\varepsilon,2)$ if $L$ is pointwise elliptic, that is the condition 
		\begin{equation*}
		\text{Re} \left(\sum\nolimits_{\substack{|\alpha|=|\beta|=m \\ 1\leq i, j\leq N}} a_{\alpha,\beta}^{i,j}(t,x) \xi_\beta^j\overline{\xi_\alpha^i}\right)\ge \lambda \|\xi\|^2\end{equation*} 
		is satisfied for some $\lambda >0$, any vector $(\xi_\alpha)_{|\alpha|=m}$ with entries in $\BBC^{N}$ and almost every $(t,x)\in \BBR^{n+1}_+$.
	\end{theorem}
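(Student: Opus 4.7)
The proof naturally splits according to the three assertions, with the $L^2$-energy theory serving as the common foundation. For $u_0\in L^2(\BBR^n)$, the Lions construction applied to the sesquilinear form associated to $L$ produces a unique energy solution $u$ with $\nabla^m u\in L^2(\BBR^{n+1}_+)$, and the propagators $S(t)\colon u_0\mapsto u(t,\cdot)$ form a contractive family on $L^2(\BBR^n)$. All three parts are then obtained by extending $S(t)$ to larger initial data classes and establishing the corresponding square-function or Carleson estimates on the produced solutions.

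For (i), the central task is the uniform bound $\|S(t)u_0\|_{L^p}\lesssim\|u_0\|_{L^p}$ for $p\in(2-\varepsilon,2+\varepsilon)$. I would first prove that local weak solutions are H\"older continuous in time with values in $L^p_{\text{loc}}(\BBR^n)$ for some $p>2$, via a Meyers-type self-improvement from the higher-order Caccioppoli inequality combined with Gehring's reverse H\"older lemma; the main technical point for $m>1$ is the correct parabolic scaling of the $2m$-order Caccioppoli estimates. Duality then yields the range $p<2$, and Riesz--Thorin interpolation produces an open $L^p$-interval around $2$. Existence on $L^p$ follows by density from $L^2\cap L^p$, uniqueness from a local Caccioppoli argument applied to the difference of two solutions with common trace, and strong continuity $u\in\mathscr{C}_0([0,\infty);L^p(\BBR^n))$ is inherited from the $L^2$ case via the uniform bounds.

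For (ii), I would first establish the conical square function estimate $\|\nabla^m u\|_{\tentspace}\lesssim\|u_0\|_{L^p}$ on energy solutions. The case $p=2$ reduces directly to the global energy inequality by Fubini in the definition of $\tentspace$. For $p>2$, one proceeds via a good-$\lambda$ argument leveraging the uniform $L^p$ propagator bounds of part (i) near $2$, and then iterates or interpolates upward. General $u_0\in L^p$ is then handled by density, while uniqueness follows from the fact that tent space membership forces an $L^p$-trace at $t=0$, extracted via averages over Whitney boxes and a Fatou-type argument, so any tent space solution with vanishing initial datum must be zero. For $p<2$ under pointwise ellipticity, the atomic decomposition of $\tentspace$ reduces matters to tent space atoms, which are controlled by the $L^p$-theory in the extended range afforded by pointwise ellipticity.

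For (iii), I would emulate the Fefferman--Stein strategy. Given $u_0\in BMO(\BBR^n)$ and a Carleson box $Q$, split $u_0=c_Q+(u_0-c_Q)\mathbf{1}_{2Q}+(u_0-c_Q)\mathbf{1}_{\BBR^n\setminus 2Q}$: the constant piece is handled trivially, the local piece by the $L^2$-theory of (ii), and the far piece by off-diagonal decay of $S(t)$; summing the contributions yields the Carleson bound $\|\nabla^m u\|_{T^{\infty,2}_m}\lesssim\|u_0\|_{BMO}$. The \textbf{main obstacle} is uniqueness at the $BMO$-level: since the data is not in any $L^p$, one cannot directly invoke the bounded propagator theory, and the zero-trace hypothesis must be exploited using only tent-space information. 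I would attack this by dualizing against Hardy space atoms, the predual of $BMO$, thereby reducing $BMO$-uniqueness to the $L^p$-uniqueness already established in part (ii) on each atom's support.
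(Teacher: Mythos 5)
Your outline reproduces the broad architecture (energy theory, propagators, Fefferman--Stein decomposition for rough data), but it has a decisive gap in part (i), which then propagates to the $p<2$ part of (ii). You claim that local H\"older continuity in time with values in $L^p_{loc}$ for some $p>2$ follows from a ``Meyers-type self-improvement from the higher-order Caccioppoli inequality combined with Gehring's reverse H\"older lemma.'' Gehring-type arguments only upgrade the \emph{space-time} integrability of $u$ and $\nabla^m u$ (this is Lemma \ref{lem:hoelder} and Corollary \ref{cor:rev_hoelder}, already available in the paper); they do not yield the uniform-in-time local $L^2\to L^p$ bound $\sup_{s\in(1/2,2)}\|u(s)\|_{L^p(B(0,1))}\lesssim \|u\|_{L^2((1/4,4)\times B(0,2))}$, let alone H\"older continuity in $t$ with values in spatial $L^p$, and for complex coefficients or systems ($N>1$ or $m>1$) no De Giorgi--Nash--Moser mechanism is available to produce it. This is exactly the point where the paper has to leave the Caccioppoli/Gehring circle of ideas: one extends a cut-off solution to the whole space, exploits the hidden coercivity through the half-time derivative $D_t^{1/2}$ on the energy spaces $E_p$, invokes \v{S}ne\u{\i}berg's interpolation lemma to get invertibility of $\partial_t+(-1)^m\mbox{div}_mA\nabla^m+2$ for $p$ near $2$, and then converts the resulting local $L^2$--$L^p$ bounds on parabolic cylinders into uniform propagator bounds via the Blunck--Kunstmann criterion combined with the $L^2$ off-diagonal estimates. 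Without this (or an equivalent substitute), the uniform $L^p$ bound on $\Gamma(t,s)$ for $p\in(2-\varepsilon,2+\varepsilon)$ --- and hence part (i) and the $(2-\varepsilon,2)$ range of (ii), which needs \ref{ubc} --- is unproven.

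Two further steps would not go through as written. For (ii) with $p\in(2,\infty)$ you propose a good-$\lambda$ argument ``leveraging the uniform $L^p$ propagator bounds of part (i) near $2$'' and iterating upward; but those bounds exist only near $2$, so there is no endpoint to iterate or interpolate towards, and the paper instead proves the estimate $\|\nabla^m u_f\|_{\tentspace}\lesssim\|f\|_{L^p_m}$ for all $p\in(2,\infty]$ at once by ball-wise Carleson estimates (Proposition \ref{pro:p>2 pol existence}), which require subtracting the minimizing polynomial in $\pol$ (not merely a constant, once $m>1$), the nontrivial conservation property $\Gamma(t,s)P=P$ of Proposition \ref{pro:conservation_property_polynome}, and the identification of the polynomial Campanato spaces $L^p_m$ and $BMO_m$ with $L^p+\pol$ and $BMO+\pol$. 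Likewise, your uniqueness arguments (a ``local Caccioppoli argument'' for the difference of two $L^\infty(L^p)$ solutions, and ``dualizing against Hardy atoms'' for $BMO$) do not by themselves identify a solution from its trace: since the solutions are not in the energy class, the mechanism in the paper is the interior representation theorem (Theorem \ref{thm:int_repr}), i.e.\ showing via Davies-type exponentially weighted estimates that any solution with the stated tent-space or $L^\infty(L^p)$ control is propagated by $\Gamma(t,s)$ for interior times, followed by a careful dominated-convergence limit $s\to0$ using the trace; merely knowing that a trace exists, or testing against atoms, does not force a zero-trace solution to vanish.
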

We refer to (ii) and (iii) of Theorem \ref{thm:main thm} as the tent space well-posedness. Below, we survey known well-posedness results for both second and higher order autonomous problems and then proceed with a thorough discussion of Theorem \ref{thm:main thm} and an explanation of our methods. 
	
		\subsection{Previous results on non-autonomous Cauchy problems}
	For non-autonomous second order operators, the issue of existence and uniqueness of solutions with $L^p$ initial data was studied in detail by Auscher, Monniaux, and Portal \cite{AMP15}. Let us first recall from \cite[\S1]{AMP15}	that for the heat equation $X=L^\infty(0,\infty;L^p(\BBR^n))$ and $X=\{u\mid u^* \in L^p(\BBR^n)\}$ are well-posedness classes for $L^p(\BBR^n)$ initial data with $p\in(1,\infty)$, where $u^*$ denotes the non-tangential maximal function $$u^*\colon x\mapsto \displaystyle{\sup_{(t,y)\colon |y-x|<\sqrt t}|u(t,y)|}.$$ In the latter case, existence follows from the maximal function characterization of Hardy spaces (cf.\ \cite[Chap. III]{S93} and \cite{FS72}), while the uniqueness is a consequence of the maximum principle. The challenge in the case of complex, merely bounded measurable coefficients or systems, is that many of the well-known classical methods, like the maximum principles, break down and different strategies are needed to approach the problem of unique solvability.
	
	In \cite{AMP15} the authors presented novel techniques, which lead to new well-posedness classes also for the real equation. First, they settled the energy well-posedness in the space of distributions $u$ satisfying $\| \nabla u \|_{ L^2(\BBR^{n+1}_+)}<\infty$ and used the established uniqueness of energy solutions to define the family of propagators 
			\begin{equation*}
			\{\Gamma(t,s) \mid 0\leq s\leq t<\infty\}\subseteq \mathscr L (L^2(\BBR^n)),\end{equation*} 
			corresponding to the semigroup $(e^{-(t-s)L})_{0\leq s\leq t<\infty}$ for autonomous operators $L$, see \cite[\S 3]{AMP15}. The propagators were further used to construct solutions with $L^p(\BBR^n)$ initial data, while the main step towards the uniqueness of solutions relied on showing that the evolution of solutions with controlled growth in the spatial variable is governed by the propagators. The precise condition is 
				\begin{equation*}
		\int_{\BBR^n}\left(\int_a^b\int_{B(x,\sqrt{b})} |u(t,y)|^2 dy dt \right) ^{1/2}e^{-\gamma|x|^{2}}dx< \infty,
		\end{equation*} 
		where $\gamma< c/(b-a)$ and the constant $c$ is determined by the ellipticity constants, cf.\ \cite[Theorem 5.1]{AMP15}. With these methods, the authors derived analogous results to case of the heat equation, where the non-tangential function was replaced by a parabolic version of the Kenig--Pipher maximal function	 \[\tilde {\mathcal N} u(x)\coloneqq \sup_{\delta>0}\left(\fint_{\delta/2}^\delta\fint_{B(x,\sqrt{\delta})}|u(t,y)|^2dydt\right)^{1/2}\] introduced in \cite{KP93} in the context of elliptic equations, cf.\ \cite[Theorem 5.9, Proposition 5.11]{AMP15} and \cite[Theorem 5.4, Corollary 5.10]{AMP15}. For the well-posedness in the class $X=L^\infty(0,\infty;L^p(\BBR^n))$ or the non-tangential space if $p<2$, the uniform $L^p(\BBR^n)$ boundedness of the propagators plays a crucial role.  
			The uniform boundedness of propagators is known to hold for instance for coefficients with bounded variation in time, or small perturbations from the autonomous case, see Section 6 in loc.\ cit.\ In this work we provide unconditional bounds of the propagators in a range of exponents around $p=2$. We address this result in the final part of the introduction.
	
The only available well-posedness results for higher order complex parabolic systems assume further regularity of the coefficients. Systems in non-divergence form were extensively studied by Solonnikov, who presented the main developments on this subject in his monograph \cite{S65}. In particular, Solonnikov established unique solvability of the Cauchy problem on $[0,T]\times\BBR^n$ in certain time-space H\"older and $p$-Energy classes for systems with coefficients with H\"older continuous derivatives, cf. Theorems 4.10 and 5.5 in loc.\ cit. The methods used relied on the well-known technique of freezing the coefficients, dealing with the constant coefficients case first and using the continuity (of the derivatives) of the coefficients to bound the error term. This strategy allows not only mixed time-space derivatives, but also domains that are not necessarily bounded or cylindrical. 
	
		 We also mention the work of Dong and Kim \cite{DK11}, who studied the $L^p$ solvability of parabolic systems in both divergence and non-divergence form under weaker ellipticity assumptions, but additional (small spatial $BMO$ norm) regularity on the coefficients. 
	
		\subsection{Ansatz} 
	The starting point of this paper was to observe that it is possible to apply the novel approach from \cite{AMP15} to the higher order case, as the methods exploit energy estimates. This allows us to treat the case of merely bounded coefficients. 
	With begin with an alternative approach to the energy well-posedness in the class of distributions satisfying $\nabla^m u\in L^2(\BBR^{n+1}_+)$ and define the family of propagators corresponding to our equation. By careful generalizations of the energy estimates and the $L^2$ off-diagonal decay of the propagators from \cite{AMP15}, in this work we give the tools necessary to extend the results of loc.\ cit.\ to the higher order case, which produces new results even for the polyharmonic operator.
	
	Combining the aforementioned estimates, we establish the conservation property for propagators
	\[\Gamma(t,s)P=P \quad\mbox{in}\quad L^2_{loc}(\BBR^n) \quad\mbox{for any  } P \in \mathcal  P_{m-1}.\]
	Here $\pol$ denotes the set of polynomials on $\BBR^n$ of degree less than $m$. This is new under the weak ellipticity assumption and gives an alternative proof for the second order case, where it was deduced from the well-posedness results in the non-tangential spaces. 
	
	A crucial observation is that under mild growth assumptions the evolution of weak solutions to \eqref{eq:the parabolic equation} at positive times is indeed governed by the propagators and can be proven verbatim along the proof for the second order case known from \cite{AMP15}. For any open bounded interval $(a,b)\subseteq (0,\infty)$ supposing
	\[ \int_a^b \int_{\BBR^n}|u(t,y)|e^{-\overline \gamma|y|^{2m/(2m-1)}}dydt<\infty\] 
 with some constant $\overline \gamma >0$ allows to obtain the identity $$u(t,\cdot)=\Gamma(t,s)u(s,\cdot)\coloneqq u(s,\cdot)\circ \Gamma(t,s)^*$$ in the sense of distributions for all $a<s\leq t <b$,
see Theorem \ref{thm:int_repr} and Remark \ref{rem:equivalent 5.1}.  In particular, in the prototype example of the polyharmonic operator $L=(-1)^m \Delta^m$, we find that the propagation of global solutions satisfying \begin{equation}
	\label{eq:ptw bound}
	|u(t,x)|\lesssim e^{c|x|^{2m/(2m-1)}} \quad \mbox{for all }x\in \BBR^n
	\end{equation} locally in time for some (varying) $c>0$, is governed by the semigroup at times $t>0$. Precisely, we require that for any $t_0>0$ there exist a constant $c>0$ and a $\delta$-neighborhood of $t_0$ in $\BBR_+$, such that \eqref{eq:ptw bound} holds for all $t\in (t_0-\delta,t_0+\delta)$.
	This exponential growth assumption does not yet imply the full uniqueness withing the class of solutions satisfying such a bound. We will need to impose stronger conditions, i.e.\ restrict ourselves to certain function spaces, in order to uniquely identify any potential solution belonging to this class via its trace at $t=0$.
\subsection{The tent space well-posedness}
As the parabolic tent spaces are an object central to this work, we begin with a precise definition. For the classical introduction to tent spaces, see Coifman, Meyer and Stein \cite{CMS85}.
\begin{definition}
	\label{def:tent}
	Let $p\in (0,\infty)$. The parabolic tent space $\tentspace$ consists of measurable functions $f\colon \BBR^{n+1}_+\to \BBC$, for which the square function 
	\[x\mapsto \left(\int_0^\infty \fint _{B(x,\sqrt[2m]{t})}|f(t,y)|^2 dy dt\right)^{1/2}\] belongs to $L^p(\BBR^n)$. For $p=\infty$ we define the space $T_m^{\infty,2}$ accordingly via the usual modified condition $$\sup\limits_{B\colon x\in B}\left(\int_0^{r_B^{2m}} \fint _{B}|f(t,y)|^2 dy dt\right)^{1/2} \in L^\infty(\BBR^n).$$
\end{definition}
\noindent If we equip $\tentspace$ with the $L^p(\BBR^n)$ norm of the corresponding object arising in the above definition, we obtain a Banach space if $1<p\leq \infty$. To explain the use of the notion \textit{tent} spaces, we recall that in the classical definition from \cite{CMS85}, the domain of integration is the cone $C_x=\{(t,y)\in \BBR^{n+1}_+\mid |x-z|<t\}$. For any closed set $F\in \BBR^n$, the complement of the union $\cup_{x\in F} C_x$, resembles a tent over $\BBR^n\setminus F$. Further, estimates involving tent spaces are often referred to as square function estimates (see the discussion after Theorem 6 in the original paper \cite{CMS85}). We return to this point of view later on and refer the interested reader to the proof of Proposition \ref{prop:tent_kenig_comp_smallp}, where we highlight this connection.

By Fubini's Theorem, for all $m\in \BBN_+$ it holds $T^{2,2}_m = L^2(\BBR^{n+1}_+)$. Hence, the tent space well-posedness for $p=2$ boils down to the energy case and was proven for second order equations in \cite{AMP15}. In Lemma \ref{lem:trace integrability}, we show that any global weak solution $u$ to \eqref{eq:the parabolic equation} satisfying $\nabla^m u\in L^2(\BBR^{n+1}_+)$ can be written as $u=v+P$ for unique polynomial $P\in \pol$ and $v\in \mathscr C_0([0,\infty);L^2(\BBR^n))$. Thus, we obtain that  $\nabla^m u\in L^2(\BBR^{n+1}_+)$ together with the weak decay assumption $u(t,\cdot)\in L^2(\BBR^n)$ for some $t>0$ implies uniqueness of solutions of the Cauchy problem with $L^2(\BBR^n)$ initial data. 

It is natural to ask, whether there is a similar condition we can impose on $\nabla^m u$ in order to capture the $L^p(\BBR^n)$ initial data, see also the question risen in the case of the heat equation in \cite[\S1]{AMP15}. In this work we give an affirmative answer to this question in the range $p\in(1,\infty)$ and cover also the case $p=\infty$, which turns out to provide the right condition for $BMO(\BBR^n)$ initial data.

It was to be expected that solutions to \eqref{eq:the parabolic equation} admitting $L^p(\BBR^n)$ initial data satisfy $\nabla^m u\in \tentspace$. Indeed, for the heat equation different characterizations of Hardy spaces $\mathcal H^p$ enable to compare $\|u^*\|_{L^p}$ and $\|\nabla u \|_{T^{p,2}_1}$ for solutions of the form $u(t, \cdot)=e^{t\Delta}f$ and $f\in L^2(\BBR^n)$. As mentioned above, this is equivalent to finding $\mathcal H^p$ to $L^p$ estimates for the square function
	\[f\mapsto \left( x\mapsto \left(\int_0^\infty \fint _{B(x,\sqrt{t})}|\nabla e^{t\Delta}f(t,y)|^2 dy dt\right)^{1/2}\right).\]
In the non-autonomous setting analogous estimates hold for global weak solutions $u$ to \eqref{eq:the parabolic equation} of the form $u_f(t,\cdot)=\Gamma(t,0)f$ with $f\in L^2(\BBR^n)$. In particular, following \cite{AMP15}, we derive
\begin{equation}\label{eq:intro1}
\|u_f\|_{X^p_m}\coloneqq \|\nontan u_f\|_{L^p(\BBR^n)}\lesssim \|\nabla^m u_f \|_{\tentspace},
\end{equation} where $p\in(\frac{n}{n+m},\infty)$ and $u\mapsto\nontan u$ is the natural adaptation of the non-tangential maximal function $\tilde{\mathcal{N}}u$ above to the homogeneity of the equation (see Section \ref{sec:tent} for the precise definitions). The converse inequality to \eqref{eq:intro1} holds for any global weak solution to \eqref{eq:the parabolic equation} if $p\in[1,2)$ and $L$ is strongly elliptic (cf.\ \eqref{eq:strong lower ellipticity bounds}) or $p\in [2,\infty)$. In general, this cannot be true for inequality \eqref{eq:intro1}, as it fails for $u= P\in \pol$. However, for $p>1$ we can show that this is essentially the only counterexample. 

\subsubsection*{The case $p\in [2,\infty]$.} 

Our main result for $p\in[2,\infty]$ is the following, cf.\ Theorem \ref{thm:wp pol} for $p>2$ and Theorem \ref{thm:global_well} together with Lemma \ref{lem:trace integrability} for $p=2$.
\begin{theorem}
\label{intr_thm:wp pol}
			Let $p\in[2,\infty]$. For a distribution $u\in \mathscr D'(\BBR^{n+1}_+)$ it is equivalent
			\begin{enumerate}[label={\upshape(\roman*)}]
				\item $u$ is a global weak solution of \eqref{eq:the parabolic equation} and $\nabla^mu\in \tentspace$.
				\item There are unique $ f\in Y$ and $P\in \pol$ (unique up to a constant if $p=\infty$), such that $u(t,\cdot)-P=\Gamma(t,0)f$ in $L^2_{loc}(\BBR^n)$ for $t>0$,
			\end{enumerate} where $Y=BMO(\BBR^n)$ if $p=\infty$ or $Y=L^p(\BBR^n)$ if $p\in [ 2,\infty)$.
			Moreover, it holds
			$$
			\|\nabla^m u\|_{\tentspace}\sim\|f\|_{Y}.
			$$
\end{theorem}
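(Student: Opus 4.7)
Given $f\in Y$ and $P\in\pol$, set $u(t,\cdot):=\Gamma(t,0)f+P$. That $u$ is a global weak solution is immediate: the propagator part solves \eqref{eq:the parabolic equation} by construction, while every $P\in\pol$ has vanishing partial derivatives of order $m$, hence $\partial_tP=0=LP$. For the tent space bound $\|\nabla^mu\|_{\tentspace}=\|\nabla^m\Gamma(\cdot,0)f\|_{\tentspace}\lesssim\|f\|_Y$ I would argue by interpolation between the endpoints $p=2$ and $p=\infty$. The $p=2$ endpoint is immediate from $T^{2,2}_m=L^2(\BBR^{n+1}_+)$ combined with the global energy estimate for the propagator. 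The $p=\infty$ endpoint amounts to the Carleson measure bound that, for every ball $B\subset\BBR^n$,
\[\frac{1}{|B|}\int_0^{r_B^{2m}}\!\!\int_B|\nabla^m\Gamma(t,0)f(y)|^2\,dy\,dt\lesssim\|f\|_{BMO}^2,\]
which I would establish by the standard splitting $f=(f-c_B)\mathbf 1_{2B}+(f-c_B)\mathbf 1_{(2B)^c}+c_B$: the conservation property $\Gamma(t,0)c_B=c_B$ (shown earlier in the paper) absorbs the constant, the local piece is handled by the global $L^2$ energy bound, and the tail by the $L^2$ off-diagonal decay of the propagators developed for the higher-order operator $L$ in the paper.

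\textbf{Direction (i)$\Rightarrow$(ii).} Now assume $u$ is a global weak solution with $\nabla^mu\in\tentspace$. The plan is to produce $f\in Y$ and $P\in\pol$ via the integral representation of Theorem~\ref{thm:int_repr}. First, tent space membership of $\nabla^mu$ forces $\nabla^mu\in L^2_{loc}(\BBR^{n+1}_+)$; a Poincar\'e-type argument across parabolic balls then yields polynomial-type control of $u$ locally, which is far more than enough to verify the very mild exponential growth hypothesis of Theorem~\ref{thm:int_repr}. Consequently $u(t,\cdot)=\Gamma(t,s)u(s,\cdot)$ holds in $\mathscr D'(\BBR^n)$ for $0<s\le t$. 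Next I appeal to the one-sided bound $\|\nontan(u-P)\|_{L^p(\BBR^n)}\lesssim\|\nabla^mu\|_{\tentspace}$, valid after a suitable choice of $P\in\pol$ for general weak solutions with $p>1$, which is exactly the remark in the excerpt that $u\in\pol$ is essentially the only obstruction to inequality \eqref{eq:intro1}. From this bound, $u(s,\cdot)-P$ is uniformly bounded in $L^p(\BBR^n)$ (resp.\ in $BMO$ when $p=\infty$) as $s\to 0^+$, so weak-$*$ compactness delivers a trace $f\in Y$; strong $L^2_{loc}$-continuity of $\Gamma(\cdot,0)$ then lets us pass to the limit in the propagator identity to obtain $u(t,\cdot)-P=\Gamma(t,0)f$.

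\textbf{Uniqueness and main obstacles.} Uniqueness of $(f,P)$ (up to constants when $p=\infty$) follows routinely from the conservation property $\Gamma(t,0)Q=Q$ for $Q\in\pol$ combined with $L^p(\BBR^n)\cap\pol=\{0\}$ for $p<\infty$ and $BMO(\BBR^n)\cap\pol=\text{constants}$. The norm equivalence $\|\nabla^mu\|_{\tentspace}\sim\|f\|_Y$ drops out of juxtaposing the two one-sided bounds from the two directions. I expect the main technical obstacles to lie at the $p=\infty$ endpoint: the Carleson measure estimate in direction (ii)$\Rightarrow$(i) requires careful use of the higher-order off-diagonal decay and the polynomial conservation, and the dual extraction of a $BMO$ trace in direction (i)$\Rightarrow$(ii) requires a parabolic tent space/$\mathcal H^1$ duality adapted to the homogeneity of $L$, with the polynomial ambiguity tracked consistently across all dyadic scales.
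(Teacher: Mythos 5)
Your direction (ii)$\Rightarrow$(i) is essentially the paper's argument (Proposition \ref{pro:p>2 pol existence}): split the datum over a ball $B$ into a local piece, a tail, and a polynomial, use the conservation property to kill the polynomial, the global energy estimate for the local piece, and off-diagonal decay for the tail. Two caveats. First, the off-diagonal estimates of Proposition \ref{prop:off2} are for $\Gamma(t,s)$ itself, not for $\nabla^m\Gamma(t,s)$; to bound $\int_0^{r^{2m}}\fint_B|\nabla^m\Gamma(s,0)f_2|^2$ one must first insert the Caccioppoli-type inequality of Proposition \ref{pro:local_energy} (with $a'\to 0$) to trade $\nabla^m\Gamma(s,0)h$ on $B$ for $\Gamma(s,0)h$ on $2B$ — the paper does this explicitly and your sketch skips it. Second, for $2<p<\infty$ you interpolate between $(L^2,T^{2,2}_m)$ and $(BMO,T^{\infty,2}_m)$; this is plausible, but the paper instead argues directly at every $p>2$ through the equivalent norm of Remark \ref{rem:eqv norm tent space} and the polynomial sharp function (subtracting the minimizing polynomial $Q_{2B}\in\pol$ rather than only a constant), which is what makes the argument uniform in $p$ and compatible with the $L^p_m$/$BMO_m$ structure theory.

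The hard direction (i)$\Rightarrow$(ii) has a genuine gap: you invoke the bound $\|\nontan(u-P)\|_{L^p}\lesssim\|\nabla^m u\|_{\tentspace}$ ``for general weak solutions after a suitable choice of $P$''. No such a priori estimate is available: Proposition \ref{prop:tent_kenig_comp_smallp} proves it only for solutions already of the form $u_f(t,\cdot)=\Gamma(t,0)f$ with $f\in L^2(\BBR^n)$ (its proof runs through the Duhamel representation of Corollary \ref{cor:duhamel prop}), and the statement for arbitrary solutions with $\nabla^m u\in\tentspace$ is precisely an a posteriori consequence of the theorem you are proving — so your argument is circular at its key step. Relatedly, the weak-$*$ compactness extraction of $f$ would require a uniform-in-time bound $\sup_{s}\|u(s,\cdot)-P\|_{Y}\lesssim\|\nabla^m u\|_{\tentspace}$, which the paper establishes only for $p\in(1,2]$ (Lemma \ref{lem:trace integrability}), not for $p\in[2,\infty]$; and passing to the limit $s\to0$ against $\Gamma(t,s)^*h$ with only weak-$*$ convergence would need $L^{p'}$ (resp. $\mathcal H^1$) convergence of the adjoint propagators, which the paper only obtains under extra hypotheses (\ref{ubc}). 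What the paper actually does is different and avoids all of this: Lemma \ref{lem:trace p>2} produces an $L^2_{loc}$ trace $u_0$ directly from $\nabla^m u\in L^2$ on unit cylinders (Poincar\'e plus absolute continuity of the coefficients of the minimizing polynomials) and proves $\|u_0-P\|_Y\lesssim S$ via a weighted sharp-function estimate; the growth hypothesis of Theorem \ref{thm:int_repr} is then verified using \eqref{eq:1}--\eqref{eq:2} together with the polynomial growth of the local means of $f$ (which comes from $f\in L^p_m$ or $BMO_m$, not from a Poincar\'e argument alone); and the limit $s\to0$ is carried out by dominated convergence on averaged quantities plus a Fubini/truncation argument identifying the limit with $\Gamma(t,0)f$ as defined in Lemma \ref{lem:extension to polynomial growth}. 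The norm equivalence then follows, with the $\lesssim$ direction supplied by Lemma \ref{lem:trace p>2}, not by a nontangential maximal function estimate.
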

		
		 We can treat the cases $p\in (2,\infty)$ and $p=\infty$ simultaneously, as we exploit the fact that only in this range of exponents $\tentspace$ can be normed by
		 $$\tentspace \ni f \mapsto \Bigg\|\sup_{\footnotesize\substack{z\in \BBR^n,\,r>0\colon \\x\in B(z,r)}}\left(\int_0^{r^{2m}} \fint _{B(z,r)}|f(t,y)|^2 dy dt\right)^{1/2} \Bigg\|_{L^p(\BBR^n)}.$$
	 Now it is easy to see that if $\|\nabla^m u\|_{\tentspace}<\infty$ holds, then $\nabla^m u$ is square integrable over cylinders $[0,1]\times B(x_0,1)$, where $x_0\in \BBR^n$. We use this information, combined with the Poincar\'e inequality and the equation, to obtain $u\in L^2(0,1; B(x_0,1))$ and deduce the existence of a $L^2_{loc} (\BBR^n)$ trace $u_0$. As we can control the $L^2(B(x_0,R))$-averages of $u_0$ modified by some polynomial $P_{B(x_0,R)}$, it is convenient to first prove the well-posedness for some Campanato type spaces $Y$, which we call polynomial $L^p$ and polynomial $BMO$ spaces, see Definitions \ref{def:BMO higher order} and \ref{def:L^p higher order}. It is non-trivial to show that those spaces equal the usual $L^p(\BBR^n)$ and $BMO(\BBR^n)$ spaces up to polynomials, cf.\ Section \ref{sec:Campanato}. The existence of solutions with initial data in $Y$ is easy due to the $L^2$ off-diagonal decay of the propagator and the bound $\|\nabla^m u_f\|_{\tentspace}\lesssim \|f\|_Y$ is proven analogously to the classical estimates of C. Fefferman and Stein, cf.\ \cite[Chap.\ VI, \S4.3]{S93}. This technique relies on the conservation property for polynomials mentioned above. 
	
	Recalling that a Borel measure $\mu$ on $\mathcal{B}(\BBR^{n+1}_+)$ satisfying
			$$\sup_{x\in \BBR^n} \sup_{r>0}\frac{1}{r^n}\mu \left((0,r)\times B(x,r)\right)<\infty$$ is called Carleson, we obtain after rephrasing Theorem \ref{intr_thm:wp pol} a Carleson measure characterization of $BMO(\BBR^n)$.
\begin{corollary}
	 			\label{intr_cor:Carleson measure characterisation}
	 			For $f\in L^2_{loc}(\BBR^n)$ it is equivalent
	 			\begin{enumerate}[label={\upshape(\roman*)}]
	 				\item There exists a global weak solution $u$ to \eqref{eq:the parabolic equation}, for which $$ d\mu(x,t)=|t^m\nabla^m u(t^{2m},x)|^2\frac{dxdt}{t}$$ is a Carleson measure and the $L^2_{loc}(\BBR^n)$ trace of $u$ is given by $f$.
	 				\item There exists a polynomial $P\in \pol$ such that $f-P\in BMO(\BBR^n)$.
	 			\end{enumerate}
	 			Moreover, $\|\nabla^m u\|_{T^{\infty,2}_m}\sim\|f-P\|_{BMO}.$
	 		\end{corollary}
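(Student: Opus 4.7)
The plan is to derive the corollary from Theorem \ref{intr_thm:wp pol} in the case $p=\infty$ after reinterpreting the tent space condition $\nabla^m u\in T^{\infty,2}_m$ as the Carleson condition on $\mu$. The main work is a bookkeeping exercise in changing variables; the substantive content is already contained in Theorem \ref{intr_thm:wp pol}.

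First I would verify the equivalence between $\nabla^m u\in T^{\infty,2}_m$ and the Carleson property of $d\mu(x,t)=|t^m\nabla^m u(t^{2m},x)|^2\,\tfrac{dx\,dt}{t}$ with equivalence of the corresponding quantities. For a ball $B=B(x,r)$, substituting $s=t^{2m}$ (so $t^m=s^{1/2}$ and $\tfrac{dt}{t}=\tfrac{1}{2m}\tfrac{ds}{s}$) gives
\begin{equation*}
\mu\bigl((0,r)\times B\bigr)=\int_0^{r}\int_B |t^m\nabla^m u(t^{2m},y)|^2\,\frac{dy\,dt}{t}=\frac{1}{2m}\int_0^{r^{2m}}\int_B |\nabla^m u(s,y)|^2\,dy\,ds.
\end{equation*}
Hence dividing by $r^n\sim |B|$ and taking suprema over $x\in\BBR^n$ and $r>0$, the Carleson norm of $\mu$ agrees (up to a dimensional constant) with $\|\nabla^m u\|_{T^{\infty,2}_m}^2$ in the formulation of Definition \ref{def:tent}. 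In particular, $\mu$ is Carleson if and only if $\nabla^m u\in T^{\infty,2}_m$.

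Next, given $f\in L^2_{loc}(\BBR^n)$, I would translate (i) and (ii) through Theorem \ref{intr_thm:wp pol} with $p=\infty$. If (i) holds, then $u$ is a global weak solution with $\nabla^m u\in T^{\infty,2}_m$, so by Theorem \ref{intr_thm:wp pol} there exist $g\in BMO(\BBR^n)$ and $P\in\pol$ (the latter unique up to an additive constant) with $u(t,\cdot)-P=\Gamma(t,0)g$ in $L^2_{loc}$ for $t>0$. Since the Cauchy problem with $BMO$ data is well-posed (so $\Gamma(t,0)g\to g$ in $L^1_{loc}$ as $t\to 0$), the $L^2_{loc}$ trace of $u$ equals $g+P$. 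Hypothesis (i) forces $f=g+P$, so $f-P=g\in BMO(\BBR^n)$ and (ii) holds with the norm bound $\|\nabla^m u\|_{T^{\infty,2}_m}\sim\|g\|_{BMO}=\|f-P\|_{BMO}$.

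Conversely, if (ii) holds, set $g\coloneqq f-P\in BMO$, use Theorem \ref{intr_thm:wp pol} to obtain the unique global weak solution $v$ with $v(t,\cdot)=\Gamma(t,0)g$ and $\nabla^m v\in T^{\infty,2}_m$, and define $u\coloneqq v+P$. Then $u$ is still a weak solution (polynomials of degree $<m$ are killed by $\nabla^m$ and the divergence-form operator, and they are annihilated by $\partial_t$), $\nabla^m u=\nabla^m v\in T^{\infty,2}_m$, and the $L^2_{loc}$ trace of $u$ is $g+P=f$. The change of variables above then promotes the tent space condition into the Carleson property for $\mu$, establishing (i) with the equivalence of norms. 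The only delicate point I anticipate is ensuring that the convergence $\Gamma(t,0)g\to g$ in $L^1_{loc}$ (so that the $L^2_{loc}$ trace of $u$ is correctly identified as $f$) is indeed part of the well-posedness for $BMO$ data in Theorem \ref{intr_thm:wp pol}; this should be supplied by the $BMO$ well-posedness statement referenced there, so no separate work is needed here.
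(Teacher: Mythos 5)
Your proposal is correct and follows essentially the same route as the paper: the paper obtains the corollary by rephrasing Theorem \ref{thm:wp pol} (the $p=\infty$ tent space well-posedness, combined with the structure of $BMO_m$ from Proposition \ref{prop:structure of BMO_m} and the trace statements in Lemma \ref{lem:trace p>2} and Proposition \ref{pro:p>2 pol existence}), with the substitution $s=t^{2m}$ identifying the Carleson condition on $\mu$ with $\|\nabla^m u\|_{T^{\infty,2}_m}<\infty$, exactly as you do. The trace identification you flag as delicate is indeed supplied by the $L^2_{loc}$ convergence statements in Lemma \ref{lem:extension to polynomial growth} and Proposition \ref{pro:p>2 pol existence}, so no gap remains.
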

	 	In particular, for $m=1$, this result complements the well-known Carleson measure characterization of $BMO(\BBR^n)$ by C. Fefferman and Stein (recalled in Proposition \ref{prop:carleson}). The variable $t^{2m}$ reminds of the scaling properties of our equation and will appear frequently throughout this work. 

	According to Corollary \ref{intr_cor:Carleson measure characterisation}, any operator $L$ as above leads to an equivalent characterization of $BMO(\BBR^n) + \mathcal P_{m-1}$. For $m=1$ and some class of coefficients, this was addressed in the survey article \cite{KL13} and similar ideas for the bi-Laplacian appeared earlier in \cite{KL12}.\ The $BMO(\BBR^n)$ well-posedness of parabolic equations is desired, as this allows rough initial data, cf.\ the famous result for the Navier--Stokes equations of Koch--Tataru \cite{KT01}. 
			
\subsubsection*{The case $p\in (1,2]$.}	For $p\in(1,2]$ and any global weak solution $u$ to \eqref{eq:the parabolic equation} with $\|\nabla^m u \|_{\tentspace} <\infty$, we prove the existence of a unique distributional trace $u_0$ and a unique polynomial $P\in \pol$, for which it holds
			\begin{equation}\label{eq:06}
			\sup_{t\ge 0}\|u(t)-P\|_{L^p(\BBR^n)}\lesssim \|\nabla^m u \|_{\tentspace}.
			\end{equation}

Thus, by combining the well-posedness result for the class $L^\infty(0,\infty;L^p(\BBR^n))$ together with the bound $\|\nabla^m u \|_{\tentspace} \lesssim \|u\|_{X^p_m}$ (if $L$ is strongly elliptic) we obtain the following, cf.\ Theorem \ref{thm:tent space wp small p}.
	\begin{theorem}
		Let $1\leq p<r\leq 2$. Assume the pointwise ellipticity bounds \eqref{eq:strong lower ellipticity bounds} and that the propagators are uniformly bounded on $L^p(\BBR^n)$. Then for a distribution $u\in \mathscr D'(\BBR^{n+1}_+)$ it is equivalent
		\begin{enumerate}[label={\upshape(\roman*)}]
			\item It holds $\nabla^m u\in T^{r,2}_m$ and $u$ is a global weak solution of \eqref{eq:the parabolic equation}.
			\item There are unique $ f\in L^r(\BBR^n)$ and $P\in\pol$, such that $u(t,\cdot)-P=\Gamma(t,0)f$ in $L^r(\BBR^n)$ for $t>0$.
		\end{enumerate} In this case, we have $u-P\in \mathscr  C_0([0,\infty);L^r(\BBR^n))$ and
		\[\|f\|_{L^r}\sim \|u-P\|_{L^\infty(L^r)}\sim\|u-P\|_{X^r_m}\sim\|\nabla^m u \|_{T^{r,2}_m}.\] 
	\end{theorem}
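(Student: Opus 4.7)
The strategy is to assemble two previously established ingredients with one interpolated input: the polynomial-trace decomposition \eqref{eq:06}, available for any $T^{r,2}_m$-weak solution in the range $r\in(1,2]$; the reverse square function estimate $\|\nabla^m u\|_{T^{r,2}_m}\lesssim \|u\|_{X^r_m}$, valid for weak solutions under the pointwise ellipticity assumption \eqref{eq:strong lower ellipticity bounds}; and the $L^\infty(0,\infty; L^r(\BBR^n))$-well-posedness of the Cauchy problem governed by $\{\Gamma(t,0)\}_{t\ge 0}$, which I obtain from the hypothesized uniform $L^p$-boundedness of propagators via Riesz--Thorin interpolation against the $L^2$-contractivity of $\Gamma(t,s)$, together with the propagator representation of Theorem~\ref{thm:int_repr} and trace continuity at $t=0$. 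Observe that for any $P\in\pol$, the shift $u\mapsto u-P$ preserves both the weak-solution property (since $\partial_t P=LP=0$) and the quantity $\nabla^m u$.

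For (i) $\Rightarrow$ (ii), start from $u$ with $\nabla^m u\in T^{r,2}_m$, apply \eqref{eq:06} at the exponent $r$ to extract the unique polynomial $P\in\pol$ and the trace $u_0$ for which $\sup_{t\ge 0}\|u(t,\cdot)-P\|_{L^r(\BBR^n)}\lesssim\|\nabla^m u\|_{T^{r,2}_m}$, so that $f:=u_0-P\in L^r(\BBR^n)$. The shift $v:=u-P$ is then a weak solution lying in $L^\infty(0,\infty; L^r(\BBR^n))$ with trace $f$, and the $L^r$-well-posedness in that class identifies $v(t,\cdot)=\Gamma(t,0)f$, which is (ii). Conversely, for (ii) $\Rightarrow$ (i), set $v(t,\cdot):=\Gamma(t,0)f$ and $u:=v+P$; then $v\in\mathscr C_0([0,\infty); L^r(\BBR^n))$ with $\|v\|_{L^\infty(L^r)}\lesssim\|f\|_{L^r}$ by the uniform propagator bound, $u$ is a weak solution as $P$ is annihilated, and the tent bound $\|\nabla^m u\|_{T^{r,2}_m}\lesssim\|f\|_{L^r}$ follows by chaining the pointwise-elliptic reverse of \eqref{eq:intro1} with the non-tangential estimate $\|v\|_{X^r_m}\lesssim\|f\|_{L^r}$, the latter being a standard transfer from the uniform $L^r$-propagator bound through local Caccioppoli and Meyers-type $L^r\to L^2$ improvement for weak solutions.

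Uniqueness of the pair $(f,P)$ is immediate: if $\Gamma(\cdot,0)f_1+P_1=\Gamma(\cdot,0)f_2+P_2$ in $L^r(\BBR^n)$ for every $t>0$, passing $t\to 0$ in $L^r(\BBR^n)$ forces $f_1-f_2=P_2-P_1$ as an $L^r$-function, and zero is the only polynomial in $L^r(\BBR^n)$. The four-way norm equivalence assembles from the individual bounds encountered along the way, and the continuity $u-P\in\mathscr C_0([0,\infty); L^r(\BBR^n))$ is inherited directly from $v=\Gamma(\cdot,0)f$. The main obstacle I expect is establishing the $L^\infty(L^r)$-well-posedness for every $r\in[p,2]$: the interpolation of the propagator family is routine, but the AMP-style uniqueness via Theorem~\ref{thm:int_repr} and the $L^\infty(L^r)\to X^r_m$ transfer must remain valid as $r$ approaches the endpoint $p$, which is the main technical point of the argument.
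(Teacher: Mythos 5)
Your proposal is correct and follows essentially the same route as the paper: Lemma \ref{lem:trace integrability} (i.e.\ \eqref{eq:06}) produces $P$ and $f$ with the $L^\infty(L^r)$ bound, the $L^\infty(L^r)$/$X^r_m$ well-posedness of Theorem \ref{thm:wp_small_p} (built from Theorem \ref{thm:int_repr}, interpolated propagator bounds and adjoint continuity) identifies $u-P=\Gamma(\cdot,0)f$, and Proposition \ref{prop:comp_tent_kenig_smallp} together with Lemma \ref{lem:wp_small} gives the converse tent-space bound and the norm equivalences. One small precision: the estimate $\|u-P\|_{X^r_m}\lesssim\|f\|_{L^r}$ cannot be run with the uniform $L^r$ bound alone (that would only yield $(\mathcal M_{HL}|f|^r)^{1/r}$); as in Lemma \ref{lem:off3} and Lemma \ref{lem:wp_small} one must use off-diagonal $L^\kappa$--$L^2$ bounds for some $\kappa$ strictly between $p$ and $r$, which is exactly the room $p<r$ in the hypothesis.
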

We could deduce the continuity of the solution with values in $L^r(\BBR^n)$ as we are working in the open interval $r\in (p,2)$. For the same conclusion in the range $p>2$, see Remark \ref{rem:continuity big p}.

	We stress that distinct proofs are needed to treat the cases $p\in(1,2]$ and $p\in (2,\infty]$. For $p\in(1,2]$ the condition $\|\nabla^m u \|_{\tentspace} <\infty$ implies that $\nabla^m u \in L^2([\varepsilon,\infty]\times \BBR^n) $ for any $\varepsilon >0$, whence it can be deduced $u\in \mathscr C((0, \infty); L^2(\BBR^n))$ and we use the weak formulation of the equation \eqref{eq:the parabolic equation} to find a distributional limit of $\{u(t)\}_{t>0}$ as $t\to 0$. As $p\leq2$, to prove \eqref{eq:06}, it is enough to bound the $L^p$ norm of weighted $L^2$-averages of $u(t)-P$ for some fixed polynomial $P$ and all $t>0$, which can be done similarly as in the previous case.
	
	\subsection{Uniform boundedness of propagators} Finally, we study the regularity question of weak solutions to \eqref{eq:the parabolic equation}, from which we derive new (even in the second order case) results on the boundedness of the propagators. As mentioned above, for coefficients satisfying certain growth assumptions in time, such bounds were established in \cite{AMP15} for $p<2$. The case $p>2$ was left open. However, in the autonomous case it is well known that the semigroup $(e^{-(t-s)L})_{0\leq s<t<\infty}$ satisfies uniform $ L^p(\BBR^n)$-bounds for each exponent $p$ in some (maximal) open interval $(p_-(L),p_+(L))$ (cf.\ \cite{A07}). Moreover, we have $$p_-(L)< q_-<2<q_+< p_+(L)$$ for some exponents $q_-$ and $q_+$, which depend on $L$ only through the order $m$ and the dimension $n$. In the case of propagators associated to non-autonomous operators, one cannot use semigroup theory methods to investigate this problem, but we were able to combine our methods with the strategy from the recent work of Auscher, Bortz, Egert, and Saari \cite{ABES18} to establish the following result, cf.\ Theorem \ref{thm:uniform_bdd_p_close_to_2}.
	\begin{theorem} \label{thm:i4}
		There exists $\varepsilon >0$ depending only on the ellipticity constants, $m$, $N$ and the dimension, such that the family of propagators $\{\Gamma(t,s)| \; 0\leq s\leq t<\infty\}$ is uniformly bounded on $L^p(\BBR^n)$ for all $p\in [1,\infty]$ with 
		$p\in(2-\varepsilon, 2+\varepsilon)$.
	\end{theorem}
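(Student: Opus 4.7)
The plan is to bootstrap from the $L^2$ off-diagonal decay of $\Gamma(t,s)$ to an $L^p$-bound for some $p>2$ close to $2$, and then obtain the range $p<2$ by duality applied to the propagators of the adjoint system. The main engine is a self-improvement of the higher order parabolic Caccioppoli inequality (Meyers--Gehring type), which is also the mechanism behind the local H\"older-in-time, $L^p_{loc}$-in-space regularity statement advertised in the abstract. I would follow the strategy of \cite{ABES18}, adapted here to the propagator framework.

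\textbf{Step 1 (local reverse H\"older).} For a local weak solution $u$ of \eqref{eq:the parabolic equation} on a parabolic cylinder $Q_r$ of parabolic size $r^{2m}\times B_r$, I would combine the higher order parabolic Caccioppoli inequality, which takes the schematic form
\[
\int_{Q_r}|\nabla^m u|^{2}\lesssim \frac{1}{r^{2m}}\int_{Q_{2r}}|u-P|^{2}\qquad \text{for any } P\in\pol,
\]
with a spatial Poincar\'e--Sobolev inequality for the $m$-th gradient and the equation (which allows one to control the time oscillation of $u-P$ by $\nabla^m u$ against a polynomial test function). This should yield
\[
\left(\fint_{Q_r}|\nabla^m u|^{2}\right)^{1/2}\lesssim \left(\fint_{Q_{2r}}|\nabla^m u|^{2q}\right)^{1/(2q)}
\]
for some $q\in(0,1)$ depending only on the structural constants. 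Gehring's lemma then upgrades this to a reverse H\"older inequality with exponent $2+\varepsilon$ for some $\varepsilon>0$, giving local $L^{2+\varepsilon}$ bounds for $\nabla^m u$ and, via the equation again, the announced H\"older regularity in time with values in $L^{p}_{loc}$ for some $p>2$.

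\textbf{Step 2 (from local to global, then duality).} Fix $f\in L^2\cap L^p(\BBR^n)$ and set $u(t,\cdot)=\Gamma(t,s)f$. Cover $\BBR^n$ by a grid of cubes $\{Q_j\}$ of side $\sim\sqrt[2m]{t-s}$ and decompose $f=\sum_k f_k$ with $f_k$ supported in the $k$-th annulus around $Q_j$. The $L^2$ off-diagonal decay of the propagator bounds $\|\mathbbm{1}_{Q_j}\Gamma(t,s)f_k\|_{L^2}$ by an exponentially decaying factor in $k$ times $\|f_k\|_{L^2}$, while Step 1 lifts the $L^2$-average of $\Gamma(t,s)f$ on $Q_j$ to an $L^{2+\varepsilon}$-average (the polynomial correction being harmless thanks to the conservation property $\Gamma(t,s)P=P$ for $P\in\pol$ established earlier in the paper). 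Summing over $j$ via H\"older's inequality and over $k$ against the exponential decay yields $\|\Gamma(t,s)f\|_{L^p}\lesssim \|f\|_{L^p}$ uniformly in $s\le t$ and for $p$ in a right-neighborhood of $2$. The range $p<2$ is then obtained by duality: the adjoint propagator corresponds, after time reversal, to the adjoint system $\partial_t v=(-1)^{m+1}\mathrm{div}_m A^*\nabla^m v$, which satisfies the same structural hypotheses with the same constants. Applying Steps 1--2 to it gives $L^{p'}$ bounds for $p'$ in a right-neighborhood of $2$, and duality converts these into the desired $L^p$ bounds for $p$ in a left-neighborhood of $2$. The quantitative dependence of $\varepsilon$ only on $m$, $N$, $n$ and the ellipticity constants is inherited from Gehring's lemma.

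\textbf{Main obstacle.} The delicate point is Step 1 in the higher order setting. The parabolic Caccioppoli inequality is accessible by standard cut-off arguments using powers of the cut-off of order $m$, but closing the Gehring bootstrap requires that, after subtracting an appropriate polynomial $P\in\pol$ on the spatial ball, the mixed time--space Poincar\'e inequality controls $u-P$ by $\nabla^m u$ in a scale-invariant way, and that this polynomial correction is stable under restriction to dyadic subcubes so that the reverse H\"older structure is not destroyed when iterating. This is the step where the higher order generalization genuinely departs from the second order case treated in \cite{ABES18}, and is the source of the structural dependence of $\varepsilon$.
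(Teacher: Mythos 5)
Your overall architecture is close to the paper's: reduce to $p>2$ by duality for the adjoint (time-reversed) system, and deduce the uniform $L^p$ bound from the $L^2$ off-diagonal decay together with a local $L^2\to L^p$ estimate on parabolic cylinders of scale $\sqrt[2m]{t-s}$; your hand-made covering and summation in Step 2 is exactly what the extrapolation result of Blunck--Kunstmann \cite{BK05}, quoted in Section \ref{sec:bound reduction}, packages (and the conservation property $\Gamma(t,s)P=P$ is not actually needed there, since the local estimate \eqref{eq:goal_uniform_L^p} applies to arbitrary local weak solutions with plain $|u|^2$ on the right). The genuine divergence, and the gap, is in how you produce that local estimate. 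A Caccioppoli--Poincar\'e--Gehring bootstrap yields a \emph{space--time} averaged gain: $\nabla^m u\in L^{2+\varepsilon}$ on an interior cylinder with norm controlled by the $L^2$ average of $u$ on a larger cylinder (this is the higher integrability of \cite{GS82,B08} recalled in the paper). What Theorem \ref{thm:i4} requires is a bound on a \emph{fixed time slice}, $\sup_{s\in(1/2,2)}\|u(s,\cdot)\|_{L^p(B(0,1))}\lesssim\|u\|_{L^2((1/4,4)\times B(0,2))}$, and a space--time $L^{2+\varepsilon}$ bound controls no individual slice. Your only bridge is the clause ``via the equation again, the announced H\"older regularity in time with values in $L^p_{loc}$'', which is precisely the non-trivial content of the theorem. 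In the paper this step is not obtained from Gehring at all: one extends $\chi u$ to a global solution of an inhomogeneous problem, exploits the hidden coercivity through $D_t^{1/2}$ and the Hilbert transform (Lemma \ref{lem:lax_milgram}), applies \v{S}ne\u{\i}berg's theorem on the interpolation scale of energy spaces $E_p$ (Proposition \ref{prop:invertibility_around_2}, following \cite{ABES18}), and then uses embeddings of Campanato type to get the slice-wise bound and the time H\"older continuity (Theorem \ref{thm:higher integrability of solutions}).

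Your route could conceivably be completed: from $\nabla^m u\in L^{p}$ locally in space--time one gets $u\in L^p_{loc}(I;W^{m,p}_{loc})$ (Gagliardo--Nirenberg plus the reverse H\"older estimates for $u$ itself) and $\partial_t u=(-1)^{m+1}\mathrm{div}_m(A\nabla^m u)\in L^p_{loc}(I;W^{-m,p}_{loc})$, and a Lions--Peetre trace/interpolation argument then gives, after a cut-off, continuity in time with values in $\bigl(W^{m,p},W^{-m,p}\bigr)_{1/p,p}=B^{m(1-2/p)}_{p,p}\hookrightarrow L^p$ for $p>2$, with quantitative constants. But none of this appears in your proposal, and it is this passage from space--time integrability to pointwise-in-time $L^p$ control --- not the bookkeeping of the subtracted polynomials in the Gehring iteration, which you single out as the ``main obstacle'' --- that carries the weight of the proof; the H\"older exponent $1/2-1/p$ of the paper also does not come for free from such an argument. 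As written, the proposal therefore has a genuine gap at its central step, even though the surrounding reductions (duality, off-diagonal localization, covering) are sound and coincide with the paper's.
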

	
	The main result of \cite{ABES18} concerns the regularity of local weak solutions to \eqref{eq:the parabolic equation} for $m=1$ and states that they are locally $(1/2-1/p)$-H\"older continuous with values in spatial $L^p_{loc}(\BBR^n)$ for any $2<p<q$ and some exponent $q$ depending on the ellipticity constants and the dimensions in a non-explicit way. We show in Section \ref{sec:bound reduction} that pointwise bounds of $L^p_{loc}(\BBR^n)$ valued solutions by their $L^2$ norms on certain cylinders imply uniform $L^p(\BBR^n)$ bounds on the propagators. This is due to a result from \cite{BK05} about the boundedness of general linear operators on $L^2(\BBR^n)$.
	
	We mention that unless the coefficients are real and $N=1$, we cannot hope to obtain H\"older continuity of local weak solutions with respect to the parabolic distance as in the Moser--Nash regularity theory \cite{M64, N57}.\ We refer to \cite[\S1]{ABES18} for a discussion of counterexamples and an overview over the subject for $m=1$ and rough coefficients. We remark that if $N,\,m\ge 1$ are arbitrary, but we impose higher H\"older regularity of the coefficients, equation \eqref{eq:the parabolic equation} has a unique solution in some H\"older class by \cite[Theorem 4.10]{S65}. 
	
	The key idea in \cite{ABES18} was to pass to the global setup, that is to a weak solution $v$ of an inhomogeneous equation on the full space by multiplying the investigated solution with a cut-off function. This allows to make sense of the half-time derivative $D_t^{1/2}v$, whose $L^2(\BBR^{n+1})$-norm can be then controlled due to the hidden coercivity of the equation on appropriate energy space. Then, an abstract interpolation result by \v{S}ne\u{\i}berg \cite{S74} is used to conclude the higher integrability for $p>2$. The desired H\"older continuity and the pointwise bounds in time follow then from the results on Campanato spaces. We adapt those methods to the higher order case with the difference that, instead of studying the inhomogeneous equation, as in \cite{ABES18}, we use the previously derived properties of weak solutions to \eqref{eq:the parabolic equation} and carry them over to the extension $v$. We obtain the following regularity result (cf.\ Theorem \ref{thm:higher integrability of solutions}).
	
	 \begin{theorem}\label{thm:i5}
		There exists $\varepsilon>0$ depending only on the ellipticity constants, $m$, $N$ and the dimension, so that any global weak solution $u\in L^2_{loc}(0,\infty; H^m_{loc}(\BBR^n))$  of \eqref{eq:the parabolic equation}  is locally bounded and H\"older continuous in time with values in spatial $L^p_{loc}$ for any $2<p<2+\varepsilon$. Moreover, $u\in L^p_{loc}(0,\infty; W^{m,p}_{loc}(\BBR^n))$.
	\end{theorem}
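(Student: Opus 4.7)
\textbf{Proof plan for Theorem \ref{thm:i5}.} The plan is to follow the strategy of \cite{ABES18} for the second order case, with the modification flagged by the author: rather than regarding the cut-off as a solution of an inhomogeneous system on $\BBR^{n+1}$, we carry the structural information we already have for global weak solutions of \eqref{eq:the parabolic equation} over to an extension. First I would fix a point $(t_0,x_0)\in\BBR^{n+1}_+$ and a small parabolic cylinder $Q = (t_0 - r^{2m}, t_0 + r^{2m}) \times B(x_0,r)$ compactly contained in $\BBR^{n+1}_+$. Take a smooth cut-off $\chi$, equal to one on $\tfrac12 Q$ and supported in $Q$. The natural ``extension'' is $v = \chi u$, extended by zero to $\BBR^{n+1}$; by the local energy information $u \in L^2_{loc}(H^m_{loc})$ and by what was derived in earlier sections (the propagator representation and the associated energy bounds), $v$ lives in the energy class $L^2(\BBR;H^m(\BBR^n))$ and satisfies an equation whose right-hand side is a commutator involving derivatives of $\chi$ applied to $u$ and $\nabla^m u$, locally in $L^2$.

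Next I would build the half-time derivative. Working on the full time-line, one has a bilinear form
\[
a(u,\varphi) = \int_\BBR \sum_{|\alpha|=|\beta|=m} \langle a_{\alpha,\beta}\partial^\beta u, \partial^\alpha \varphi\rangle\, dt
\]
which is coercive on the energy space $E = L^2(\BBR;\dot H^m(\BBR^n))\cap \dot H^{1/2}(\BBR;L^2(\BBR^n))$ after incorporating the time derivative in the standard way (via the identification $\partial_t = D_t^{1/2} H D_t^{1/2}$ with $H$ the Hilbert transform, which is what makes the hidden coercivity work in \cite{ABES18}). The weak equation $\partial_t v + Lv = F$ then yields an estimate of $\|D_t^{1/2} v\|_{L^2(\BBR^{n+1})}$ in terms of the right-hand side $F$, and hence in terms of local $L^2$-norms of $u$ and $\nabla^m u$ on $Q$. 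This step requires the higher-order adaptation of all energy and commutator estimates, but since we already have sharp $L^2$ off-diagonal decay and conservation of polynomials from the previous sections, this is largely bookkeeping; the routine but delicate point is to express the cut-off error as a sum of lower-order terms of the form $(\nabla^j \chi)(\nabla^{2m-j} u)$ that sit in $L^2$ locally.

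Now comes the heart of the argument: \v{S}ne\u{\i}berg's interpolation. The map assigning to data the solution of the extended equation is a bounded isomorphism between suitable couples of Hilbert spaces (the $L^2$-energy space on one side, with the source space on the other), and by stability of isomorphisms in the complex interpolation scale \cite{S74}, it remains an isomorphism in a small open neighborhood of the $p=2$ endpoint. Concretely, this produces an $\varepsilon>0$ and a bound of the form
\[
\|D_t^{1/2} v\|_{L^p(\BBR^{n+1})} + \|\nabla^m v\|_{L^p(\BBR^{n+1})} \lesssim \text{(data)}
\]
for $2 \le p < 2+\varepsilon$, with $\varepsilon$ depending only on $m$, $N$, $n$ and the ellipticity constants. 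Unwinding the cut-off yields $u \in L^p_{loc}(0,\infty; W^{m,p}_{loc}(\BBR^n))$ and a local bound on $D_t^{1/2} u$ in $L^p$.

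Finally I would convert the higher integrability of $D_t^{1/2} u$ into the claimed Hölder-in-time-with-values-in-$L^p_{loc}$ regularity. Here I would invoke the Campanato-type characterization (of the same flavor as is used in Section \ref{sec:Campanato} for the polynomial $L^p$/$BMO$ spaces): membership of $D_t^{1/2} u$ in $L^p_{loc}$ on $\BBR^{n+1}$ implies that the $L^p_x$-valued function $t \mapsto u(t,\cdot)$ lies in the time-Campanato class of exponent $1/2 - 1/p$ on compact subsets of $(0,\infty)$, hence is Hölder continuous of exponent $1/2 - 1/p$ in time with values in $L^p_{loc}(\BBR^n)$, which also furnishes local boundedness. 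I expect the main obstacle to be the extension step together with the verification that the bilinear form associated to $L$ is coercive on the energy space $E$ in the higher-order setting; once the coercivity and the cut-off commutator estimates are in place, the \v{S}ne\u{\i}berg argument and the Campanato conversion are essentially mechanical and mirror \cite{ABES18}.
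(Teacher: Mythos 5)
Your overall route---cutting off to a globally defined $v=\chi u$, producing the half-time derivative through the hidden coercivity $\partial_t=D_t^{1/2}H_tD_t^{1/2}$, applying \v{S}ne\u{\i}berg's theorem around $p=2$, and converting $L^p$ bounds into H\"older continuity in time via a Campanato-type embedding---is exactly the paper's (Section \ref{sec:uniform bounds}). But two steps, as you describe them, would not go through. First, you organize the cut-off error as terms $(\nabla^j\chi)(\nabla^{2m-j}u)$ ``in $L^2$ locally'': for $j<m$ these involve spatial derivatives of $u$ of order strictly larger than $m$, which a weak solution $u\in L^2_{loc}(0,\infty;H^m_{loc}(\BBR^n))$ does not possess. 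The equation for $v$ must be kept in divergence form, as in \eqref{eq:inhom_for_v}: the error is $f+\mbox{div}_m F+\sum_{|\xi|\le m-1}\partial^\xi F_\xi$, where at most $m$ (and inside $F$, $F_\xi$ at most $m-1$) derivatives fall on $u$, and these are controlled by the local energy estimates of Proposition \ref{pro:local_energy}. Second, the coercivity: under the G{\aa}rding inequality \eqref{eq:lower Garding ellipticity estimate} alone the form only controls $\|\nabla^m v\|_{L^2}$, and passing to the full $H^m$-norm by Gagliardo--Nirenberg creates a zero-order deficit (see \eqref{eq:coercivity}); this is why the paper works with the shifted operator $\mathscr L=\partial_t+(-1)^m\mbox{div}_m A\nabla^m+2$ on the \emph{inhomogeneous} spaces $E_p$ (Lemma \ref{lem:lax_milgram}), which also form an honest complex interpolation scale (Lemma \ref{lem:interpolation scales}). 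Your homogeneous space $L^2(\BBR;\dot H^m)\cap \dot H^{1/2}(\BBR;L^2)$ loses this bookkeeping and makes the interpolation/\v{S}ne\u{\i}berg step (Proposition \ref{prop:invertibility_around_2}) considerably more delicate.

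The part you label ``largely bookkeeping'' is in fact the substantive estimate after \v{S}ne\u{\i}berg: one must show that the right-hand side of \eqref{eq:inhom_for_v} lies in $(E_{p'})'$ for $p>2$ with norm $\lesssim\|u\|_{L^2}$ on a slightly larger cylinder (Lemma \ref{lem:some bounds}). This uses the parabolic Sobolev embedding and slice-wise Gagliardo--Nirenberg (Lemma \ref{lem:embed}, Corollary \ref{cor:embed}) and, crucially, the self-improved reversed H\"older estimates of Corollary \ref{cor:rev_hoelder} (placing $v$ and $f$ in $L^q$ up to $q=2+\frac{4m}{n}$, needed also for the extra term $2v$ created by the shift); it is this step, not \v{S}ne\u{\i}berg alone, that produces the additional restriction $p<2+\frac{4}{m(n+2)-2}$ in Theorem \ref{thm:higher integrability of solutions}. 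The inputs you invoke instead---the $L^2$ off-diagonal decay and the conservation of polynomials for the propagators---play no role here; only interior energy and reversed H\"older estimates enter, which is precisely why the conclusion holds for \emph{every} global weak solution and not only for propagator solutions. Your final Campanato conversion and the H\"older exponent $1/2-1/p$ are as in the paper.
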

	
	We mention that the higher integrability of $\nabla^m u$ has already been obtained by Giaquinta and Struwe for $m=1$ \cite{GS82} and also by B\"ogelein \cite{B08} in the real-valued case, but general $m$. 
	
\subsection{Structure of the paper}
	In Section \ref{sec:prel} we fix the notation, state the ellipticity assumptions, introduce function spaces appearing in this work and review the semigroup theory. We continue with the derivation and consequences of the a priori energy estimates in Section \ref{sec:energy estimates}.
	
	In Section \ref{sec:traces tent space} we demonstrate that the tent space condition $\nabla^m u \in \tentspace$ is sufficient to find and control the distributional trace of a global weak solution $u$. In Section \ref{sec:energy solutions} we use obtained trace estimates to show the well-posedness of energy solutions and introduce the family of propagators. 
	
	In Section \ref{sec:existence results} we construct solutions with initial data in $L^p$ with $p>2$ or $BMO$ by exploiting the $L^2$ off-diagonal decay of the propagators. If $p<2$, we need to assume the uniform boundedness of propagators and follow closely \cite{AMP15}. 
	
	The uniqueness of solutions is addressed in Section \ref{sec:uniqueness results}.\ We begin with the analogous interior representation result as in \cite[\S5]{AMP15} and consequently establish the tent space well-posedness.
	
	The final section addresses the uniform boundedness assumption for the propagators. First, we provide some examples, which again follows loc.\ cit. Second, we prove that with no extra assumption on the coefficients, the propagators satisfy uniform $L^p(\BBR^n)$ bounds for exponents $p$ in some neighborhood of $2$, which depends on ellipticity and dimensions only. We also obtain the result on H\"older regularity in time mentioned above.
	
	The Appendix contains the proof of the estimate $\|u\|_{X^p_m}\lesssim \|\nabla^mu\|_{T^{p,2}}$, as well as its converse, if $p\ge 2$ or $p\in (1,2)$ and the strong ellipticity estimates hold.
	
	\subsection{Remarks} 
	The reader might have noticed that our well-posedness results implicitly exclude the case of $L^1(\BBR^n)$ initial data. Indeed, this case  is rather delicate, cf.\ \cite[\S9]{AMP15}.\ Analogous statements hold as well for the higher order case, but for the sake of brevity, we leave their proof as an exercise to the interested reader. The case $p<1$ is completely open and an optimal condition implying the uniform boundedness of propagators is also unknown.

	\section{Review}
	\label{sec:prel}
	\setcounter{theorem}{0} \setcounter{equation}{0}
		\subsection{Notation}
		
		We denote the positive integers by $\BBN_+$, $n\in \BBN_+$ is the spatial dimension and the parabolic half-space is given by $\BBR^{n+1}_+=\{(t,x)\in \BBR^{n+1}\mid t>0\}$. Also, $(a,b)\subseteq (0,\infty)$ and $\Omega\subseteq \BBR^n$ denote open sets and $B(x,r)\subseteq\BBR^n$ the open ball centered at $x\in \BBR^n$ with radius $r>0$. The Euclidean distance of $x\in \BBR^n$ to a closed set $E\subseteq \BBR^n$ is denoted by $d(x,E)$ and the distance of two closed sets $E,\, F\subseteq \BBR^n$ by $d(E,F)$. Further, let $M$ denote the number of multi-indices $\alpha \in \BBN^n$ of length $m$, that is $M=\binom{n+m-1}{m}$. We use standard notation for the partial spatial derivatives $\partial^\alpha$.
		
		In this work we deal with systems, but for readability we usually make no notational difference between the cases $N=1$ and $N>1$, meaning we will write 
		\[Lu= (-1)^m \sum_{|\alpha|=|\beta|=m } \partial^\alpha (a_{\alpha,\beta}(t,x) \partial^\beta u)\] and, in shorthand notation, $L=(-1)^m\mbox{div}_m A \nabla ^m$,
		while keeping in mind that the coefficient matrix
		$A(t,x)=(a_{\alpha,\beta}^{i,j}(t,x))_{|\alpha|=|\beta|=m,\,1\leq i,j\leq N}$ is a measurable function on $\BBR^{n+1}_+$ valued in $\BBC^{NM\times NM}$. We refer to the constants $n,\, N,\, m$ simply as \textit{dimensions}.
		
		We use standard notions of $L^p(\Omega)$, $1\leq p\leq\infty$, and Sobolev spaces $W^{m,p}(\Omega)$ of complex-valued functions on $\Omega$. For a Banach space $X(\Omega)$ of complex-valued functions, let $\mathscr L(X(\Omega))$ denote the space of bounded linear operators. 
		Further, we denote by $L^p(a,b;X(\Omega))$ (or $L^p(X)$ for $(a,b)=(0,\infty)$ and $\Omega=\BBR^n$) the Bochner space of $X(\Omega)$-valued $L^p$ functions on $(a,b)$. We write $f\in L^p_{loc}(a,b;X_{loc}(\Omega))$ if $f\in L^p(c,d;X(\omega))$ for any open cylinder $(c,d)\times \omega$ with $a<c<d<b$ and $\overline \omega \subseteq \Omega$. Spaces $\mathscr C(a,b;L^p(\Omega)), \, \mathscr C(a,b;L_{loc}^p(\Omega))$ are defined similarly and if the continuity holds up to the endpoints of the interval, we write $\mathscr C([a,b];L^p(\Omega))$. Additionally, $\mathscr C_0([0,\infty),L^p(\BBR^n))$ consists of those elements of $\mathscr C([0,\infty);L^p(\BBR^n))$, whose $L^p$-norm vanishes as $t$ tends to infinity. 
		
		For an open subset $U$ of $\BBR^n$ or $\BBR^{n+1}_+$, we denote the space of smooth compactly supported functions on $U$ by $\mathscr D(U)$ and by $\mathscr D'(U)$ the space of distributions. The spatial Fourier transform $\mathcal Fu$ defined on the Schwartz space $\mathscr S (\BBR^n)$, will be sometimes denoted by $\hat u$. Throughout the work, $\mathcal M_{HL}$ denotes the (uncentered) Hardy--Littlewood maximal operator.
		
		We let $\mathcal P_{m-1}$ denote the space of polynomials on $\BBR^n$ of degree less than $m$. 
		
		When referring to a solution to \eqref{eq:the parabolic equation} we always mean a weak solution according to the definition below.
		\begin{definition}
			\label{def:weak_sol}
			A (local) weak solution of \eqref{eq:the parabolic equation} on $(a,b)\times \Omega\subseteq \BBR^{n+1}_+$ is a complex-valued function $u\in L^2_{loc}(a,b; H^m_{loc}(\Omega))$ such that 
			\[\int_a^b\int_\Omega u(t,x)\overline{\partial_t \phi (t,x)}dxdt=\sum_{|\alpha|=|\beta|=m} \int_a^b\int_\Omega a_{\alpha,\beta}(t,x) \partial^\beta u(t,x)  \overline{\partial^\alpha \phi(t,x)} dxdt\] holds for all $\phi \in \mathscr C_c^\infty((a,b)\times \Omega)$. If $(a,b)=(0,\infty)$ and $\Omega =\BBR^n$, we call $u$ a global weak solution. 
		\end{definition}
		In this work the constant $C>0$ may vary from line to line. Unless stated otherwise, it depends only on the ellipticity and dimensions. We write $a\lesssim b$ for $a,b\in\BBR$ if there is a constant $C>0$ with $a\leq Cb$. Finally, $a\sim b$ if $a\lesssim b$ and $b\lesssim a$.  
	\subsection{Ellipticity}
		\label{sec:ellipticity}
		The coefficients $(a_{\alpha,\beta}^{i,j}(t,x))_{|\alpha|=|\beta|=m,\,1\leq i,j\leq N}$ of the operator $L=(-1)^m\mbox{div}_m A \nabla ^m$ are always assumed to belong to $L^\infty(\BBR^{n+1}_+;\BBC)$. In particular there exists a $\Lambda >0$ with 
		\begin{equation}
		\label{eq:upper ellipticity estimate}
		\||a_{\alpha,\beta}|\|_{L^\infty(\BBR^{n+1}_+)}\leq \Lambda.
		\end{equation}
		Unless otherwise stated, $L$ is elliptic in the sense of the strong G{\aa}rding inequality, meaning there exists $\lambda >0$ so that
		\begin{equation}
		\label{eq:lower Garding ellipticity estimate}
		\mbox{Re}\sum\nolimits_{\substack{|\alpha|=|\beta|=m \\ 1\leq i, j\leq N}} \int_{\BBR^n}a_{\alpha,\beta}^{i,j}(t,x)\partial^\beta f_j(x)\overline {\partial^\alpha f_i(x)}dx\ge \lambda \|\nabla^m f\|_{L^2(\BBR^{n})}^2
		\end{equation} holds for almost every $t>0$ and for any $f\in \mathscr D'(\BBR^n; \BBC^N)$ with $|\nabla^m f|\in L^2(\BBR^n)$. We will refer to conditions \eqref{eq:upper ellipticity estimate} and \eqref{eq:lower Garding ellipticity estimate} as the \textit{ellipticity estimates} for $L$ and to $\lambda, \,\Lambda$ as the \textit{ellipticity constants}.
		
		As we show in Proposition \ref{prop:comp_tent_kenig_bigp}, any global solution $u$ of \eqref{eq:the parabolic equation} satisfies $\|\nabla^m u \|_{T^{p,2}}\lesssim \|u\|_{X^p_m}$ for $p\ge2$. We prove this estimate also in the case $p\in[1,2)$ under the following ellipticity assumption. We say that $L$ satisfies the strong ellipticity condition if there exists $\lambda >0$ so that for any $\xi\in \BBC^{NM}$ and almost every $(t,x)\in \BBR^{n+1}_+$ it holds that
		\begin{equation}\label{eq:strong lower ellipticity bounds}
		\text{Re} \left(\sum\nolimits_{\substack{|\alpha|=|\beta|=m \\ 1\leq i, j\leq N}} a_{\alpha,\beta}^{i,j}(t,x) \xi_\beta^j\overline{\xi_\alpha^i}\right)\ge \lambda \|\xi\|^2.\end{equation}
		
		It is immediate that the strong ellipticity condition \eqref{eq:strong lower ellipticity bounds} implies the strong G{\aa}rding inequality \eqref{eq:lower Garding ellipticity estimate}.\ The converse is not true except if $m=1$ and $N=1$.\ For constant coefficients \eqref{eq:lower Garding ellipticity estimate} is by a Fourier transform argument equivalent to \eqref{eq:strong lower ellipticity bounds} for the specific choice $\xi_\beta^j\coloneqq\xi^\beta\eta_j$ for $\eta\in \BBC^M$ and $\eta\in \BBC^N$. 
		See \cite[\S0.4]{AT00} and \cite[\S1]{AHMT02} for a discussion of the relation between different notions of ellipticity. 
	
	\subsection{Sobolev and Lions spaces}
		
		We review some of the classical results concerning distributions with integrable derivatives. First, recall that integrability of higher order derivatives implies that the distribution itself is locally integrable (cf.\ \cite[Corollary 2.1]{DL54}). We deduce from \cite[\S1.1.11]{M11} the following version of the Poincar\'e inequality.
		\begin{lemma}
		\label{lem:generalized Poincare}
		Let $p\ge 1$ and $u\in \mathscr D'(B(0,1))$ be a distribution with derivatives of order $m$ in $L^p(B(0,1))$. Let $\omega$ be an open set with $\overline{\omega }\subseteq B(0,1)$. Then, there exists a polynomial $P \in \pol$ 
		\[P(x) = \sum_{|\alpha|\leq m-1} (u,\phi_\alpha)x^\alpha,\] so that 
		\begin{equation}
		\label{eq:generalized Poincare}
		\sum_{k=0}^{ m} \|\nabla^k(u-P)\|_{L^p(B(0,1))} \leq C \|\nabla^mu\|_{L^p(B(0,1))}.
		\end{equation}
		Here, the constant $C$ and the functions $\phi_\alpha\in \mathscr D(\omega)$ do not depend on $u$.
		\end{lemma}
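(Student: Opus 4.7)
The plan is to construct the polynomial $P$ explicitly via a biorthogonal projection and then reduce the estimate to a compactness-based Poincaré inequality.

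First, I would construct test functions $\phi_\alpha \in \mathscr D(\omega)$ indexed by multi-indices $|\alpha|\leq m-1$ satisfying the biorthogonality relation $\int_\omega \phi_\alpha(x)\,x^\beta\,dx = \delta_{\alpha\beta}$ for all $|\alpha|,|\beta|\leq m-1$. Since $\mathcal P_{m-1}$ is finite-dimensional and the bilinear pairing $(\phi,P)\mapsto \int_\omega P\phi\,dx$ is non-degenerate on the polynomial side (any nonzero polynomial can be separated by some compactly supported smooth function in $\omega$), standard finite-dimensional linear algebra produces the desired biorthogonal system. Next, by the Deny--Lions result cited in the paper, $u\in \mathscr D'(B(0,1))$ with $\nabla^m u\in L^p(B(0,1))$ is already in $L^1_{loc}(B(0,1))$, so the scalars $(u,\phi_\alpha)$ are well-defined, and I set $P(x) = \sum_{|\alpha|\leq m-1}(u,\phi_\alpha)x^\alpha$. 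By construction, the function $v\coloneqq u-P$ satisfies $(v,\phi_\alpha)=0$ for all $|\alpha|\leq m-1$ and has the same top-order derivatives as $u$.

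It then suffices to prove the abstract inequality
\[
\sum_{k=0}^m\|\nabla^k v\|_{L^p(B(0,1))} \leq C\,\|\nabla^m v\|_{L^p(B(0,1))}
\]
for every $v\in W^{m,p}(B(0,1))$ satisfying the vanishing moment conditions $(v,\phi_\alpha)=0$, $|\alpha|\leq m-1$. I would establish this by the classical compactness-contradiction argument. Suppose it fails; then there exists a sequence $\{v_j\}\subset W^{m,p}(B(0,1))$ with the vanishing moments, normalized so that $\sum_{k=0}^m\|\nabla^k v_j\|_{L^p}=1$, yet $\|\nabla^m v_j\|_{L^p}\to 0$. By the Rellich--Kondrachov compactness theorem applied at order $m-1$, a subsequence converges in $W^{m-1,p}(B(0,1))$ to some $v_\infty$. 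Passing to the distributional limit of the identity $\nabla^m v_j\to 0$ in $L^p$ yields $\nabla^m v_\infty=0$, hence $v_\infty\in \mathcal P_{m-1}$ by Deny--Lions. The biorthogonality then forces all coefficients of $v_\infty$ to vanish, i.e.\ $v_\infty\equiv 0$, so $\|v_j\|_{W^{m-1,p}}\to 0$; combined with $\|\nabla^m v_j\|_{L^p}\to 0$ this contradicts the normalization.

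The constant $C$ and the test functions $\phi_\alpha$ depend only on $\omega$, $n$, $m$ and $p$ but not on $u$, as required. The main obstacle I anticipate is making the convergence step in the compactness argument fully rigorous across all $1\leq p<\infty$ (and, if needed, $p=\infty$ by an approximation or duality argument): one has to ensure that the weak/strong limit of $v_j$ in $W^{m-1,p}$ really produces a limit on which $\nabla^m v_\infty=0$ can be tested against arbitrary $\mathscr D(B(0,1))$ functions. For $1<p<\infty$ this is immediate by weak compactness of bounded sequences in $L^p$, while the case $p=1$ requires passing through a weak-$\ast$ argument in the space of Radon measures and then invoking the fact that a distribution with vanishing $m$-th derivatives is still a polynomial. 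Once this is handled, the conclusion follows cleanly.
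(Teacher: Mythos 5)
Your biorthogonal construction of the $\phi_\alpha\in\mathscr D(\omega)$ and the compactness--contradiction argument are fine as far as they go (and the difficulty you flag at $p=1$ is not really there: Rellich--Kondrachov gives \emph{strong} convergence in $W^{m-1,p}(B(0,1))$ for every $1\leq p<\infty$ on the ball, and $\nabla^m v_j\to 0$ strongly by assumption, so no weak-$\ast$ detour through measures is needed). The genuine gap is one step earlier: what you prove is an a priori estimate for $v\in W^{m,p}(B(0,1))$, i.e.\ for functions already known to lie, together with all intermediate derivatives, in $L^p$ of the full ball. The hypothesis of the lemma only gives $u\in\mathscr D'(B(0,1))$ with $\nabla^m u\in L^p(B(0,1))$; the Deny--Lions result you invoke yields only $u\in L^1_{loc}$, hence at best $u-P\in W^{m,p}_{loc}(B(0,1))$. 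Part of the conclusion of the lemma is precisely the qualitative statement that $u-P$ and its derivatives up to order $m$ are in $L^p$ \emph{up to the boundary}; a compactness argument can only bound a quantity already known to be finite, it cannot produce that finiteness. This implication is not automatic for a general bounded open set (there are bounded domains on which $\nabla^m u\in L^p$ does not force $u\in L^p$), so the geometry of the ball must enter somewhere, and in your write-up it never does.

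To close the gap you would either have to (a) prove your inequality on the balls $B(0,r)$, $r<1$, with a constant uniform in $r$ (your compactness proof gives an $r$-dependent constant, and the moment functionals are tied to the fixed set $\omega$, so uniformity requires an additional argument) and then let $r\to 1$ by monotone convergence, or (b) exploit that $B(0,1)$ is star-shaped with respect to a ball and use the Sobolev integral representation $u=P+K(\nabla^m u)$ with $K$ bounded on $L^p(B(0,1))$, which gives membership and the estimate simultaneously. Route (b) is in effect what the paper does: it does not reprove the lemma but deduces it from \cite[\S 1.1.11]{M11}, where exactly this representation-formula argument is carried out. So your plan (self-contained projection plus compactness) is a genuinely different route from the paper's citation of Maz'ya, and with the membership step supplied it would be a valid alternative; as written, it assumes part of what has to be shown.
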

		
		\noindent As a consequence of \eqref{eq:generalized Poincare} we obtain for all $r>0$ 
		\begin{equation*}
		\min_{P\in \mathcal P_{m-1}} \left(\sum_{k=0}^{ m} r^{k-m}\|\nabla^k(u-P)\|_{L^p(B(0,r))} \right)\leq C \|\nabla^mu\|_{L^p(B(0,r))}.
		\end{equation*} 

We also note the Gagliardo--Nirenberg inequality.
		 \begin{lemma}[{\cite[Theorem 1.5.2]{ChM12}}]
		 	\label{lem:Gagliardo--Nirenberg inequality}
		 	Let $m\in \BBN$, $p, r\in [1,\infty] $ and $u\in L^p(\BBR^n)\cap L^r(\BBR^n)$ with the distributional derivatives satisfying $\nabla^m u \in L^p(\BBR^n)$. Then for integer $0\leq k\leq m$, $\theta\in [\frac{k}{m},1]$ (except $\theta=1$ if $m-k-\frac{n}{2}\in \BBN$) and any multi-index $\gamma\in \BBN^n$ with $|\gamma|=k $ it holds $\partial^\gamma u \in L^q(\BBR^n)$, for $q$ given by 
		 	$$\frac{1}{q}=\frac{k}{n}+\theta \left(\frac{1}{p}-\frac{m}{n}\right)+(1-\theta)\frac{1}{r}.$$
		 	\noindent Moreover,  
		 	\begin{equation*} 
		 	\|\partial^\gamma u\|_{L^q}\lesssim_{n,m} \|\nabla^m u\|_{L^p}^{\theta} \|u\|_{L^r}^{1-\theta}.
		 	\end{equation*}	
		 	
		 \end{lemma}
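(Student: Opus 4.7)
The plan is to follow the classical interpolation route. First, a scaling argument shows the stated formula for $1/q$ is forced: replacing $u$ by $u_\lambda(x)=u(\lambda x)$ and examining how $\|u\|_{L^p}$, $\|u\|_{L^r}$, $\|\nabla^m u\|_{L^p}$ and $\|\partial^\gamma u\|_{L^q}$ scale, one finds that unless $1/q=k/n+\theta(1/p-m/n)+(1-\theta)/r$ holds, sending $\lambda\to 0$ or $\lambda\to\infty$ breaks the inequality. This both motivates the exponent condition and justifies that it suffices to prove the estimate at one fixed scale.

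Next I would treat the base case $k=0$, $m=1$, which is the classical Gagliardo--Sobolev embedding $\|u\|_{L^{p^*}}\lesssim\|\nabla u\|_{L^p}$ for $1\le p<n$ and $1/p^*=1/p-1/n$. For $p=1$ this follows from the slicing / tensorized Hölder argument: bound $|u(x)|$ by a line integral of $|\partial_i u|$ in each coordinate direction and multiply the $n$ inequalities. The case $p>1$ follows by applying the $p=1$ estimate to $|u|^{\gamma}$ for a suitable power $\gamma$. Having this at hand, the general $k=0$ case with $\theta\in(0,1)$ follows by Hölder's inequality interpolating $L^q$ between $L^{p^*}$ and $L^r$.

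To handle intermediate derivatives $0<k<m$, I would iterate the standard one-step inequality
\begin{equation*}
\|\nabla u\|_{L^2(\BBR^n)}^2\le C\,\|u\|_{L^\infty(\BBR^n)}\,\|\nabla^2 u\|_{L^1(\BBR^n)},
\end{equation*}
obtained by integration by parts followed by Cauchy--Schwarz (and its $L^p$-variants), to produce the Landau--Kolmogorov type bound $\|\nabla^k u\|_{L^{q_k}}\lesssim\|u\|_{L^r}^{1-k/m}\|\nabla^m u\|_{L^p}^{k/m}$. This is precisely the endpoint $\theta=k/m$ of the stated inequality. The general $\theta\in[k/m,1]$ is then obtained by interpolating, via Hölder on log-convexity, between this endpoint estimate and the estimate obtained by applying the $k=0$ case to $\partial^\gamma u$ with $\nabla^{m-k}(\partial^\gamma u)\in L^p$.

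The main obstacle lies in the excluded borderline $\theta=1$ when $m-k-n/2\in\BBN$: here the natural Sobolev embedding would land in $L^\infty$ via a critical exponent $p=2$, which fails on $\BBR^n$ (the usual counterexample being a logarithmic singularity). For this case one must invoke a substitute space such as $\mathrm{BMO}$ or a Besov refinement to close the argument, which is why the statement deliberately excises that locus of exponents. Away from it the above Hölder-based interpolation yields the full inequality with the stated homogeneity.
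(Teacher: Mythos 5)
The paper does not prove this lemma at all: it is imported verbatim from Cherrier--Milani \cite[Theorem 1.5.2]{ChM12}, so there is no internal argument to compare yours against. Your sketch is the classical Gagliardo--Nirenberg proof, and its architecture is sound: scaling forces the exponent relation; the slicing/tensorized H\"older argument gives the Sobolev base case $k=0$, $m=1$, $p=1$, with the $|u|^{\gamma}$ trick for $1<p<n$; integration by parts plus H\"older gives the one-step Landau--Kolmogorov inequality whose iteration produces the endpoint $\theta=k/m$; and once $\partial^\gamma u$ is known to lie in the space $L^{q_{k}}$ corresponding to $\theta=k/m$, applying the $k=0$ case to $v=\partial^\gamma u$ (with $\nabla^{m-k}v\in L^p$ and $v\in L^{q_k}$) and combining by log-convexity of the $L^q$ scale yields every $\theta\in[\tfrac{k}{m},1]$, with the homogeneity in $\|\nabla^m u\|_{L^p}$ and $\|u\|_{L^r}$ coming out affinely in $\theta$ exactly as stated. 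Two technicalities are glossed over, though neither undermines the route: first, the borderline cases admitted by the statement, namely $p\ge n$ in the Sobolev step (Morrey regime), $r=\infty$, and $p=\infty$, require separate arguments beyond the slicing estimate, and the whole chain should be run on smooth compactly supported functions and closed by a density/approximation argument, since $u$ is only assumed to have distributional derivatives; second, your handling of the excluded locus is the right explanation (the critical embedding into $L^\infty$ fails), but note the paper's formulation excludes $\theta=1$ when $m-k-\tfrac{n}{2}\in\BBN$, i.e.\ it is tailored to the exponent $p=2$ occurrences used later, whereas the general theorem excludes $m-k-\tfrac{n}{p}\in\BBN$; your sketch is consistent with the latter and hence covers what is claimed.
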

		 \noindent The Gagliardo--Nirenberg inequality will be used frequently throughout this work. To establish (higher) integrability of the intermediate derivatives, we will mostly refer to the case when $\theta=\frac{k}{m}$ and the exponent $q$ is given by $$\frac{1}{q}=\frac{k}{m}\frac{1}{p}+\frac{m-k}{m}\frac{1}{r}.$$

The weak formulation of the parabolic equation \eqref{eq:the parabolic equation} uses the distributional derivative $\partial_t u$ of a weak solution and its higher order spatial derivatives. In particular, integrability of $\nabla^m u$ provides additional information about the distribution $\partial_t u$. From this, we can deduce some regularity of $u$, as the next result states.
			\begin{lemma}[{\cite[Chap.\ XVIII, \S2-3]{DL92}}]
				\label{lem:absolute continuity}
				Let $V$ and $H$ be complex, separable Hilbert spaces and $V'$ be the antidual of $V$. Assume that $V$ is dense in $H$ such that $V\hookrightarrow H\hookrightarrow  V'$ and $H$ is dense in $V'$. Consider the inhomogoneneous space 
				$$W(a,b;V,V') \coloneqq \{u \mid u\in L^2(a,b;V),\, \partial_t u \in L^2(a,b;V')\}.$$
				Then every $h\in W(a,b;V,V')$ is equal almost everywhere to a continuous function of $[a,b]$ to $H$. Moreover, for any $v\in  W(a,b;V,V')$ the function $t\mapsto \langle h(t), v(t)\rangle$ is absolutely continuous over $[a,b]$ with distributional derivative 
				$$\frac{d}{dt}\langle h(t), v(t)\rangle = \langle h'(t), v\rangle_{V',V} + \overline{\langle v'(t), h(t)\rangle_{V',V}}.$$
			\end{lemma}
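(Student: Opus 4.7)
The plan is to establish both assertions through a density argument based on mollification in time. First I would extend an arbitrary $h \in W(a,b;V,V')$ to a function $\tilde h \in W(\BBR;V,V')$ by reflection across the endpoints followed by multiplication with a smooth cutoff equal to one on $[a,b]$. Since reflection is an isometry on $L^2$ and commutes with the distributional time derivative up to a sign, this preserves both integrability conditions with norms controlled by $\|h\|_{W(a,b;V,V')}$.

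The second step is to regularize in time: set $h_\varepsilon = \rho_\varepsilon * \tilde h$ for a standard mollifier $\rho_\varepsilon$, which yields a smooth $V$-valued function whose classical derivative is $\rho_\varepsilon * \tilde h' \in \mathscr C^\infty(\BBR;V')$. For such regular approximants, the chain rule applied in $H$ gives
\[\frac{d}{dt}\|h_\varepsilon(t)\|_H^2 = 2\,\mathrm{Re}\,\langle h_\varepsilon'(t), h_\varepsilon(t)\rangle_{V',V},\]
and integrating from a base point $t_0 \in [a,b]$ and averaging $t_0$ across $[a,b]$ produces the fundamental embedding estimate $\sup_{t\in[a,b]}\|h_\varepsilon(t)\|_H^2 \lesssim \|h_\varepsilon\|_{W(a,b;V,V')}^2$. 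Applying the same inequality to differences $h_{\varepsilon_1} - h_{\varepsilon_2}$ shows the family is Cauchy in $\mathscr C([a,b];H)$; its limit must coincide almost everywhere with $h$ by uniqueness of the $V'$-limit via the inclusion $H \hookrightarrow V'$. This yields the continuous representative and the continuous embedding $W(a,b;V,V') \hookrightarrow \mathscr C([a,b];H)$.

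For the product rule, I would apply the same mollification scheme to both $h$ and $v$ simultaneously. The classical Leibniz identity
\[\frac{d}{dt}\langle h_\varepsilon(t), v_\varepsilon(t)\rangle_H = \langle h_\varepsilon'(t), v_\varepsilon(t)\rangle_{V',V} + \overline{\langle v_\varepsilon'(t), h_\varepsilon(t)\rangle_{V',V}}\]
holds pointwise for such smooth $V$-valued functions. The passage to the limit is then routine: the right-hand side converges in $L^1(a,b)$ thanks to the joint $L^2(V)$ and $L^2(V')$ convergence of the approximants, while $\langle h_\varepsilon(t), v_\varepsilon(t)\rangle_H$ converges uniformly on $[a,b]$ by virtue of the continuous embedding into $\mathscr C([a,b];H)$ just established. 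Integrating the smooth identity and sending $\varepsilon \to 0$ gives the absolute continuity of $t\mapsto \langle h(t),v(t)\rangle$ together with the asserted distributional formula for its derivative.

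The main obstacle I anticipate is the careful handling of the extension across the endpoints of $(a,b)$: one must verify that reflecting a distribution with values in the antidual $V'$ preserves the structure $\partial_t h \in L^2(V')$, which amounts to checking the identity $\partial_t(h\circ r) = -(\partial_t h)\circ r$ distributionally on the reflected interval against test functions valued in $V$. This is delicate because $h$ is only defined almost everywhere and the sesquilinear duality pairing with $\mathscr D$-test functions valued in $V$ must be extended consistently across the seam. Once the reflected function is recognized as an element of $W(\BBR;V,V')$, the remainder of the argument reduces to standard convolution estimates in Bochner spaces.
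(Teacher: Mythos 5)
Your argument is correct: it is the standard extension--mollification proof of this classical result, which is exactly the route taken in the reference the paper cites ([DL92, Chap.~XVIII]); the paper itself states the lemma as a quoted result and gives no proof of its own. The one delicate point you flag (that reflection preserves $\partial_t h\in L^2(V')$ across the seam) is handled in the usual way by testing against functions vanishing at the endpoint and a cutoff limit, so no genuine gap remains.
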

			
			\noindent We will frequently apply Lemma \ref{lem:absolute continuity} with $V=H^m_0(\Omega)$, $H=L^2(\Omega)$ and $V'=H^{-m}(\Omega)$ for any open $\Omega\subseteq \BBR^n$ and refer to the spaces $W(a,b;V,V')$ from the above definition as (inhomogeneous) Lions spaces.
			 To explain the choice of vocabulary, we note that following \cite{AMP15} we could introduce the (homogeneous) space \[\hlions\coloneqq \{u\in \mathscr D' (\BBR^{n+1}_+)\mid \nabla^m u\in L^2(0,\infty;L^2(\BBR^{n})),\, \partial_t u \in L^2(0,\infty;\hsobdual)\},\] which, however, does not fit into the setting from Lemma \ref{lem:absolute continuity}. Nevertheless, it can be shown (see \cite[Lemma 3.1]{AMP15} for a proof in the second order case) that every $u\in \hlions$ can be uniquely decomposed in $u=v+P$ with $v\in \hlions \cap \mathscr C _0([0,\infty);L^2(\BBR^n))$ and $P\in \mathcal P_{m-1}$. This decomposition shows the existence of the trace of any weak solution $u$ to \eqref{eq:the parabolic equation} with $\nabla^m u \in L^2(L^2)$. Here, we do not address the homogeneous spaces explicitly, as we present an alternative proof of this part of the well-posedness results. The advantage of our method is that it applies to the case $p<2$ with the tent space condition $\nabla^m u \in \tentspace$.
		\subsection{Tent and Kenig-Pipher spaces} 
		\label{sec:tent} 
		In this short section we introduce and remark on spaces, which play a crucial role in the $L^p(\BBR^n)$ well-posedness theory we develop in this work. They are variants of spaces presented in \cite[\S2]{AMP15}, where due to the structure of our problem, the homogenity needed to be changed from $\sqrt{t}$ to $\sqrt[2m]{t}$. For further details we refer to the paper of Coifman, Meyer and Stein \cite{CMS85} (for tent spaces) as well as \cite{AMP15} and references therein.

	The parabolic tent spaces $\tentspace$ adapted to the order of our equation were introduced in Definition \ref{def:tent}. In addition, we remark that those spaces are reflexive if $1<p<\infty$ and in this regime, the duality $(\tentspace)'=T^{p',2}_m$ holds, where $1/p'+1/p=1$ and the duality is given by the $L^2$ inner product on $\BBR^{n+1}_+$. Moreover, we have the following crucial observation.
		\begin{remark}
		\label{rem:eqv norm tent space}
		 By \cite[Theorem 3]{CMS85} an equivalent norm on $\tentspace$ for $2<p<\infty$ is given by 
		 $$\tentspace \ni f \mapsto \bigg\|\sup_{x\in B}\left(\int_0^{r_B^{2m}} \fint _{B}|f(t,y)|^2 dy dt\right)^{1/2} \bigg\|_{L^p}.$$ 
		\end{remark}
		
		We proceed with the analogy of non-tangential maximal functions, which play a crucial role in the Hardy space theory. Here, we use a  parabolic version of the non-tangential maximal function, introduced by Kenig and Pipher in \cite{KP93} in the context of elliptic equations.
		\begin{definition} \label{def:nontan} For $0<p\leq \infty$ let $ X^p_m$ be the space of functions $u\in L^2_{loc}(\BBR^{n+1}_+)$, for which the non-tangential maximal function 
		\[\nontan u(x)\coloneqq \sup_{\delta>0}\left(\fint_{\delta/2}^\delta\fint_{B(x,\sqrt[2m]{\delta})}|u(t,y)|^2dydt\right)^{1/2}\] belongs to $L^p(\BBR^n)$. We write $\|u\|_{X^p_m}\coloneqq\|\nontan u\|_{L^p}<\infty.$
		\end{definition}
		\noindent The space $X^p_m$ is a Banach space for $1\leq p\leq\infty$. 
	
		\subsection{Campanato type spaces $BMO_m(\BBR^n)$ and $L^p_m(\BBR^n)$}
		\label{sec:Campanato}
		\noindent We first recall the space of functions bounded mean oscillations of John and Nirenberg \cite{JN61}. We say that $f\in L^1_{loc}(\BBR^n)$ belongs to $BMO(\BBR^n)$, if the \textit{sharp function} $$f^\#(x)  =\sup_{B\ni x}  \fint_{B}\left|f(y)-f_{B}\right| dy $$  belongs to $L^{\infty}(\BBR^n)$,	where $f_B\coloneqq \fint_B f dx$. Examples of $BMO(\BBR^n)$ functions are for example constants and $\log |p|$ for any polynomial $p $, we refer to the books \cite{G09, S93} for details. By the John--Nirenberg Lemma a seminorm on $BMO(\BBR^n)$ is given by
		$$f\mapsto\|f\|_{BMO}\coloneqq\sup_{x\in \BBR^n} \sup_{r>0}\inf_{c\in \BBC}\left(\fint_{B(x,r)}\left|f(y)-c\right|^2 dy\right)^{1/2}.$$ We recall the well-known  
		 Carleson measure characterization of $BMO(\BBR^n)$ by C. Fefferman and Stein.
		 \begin{proposition}[{\cite[Chap.\ VI, \S4.3]{S93}}]
		 	\label{prop:carleson}
		 	A Borel measure $\mu$ on $\mathcal{B}(\BBR^{n+1}_+)$ satisfying
		 	\[\|\mu\|_{\mathcal C}\coloneqq\sup_{x\in \BBR^n} \sup_{r>0} \frac{1}{|B(x,r)|}\int_0^{r}\int_{B(x,r)} d\mu<\infty\] is called a Carleson measure. For a locally integrable function $f$ the following holds
		 	\begin{enumerate}[label=\upshape{(\roman*)}]
		 		\item Assume $f\in BMO(\BBR^n)$, then $ d\mu(x,t)=|t\nabla v(t^2,x)|^2\frac{dxdt}{t}$ is a Carleson measure, where $v(t,x)=e^{t\Delta}f(x)$ is the solution to the heat equation with initial datum $f$. Moreover,
		 		\begin{equation*}
		 		\|\mu\|_{\mathcal C}=
		 		\sup_{x\in \BBR^n} \sup_{r>0}\frac{1}{|B(x,r)|}\int_0^{r^2}\int_{B(x,r)}|\nabla v(t,y)|^2dydt\lesssim_{n}\|f\|_{BMO}^2.
		 		\end{equation*}
		 		\item Assume that $f$ satisfies the growth condition \begin{equation}
		 		\label{eq:growth_condition}
		 		\int_{\BBR^n}\frac{|f(x)|}{(1+|x|)^{n+1}}dx<\infty.
		 		\end{equation} If for $v$ as above $d\mu(x,t)=|t\nabla v(t^2,x)|^2\frac{dxdt}{t}$ is a Carleson measure, then $f\in BMO(\BBR^n)$ and $\|f\|_{BMO} \lesssim_n \|\mu\|_{\mathcal C}^{1/2}. $
		 	\end{enumerate}
		 \end{proposition}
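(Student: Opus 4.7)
This is the classical Fefferman--Stein theorem. For part (i) the plan is to fix a ball $B=B(x_0,r)$ and decompose $f-f_{2B}=g+h$, with $g\coloneqq(f-f_{2B})\mathbf 1_{2B}$ the ``local'' piece and $h\coloneqq(f-f_{2B})\mathbf 1_{(2B)^c}$ the ``far'' piece; the constant $f_{2B}$ is harmless since $\nabla e^{t\Delta}$ annihilates constants. For $g$, Plancherel gives the exact Littlewood--Paley identity
\begin{equation*}
\int_{\BBR^n}\int_0^\infty|t\nabla e^{t^2\Delta}g(y)|^2\tfrac{dt}{t}dy=c_n\|g\|_{L^2(\BBR^n)}^2,
\end{equation*}
so by John--Nirenberg $\|g\|_{L^2}^2\lesssim|B|\|f\|_{BMO}^2$, which already yields the right bound on the Carleson integral contributed by $g$ (after passing $t^2\mapsto t$ and localising to $(0,r^2)\times B$).

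For the far piece I would use the pointwise heat-kernel gradient estimate $|\nabla K_t(y-z)|\lesssim t^{-(n+1)/2}e^{-|y-z|^2/ct}$: for $y\in B$, $z\in(2B)^c$ one has $|y-z|\gtrsim|z-x_0|\ge r$, so
\begin{equation*}
|\nabla e^{t\Delta}h(y)|\lesssim\sum_{k\ge1}\int_{2^{k+1}B\setminus 2^kB}t^{-(n+1)/2}e^{-c\,4^kr^2/t}|f(z)-f_{2B}|dz.
\end{equation*}
The standard BMO growth bound $\fint_{2^{k+1}B}|f-f_{2B}|\lesssim k\|f\|_{BMO}$ and the Gaussian decay in $4^kr^2/t$ make the geometric series converge uniformly in $t\in(0,r^2)$; the resulting pointwise estimate is then squared, multiplied by $t$, integrated, and divided by $|B|$ to give the required Carleson bound.

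For part (ii) I would argue by duality. Under the growth hypothesis \eqref{eq:growth_condition} the pairing $\langle f,g\rangle$ is well-defined for suitably nice $g$, and Calder\'on's reproducing formula applied to $v(t,\cdot)=e^{t\Delta}f$ lets one rewrite it, up to a constant, as $\int_0^\infty\langle t\nabla v_f,t\nabla v_g\rangle\tfrac{dt}{t}$. The area integral characterisation of $\mathcal H^1$ says $g\in\mathcal H^1$ iff $t\nabla v_g\in T^{1,2}$ (with comparable norms), while the hypothesis says $t\nabla v_f\in T^{\infty,2}$ with norm $\lesssim\|\mu\|_{\mathcal C}^{1/2}$. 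The Coifman--Meyer--Stein tent-space duality $(T^{1,2})^*=T^{\infty,2}$ then gives $|\langle f,g\rangle|\lesssim\|\mu\|_{\mathcal C}^{1/2}\|g\|_{\mathcal H^1}$ for all $g$ in a dense subclass of $\mathcal H^1$; by the Fefferman duality $(\mathcal H^1)^*=BMO$ this yields $f\in BMO$ with the stated norm control.

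The main technical obstacle is the far-field estimate in (i): one must use the BMO-controlled growth $\fint_{2^{k+1}B}|f-f_{2B}|\lesssim k\|f\|_{BMO}$ together with the Gaussian factor to absorb the polynomial loss on annuli, and then carefully combine this with the trivial dyadic splitting of $(2B)^c$ uniformly in $t\in(0,r^2)$. In (ii) the subtle point is that \eqref{eq:growth_condition} is exactly what is needed to make $f$ act as a tempered distribution on a dense subspace of $\mathcal H^1$ (e.g.\ finite linear combinations of atoms with mean zero and compact support), so that the reproducing formula and tent-space duality can be applied rigorously rather than formally.
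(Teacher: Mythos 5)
Your sketch is correct and is essentially the classical Fefferman--Stein argument: for (i) the near/far splitting with Plancherel on the local piece and Gaussian kernel decay against the logarithmic BMO growth on annuli, and for (ii) the Calder\'on reproducing formula combined with the $T^{1,2}$--$T^{\infty,2}$ tent-space duality and $(\mathcal H^1)^*=BMO$. The paper itself gives no proof of this proposition -- it is quoted directly from Stein's book -- and your outline matches the proof given in that reference, with the growth condition \eqref{eq:growth_condition} playing exactly the role you identify in making the pairing and reproducing formula rigorous.
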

		 
		 Proposition \ref{prop:carleson} allows to compare the $T^{\infty,2}$ norm of the gradient of the solution to the Cauchy problem $\partial_t v = \Delta v$, $v_0=f$, given by the propagator $v(t,x)=e^{t\Delta}f(x)=\Gamma(t,0)f(x)$, with the $BMO(\BBR^n)$ norm of initial data. We will derive a generalization of this result, where the heat semigroup is replaced by the family of propagators associated to our equation \eqref{eq:the parabolic equation}. We also show that the growth condition \eqref{eq:growth_condition} is not necessary.
		 
		 Since polynomials $P\in \mathcal P_{m-1}$ are trivially solutions of \eqref{eq:the parabolic equation} and satisfy $\|\nabla^m P \|_{\tentspace} =0$, we adjust the $BMO$-type space as follows. 
	
		 \begin{definition}
		 	\label{def:BMO higher order}
		 	We say that a function $f\in L^2_{loc} (\BBR^n)$ belongs to the class $BMO_{m}(\BBR^n)$ if 
		 	\[ \|f\|_{BMO_m}\coloneqq\sup_{x\in \BBR^n} \sup_{r>0} \inf_{P\in \mathcal P_{m-1}}\left( \fint_{B(x,r)}\left|f(y)-P(y)\right|^2dy\right)^{1/2}<\infty.
		 	\]
		 \end{definition}
		 
		 The $BMO_m(\BBR^n)$ space was first introduced by Campanato in \cite{C64}, who investigated the regularity of such functions in dependence of the power of the factor $r^{-1}$ in front of the integral. His estimates in \cite{C64} rapidly lead to the conclusion that $BMO_m(\BBR^n)$ equals the $BMO(\BBR^n)$ space modulo $\pol$. The complete result appears in \cite[Theorem 1]{JTW06}.
		 \begin{proposition}[{\cite[Theorem 1]{JTW06}}]
		 			\label{prop:structure of BMO_m}
		 			Let $f\in BMO_m(\BBR^n)$. Then there exists a unique polynomial $P\in \pol$ satisfying $P(0)=0$, so that $f-P\in BMO(\BBR^n)$. Moreover, there is a constant $C=C_{n,m}>0$ such that 
		 			\[\|f\|_{BMO_m}\leq \|f-P\|_{BMO}\leq C \|f\|_{BMO_m}.\]
		\end{proposition}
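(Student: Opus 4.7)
The plan is to adapt Campanato's classical iteration: build $P$ as a limit of near-minimizers for the $BMO_m$-infimum over a dyadic sequence of balls centered at the origin, and then verify the $BMO$ norm equivalence.

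First, for each $k \in \BBN$ I would set $B_k := B(0, 2^k)$ and pick $P_k \in \pol$ realizing the infimum in the definition of $\|f\|_{BMO_m}$ over $B_k$, so that $(\fint_{B_k}|f - P_k|^2)^{1/2} \leq \|f\|_{BMO_m}$; existence is standard since $\pol$ is finite-dimensional and the infimum is an $L^2(B_k)$-projection onto a closed subspace. Combining $B_k \subset B_{k+1}$ with the doubling relation $|B_{k+1}| = 2^n |B_k|$ and the triangle inequality yields
$$\left(\fint_{B_k} |P_k - P_{k+1}|^2\right)^{1/2} \lesssim \|f\|_{BMO_m}.$$

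The next step is a finite-dimensional rescaling. Equivalence of norms on $\pol$ gives $(\fint_{B(0,r)}|Q|^2)^{1/2} \sim \max_{|\alpha|<m} r^{|\alpha|}|c_\alpha|$ for $Q(x) = \sum_{|\alpha| < m} c_\alpha x^\alpha$, and applied to $Q = P_k - P_{k+1}$ this produces coefficient bounds $|c_\alpha^{(k)}| \lesssim 2^{-k|\alpha|}\|f\|_{BMO_m}$. For $|\alpha| \geq 1$ these are summable in $k$, so the sequence $\tilde P_k := P_k - P_k(0)$ converges uniformly on compact subsets of $\BBR^n$ to a polynomial $P \in \pol$ with $P(0) = 0$, which I take as the candidate.

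To check $f - P \in BMO$ with $\|f - P\|_{BMO} \lesssim \|f\|_{BMO_m}$, I would fix an arbitrary ball $B = B(x_0, r)$, run the analogous dyadic comparison centered at $x_0$ to produce a near-minimizer $P_B$ on $B$, and chain a sequence of doubling balls connecting $B$ to one of comparable size containing the origin. The coefficient estimates then show that $P - P_B$ equals a constant plus a remainder controlled by a geometric series, so after subtracting the mean of $f - P$ on $B$ the $L^2$-average is $\lesssim \|f\|_{BMO_m}$. The reverse estimate $\|f\|_{BMO_m} \leq \|f - P\|_{BMO}$ is immediate: for any ball $B$, the polynomial $P + (f - P)_B \in \pol$ is a valid competitor in the infimum defining $\|f\|_{BMO_m}$. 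Uniqueness follows since if $P_1, P_2 \in \pol$ both satisfy the conclusion with $P_i(0) = 0$, then $P_1 - P_2 \in \pol \cap BMO$, hence constant (a non-constant polynomial has $BMO$ seminorm on $B(0, r)$ growing with $r$), and $(P_1 - P_2)(0) = 0$ then forces $P_1 = P_2$. The main obstacle is the chained doubling argument transferring bounds from origin-centered dyadic balls to arbitrary balls $B(x_0, r)$; once this is handled, the geometric decay of the coefficient differences and the finite-dimensionality of $\pol$ carry the rest of the argument.
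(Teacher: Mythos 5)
Your proposal is correct and follows essentially the same route as the paper, which quotes \cite[Theorem 1]{JTW06} and sketches exactly this argument in Lemma \ref{lem:filter out the pol}: compare the minimizing polynomials on doubled balls, use equivalence of norms on $\pol$ to get coefficient differences decaying like $2^{-k|\alpha|}$, pass to the limit for $|\alpha|\ge 1$ while absorbing the non-convergent constant term into the $BMO$ constant, and recover arbitrary balls by chaining to origin-centered ones. Your treatment of uniqueness and of the easy inequality $\|f\|_{BMO_m}\le\|f-P\|_{BMO}$ is also the standard one.
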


		 Analogously, we introduce the polynomial $L^p$ spaces for $p>2$. 
		 
		 \begin{definition}
		 	\label{def:L^p higher order}
		 	Let $p>2$. We say that a function $f\in L^2_{loc} (\BBR^n)$ belongs to $L^p_{m}(\BBR^n)$ if 
		 	\[ \|f\|_{L^p_m}\coloneqq\bigg\|\sup_{B\ni x} \inf_{P\in \mathcal P_{m-1}}\left( \fint_{B}\left|f(y)-P(y)\right|^2dy\right)^{1/2}\bigg\|_{L^p}<\infty.
		 	\]
		 \end{definition}
		\begin{proposition}
		 			\label{prop:structure of L^p_m}
		 			Let $p\in (2,\infty)$ and $f\in L^p_m(\BBR^n)$. There exists a unique polynomial $P\in \pol$, so that $f-P\in L^p(\BBR^n)$. Moreover, there is a constant $C=C_{n,m,p}>0$ such that 
		 			\[ C^{-1}\|f\|_{L^p_m}\leq \|f-P\|_{L^p} \leq C \|f\|_{L^p_m}.\]
		\end{proposition}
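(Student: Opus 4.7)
The uniqueness of $P$ is immediate: if $P_1, P_2 \in \pol$ both satisfy $f - P_i \in L^p(\BBR^n)$, then $P_1 - P_2 \in L^p(\BBR^n) \cap \pol = \{0\}$ since $p<\infty$. For the lower bound of the norm equivalence, note that if $f - P \in L^p$ and $B \ni x$, then
\[
\inf_{Q \in \pol}\fint_B |f - Q|^2 \le \fint_B |f-P|^2 \le \mathcal M_{HL}(|f-P|^2)(x),
\]
so the quantity appearing inside the $L^p$-norm in Definition \ref{def:L^p higher order} is pointwise bounded by $(\mathcal M_{HL}(|f-P|^2))^{1/2}$. Since $p/2>1$, the $L^{p/2}$ Hardy--Littlewood maximal inequality yields $\|f\|_{L^p_m} \lesssim \|f-P\|_{L^p}$.

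For existence I would define, for each $R>0$, the polynomial $P_R \in \pol$ to be the $L^2(B(0,R))$-orthogonal projection of $f$ onto $\pol$. By minimality, for each $x \in B(0,R)$,
\[
\fint_{B(0,R)} |f - P_R|^2 = \inf_{Q \in \pol}\fint_{B(0,R)} |f - Q|^2 \le \sup_{B\ni x}\inf_{Q \in \pol}\fint_B|f-Q|^2,
\]
and averaging in $x$ combined with Hölder gives $\fint_{B(0,R)} |f-P_R|^2 \lesssim R^{-2n/p}\|f\|_{L^p_m}^2$. Setting $R_k = 2^k$ and $P_k := P_{R_k}$, the triangle inequality together with $|B(0,R_{k+1})| = 2^n |B(0,R_k)|$ yields $\fint_{B(0,R_k)}|P_k - P_{k+1}|^2 \lesssim \|f\|_{L^p_m}^2\, 2^{-2kn/p}$. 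Because $L^2$-average and $L^\infty$ norms on the finite-dimensional space $\pol$ are equivalent with constants invariant under dilation, this upgrades to $\|P_k - P_{k+1}\|_{L^\infty(B(0,R_k))} \lesssim \|f\|_{L^p_m}\, 2^{-kn/p}$. Expanding each $P_k$ in the monomial basis shows that each of the finitely many coefficient sequences is Cauchy, so $P_k$ converges to a limit $P \in \pol$ with $\|P_k - P\|_{L^\infty(B(0,R_k))} \lesssim \|f\|_{L^p_m}\, 2^{-kn/p}$ by telescoping.

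The main obstacle is the upper bound $\|f - P\|_{L^p} \lesssim \|f\|_{L^p_m}$. Setting $g := f-P$, the classical sharp function $g^{\#}(x) := \sup_{B \ni x}\inf_{c \in \BBC} \left(\fint_B |g - c|^2\right)^{1/2}$ is pointwise majorized by the integrand of Definition \ref{def:L^p higher order}, since for any $B\ni x$ the choice $Q=P+c$ gives $\inf_c\fint_B|g-c|^2 \le \inf_{Q\in\pol}\fint_B|f-Q|^2$. Combining the two estimates of the previous paragraph also yields $\fint_{B(0,R_k)}|g|^2 \to 0$ as $k \to \infty$, so $g$ decays at infinity in an averaged sense. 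This is exactly the hypothesis needed to invoke an $L^p$-version of the Fefferman--Stein sharp function inequality and conclude $\|g\|_{L^p} \lesssim \|g^{\#}\|_{L^p} \lesssim \|f\|_{L^p_m}$. The technical verification of this Fefferman--Stein-type inequality, under the decay hypothesis on $g$ produced by our construction, can be carried out by a Calderón--Zygmund good-$\lambda$ argument paralleling the $BMO$ case treated in Proposition \ref{prop:structure of BMO_m} and \cite[Theorem 1]{JTW06}.
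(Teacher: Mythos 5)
Your uniqueness argument and the lower bound via the $L^{p/2}$ Hardy--Littlewood maximal inequality coincide with the paper's, and your construction of $P$ as the limit of the $L^2(B(0,2^k))$-projections is sound (the coefficient estimates you cite do make each coefficient sequence Cauchy). The first genuine gap is the step where you pass to the classical sharp function of $g=f-P$: you claim $g^{\#}$ is pointwise majorized by the quantity in Definition \ref{def:L^p higher order} because ``the choice $Q=P+c$ gives $\inf_{c}\fint_B|g-c|^2\le\inf_{Q\in\pol}\fint_B|f-Q|^2$''. This inequality goes the wrong way: $\{P+c\colon c\in\BBC\}$ is a \emph{subset} of $\pol$, so restricting the infimum to it can only increase it, and in general $g^{\#}(x)\ge f^{\#,m}(x)$. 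Obtaining the bound $(f-P)^{\#,1}\lesssim f^{\#,m}$ for a suitable single polynomial $P$ is exactly the content of the paper's Lemma \ref{lem:filter out the pol} (following \cite{JTW06}): for \emph{every} ball $B\ni x$, with arbitrary center and radius, one must show that the non-constant coefficients of the local minimizer $\mathbb{P}_B(f)$ differ from those of $P$ by $O(f^{\#,m}(x))$, via the chain-of-dyadic-balls coefficient comparison carried out uniformly in the center; your construction only compares projections on balls centered at the origin and does not yield this.

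The second deferred step is the technical heart of the proposition and is not a citable black box. The inequality $\|g\|_{L^p}\lesssim\|g^{\#}\|_{L^p}$ fails in general (constants, polynomials), and the textbook Fefferman--Stein theorem assumes $g\in L^{p_0}$ a priori, which you do not have. The paper proves precisely the needed substitute by duality: mollify $g$, apply Stein's pairing lemma $|\int gh|\lesssim\int g^{\#}\,\mathcal M_{HL}h$ for $h\in\mathcal H^1(\BBR^n)$, and control all error terms using the decay estimate $\sup_{x_0}\left(\fint_{B(x_0,r)}|g|^2\right)^{1/2}\lesssim r^{-n/p}\|f\|_{L^p_m}$, which is uniform in the center $x_0$ (this uniformity is what bounds $\|g_\varepsilon\|_{L^\infty}$ and the term involving $h_{B(0,1/\varepsilon)}$). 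Your construction gives decay only on balls centered at the origin, which is strictly weaker; a localized good-$\lambda$ or local Fefferman--Stein argument on expanding cubes (keeping the extra term $|Q_R|\left(\fint_{Q_R}|g|\right)^p\lesssim\|f\|_{L^p_m}^p$ uniformly bounded) can plausibly be made to work, but as written you have asserted rather than proved the key inequality, and with a weaker decay input than the argument requires. So the proposal's skeleton is the same as the paper's (reduce to the classical sharp function of $f-P$, then a Fefferman--Stein substitute), but both crux steps are left unestablished, and the justification offered for the first one is incorrect.
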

		Before proving Proposition \ref{prop:structure of L^p_m}, let us collect some remarks, which we use without comment later on.
		\begin{enumerate}[label={\upshape(\roman*)}]
			\item 
			For $p\in (2,\infty)$ and $f\in L^2_{loc} (\BBR^n)$, the relations $f\in L^p_{m}(\BBR^n)$ and $f\in BMO_{m}(\BBR^n)$ can be equivalently expressed in terms of the polynomial sharp function $f^{\#,m}$ by requiring $f^{\#,m} \in L^p(\BBR^n)$ or $f^{\#,m} \in L^\infty(\BBR^n)$, correspondingly. Here,
			\[f^{\#,m} (x)\coloneqq \sup_{B\ni x} \inf_{P\in \mathcal P_{m-1}}\left( \fint_{B}\left|f(y)-P(y)\right|^2dy\right)^{1/2}.\]
			\item For each ball $B=B(x_0,r)$ and $f \in L^2_{loc} (\BBR^n)$ the infimum
			\[ \inf_{P\in \mathcal P_{m-1}}\left( \fint_{B}\left|f(y)-P(y)\right|^2dy\right)^{1/2}\] is attained for the polynomial given by the orthogonal projection $\mathbb{P}_{x_0,r}(f)$ of $f$ onto $\mathcal P_{m-1}$ with respect to the scalar product  $(f,g)\mapsto \fint_{B(x_0,r)}f\overline g  dx$. 
			
			\item The minimizing polynomial $\mathbb{P}_{x_0,r}(f)$ for $f\in L^2_{loc}(\BBR^n)$ on $B(x_0,r)$ satisfies 
			$$\|\mathbb{P}_{x_0,r}(f)\|_{L^\infty(B(x_0,r))}\lesssim  \fint_{B(x_0,r)}|f(x)|dx$$	with some constant $C>0$ depending on $m$ and $n$.
		\end{enumerate}
		Above points remain true if we introduce a weight $\omega$, that is a non-negative, bounded, radially symmetric weight function on $B(0,1)$ satisfying $\int \omega =1$ and $0<c<\omega $ on $B(0,1/2)$, and rescale it appropriately on each ball $B$.
	For the proof of Proposition \ref{prop:structure of L^p_m}, we need the following result. The case $p=\infty$ implies Proposition \ref{prop:structure of BMO_m}.
		 \begin{lemma}
		 	\label{lem:filter out the pol}
		 	Let $p\in (2,\infty]$ and $f\in L^2_{loc} (\BBR^n)$. Then there exists $P\in \mathcal P_{m-1}$ such that
		 	\begin{equation}\label{eq:sharp function norm}
		 	\| (f-P)^{\#,1}\|_{L^p} =\bigg\|\sup_{B\ni x} \inf_{c\in \BBC}\left( \fint_{B}\left|(f-P)(y)-c\right|^2dy\right)^{1/2}\bigg\|_{L^p}\lesssim \| f^{\#,m}\|_{L^p}.
		 	\end{equation}
		 \end{lemma}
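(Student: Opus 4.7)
My plan is to reduce the $p=\infty$ case to Proposition \ref{prop:structure of BMO_m}, and to handle $p\in(2,\infty)$ by a dyadic construction of $P$ followed by a pointwise estimate controlling $(f-P)^{\#,1}$ by the Hardy--Littlewood maximal function of $f^{\#,m}$. When $p=\infty$, the assumption $\|f^{\#,m}\|_{L^\infty}<\infty$ is exactly $f\in BMO_m(\BBR^n)$, and Proposition \ref{prop:structure of BMO_m} yields $P\in\pol$ with $f-P\in BMO(\BBR^n)$; since $(f-P)^{\#,1}\sim\|f-P\|_{BMO}$, we are done. So the substantive work is in the range $p<\infty$.

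For $p\in(2,\infty)$, set $B_k=B(0,2^k)$ and $Q_k=\mathbb{P}_{B_k}(f)$ for $k\in\BBZ$. The first step is to show that $(Q_k)$ converges to a polynomial $P\in\pol$ in $L^\infty_{\mathrm{loc}}(\BBR^n)$. Using remark (iii) applied to the polynomial $Q_{k+1}-Q_k=\mathbb{P}_{B_k}(Q_{k+1}-Q_k)$, together with the fact that $Q_k$ is a best approximation on $B_k$, one gets
\[
\|Q_{k+1}-Q_k\|_{L^\infty(B_k)}\lesssim\left(\fint_{B_{k+1}}|f-Q_{k+1}|^2\right)^{1/2}\leq\left(\fint_{B_{k+1}}(f^{\#,m})^2\right)^{1/2}\lesssim 2^{-kn/p}\|f^{\#,m}\|_{L^p},
\]
where the last step uses Hölder. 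Expanding $Q_{k+1}-Q_k=\sum_{|\alpha|\leq m-1}c_\alpha^{(k)}x^\alpha$ and using the standard fact that $|c_\alpha^{(k)}|\lesssim 2^{-k|\alpha|}\|Q_{k+1}-Q_k\|_{L^\infty(B_k)}$, each coefficient sequence is absolutely summable in $k$, so $P(x):=\lim_{k\to\infty}Q_k(x)$ exists in $\pol$ and moreover $\|Q_k-P\|_{L^\infty(B_k)}\lesssim 2^{-kn/p}\|f^{\#,m}\|_{L^p}$.

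The second step is the pointwise control $(f-P)^{\#,1}(x)\lesssim \mathcal{M}_{HL}((f^{\#,m})^2)^{1/2}(x)$. Fix $x\in\BBR^n$ and any ball $B=B(y,r)\ni x$, let $k_B$ be the smallest integer with $B\subseteq B_{k_B}$, and write $Q_B=\mathbb{P}_B(f)$. Choosing $c=(Q_B-P)(y)$ in the infimum,
\[
\inf_{c\in\BBC}\fint_B|f-P-c|^2\lesssim\fint_B|f-Q_B|^2+\fint_B|(Q_B-P)-(Q_B-P)(y)|^2.
\]
The first term is bounded by $\mathcal{M}_{HL}((f^{\#,m})^2)(x)$ since $\fint_B|f-Q_B|^2\leq\fint_B(f^{\#,m})^2$. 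For the second, I Taylor-expand the polynomial $R=Q_B-P$ at $y$, obtaining
\[
\fint_B|R-R(y)|^2\lesssim\sum_{1\leq|\alpha|\leq m-1}|\partial^\alpha R(y)|^2 r^{2|\alpha|}.
\]
To bound $|\partial^\alpha R(y)|$ I telescope $R=(Q_B-Q_{k_B})+\sum_{k\geq k_B}(Q_k-Q_{k+1})$ and use the derivative estimate $|\partial^\alpha (Q_{k+1}-Q_k)(y)|\lesssim 2^{-k|\alpha|}\|Q_{k+1}-Q_k\|_{L^\infty(B_k)}$ together with $\|Q_{k+1}-Q_k\|_{L^\infty(B_k)}\lesssim \fint_{B_{k+1}}f^{\#,m}\leq \mathcal{M}_{HL}(f^{\#,m})(x)$ (since $x\in B_{k+1}$ for $k\geq k_B$). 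Summing the geometric tail $\sum_{k\geq k_B}2^{-k|\alpha|}\sim 2^{-k_B|\alpha|}\sim r^{-|\alpha|}$ for $|\alpha|\geq 1$ absorbs the factor $r^{|\alpha|}$ and leaves $\mathcal{M}_{HL}(f^{\#,m})(x)$; the boundary term $Q_B-Q_{k_B}$ is handled analogously since $B\subseteq B_{k_B}$ with comparable radii. Taking $L^p$-norms and using the $L^p$-boundedness of both $\mathcal{M}_{HL}$ and its $L^2$-variant $(\mathcal{M}_{HL}(\cdot^2))^{1/2}$ (valid as $p>2$) yields \eqref{eq:sharp function norm}.

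The step I expect to be most delicate is the telescoping control of the polynomial derivatives $\partial^\alpha R(y)$: one must carefully keep track of which dyadic ball contains the center $y$ (as opposed to the fixed reference point $x$), and ensure the scale factor $r^{|\alpha|}$ is exactly compensated by the geometric sum $\sum_{k\geq k_B}2^{-k|\alpha|}$, rather than producing a logarithmic loss. The constant and $|\alpha|=0$ components play no role here because they disappear into $\inf_c$; this is precisely why the construction of $P$ via the full limit of $Q_k$ (not just its higher-order part) is legitimate for $p<\infty$ but fails at $p=\infty$, matching the extra $P(0)=0$ normalization needed in Proposition \ref{prop:structure of BMO_m}.
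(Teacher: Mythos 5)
Your construction of $P$ as the limit of the origin-centered minimizers $Q_k=\mathbb{P}_{0,2^k}(f)$, and the reduction of \eqref{eq:sharp function norm} to a pointwise bound on $(f-P)^{\#,1}$, follow the same outline as the paper (which in turn follows the proof of \cite{JTW06}); the reduction of the case $p=\infty$ to Proposition \ref{prop:structure of BMO_m} is legitimate, since that proposition is quoted from the literature. However, there is a genuine gap in the main step for $p\in(2,\infty)$: the ``boundary term'' $Q_B-Q_{k_B}$. If $B=B(y,r)$ with $|y|\gg r$, the smallest origin-centered dyadic ball $B_{k_B}\supseteq B$ has radius $2^{k_B}\sim |y|$, which is \emph{not} comparable to $r$, so the claim that $B\subseteq B_{k_B}$ ``with comparable radii'' fails and this term cannot be ``handled analogously''. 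Concretely, remark (iii) controls $Q_B-Q_{k_B}=\mathbb{P}_{y,r}(f-Q_{k_B})$ only through $\fint_B|f-Q_{k_B}|$, whereas your hypotheses control $f-Q_{k_B}$ only in mean over the much larger ball $B_{k_B}$; passing from the average over $B_{k_B}$ to the average over $B$ costs a factor of order $(2^{k_B}/r)^{n/2}$, which is unbounded. (Writing $\fint_B|f-Q_{k_B}|\le \fint_B|f-Q_B|+\|Q_B-Q_{k_B}\|_{L^\infty(B)}$ is circular.) Your telescoping sum $\sum_{k\ge k_B}(Q_k-Q_{k+1})$ only bridges the scales from $2^{k_B}\sim|y|$ upward; the scales between $r$ and $|y|$ are never traversed, and that is exactly where the estimate breaks.

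The missing idea --- and the heart of the paper's sketch following \cite{JTW06} (``comparing the coefficients of the minimizing polynomials on different balls'') --- is to telescope over dyadic dilates of $B$ around its \emph{own} center: compare $\mathbb{P}_{y,2^jr}(f)$ with $\mathbb{P}_{y,2^{j+1}r}(f)$ for $j=0,\dots,J$ with $2^Jr\sim|y|+r$, using that every intermediate ball contains $x$, so each oscillation is $\le f^{\#,m}(x)$ and the derivative increments of order $|\alpha|\ge1$ sum to $\lesssim r^{-|\alpha|}f^{\#,m}(x)$; only then does one switch to an origin-centered ball of comparable radius and continue along your chain of $Q_k$'s, checking finally that the limiting coefficients of order $\ge 1$ are independent of the center. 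This yields the pointwise bound $(f-P)^{\#,1}(x)\lesssim f^{\#,m}(x)$, with no maximal function at all, so it also covers $p=\infty$ directly. (A minor additional imprecision: $2^{-k_B|\alpha|}\sim r^{-|\alpha|}$ should only be $2^{-k_B|\alpha|}\le r^{-|\alpha|}$, but that inequality goes the right way and is harmless; the one-step jump from $B$ to $B_{k_B}$ is the real flaw.)
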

		 \begin{proof} 
		 	Clearly, it is enough to prove 
		 	\begin{align*}
		 	(f-P)^{\#,1}(x) \lesssim  	f^{\#,m}(x)<\infty
		 	\end{align*} for some polynomial $P\in\pol$ and almost every $x\in \BBR^n$. This requires comparing the coefficients of the minimizing polynomials on different balls and is an easy consequence of the proof of \cite[Theorem 1]{JTW06}. We only sketch the main steps. First, the assumption gives us \[C_{x_0,r_0}(f)=\sup_{r\ge r_0} \inf_{P\in \mathcal P_{m-1}}\left( \fint_{B(x_0,r)}\left|f(y)-P(y)\right|^2dy\right)^{1/2}<\infty\]
		 	for all $x_0\in \BBR^n$ and $r_0>0$.
		 	We write $\mathbb{P}_{x_0,r}(f)$ as
		 	\begin{align*}
		 	\mathbb{P}_{x_0,r}(f)(y)=\sum_{|\alpha|\leq m-1}c_{\alpha}^{x_0,r}(y-x_0)^\alpha
		 	\end{align*} for some coefficients $\left\{ c_{\alpha}^{x_0,r}\mid|\alpha|\leq m-1\right\}\subseteq \BBC$. Using the bound on the minimal polynomials we easily arrive then at the crucial estimate 
		 	 \begin{align*}
		 	 \sum_{|\alpha|\leq m-1}\left|c_{\alpha}^{x_0,r}-c_{\alpha}^{x_0,2r}\right|r^{|\alpha|}
		 	 &\lesssim C_{x_0,r}(f).	
		 	 \end{align*}
		 	 Thus, for $\alpha\neq 0$, we obtain a Cauchy sequence and find the limiting coefficients for $x_0$ and $r$. Repeating of the arguments shows the independence of the coefficients of the radius, while the $x_0$ dependence is accumulated in the constant, if we center the limiting polynomial at the origin.
		 \end{proof}
		\begin{proof}[Proof of Proposition \ref{prop:structure of L^p_m}]
			Let $p\in (2,\infty)$ and $f\in L^p_m(\BBR^n)$. We know 
			$ (f-P)^{\#,1}\in {L^p(\BBR^n)}$ for some polynomial $P\in \pol$ given by Lemma \ref{lem:filter out the pol}. Consider 
			\[C_{x_0,r}\coloneqq\sup_{R\ge r} \left( \fint_{B(x_0,R)}\left|(f-P)(y)-(f-P)_{B(x_0,R)}\right|^2dy\right)^{1/2}.\] 
			Then 
			\begin{equation} \label{eq:decay}C_{x_0,r}\lesssim r^{-\frac{n}{p}}\| (f-P)^{\#,1}\|_{L^p} .\end{equation}
			Indeed, assume $x\in B(x_0,r)$, then $C_{x_0,r} \leq (f-P)^{\#,1}(x)$ and integrating over $B(x_0,r)$ gives 
			\[|B(x_0,r)|^{\frac{1}{p}}C_{x_0,r}\leq \| (f-P)^{\#,1}\|_{L^p}.\]
			
			Comparing the means $(f-P)_{B(x_0,r)}$ and $(f-P)_{B(x_0,2r)}$ and the decay from \eqref{eq:decay} imply the convergence of $(f-P)_{B(x_0,r)}$ as $r$ tends to infinity. Again, the limit does not depend on $x_0$ and denote it by $d_0$. Define $g\coloneqq f-P-d_0$. Then it holds for every $r>0$
			\begin{equation}
			\label{eq:estimate 3}
			\sup_{x_0\in \BBR^n}\left(\fint_{B(x_0,r)}|g|^2dx\right)^{1/2}\lesssim r^{-\frac{n}{p}}\| (f-P)^{\#,1}\|_{L^p}\lesssim r^{-\frac{n}{p}}\| f^{\#,m}\|_{L^p}.
			\end{equation}
			
			On the other hand, we have $g^{\#}(x)\leq  g^{\#,1}(x)$ for every $x\in \BBR^n$, thus
			\begin{equation}
			\label{eq:g sharp}
			\| g^{\#}\|_{L^p}\lesssim \| f^{\#,m}\|_{L^p}<\infty.
			\end{equation}
			
			We now demonstrate that estimates \eqref{eq:estimate 3} and \eqref{eq:g sharp} imply $g\in L^p(\BBR^n)$. Our proof is based on the following consequence of \cite[Chap.\ IV, \S2]{S93}.
			\begin{lemma}[{\cite[Chap.\ IV, \S2]{S93}}]
				\label{lem:stein hardy}
				Let $g$ be a bounded tempered distribution on $\BBR^n$ and suppose that $h\in \mathcal H^1(\BBR^n)$. Then
				\[\bigg|\int_{\BBR^n} g(x) h(x) dx \bigg| \lesssim \int_{\BBR^n} g^\#(x)\mathcal M_{HL}h(x)\,dx.\]  
			\end{lemma}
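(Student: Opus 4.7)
The plan is to reduce the estimate to the atomic decomposition of $\mathcal H^1(\BBR^n)$: every $h$ in this space can be written as $h=\sum_k \lambda_k a_k$, where each $a_k$ is supported on a ball $B_k$, satisfies $\int a_k\,dx=0$ and $\|a_k\|_{L^\infty}\leq |B_k|^{-1}$, with $\sum_k |\lambda_k|\lesssim \|h\|_{\mathcal H^1}$. The vanishing mean is what makes the pairing with $g$ well-defined despite $g$ being merely a bounded tempered distribution: after subtracting a constant from $g$ one may interpret the duality classically, since atoms can be chosen smooth and compactly supported.

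First I would establish a single-atom bound. Fix an atom $a_k$ with supporting ball $B_k$. Using the cancellation of $a_k$ to subtract $g_{B_k}$, together with the $L^\infty$ bound on the atom,
\[\Big|\int g\, a_k\,dx\Big|=\Big|\int (g-g_{B_k})\,a_k\,dx\Big|\leq |B_k|^{-1}\int_{B_k}|g-g_{B_k}|\,dy\leq \fint_{B_k}g^\#(y)\,dy,\]
the last inequality being simply the definition of the sharp maximal function applied at any point of $B_k$. To pass from this to the global inequality, I would not use an arbitrary atomic decomposition but rather select one coming from a Calder\'on--Zygmund stopping-time procedure applied to $h$ at each dyadic level $2^j$. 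Setting $\Omega_j=\{M_{HL}h>2^j\}$ and Whitney-decomposing each $\Omega_j$ into essentially disjoint cubes $\{Q_{j,k}\}$, one can produce atoms $a_{j,k}$ supported in a fixed dilate of $Q_{j,k}$ with coefficients $|\lambda_{j,k}|\lesssim 2^j |Q_{j,k}|$. Plugging this into the one-atom estimate and interchanging the order of summation,
\[\Big|\int g\,h\,dx\Big|\lesssim \sum_j 2^j\int_{\Omega_j}g^\#(y)\,dy=\int g^\#(y)\Big(\sum_j 2^j\mathbf 1_{\Omega_j}(y)\Big)\,dy,\]
and the bracketed sum is comparable to $M_{HL}h(y)$ by the layer-cake formula.

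The main technical obstacle is precisely this matching between the atoms and $M_{HL}h$: a generic $\mathcal H^1$-atomic decomposition is not pointwise compatible with the Hardy--Littlewood maximal function, and a naive bound $\sum_k|\lambda_k||B_k|^{-1}\mathbf 1_{B_k}\lesssim M_{HL}h$ fails because the atom balls may overlap uncontrollably. Producing the atoms from Whitney cubes of the super-level sets of $M_{HL}h$ restores compatibility and is exactly what makes the layer-cake identification in the last step go through. A minor further issue is the rigorous interpretation of $\int g\,a_k\,dx$ when $g$ is only a bounded distribution; this is handled by a standard smoothing argument together with the fact that atoms can be taken smooth and compactly supported, so that each individual pairing is classical and the resulting inequality passes to the limit.
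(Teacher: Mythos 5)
Your proposal is essentially the classical Fefferman--Stein argument, and as such it is a genuinely different route from the paper: the paper does not prove this lemma at all, but quotes Stein's result (stated with the grand maximal operator $\mathcal M_{\mathcal F}$ in place of $\mathcal M_{HL}$) and only adds the pointwise domination $\mathcal M_{\mathcal F}h\leq C\,\mathcal M_{HL}h$ from \cite[Chap.\ II, \S2.1]{S93}. Your single-atom estimate via subtracting $g_{B_k}$ and the definition of $g^\#$ is correct, and the idea of producing the atoms from a stopping-time decomposition at the levels $2^j$ of a maximal function, followed by the layer-cake identity, is exactly the right mechanism.

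There is, however, a concrete gap in the execution: you build the atoms on the superlevel sets $\Omega_j=\{\mathcal M_{HL}h>2^j\}$, but $\mathcal M_{HL}h$ is \emph{never} integrable for $h\neq 0$ (the absolute value inside the averages destroys the cancellation of $h$, so $\mathcal M_{HL}h(x)\gtrsim \|h\|_{L^1}|x|^{-n}$ at infinity). Hence $\sum_{j,k}|\lambda_{j,k}|\approx\sum_j 2^j|\Omega_j|\approx\|\mathcal M_{HL}h\|_{L^1}=\infty$, the atomic series is not absolutely convergent in $L^1$, and the step where you ``plug into the one-atom estimate and interchange the order of summation'' — i.e. the identity $\int gh=\sum_{j,k}\lambda_{j,k}\int g\,a_{j,k}$ — is not justified: in the telescoping construction the low-level remainders $g_{j_0}$ tend to $0$ only uniformly, with $L^1$ norms bounded away from zero, and such convergence cannot be paired against a merely bounded $g$. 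The repair is to run the identical construction at the levels of the grand maximal function $\mathcal M_{\mathcal F}h$, which \emph{is} integrable precisely because $h\in\mathcal H^1(\BBR^n)$; then the coefficients are summable, the series converges to $h$ in $L^1$, the layer-cake step yields $\int g^\#\,\mathcal M_{\mathcal F}h\,dx$, and one concludes with the pointwise bound $\mathcal M_{\mathcal F}h\lesssim \mathcal M_{HL}h$ — which is exactly the one reduction the paper itself carries out, the rest being cited from Stein. (Your closing remark on interpreting $\int g\,a_k$ for a bounded distribution $g$ by smoothing is fine, and is anyway immaterial in the paper's application, where $g\in L^\infty$ and $h$ is bounded, compactly supported and mean-free.)
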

			
			 Note that in \cite{S93} this lemma was stated with the Hardy--Littlewood maximal function replaced by the grand maximal operator $\mathcal{M_F}$ for any fixed finite collection $\mathcal F$ of seminorms on the Schwartz space (for definition see \cite[Chap.\ III, \S1.2]{S93}). We can, however, estimate $\mathcal{M_F}h\leq C \mathcal M_{HL}h$, see \cite[Chap.\ II, \S2.1]{S93}. We also recall that any bounded, compactly supported function $h$ with $\int h =0$ belongs to $\mathcal H^1(\BBR^n)$ (see \cite[Chap.\ III, \S5, Remark 5]{S93}).
	
			\begin{claim} Let $h\in \mathscr C_c^\infty(\BBR^n)$. Then it holds for $q\in(1,2)$ with $\frac{1}{p}+\frac{1}{q}=1$
				\[\bigg|\int_{\BBR^n} g(x) h(x) dx \bigg| \lesssim \| f^{\#,m}\|_{L^p}\|h\|_{L^q}.\]
			\end{claim}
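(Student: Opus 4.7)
The plan is to invoke Lemma \ref{lem:stein hardy} after decomposing $h$ into a mean zero piece (to which the lemma can be applied) and a localized remainder (handled directly via the averaged bound \eqref{eq:estimate 3} on $g$). Fix $R>0$ so that $\operatorname{supp}(h)\subseteq B(0,R)$, choose a fixed $\varphi\in\mathscr{C}_c^\infty(B(0,1))$ with $\int\varphi=1$, and set $\varphi_R(x)=R^{-n}\varphi(x/R)$. Writing $c=\int h$, I would decompose
\[
h = h_1 + h_2, \qquad h_2 = c\,\varphi_R, \qquad h_1 = h-h_2,
\]
so that $h_1\in L^\infty(\BBR^n)$ is compactly supported with $\int h_1=0$. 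By the remark recalled from \cite[Chap. III, \S5]{S93}, this places $h_1\in\mathcal{H}^1(\BBR^n)$.

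For the first piece, Lemma \ref{lem:stein hardy} together with H\"older's inequality and $L^q$-boundedness of $\mathcal{M}_{HL}$ (using $q>1$) gives
\[
\bigg|\int_{\BBR^n} g\,h_1\,dx\bigg|\lesssim \int_{\BBR^n}g^{\#}\mathcal{M}_{HL}h_1\,dx\lesssim \|g^{\#}\|_{L^p}\|h_1\|_{L^q}\lesssim \|f^{\#,m}\|_{L^p}\|h\|_{L^q},
\]
where the last step uses \eqref{eq:g sharp} and the crude bound $\|h_1\|_{L^q}\leq \|h\|_{L^q}+\|h_2\|_{L^q}\lesssim \|h\|_{L^q}$ (since $\|h_2\|_{L^q}\lesssim |c|R^{-n/p}\lesssim \|h\|_{L^q}$ by H\"older on $B(0,R)$ with $1/q'=1/p$).

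For the second piece, I would use that $\varphi_R$ is supported in $B(0,R)$ and apply \eqref{eq:estimate 3}:
\[
\bigg|\int_{\BBR^n} g\,h_2\,dx\bigg|\leq |c|\,\|\varphi\|_{L^\infty}\fint_{B(0,R)}|g|\,dx\lesssim |c|\,R^{-n/p}\|f^{\#,m}\|_{L^p}.
\]
Combining with $|c|\leq \|h\|_{L^1(B(0,R))}\leq |B(0,R)|^{1/q'}\|h\|_{L^q}\lesssim R^{n/p}\|h\|_{L^q}$ yields $|\int g\,h_2|\lesssim \|f^{\#,m}\|_{L^p}\|h\|_{L^q}$, and adding the two estimates proves the claim.

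The only real obstacle is that the test function $h$ is not itself in $\mathcal{H}^1$, so one cannot apply Lemma \ref{lem:stein hardy} directly; the splitting is arranged precisely so that the price paid for removing the non-zero mean (the factor $R^{n/p}$ from H\"older) is exactly cancelled by the decay factor $R^{-n/p}$ in \eqref{eq:estimate 3}, which is the reason the argument is $R$-independent.
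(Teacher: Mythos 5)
Your decomposition is essentially the paper's: split off the mean of $h$ so that the remainder is a bounded, compactly supported, mean-zero function (hence in $\mathcal H^1(\BBR^n)$ by the recalled remark), feed that piece into Lemma \ref{lem:stein hardy}, and control the mean part directly through the decay estimate \eqref{eq:estimate 3}, with the factors $R^{n/p}$ and $R^{-n/p}$ cancelling exactly as you observe (the paper corrects by $h_{B(0,R)}\mathbbm{1}_{B(0,R)}$ rather than your smooth bump $c\,\varphi_R$, an immaterial difference). The one point where you depart from the paper is also where a gap opens: you apply Lemma \ref{lem:stein hardy} directly to $g$, but that lemma requires $g$ to be a \emph{bounded} tempered distribution, and at this stage $g=f-P-d_0$ is only known to lie in $L^2_{loc}(\BBR^n)$ with the averaged bound \eqref{eq:estimate 3}. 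The paper's mollification step exists precisely to secure this hypothesis: it shows $g_\varepsilon=g\ast\eta_\varepsilon\in L^\infty(\BBR^n)$, proves the sharp-function domination \eqref{eq:convolved sharp function} so that $\|g_\varepsilon^{\#}\|_{L^p}$ is controlled uniformly in $\varepsilon$, and only then lets $\varepsilon\to 0$ (which is also why its mean-zero correction is made at the $\varepsilon$-dependent scale $R=1/\varepsilon$).

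The gap is repairable without mollifying: \eqref{eq:estimate 3} gives $\int_{B(x,2^k)}|g|\lesssim 2^{kn(1-1/p)}\|f^{\#,m}\|_{L^p}$ uniformly in $x\in\BBR^n$, so pairing $g$ against any $\varphi\in\mathscr S(\BBR^n)$ and summing over dyadic annuli, using the rapid decay of $\varphi$, shows $g\ast\varphi\in L^\infty(\BBR^n)$; hence $g$ is indeed a bounded tempered distribution and your direct application of Lemma \ref{lem:stein hardy} becomes legitimate. As written, however, this verification is missing, and it is exactly the content that the paper's $\varepsilon$-regularization was designed to supply; you should either include it or mollify as the paper does. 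The remaining estimates in your argument (the bound $\|h_1\|_{L^q}\lesssim\|h\|_{L^q}$, the $L^q$-boundedness of $\mathcal M_{HL}$ for $q>1$, and the treatment of $h_2$ via \eqref{eq:estimate 3}) are correct.
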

			\begin{claimproof}
			 Consider a sequence of molifiers $(\eta_\varepsilon)_{\varepsilon>0}$, where $\eta_\varepsilon=\varepsilon^{-n}\eta(\cdot/\varepsilon)$ for some non-negative $\eta \in \mathscr C_c^\infty(\BBR^n)$ supported in $B(0,1)$ and satisfying $\int \eta =1$. Set $g_\varepsilon \coloneqq g\ast \eta_\varepsilon$ for $\varepsilon>0$. Then $ g_\varepsilon$ defines a bounded tempered distribution, as we can show $ g_\varepsilon \in L^\infty(\BBR^n)$. 
			 
			 Indeed, for any $x\in \BBR^n$ we have the uniform estimate
			 \[| g_\varepsilon(x)|\lesssim_\eta \frac{1}{\varepsilon^n}\bigg|\int_{B(x,\varepsilon
			 	)}g(y)dy\bigg|\lesssim\left(\fint_{B(x,\varepsilon)}|g(y)|^2dy\right)^{1/2}\stackrel{\eqref{eq:estimate 3}}{\lesssim} \varepsilon^{-\frac{n}{p}}\| f^{\#,m}\|_{L^p}.\]
			 
			 Moreover, for $h\in \mathscr C_c^\infty(\BBR^n)$ choose some $R>0$ with $\text{supp}\, h \subseteq B(0,R)$ and write $$ h_R = h-h_{B(0,R)}\mathbbm{1}_{B(0,R)}.$$ Then $ h_R \in \mathcal H^1(\BBR^n)$ and so by Lemma \ref{lem:stein hardy}
			   \begin{equation}\label{eq:stein conv}\bigg|\int_{\BBR^n} g_\varepsilon(x)h_R(x)dx\bigg| \lesssim \|  g_\varepsilon^{\#}\|_{L^p}\|\mathcal M_{HL} h_R\|_{L^q}.\end{equation}
			  
			  By the boundedness of the Hardy--Littlewood maximal function, there exists $C_q>0$ so that
			   \begin{equation}\label{eq:stein conv2}\|\mathcal M_{HL}h_R\|_{L^q}\leq C_q\| h_R\|_{L^q}\leq C_q\left(\| h\|_{L^q}+|B(0,R)|^{\frac{1}{q}}|h_{B(0,R)}|\right) \lesssim_{q,n}\| h\|_{L^q}\end{equation} holds independently of $R>0$.
			   Furthermore, we have for all $x\in \BBR^n$ and $\varepsilon>0$
			   \begin{equation}
			   \label{eq:convolved sharp function}
			   g_\varepsilon^{\#}(x)=\left(\eta_\varepsilon \ast g\right)^\# (x)\lesssim \eta_\varepsilon \ast g^\# (x)
			   \end{equation} 
			   and, consequently, $g_\varepsilon^{\#}\in L^p(\BBR^n)$ with the uniform bound 
			    \begin{equation}
			    \label{eq:convolved sharp function p}\sup_{\varepsilon>0} \|g_\varepsilon^{\#}\|_{L^p}\lesssim \sup_{\varepsilon>0} \|\eta_\varepsilon\ast g^{\#}\|_{L^p}\lesssim \| g^{\#}\|_{L^p}.
			    \end{equation} 
			   
			   We now prove \eqref{eq:convolved sharp function}. By Fubini's Theorem 
			    \begin{align*}
			    \left(\eta_\varepsilon \ast g\right)^\# (x)
			    &
			    \lesssim\sup_{B(x_0,\rho)\ni x} \fint_{B(x_0,\rho)}\fint_{B(0,\varepsilon)}\eta\left(\frac{y}{\varepsilon}\right)\bigg| g(z-y)
			    -\fint_{B(x_0,\rho)}g(z-y)dz\bigg|dy\,dz.
			    \end{align*} 
			    
			    We use Fubini's Theorem, then translate variables and take the supremum inside the first integral to arrive at 
			      \begin{align*}
			      \left(\eta_\varepsilon \ast g\right)^\# (x)
			      &\leq\sup_{B(x_0,\rho)\ni x} \fint_{B(0,\varepsilon)}\eta\left(\frac{y}{\varepsilon}\right)\sup_{B(\tilde x_0, \tilde \rho)\ni x-y}\fint_{B(\tilde x_0,\tilde\rho)}\bigg| g(z)
			      -\fint_{B(\tilde x_0,\tilde\rho)}g(z)dz\bigg|dz\,dy\\
			      &\lesssim \frac{1}{\varepsilon^n}\int_{B(0,\varepsilon)}\eta\left(\frac{y}{\varepsilon}\right) g^\# (x-y) dy =   \eta_\varepsilon \ast g^\# (x).
			      \end{align*} Putting \eqref{eq:stein conv}, \eqref{eq:stein conv2} and \eqref{eq:convolved sharp function p} together we have for all $R>0$ and $\varepsilon>0$
			       \begin{equation}\label{eq:stein conv3}\bigg|\int_{\BBR^n} g_\varepsilon(x)h_R(x)dx\bigg| \lesssim \|  g^{\#}\|_{L^p}\| h\|_{L^q}.\end{equation}
			       
			       On the other hand, since $g\in L^2_{loc}(\BBR^n)$ we have
			    \[\int_{\BBR^n}g(x)h(x)dx = \lim_{\varepsilon\to 0}\int_{\BBR^n}g_\varepsilon(x)h(x)dx.\] 
			    
			    Let $\varepsilon_0>0$ be small enough so that $\text{supp}\, h  \subseteq B(0,\varepsilon_0^{-1})$. For $\varepsilon< \varepsilon_0$, we consider the decomposition $h= h_{1/\varepsilon}+h_{B(0,1/\varepsilon)}\mathbbm{1}_{B(0,1/\varepsilon)}$ and estimate using Fubini's Theorem
			    \begin{align*}
			    \bigg|\int_{B(0,1/\varepsilon)} g_\varepsilon(x) h_{B(0,1/\varepsilon)} dx \bigg|
			    &\lesssim \varepsilon^{\frac{n}{q}}\|h\|_{L^q}\fint_{B(0,\varepsilon)} \eta\left(\frac{y}{\varepsilon}\right)\int_{B(0,1/\varepsilon)}\big| g(x-y)\big|dxdy\\
			    &= \varepsilon^{\frac{n}{q}-n}\|h\|_{L^q}\int_{B(0,\varepsilon)} \eta_\varepsilon\fint_{B(-y,1/\varepsilon)}\big| g(z)\big|dzdy\\
			   &\hspace{-0.2cm} \stackrel{\eqref{eq:estimate 3}}{\lesssim} \varepsilon^{\frac{n}{q}-n}\varepsilon^{\frac{n}{p}}\|h\|_{L^q}\| f^{\#,m}\|_{L^p}\int \eta_\varepsilon(y)dy
			   \\&=\| f^{\#,m}\|_{L^p}\|h\|_{L^q}.
			    \end{align*}
			    Concluding,
			      	    \begin{align*}\int_{\BBR^n}g(x)h(x)dx &= \lim_{\varepsilon\to 0}\int_{\BBR^n}g_\varepsilon(x)h(x)dx\\&= \limsup_{\varepsilon\to 0}\left(\int_{\BBR^n}g_\varepsilon h_{1/\varepsilon} dx + \int_{\BBR^n}g_\varepsilon h_{B(0,1/\varepsilon)}\mathbbm{1}_{B(0,1/\varepsilon)}dx\right)\\&\lesssim\| f^{\#,m}\|_{L^p}\|h\|_{L^q}.\end{align*}
			    This proves the claim.
			\end{claimproof}
			
			Finally, we argue by density of the test functions in $L^p(\BBR^n)$ and conclude by the Claim that $g=f-P\in L^p(\BBR^n)$ and there exists a constant dependent on $m,\, n$ and $p$ such that 
			 \[\|f-P\|_{L^p}\leq C\| f^{\#,m}\|_{L^p}.\]
			
			Uniqueness of $P$ follows from the fact that the only $p$-integrable polynomial is the zero polynomial. Notice that $f^{\#,m}(x)=(f-P)^{\#,m}(x)$ and 
			$$(f-P)^{\#,m}(x)\leq (\mathcal M_{HL}|f-P|^2)^{1/2}(x).$$ 
			
			As $p>2$, the maximal function $\mathcal M_{HL} $ is bounded on $L^{p/2}(\BBR^n)$, hence \[\| f^{\#,m}\|_{L^p}\leq C\|f-P\|_{L^p}.\qedhere\]
		\end{proof}
	
			\subsection{Semigroup theory for autonomous operators}
		\label{sec:aut}
		Undoubtedly, semigroups play a special role in the context of autonomous parabolic systems,
		
		so let us review some of their essential properties.
		There is an extensive existing literature on this subject. What follows can be found for example in \cite[\S0.1, \S0.4]{AT00} (sectorial operators and their functional calculus) and \cite{D95, A07}.
		
		Let $L= (-1)^m\mbox{div}_mA\nabla^m$ be a time-independent operator with bounded measurable coefficients satisfying the strong G{\aa}rding inequality \eqref{eq:lower Garding ellipticity estimate}. The ellipticity assumption implies that $L$ is maximal accretive on $L^2(\BBR^n)$, thus $-L$ generates a $\mathscr C_0$-semigroup of contractions 
		$(e^{-tL})_{t\ge 0}$ on $L^2(\BBR^n)$.
		
		The semigroup $(e^{-tL})_{t\ge 0}$ satisfies Davies--Gaffney $L^2$ off-diagonal estimates, that is there exist constants $C,\;c>0$ such that for all $E,\;F \subseteq \BBR^n$ closed and disjoint, $t>0$ and $f\in L^2(\BBR^n)$ 
		\[\|\mathbbm{1}_E e^{-tL}(\mathbbm{1}_F f)\|_{L^2}\leq C e^{-c\left(\frac{d(E,F)}{t^{1/(2m)}}\right)^{\frac{2m}{2m-1}}}\|\mathbbm{1}_F f\|_{L^2}.\] Estimates of this form occur to be crucial for the results in the aforementioned literature and so will they be in this work. We shall follow the method by Davies \cite{D95} to prove the off-diagonal estimates for our propagators. 
	
		By the solution of Kato's square root conjecture in \cite{AHMT02}, the domain of the operator $L^{1/2}$ is given by the inhomogeneous Sobolev space $\sob$ and we have the uniform bound 
		\begin{equation}\label{eq:Kato} \sup_{t>0}\|\nabla^m e^{-tL}u\|_{L^2}\lesssim \sup_{t>0}\|L^{1/2} e^{-tL}u\|_{L^2}\lesssim \|L^{1/2}u\|_{L^2}\lesssim \|\nabla ^m u\|_{L^2}. \end{equation}
		
By \cite[Remark 3.2]{AHMT02}, for every polynomial $P\in \pol$ the equality $e^{-tL}P=P$ holds in $L^2_{loc}(\BBR^n)$.
		
		We summarize some well-established facts about the $L^p$ theory for the semigroup $e^{-tL}$ (cf.\ \cite{A07}).
		Let $p_-(L)$ and $p_+(L)$ be the infimum and correspondingly supremum over all $p\in [1,\infty]$ such that $\sup_{t>0} \|e^{-tL}\|_{\mathscr L(L^p)}<\infty$. Further, let $q_-(L)$ and $q_+(L)$ be the infimum and correspondingly supremum over all $q\in [1,\infty]$ such that $\sup_{t>0} \|\sqrt t \nabla^m e^{-tL}\|_{\mathscr L(L^q)}<\infty$. Then it holds
		\begin{enumerate}[label=\upshape{(\roman*)}]
			\item $(p_-(L),p_+(L))=(1,\infty)$ if $n\leq 2m$ (in this case the semigroup is given by a kernel with Gaussian bounds),
			\item $[\frac{2n}{n+2m},\frac{2n}{n-2m}]\subseteq(p_-(L),p_+(L))$ if $n> 2m$,
			\item $q_-(L)=p_-(L)$ ,
			\item $q_+(L)=\infty$ if $n=1$, 
			\item $q_+(L)>2 $ and $p_+(L)\ge q_+(L)^{*m}$, where $q\mapsto q^*$ is the Sobolev exponent mapping given by $q^*=\frac{nq}{n-q}$ if $q<n$ and $q^*=\infty$ otherwise. Thus, if $n>mq$ we have $p_+(L)\ge 	\frac{nq}{n-mq}$.	
		\end{enumerate}
		For $p\in (q_-(L),q_+(L))$ the Riesz transforms $\nabla^m L^{-1/2}$ are bounded on $L^p(\BBR^n)$. Further, for exponents $p_-(L)<p\leq q<q_+(L)$ and $k=0,\dots, m$, we have the $L^p-L^q$ off-diagonal estimates of the form 
		\[\|\mathbbm{1}_E   t^{\frac{k}{2m}}\nabla^ke^{-tL}(\mathbbm{1}_F f)\|_{L^q}\leq C t^{\frac{n}{2m}(\frac{1}{q}-\frac{1}{p})}e^{-c\left(\frac{d(E,F)}{t^{1/(2m)}}\right)^{\frac{2m}{2m-1}}}\|\mathbbm{1}_F f\|_{L^p}.\] 
		We will return to the $L^p$ theory in Section \ref{sec:bound examples} and the Appendix.
	
	\section{A priori energy estimates}
	\label{sec:energy estimates}
	\setcounter{theorem}{0} \setcounter{equation}{0}

	This section follows \cite[\S3.2, \S4.1]{AMP15}.
	We begin with local energy estimates for weak solutions of \eqref{eq:the parabolic equation}. In the parabolic setting such estimates are known to be a crucial tool in proving the existence of weak solutions. A similar result (the autonomous case) can be found in \cite[Proposition 3.2]{CMY16}. Our proof relies on the iteration scheme by Barton \cite[Theorem 3.10]{B16}.
	\begin{proposition}
		 \label{pro:local_energy}
		Let $0\leq a<c<d< b\leq\infty$, $\delta>1$, $R>0$ and $x_0\in\BBR^n$. Suppose that $u$ is a local weak solution of \eqref{eq:the parabolic equation} in $(a,b)\times B(x_0,\delta R)$. Then it holds $$u\in \mathscr C ([c,d];L^2(B(x_0,r)))$$ for any $0<r<\delta R$ and there exists a constant $C>0$ depending on ellipticity and dimensions, such that for all $0<r<R$, integer $0\leq k\leq m$ and $a<\tilde a<c$, we have, with $B_r\coloneqq B(x_0,r),$
		\small\begin{equation*}
		  \|u(d,\cdot)\|_{L^2(B_{r})}^2 \leq C\left(\frac{1}{(R-r)^{2m}} + \frac{1}{d-c}\right)\int_c^d \|u(s,\cdot)\|_{L^2(B_R)}^2ds,
		\end{equation*}
		\small
		\begin{align*}
		 (c-\tilde a)\int_{c}^d \|\nabla ^k u(s,\cdot)\|_{L^2(B_r)}^2ds
		 \leq C\left(\frac{(d-c)}{(R-r)^{2k}} +(R-r)^{2m-2k}\right)\int_{c}^d\| u(s,\cdot)\|_{L^2(B_R)}^2 ds.
		\end{align*} 
	\end{proposition}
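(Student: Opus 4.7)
My plan is to prove both estimates through standard parabolic Caccioppoli-type arguments, adapted to the higher order setting by nesting cutoffs and iterating as in Barton's scheme.

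\textbf{Continuity in time.} Fix $r<r'<\delta R$ and a cutoff $\eta\in\mathscr C_c^\infty(B(x_0,r'))$ with $\eta\equiv 1$ on $B(x_0,r)$. The weak formulation of \eqref{eq:the parabolic equation}, together with the uniform bound \eqref{eq:upper ellipticity estimate} on the coefficients and the hypothesis $u\in L^2_{loc}(a,b;H^m_{loc})$, shows that $\eta u$ satisfies $\partial_t(\eta u)=-L(\eta u)+[\eta,L]u$ in the sense of distributions, where the commutator $[\eta,L]u$ only involves derivatives of $u$ up to order $m$ on $B(x_0,r')$. Hence $\eta u$ lies in the Lions space $W(a',b';H^m_0(B(x_0,r')),H^{-m}(B(x_0,r')))$ for any $a<a'<b'<b$, and Lemma~\ref{lem:absolute continuity} yields $u\in \mathscr C([c,d];L^2(B(x_0,r)))$.

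\textbf{Set-up of the Caccioppoli inequality.} For the gradient estimate, fix a smooth time cutoff $\chi$ with $\chi\equiv 0$ on $(a,\tilde a)$, $\chi\equiv 1$ on $[c,d]$, $|\chi'|\lesssim(c-\tilde a)^{-1}$, and a spatial cutoff $\varphi\in\mathscr C_c^\infty(B_R)$ with $\varphi\equiv 1$ on $B_r$ and $|\nabla^j\varphi|\lesssim(R-r)^{-j}$ for $0\leq j\leq m$. Testing the weak formulation against $\bar u\,\varphi^{2m}\chi^2$ (after a Steklov regularization in time to make the time derivative of $u$ rigorous), integrating by parts both in time and $m$ times in space, and applying Gårding's inequality \eqref{eq:lower Garding ellipticity estimate} to the top order term, produces schematically
\begin{equation*}
\int_{\tilde a}^d \|\varphi^m\nabla^m u\|_{L^2}^2\chi^2\,ds \lesssim \int_{\tilde a}^d \|\varphi^m u\|_{L^2}^2|\chi\chi'|\,ds + \sum_{0\leq k<m}\int_{\tilde a}^d (R-r)^{-2(m-k)}\|\varphi^{m-k}\nabla^k u\|_{L^2(B_R)}^2\chi^2\,ds,
\end{equation*}
the error terms arising from distributing the derivatives in $\partial^\alpha(\bar u\,\varphi^{2m})$ onto $\varphi$.

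\textbf{Iteration.} The main obstacle is absorbing the intermediate derivatives $\nabla^k u$ with $0<k<m$, since \eqref{eq:lower Garding ellipticity estimate} only controls the top order and there is no direct Poincaré-type bound available to close the estimate in a single step. This is where the iteration scheme of Barton \cite[Theorem 3.10]{B16} enters: using the Gagliardo--Nirenberg inequality (Lemma \ref{lem:Gagliardo--Nirenberg inequality}) with $\theta=k/m$ at each intermediate order, one interpolates
\begin{equation*}
\|\varphi^{m-k}\nabla^k u\|_{L^2(B_R)}\lesssim \varepsilon (R-r)^{m-k}\|\nabla^m(\varphi^m u)\|_{L^2}+\varepsilon^{-k/(m-k)}(R-r)^{-k}\|u\|_{L^2(B_R)},
\end{equation*}
after re-distributing the cutoff powers via a dyadic family of nested balls between $B_r$ and $B_R$. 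Choosing $\varepsilon$ small and iterating over the nested scales absorbs the top order contributions into the left-hand side and produces only the $u$-norm on $B_R$ on the right, delivering the stated integrated gradient bound for every $0\leq k\leq m$. The pointwise estimate for $\|u(d,\cdot)\|_{L^2(B_r)}^2$ is proved by the same computation, choosing instead $\chi(c)=0$, $\chi(d)=1$ with $|\chi'|\lesssim (d-c)^{-1}$, retaining the boundary term $\tfrac12\|\varphi^m u(d)\|_{L^2}^2$ produced by the integration by parts in time, and discarding the gradient term on the left-hand side before the absorption step.
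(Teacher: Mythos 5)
Your proposal is correct in substance and follows essentially the same route as the paper: continuity in time via the Lions-space Lemma \ref{lem:absolute continuity} applied to a spatially cut-off solution, a Caccioppoli-type inequality obtained by testing the weak formulation with cutoff-modified $u$ and using the G{\aa}rding inequality on the top-order term, and Barton's iteration over nested balls/annuli \cite[Theorem 3.10]{B16} to absorb the intermediate derivatives (your single Gagliardo--Nirenberg display does not close by itself because of the commutator terms $\nabla^{i}u\,\nabla^{k-i}\varphi$, but you correctly defer this to the iteration over nested scales, which is exactly the mechanism the paper invokes). The one genuine difference is how the factors $1/(d-c)$ and $(c-\tilde a)$ are produced: you use a temporal cutoff $\chi$, whereas the paper keeps the initial-data term $\|u(a',\cdot)\|_{L^2(B_R)}^2$ in the energy identity and only afterwards averages over the initial time $a'$ by Fubini. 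This is not merely cosmetic for the second estimate: with your $\chi$ transitioning on $[\tilde a,c]$, the term $\int\chi\chi'\|\varphi^m u\|^2$ puts $\int_{\tilde a}^{d}\|u\|_{L^2(B_R)}^2\,ds$ on the right-hand side, which is weaker than the stated bound with $\int_c^d$, and the past interval $[\tilde a,c]$ cannot in general be controlled by $[c,d]$. To recover the inequality with the right-hand side confined to $[c,d]$ you should do as the paper does and average the initial time over $[c,d]$ (equivalently, average over a family of cutoffs whose transition lies inside $[c,d]$); for the $\|u(d,\cdot)\|_{L^2(B_r)}$ bound your choice $\chi(c)=0$, $\chi(d)=1$ already gives the stated form. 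With that adjustment the argument delivers the proposition as used throughout the paper.
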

	\begin{proof}
		We first test the equation with a cut-off function to find for $0<r<R$ and $a'\in (a,d)$
			\begin{align}
			\label{claim1}
			&\|u(d,\cdot)\|_{L^2(B_r)}^2+\lambda\int_{a'}^d \|\nabla ^m u(s,\cdot)\|_{L^2(B_r)}^2ds \\ \notag&
			\leq \|u(a',\cdot)\|_{L^2(B_R)}^2+ \sum_{i=1}^{m-1}\frac{C}{(R-r)^{2m-2k}}\int_{a'}^d \|\nabla ^k u(s,\cdot)\|_{L^2(B_R\setminus B_r)}^2ds.
			\end{align}
	\paragraph{Proof of \eqref{claim1}} Let $\eta \in \testfunctions$ be a real-valued, non-negative function supported in $B_R$, equal $1$ on $B_r$ with $\|\nabla^k\eta\|_{L^\infty}\leq \frac{C}{(R-r)^k}$, for $k=0,\dots, m$. Then for any integer $0\leq k\leq m$
	\[\|\nabla^k (\eta^{2m} u)\|_{L^2((a',d)\times \BBR^n)}\lesssim_{n,m}\sum_{0\leq l \leq k} \left(\frac{C}{R-r}\right)^{k-l}\|\nabla ^{l} u\|_{L^2((a',d)\times B_R)}<\infty, \] 
	so $\eta ^{2m}u \in L^2(a',d; H^m_0(B_R))$. As $u$ is a local weak solution on $(a,b)\times B(x,\delta R)$, we can use the density of test functions on $(a',d)\times B_R$ in $L^2(a',d;H^m_0(B_R))$ to conclude $\partial_t u \in L^2(a',d;H^{-m}(B_R))$. Thus, $\partial_t (\eta^{2m} u) \in  L^2((a',d);H^{-m}(B_R))$ and Lemma \ref{lem:absolute continuity} implies $\eta^{2m} u \in \mathscr C( [a',d]; L^2(B_R))$. Moreover, the map $c\to\|\eta ^{2m}u(c, \cdot)\|_{L^2}^2$ is absolutely continuous in $c\in [a',d]$ and we calculate 
	\begin{align*}
	I\coloneqq\|\eta^{2m} u(d, \cdot)\|_{L^2}^2-\|\eta^{2m} u(a', \cdot)\|_{L^2}^2
	&= - 2\text{Re}\int_{a'}^d \langle A(t,\cdot)\nabla^m  u(t,\cdot), \nabla^m\left(\eta^{4m} u(t,\cdot)\right)\rangle dt 
	\end{align*}
	where the inner pairing is the usual $L^2$ scalar product. By Leibnitz rule this equals further
		\begin{align*}
		I&=- 2\text{Re}\int_{a'}^d \sum_{|\alpha|=|\beta|=m} \left[\int_{B_R}a_{\alpha,\beta}\partial^\beta u \left( \eta^{2m}\overline{\partial^\alpha(\eta^{2m}u)}+\sum_{\gamma<\alpha}c_{\alpha,\gamma}\partial^{\alpha-\gamma}(\eta^{2m})\overline{\partial^{\gamma}(\eta^{2m}u)}\right)dx\right] dt 
		\end{align*}
		
		The second term in the round brackets can be written as $\sum_{\gamma<\alpha}\eta^{2m}\Phi_{\alpha,\gamma}\overline{\partial^{\gamma}u}$ where $\Phi_{\alpha,\gamma}$ are functions supported in $B_R\setminus B_r$ and $\|\Phi_{\alpha,\gamma}\|_{L^\infty}\leq C (R-r)^{-(|\alpha|-|\gamma|)}$, which leads to
		\begin{align*}
			I&=- 2\text{Re}\int_{a'}^d \sum_{|\alpha|=|\beta|=m} \left[\int_{B_R}a_{\alpha,\beta}(t,x)(\eta^{2m}\partial^\beta u) \left( \overline{\partial^\alpha(\eta^{2m}u)}+\sum_{\gamma<\alpha}\Phi_{\alpha,\gamma}\overline{\partial^{\gamma}u}\right)dx\right] dt 
		\end{align*} 
		
		Next, rewrite $\eta^{2m}\partial^\beta u$, so that the term $\partial^\beta(\eta^{2m}u)$ appears. Ellipticity bounds imply then
		\begin{align*}
		&\|\eta^{2m} u(d, \cdot)\|_{L^2}^2-\|\eta^{2m} u(a', \cdot)\|_{L^2}^2+2\lambda \int_{a'}^d \|\nabla ^m 
		(\eta^{2m}u)(s,\cdot)\|_{L^2(B_R)}^2ds \\
		&\lesssim_{\Lambda,n,m}  \int_{a'}^d \|\nabla ^m 
		 (\eta^{2m}u)(s,\cdot)\|_{L^2(B_R\setminus B_r)}\left(\sum_{k=0}^{m-1}(R-r)^{-(m-k)}\|\nabla ^k u(s,\cdot)\|_{L^2(B_R\setminus B_r)}\right)ds \\&+  \int_{a'}^b\sum_{k=0}^{m-1}(R-r)^{-(2m-2k)}\|\nabla ^k u(s,\cdot)\|_{L^2(B_R\setminus B_r)}^2ds 
		 \end{align*} Finally, we use the Cauchy inequality on the first summand, choosing the overall constant in front of the factor $\int_{a'}^d \|\nabla ^m 
		 (\eta^{2m}u)(s,\cdot)\|_{L^2(B_R)}^2ds$ on the right hand side to be smaller than $\lambda$. Subtracting this quantity on both sides gives the claim.
		 \hfill $\blacksquare$
	 
		We remark that the proof shows 
		 \begin{align}\label{claim1_strong}
		 &\|u(d,\cdot)\|_{L^2(B_\rho)}^2+\lambda\int_{a'}^d \|\nabla ^m u(s,\cdot)\|_{L^2(B_\rho)}^2ds \\\notag & \lesssim\|u(d,\cdot)\psi(\cdot)\|_{L^2(B_r)}^2+\lambda\int_{a'}^d \|\nabla ^m( u(s,\cdot)\psi(\cdot))\|_{L^2(B_r)}^2ds \\\notag &\leq \int_{a'}^d\sum_{k=0}^{m-1}\frac{C_{\Lambda,\lambda, n,m}}{(r-\rho)^{2m-2k}} \|\nabla ^k u(s,\cdot)\|_{L^2(B_r\setminus B_\rho)}^2ds +\|u(a',\cdot)\|_{L^2(B_r)}^2
		 \end{align} for any $0<\rho<r<R$ and cut-off function $\psi$ with radii-dependent decay as specified above. We still need to deal with the intermediate derivatives. This type of inequality was encountered in \cite[Theorem 3.10]{B16} in the context of higher order autonomous elliptic equations and Barton used an iteration over the annuli to show that the terms coming from the intermediate derivatives can be neglected. The same reasoning as in the proof of \cite[Theorem 3.10]{B16} shows the self improving property of our inequality \eqref{claim1_strong}, namely we obtain
		
		 \begin{claim} Assume that \eqref{claim1_strong} holds for any $0<\rho<r<R$ and some $a<a'<d<b$. Then there is a constant $C$ dependent on ellipticity and dimensions, such that if $0<r<R$ then $u$ satisfies the stronger inequalities \begin{enumerate}[label=\upshape { (\roman*)}]
		 	\item  $
		 	\int_{a'}^d \|\nabla ^m u(s,\cdot)\|_{L^2(B_r)}^2ds \leq  \frac{C}{(R-r)^{2m}}\int_{a'}^d \| u(s,\cdot)\|_{L^2(B_R\setminus B_r)}^2ds +C\|u(a',\cdot)\|_{L^2(B_R)}^2
		 	$.
		 	\item For integer $0\leq j \leq m$, \begin{align*}
		 	\int_{a'}^d \int_{B_r}|\nabla ^j u|^2dxds \leq  \frac{C}{(R-r)^{2j}}\int_{a'}^d \int_{B_R}| u|^2dxds +C(R-r)^{2m-2j}\|u(a',\cdot)\|_{L^2(B_R)}^2.
		 	\end{align*}
		 	\item $\|u(d,\cdot)\|_{L^2(B_r)}^2\leq \frac{C}{(R-r)^{2m}}\int_{a'}^d \| u(s,\cdot)\|_{L^2(B_R)}^2ds +C\|u(a',\cdot)\|_{L^2(B_R)}^2.$
		 \end{enumerate}
	 \end{claim}
		Let us finish the proof of Proposition \ref{pro:local_energy}. Integrating the estimate (i) over $a'\in [c,d]$ and applying Fubini's Theorem gives us
		\begin{align*}
		\int_c^d(s-c) \int_{B_r}|\nabla ^m u|^2dxds&=\int_c^d\int_{a'}^d \int_{B_r}|\nabla ^m u|^2dxdsda' \\&\leq  \frac{C}{(R-r)^{2m}}\int_c^d\int_{a'}^d\int_{B_R\setminus B_r}| u|^2dxdsda' +C\int_c^d\int_{B_R}|u|^2dxda'
		 \\&\leq C\left( \frac{ (d-c)}{(R-r)^{2m}}+1\right)\int_c^d\int_{B_R}| u|^2dxds.
		\end{align*} That is for any $a<\tilde a < c$ we obtain
		\begin{align*}
		(c-\tilde a)\int_c^d \int_{B_r}|\nabla ^m u|^2dxds&\leq C\left( \frac{ (d-c)}{(R-r)^{2m}}+1\right)\int_c^d\int_{B_R}| u|^2dxds.
		\end{align*}
		The bound for the intermediate derivatives follows in the same way if we integrate (ii). 
		
		Finally, we integrate (iii) in $a'$ to arrive at 
		   \begin{equation*}
		   \|u(d,\cdot)\|_{L^2(B_{r})}^2 \leq C\left(\frac{1}{(R-r)^{2m}} + \frac{1}{d-c}\right)\int_c^d \|u(s,\cdot)\|_{L^2(B_R)}^2ds.\qedhere
		   \end{equation*}
	\end{proof}
	
	When studying the $L^p(\BBR^n)$ well-posedness theory, it will be possible to reduce some proofs to the case $p>2$ by duality. For this reason we introduce the backwards in time equation, the propagator for which will turn out to be the adjoint of the propagator for some equation of type \eqref{eq:the parabolic equation}. 
	\begin{definition}
		\label{def:backwards equation}
		Let $T>0$ and $\tilde A \in L^\infty(\BBR^{n+1}_+;\BBC^{MN\times MN})$ satisfy the ellipticity estimates. We say $u\in L^2_{loc}(0,T; H^m_{loc}(\Omega))$ is a local weak solution to the backwards in time equation up to time $T>0$,
		\begin{equation}
		\label{eq:backwards}
		\partial_s u(s,x) = (-1)^m \text{div}_m \tilde A(s,x) \nabla^m u \quad\mbox{on}\quad (0,T)\times \Omega,
		\end{equation} if for any $\phi \in \mathscr C_c^\infty((0,T)\times \Omega)$ it holds
		\[-\int_0^T\int_\Omega u(t,x)\overline{\partial_t \phi (t,x)}dxdt=\int_0^T\int_\Omega\tilde A(t,x)\nabla^m u(t,x)  \overline{\nabla^m \phi(t,x)} dxdt.\]
	\end{definition}
	\begin{remark} \label{rem:backwards eq}
		 We see directly from the weak formulation above that if $u$ is a weak solution of \eqref{eq:backwards} on $(0,T)\times\Omega$, then $u(T-t,x)$ is a local weak solution of \eqref{eq:the parabolic equation} on $(0,T)\times\Omega$ with $A(t,x)=\tilde A(T-t,x)$. Thus, we obtain the continuity in time of such solutions with values in $H^m_{loc}(\Omega)$, as well as the analogous quantitative energy estimates from Proposition \ref{pro:local_energy}.
	\end{remark}
	
	The local energy estimates can be used to derive reversed H\"older estimates. For this, note that $(0,\infty)\times \BBR^n$, equipped with the quasi-distance \[d((t,x),(s,y))\coloneqq\max\{|t-s|^{1/(2m)},|x-y|\}\] and the Lebesgue measure, is a space of homogeneous type. We denote $B_R(t,x)=[t-R^{2m},t+R^{2m}]\times B(x,R)$.
	\begin{lemma}
		\label{lem:hoelder}
		Let $q=2+\frac{4m}{n}$. Then there exists a constant $C_{\lambda,\lambda,m,n}>0$, such that for all global weak solutions $u$ to \eqref{eq:the parabolic equation}, $(t,x)\in (0,\infty)\times \BBR^n$ and all $(4r)^{2m}\in (0, t)$ the following inequality holds 
		\begin{equation}
		\left(\fint_{B_r(t,x)} |u(s,y)|^qdyds\right)^{1/q}\leq C \left(\fint_{B_{4r}(t,x)} |u(s,y)|^2dyds\right)^{1/2}.
		\end{equation}
	\end{lemma}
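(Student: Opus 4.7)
The exponent $q=2+\tfrac{4m}{n}$ is the critical parabolic Sobolev exponent: for this $q$ and $\theta=2/q$, the scaling condition $\tfrac{1}{q}=\theta(\tfrac{1}{2}-\tfrac{m}{n})+\tfrac{1-\theta}{2}$ of Lemma \ref{lem:Gagliardo--Nirenberg inequality} (applied with $k=0$, $p=r=2$) is satisfied, so that for every $v\in H^m(\BBR^n)$,
\[\|v\|_{L^q(\BBR^n)}^q\lesssim\|\nabla^m v\|_{L^2}^{2}\,\|v\|_{L^2}^{q-2}.\]
The right-hand side is exactly the product controlled by the two estimates of Proposition \ref{pro:local_energy}, so my plan is the standard ``$L^\infty_tL^2_x\cap L^2_t\dot H^m_x\hookrightarrow L^q_{t,x}$'' embedding scheme adapted to local weak solutions.

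Fix $(t,x)$ and $r$ with $(4r)^{2m}<t$, write $I_\rho=[t-\rho^{2m},t+\rho^{2m}]$ and $B_\rho=B(x,\rho)$, and pick $\eta\in\mathscr{C}_c^\infty(\BBR^n)$ with $\eta\equiv 1$ on $B_{2r}$, $\mathrm{supp}\,\eta\subseteq B_{3r}$, and $\|\nabla^j\eta\|_\infty\lesssim r^{-j}$. At almost every $s\in I_r$ I would apply the above Gagliardo--Nirenberg inequality to $\eta u(s,\cdot)$ and integrate in $s$, obtaining
\[\int_{I_r}\|u(s)\|_{L^q(B_r)}^q\,ds\lesssim\Bigl(\sup_{s\in I_r}\|u(s)\|_{L^2(B_{3r})}^2\Bigr)^{\!(q-2)/2}\int_{I_r}\|\nabla^m(\eta u)(s)\|_{L^2}^2\,ds.\]
Expanding $\nabla^m(\eta u)$ by the Leibniz rule gives $\|\nabla^m(\eta u)(s)\|_{L^2}\lesssim\sum_{k=0}^m r^{-(m-k)}\|\nabla^k u(s)\|_{L^2(B_{3r})}$, so the last factor is dominated by $\sum_{k=0}^m r^{-2(m-k)}\int_{I_r}\|\nabla^k u\|_{L^2(B_{3r})}^2\,ds$.

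I would then invoke Proposition \ref{pro:local_energy} on nested cylinders corresponding to $\tilde a=t-3r^{2m}$, $c=t-2r^{2m}$, $d\in I_r$, outer radius $R=4r$ and inner radius $3r$ (all time-endpoints are positive thanks to $(4r)^{2m}<t$). This yields both $\sup_{s\in I_r}\|u(s)\|_{L^2(B_{3r})}^2\lesssim r^{-2m}\int_{B_{4r}(t,x)}|u|^2$ and, for each integer $0\le k\le m$, $\int_{I_r}\|\nabla^k u\|_{L^2(B_{3r})}^2\,ds\lesssim r^{-2k}\int_{B_{4r}(t,x)}|u|^2$. Hence the cut-off loss $r^{-2(m-k)}$ is compensated exactly by the $r^{-2k}$ from the energy estimate, and all Leibniz terms collapse to yield
\[\int_{I_r}\|u\|_{L^q(B_r)}^q\,ds\lesssim r^{-qm}\Bigl(\int_{B_{4r}(t,x)}|u|^2\Bigr)^{\!q/2}.\]
Dividing by $|B_r(t,x)|\sim r^{n+2m}$ on the left and by $|B_{4r}(t,x)|^{q/2}\sim r^{q(n+2m)/2}$ on the right, the resulting $r$-exponent equals $-qm-(n+2m)+\tfrac{q}{2}(n+2m)$, which vanishes precisely when $q=2+\tfrac{4m}{n}$. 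Taking $q$-th roots then delivers the scale-invariant reverse H\"older inequality in the form stated.

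The only genuinely delicate point is the bookkeeping for the intermediate derivatives $\nabla^k u$, $0<k<m$, arising from the Leibniz expansion: without the intermediate-derivative family of space-time bounds provided by Proposition \ref{pro:local_energy} one could not close the scheme for general $m$. For $m=1$ only $u$ and $\nabla u$ appear and the argument reduces to the familiar parabolic Moser setup; for $m>1$ this is the single nontrivial input, and it is exactly the reason Proposition \ref{pro:local_energy} states the intermediate estimates for all $0\le k\le m$ and not just $k=m$.
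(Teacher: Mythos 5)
Your argument is correct and is essentially the route the paper itself takes (and merely cites): slice-wise Gagliardo--Nirenberg at the critical exponent $q=2+\frac{4m}{n}$ combined with the local energy estimates of Proposition \ref{pro:local_energy}, with the cutoff losses $r^{-2(m-k)}$ cancelled by the Caccioppoli gains $r^{-2k}$ exactly as in \cite[Lemma 4.1]{AMP15}. The only difference is that the paper defers these details to the cited reference, whereas you write them out; your choices of radii and time levels, and the final scaling bookkeeping, are all in order.
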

	\begin{proof}
		This follows from Lemma \ref{lem:Gagliardo--Nirenberg inequality} an Proposition \ref{pro:local_energy}. For the full argument, see the proof of \cite[Lemma 4.1]{AMP15}.
	\end{proof}
	Similarly as in \cite[\S4.1]{AMP15}, the reversed H\"older equality above holds for an improved exponent $\tilde q >q $, which is an application of a Gehring's Lemma type argument for spaces of homogeneous type, see \cite{BB11}. For the exponent on the right hand side we can even choose any $p\in [1,2]$ (for a proof of this self-improving property in the setting of spaces of homogeneous type we refer to \cite[Theorem B1]{BCF15}).
	\begin{corollary}
		\label{cor:rev_hoelder}
		There exist $C>0$ and $\tilde q >2+\frac{4m}{n}$ both dependent on the ellipticity and dimensions, such that for any global weak solution $u$ of \eqref{eq:the parabolic equation}, every $(t,x)\in (0,\infty)\times \BBR^n$ and all $(4r)^{2m}\in (0, t),$ we have  
		\begin{align*}
		\left(\fint_{B_{r}(t,x)} |u(s,y)|^2dyds\right)^{1/2}\leq\left(\fint_{B_r(t,x)} |u(s,y)|^{\tilde q}dyds\right)^{1/{\tilde q}}
		\leq C \left(\fint_{B_{4r}(t,x)} |u(s,y)|dyds\right).
		\end{align*}
	\end{corollary}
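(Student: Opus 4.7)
The plan is to obtain Corollary \ref{cor:rev_hoelder} as a two-step self-improvement of Lemma \ref{lem:hoelder}, first raising the exponent on the left-hand side via a Gehring-type argument, and then lowering the exponent on the right-hand side. Throughout, the underlying space is $((0,\infty)\times\BBR^n,d,dx\,dt)$ with the parabolic quasi-distance $d$, which is a space of homogeneous type, so the abstract machinery from \cite{BB11,BCF15} is directly applicable to the parabolic cylinders $B_r(t,x)$.

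First, I would record Lemma \ref{lem:hoelder} in the form of a local reverse H\"older inequality: for every global weak solution $u$ and every parabolic ball $B=B_r(t,x)$ contained in $(0,\infty)\times\BBR^n$ with $(4r)^{2m}<t$,
\[
\left(\fint_{B} |u|^q\,dy\,ds\right)^{1/q} \le C\left(\fint_{4B} |u|^2\,dy\,ds\right)^{1/2},
\]
with $q=2+\tfrac{4m}{n}$ and $4B\coloneqq B_{4r}(t,x)$. The class of admissible balls is stable under scaling by a fixed constant (provided we stay inside the admissibility condition $(4r)^{2m}<t$), which is exactly the hypothesis needed for Gehring's self-improvement in a space of homogeneous type. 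Applying the version of Gehring's lemma in \cite[Chap.~XI]{BB11} produces a (small) exponent $\varepsilon>0$ and a constant $C'$, depending only on the doubling constant of the parabolic metric and on $C,q$, such that
\[
\left(\fint_{B} |u|^{\tilde q}\,dy\,ds\right)^{1/\tilde q} \le C'\left(\fint_{4B} |u|^{2}\,dy\,ds\right)^{1/2}, \qquad \tilde q \coloneqq q+\varepsilon,
\]
for every admissible parabolic ball. This yields the first inequality of the corollary.

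Next, to reduce the exponent on the right-hand side from $2$ down to $1$, I would invoke the self-improvement result in \cite[Theorem~B1]{BCF15}. That statement says precisely that, in a space of homogeneous type, any reverse H\"older inequality of the form above with constant dilation $4B$ automatically upgrades to the same inequality with exponent $p$ on the right, for every $p\in[1,2]$, possibly with a larger constant but the same dilation factor. Applying it with $p=1$ gives
\[
\left(\fint_{B} |u|^{\tilde q}\,dy\,ds\right)^{1/\tilde q} \le C\fint_{4B} |u|\,dy\,ds,
\]
and chaining this with the trivial bound $\bigl(\fint_{B}|u|^{2}\bigr)^{1/2}\le \bigl(\fint_{B}|u|^{\tilde q}\bigr)^{1/\tilde q}$ (Jensen) yields the chain of three inequalities stated in the corollary.

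The main obstacle I anticipate is bookkeeping rather than mathematics: I must make sure that the admissibility condition $(4r)^{2m}\in(0,t)$ is preserved under the covering and stopping-time arguments inside Gehring's lemma, because those arguments enlarge balls by fixed factors a controlled number of times. This is fine as long as the class of admissible balls is scale-invariant by a constant factor, which it is (up to adjusting the constant $4$ to some larger universal constant), and I would absorb any such loss into the universal constant $C$. Everything else -- the doubling property of the parabolic measure, the independence of the constants on $(t,x)$ and $r$, and the fact that the statement is uniform over all global weak solutions -- follows automatically from the parabolic scaling and the translation-invariance of the hypotheses.
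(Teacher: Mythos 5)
Your proposal is correct and follows essentially the same route as the paper: starting from Lemma \ref{lem:hoelder}, one applies a Gehring-type self-improvement in the space of homogeneous type (via \cite{BB11}) to raise the left-hand exponent to some $\tilde q>2+\frac{4m}{n}$, and then invokes \cite[Theorem B1]{BCF15} to lower the right-hand exponent from $2$ to $1$, the first inequality being Jensen. The paper's own proof consists of exactly these two citations, so your additional bookkeeping about the admissibility of parabolic balls only makes explicit what the paper leaves implicit.
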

	\section{Traces of tent space solutions}
	\setcounter{theorem}{0} \setcounter{equation}{0}
	\label{sec:traces tent space}
We show that global weak solutions $u$ to \eqref{eq:the parabolic equation} satisfying $\|\nabla^m u \|_{\tentspace}<\infty$ possess a distributional trace, which, up to a polynomial, lies in $L^p$ or $BMO$ space. If $p\in(1,2]$ we also deduce a bound on the $L^\infty(L^p)$-norm of the solution modified by the same polynomial. 
		\begin{lemma}\label{lem:trace existence}
			Let $p\in [1,2]$ and $u$ be a global weak solution of \eqref{eq:the parabolic equation} with $S=\|\nabla^m u \|_{\tentspace}<\infty$.
			Then it holds $\|\nabla^m u \|_{L^2((\varepsilon,\infty)\times \BBR^n)}<\infty$ for all $\varepsilon>0.$ Further, there exists a unique distributional trace of $u$ at $t=0$.
		\end{lemma}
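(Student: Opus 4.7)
The plan is to prove the two claims of the lemma in order, treating them somewhat separately. For the $L^2$-bound on $\nabla^m u$, I would begin with the Fubini identity
\[\int_\varepsilon^\infty\int_{\BBR^n}|\nabla^m u|^2\,dy\,dt = \int_{\BBR^n}A_\varepsilon(x)\,dx, \quad A_\varepsilon(x):=\int_\varepsilon^\infty\fint_{B(x,\sqrt[2m]{t})}|\nabla^m u|^2\,dy\,dt,\]
obtained by writing $|\nabla^m u(t,y)|^2=\fint_{B(y,\sqrt[2m]{t})}|\nabla^m u(t,y)|^2\,dx$ and swapping the $x$- and $y$-integrals. The pointwise domination $A_\varepsilon \leq S^2$ by the tent-space square function, whose $L^p$-norm equals $\|\nabla^m u\|_{T^{p,2}_m}$ and is finite by hypothesis, instantly resolves the case $p=2$.

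For $p\in[1,2)$ one has to exploit the scale restriction $t\geq\varepsilon$. The key observation is that for $x\in B(x_0,\sqrt[2m]{\varepsilon}/2)$ and $t\geq\varepsilon$, the inclusion $B(x_0,\sqrt[2m]{t})\subseteq B(x,2\sqrt[2m]{t})$ yields $A_\varepsilon(x_0) \leq 2^n A_\varepsilon^{(2)}(x)$, with $A_\varepsilon^{(2)}$ the doubled-aperture truncated square function. Since $(A_\varepsilon^{(2)})^{p/2}\leq (S^{(2)})^p$ and $\|S^{(2)}\|_{L^p}\lesssim\|S\|_{L^p}$ by the change-of-aperture theorem of Coifman--Meyer--Stein, Markov's inequality on $B(x_0,\sqrt[2m]{\varepsilon}/2)$ produces $x$ therein with $A_\varepsilon^{(2)}(x)\leq C\|S\|_{L^p}^{2}\,\varepsilon^{-n/(mp)}$. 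This gives $\|A_\varepsilon\|_{L^\infty}\lesssim\varepsilon^{-n/(mp)}\|S\|_{L^p}^2$, and combining this with the tent-space bound $\int A_\varepsilon^{p/2}\leq\|S\|_{L^p}^p$ through the interpolation $A_\varepsilon\leq \|A_\varepsilon\|_{L^\infty}^{1-p/2}A_\varepsilon^{p/2}$ yields
\[\int_{\BBR^n}A_\varepsilon\,dx\;\lesssim\;\varepsilon^{-n(2-p)/(2mp)}\|S\|_{L^p}^2<\infty,\]
completing the first claim.

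For the distributional trace, the $L^2$-bound together with the weak form of $\partial_t u + Lu = 0$ yields $\partial_t u\in L^2_{loc}((0,\infty);H^{-m}_{loc}(\BBR^n))$, and Poincar\'e (Lemma~\ref{lem:generalized Poincare}) combined with the Lions continuity lemma (Lemma~\ref{lem:absolute continuity}) upgrades to $u\in\mathscr{C}((0,\infty);L^2_{loc}(\BBR^n))$ up to local polynomial correction. For $\phi\in\mathscr{C}_c^\infty(\BBR^n)$ and $0<s<t$, testing the equation against $\phi(x)\psi(\tau)$ for a cutoff $\psi$ concentrating near $[s,t]$ produces
\[\bigg|\int_{\BBR^n}u(t)\phi\,dx - \int_{\BBR^n}u(s)\phi\,dx\bigg|\;\leq\;\Lambda\,\|\nabla^m\phi\|_{L^2}\sqrt{t-s}\,\|\nabla^m u\|_{L^2((s,t)\times\mathrm{supp}\,\phi)},\]
and the quantitative $\varepsilon$-dependent bound from the first part lets one verify that $\{\int u(t)\phi\}_{t>0}$ is Cauchy along a dyadic sequence $t_k\downarrow 0$, thereby defining $\langle u_0,\phi\rangle := \lim_{t\to 0^+}\int u(t)\phi$; uniqueness is automatic for distributional limits. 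The main obstacle is the $L^\infty$-bound on $A_\varepsilon$ for $p<2$, which crucially relies on the scale restriction $t\geq\varepsilon$ --- without this the bound fails for arbitrary $T^{p,2}_m$-functions --- and is what makes the change-of-aperture plus Markov argument the technical heart of the proof.
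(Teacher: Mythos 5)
The first half of your argument is fine and reaches the paper's quantitative bound $\|\nabla^m u\|_{L^2((\varepsilon,\infty)\times\BBR^n)}\lesssim \varepsilon^{\frac{n}{2m}(\frac12-\frac1p)}S$ by a different route: the paper tiles $\BBR^n$ by cubes of side $\sim\varepsilon^{1/2m}$ and uses $\ell^{p/2}\hookrightarrow\ell^1$, whereas you use change of aperture plus a Markov/averaging argument to get $\|A_\varepsilon\|_{L^\infty}\lesssim \varepsilon^{-n/(mp)}S^2$ and then interpolate against $\int A_\varepsilon^{p/2}\le S^p$. Both are legitimate and give the same exponent.

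The second half has a genuine gap. Your Cauchy--Schwarz estimate gives, along $t_k=2^{-k}$, increments bounded by
\[
\bigl|\langle u(t_k)-u(t_{k+1}),\phi\rangle\bigr|\;\lesssim\;(t_k-t_{k+1})^{1/2}\,\|\nabla^m u\|_{L^2((t_{k+1},t_k)\times\BBR^n)}\;\lesssim\;2^{-k\left(\frac12-\frac{n}{2m}\left(\frac1p-\frac12\right)\right)}S\,\|\nabla^m\phi\|_{L^2},
\]
which is summable only when $p>\frac{2n}{n+2m}$. Since the lemma is claimed for all $p\in[1,2]$, this fails exactly in the range $1\le p\le \frac{2n}{n+2m}$, which is nonempty whenever $n\ge 2m$ (for $p$ strictly below the threshold the increments do not even tend to zero). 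Restricting to $\operatorname{supp}\phi$ does not rescue the estimate: a tent-space function of norm one concentrated on $(\varepsilon,2\varepsilon)\times B(0,\varepsilon^{1/2m})$ has $L^2$ norm there comparable to $\varepsilon^{-\frac{n}{2m}(\frac1p-\frac12)}$, so the bound from your first part is sharp even locally, and you use no further structure of the solution. The missing idea is the one the paper uses: estimate the pairing by the $T^{p,2}_m$--$T^{p',2}_m$ duality rather than by $L^2$--$L^2$, i.e.
\[
\bigl|\langle u(\varepsilon)-u(\delta),\phi\rangle\bigr|\;\lesssim\;\|\nabla^m u\|_{\tentspace}\,\bigl\|\nabla^m\phi\,\mathbbm{1}_{(\delta,\varepsilon)}\nu\bigr\|_{T^{p',2}_m},
\]
and then bound $\bigl\|\nabla^m\phi\,\mathbbm{1}_{(\delta,\varepsilon)}\nu\bigr\|_{T^{p',2}_m}\lesssim|\varepsilon-\delta|^{1/2}\|\nabla^m\phi\|_{L^{p'}}$ via the Hardy--Littlewood maximal function (here $p'\ge2$ because $p\le2$). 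This pairs the short time interval against the test function, costs no negative power of $\varepsilon$, and yields the Cauchy property uniformly for all $p\in[1,2]$.
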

		\begin{proof}
			We first claim
			\begin{align*}
			\|\nabla^m u \|_{L^2((\varepsilon,\infty)\times \BBR^n)}\lesssim \varepsilon ^{\frac{n}{2m}(\frac{1}{2}-\frac{1}{p})}S.
			\end{align*} For $p=2$ there is nothing to show. 
			For $p<2$ we write for $\varepsilon>0$
			\begin{align*}
			\|\nabla^m u \|_{L^2((\varepsilon,\infty)\times \BBR^n)} &= \left(\int_{\BBR^n} \int_{\varepsilon}^\infty \fint_{B(x,\sqrt[2m]{t}/4)}|\nabla^m u |^2dydtdx\right)^{1/2} \\
			&= \left(\sum_{z\in \frac{\sqrt[2m]{\varepsilon}}{4\sqrt{n}}\BBZ^n} \int_{Q(z)}\int_{\varepsilon}^\infty \fint_{B(x,\sqrt[2m]{t}/4)}|\nabla^m u |^2dydtdx\right)^{1/2},
			\end{align*}
			where for $z\in \frac{\sqrt[2m]{\varepsilon}}{4\sqrt{n}}\BBZ^n$ we denoted by $Q(z)$ the cube $z+\frac{\sqrt[2m]{\varepsilon}}{4\sqrt{n}}(0,1)^n$. We write $c_{Q(z)}$ for the center of $Q(z)$ and realize for any $t>\varepsilon$ and $x\in Q(z)$
			\[B(x,\sqrt[2m]{t}/4)\subseteq B(c_{Q(z)}, \sqrt[2m]{t}/2)\subseteq B(x, \sqrt[2m]{t}),\] hence it holds
			\begin{align*}
			\|\nabla^m u \|_{L^2((\varepsilon,\infty)\times \BBR^n)} &\lesssim  \left(\sum_{z\in \frac{\sqrt[2m]{\varepsilon}}{4\sqrt{n}}\BBZ^n} \int_{Q(z)}\int_{\varepsilon}^\infty \fint_{B(c_{Q(z)}, \sqrt[2m]{t}/2)}|\nabla^m u |^2dydtdx\right)^{1/2}\\
			&\lesssim \varepsilon^{\frac{n}{2m}\frac{1}{2}} \left(\sum_{z\in \frac{\sqrt[2m]{\varepsilon}}{4\sqrt{n}}\BBZ^n} \int_{\varepsilon}^\infty \fint_{B(c_{Q(z)}, \sqrt[2m]{t}/2)}|\nabla^m u |^2dydt\right)^{1/2}\\
			&\lesssim \varepsilon^{\frac{n}{2m}\frac{1}{2}} \left(\sum_{z\in \frac{\sqrt[2m]{\varepsilon}}{4\sqrt{n}}\BBZ^n} \left(\int_{\varepsilon}^\infty \fint_{B(c_{Q(z)}, \sqrt[2m]{t}/2)}|\nabla^m u |^2dydt\right)^{p/2}\right)^{1/p}\\
			&\lesssim \varepsilon^{\frac{n}{2m}(\frac{1}{2}-\frac{1}{p})} \left(\sum_{z\in \frac{\sqrt[2m]{\varepsilon}}{4\sqrt{n}}\BBZ^n}\int_{Q(z)} \left(\int_{\varepsilon}^\infty \fint_{B(x, \sqrt[2m]{t})}|\nabla^m u |^2dydt\right)^{p/2}dx\right)^{1/p}\\
			&\lesssim \varepsilon^{\frac{n}{2m}(\frac{1}{2}-\frac{1}{p})}S,
			\end{align*} 
			where in the third line we needed the embedding of the sequence spaces $\ell_p \hookrightarrow \ell_2$ as $p<2$.
			The weak formulation of the equation \eqref{eq:the parabolic equation} implies $\partial_t u \in L^2(\varepsilon,\infty; H^{-m}(\BBR^n))$. Let $\nu \in \mathscr C^\infty ((0,\infty))$ satisfy $\nu=1$ on $(0,1)$ an $\nu =0$ on $(2,\infty)$. Applying Lemma \ref{lem:absolute continuity} for intervals $[\varepsilon,2]$ and any $\varepsilon \in (0,1)$, we obtain for every $\psi \in \testfunctions$
			\begin{align*}
			(u(\varepsilon, \cdot),\psi)_{L^2}=(u(\varepsilon, \cdot),\psi \nu (\varepsilon))_{L^2}
			=-\int_{\varepsilon}^{\infty}(\partial_t u, \psi \nu)dt -\int_{1}^{2} (u,\psi\partial_t \nu) dt.
			\end{align*}
			Thus we find for $0<\delta<\varepsilon<1$ 
			\begin{align*}
			|(u(\varepsilon, \cdot),\psi)_{L^2}-(u(\delta, \cdot),\psi)_{L^2}|&=\left|\int_{\varepsilon}^{\infty}(A\nabla^m u(t), \nabla^m \psi \mathbbm{1}_{(\delta,\varepsilon)}(t)\nu(t))dt\right|\\
			&\lesssim \|\nabla^mu \|_{\tentspace} \|\nabla^m \psi \mathbbm{1}_{(\delta,\varepsilon)}(t)\nu(t)\|_{T^{p',2}_m}.
			\end{align*}
			Simple estimates with help of the Hardy--Littlewood maximal function yield, as $p\leq 2$,
			\[\|\nabla^m \psi \mathbbm{1}_{(\delta,\varepsilon)}(t)\nu(t)\|_{T^{p',2}_m}\lesssim |\varepsilon-\delta|\|\nabla^m \psi\|_{L^{p'}(\BBR^n)},\] thus there exists a distributional limit $u_0\in \mathscr D'(\BBR^n)$, as claimed. 
		\end{proof}
		\begin{lemma}\label{lem:trace integrability}
			Let $p\in (1,2]$ and $u$ be a global weak solution of \eqref{eq:the parabolic equation} with $S=\|\nabla^m u \|_{\tentspace}<\infty$ and distributional trace $u_0$. Then there exists a unique polynomial $P\in \pol$ with $u_0-P\in L^p(\BBR^n)$ and 
			\begin{equation*}
			\sup_{t\ge 0}\|u(t)-P\|_{L^p(\BBR^n)}\lesssim S.
			\end{equation*}
		\end{lemma}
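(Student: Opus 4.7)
The plan is to mimic the proof strategy of the $p \in [2, \infty]$ case, while exploiting that for $p \leq 2$ a Jensen-type inequality reduces pointwise $L^p$ control to control of weighted $L^2$-averages. Indeed, concavity of $s \mapsto s^{p/2}$ together with Fubini's theorem give, for any $r > 0$ and any $P \in \pol$,
\[
\|u(t_0) - P\|_{L^p(\BBR^n)}^p \;\leq\; \int_{\BBR^n}\Bigl(\fint_{B(x,r)}|u(t_0, y) - P(y)|^2\,dy\Bigr)^{p/2}dx,
\]
so it suffices to control the square-function-type quantity on the right uniformly in $t_0 > 0$.

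For the local estimate, fix $t_0 > 0$ and $x_0 \in \BBR^n$ and set $r = t_0^{1/(2m)}$. Since $LQ = 0$ for every $Q \in \pol$, $u - Q$ is also a weak solution, and Proposition \ref{pro:local_energy} applied on $(t_0/2, t_0) \times B(x_0, 2r)$ yields
\[
\|u(t_0) - Q\|_{L^2(B(x_0, r))}^2 \;\lesssim\; t_0^{-1}\int_{t_0/2}^{t_0}\|u(s) - Q\|_{L^2(B(x_0, 2r))}^2\,ds.
\]
Choosing $Q = Q_{x_0, t_0}$ to be a time-averaged $L^2(B(x_0, 2r))$-projection of $s \mapsto u(s)$ onto the finite-dimensional space $\pol$, and invoking Lemma \ref{lem:generalized Poincare} slice-by-slice (the polynomial error between the slice-optimal polynomial and the time-averaged one being absorbed by the norm-equivalence on $\pol$ restricted to comparable balls), leads to
\[
\fint_{B(x_0, r)}|u(t_0) - Q_{x_0, t_0}|^2\,dy \;\lesssim\; r^{2m}\fint_{t_0/2}^{t_0}\fint_{B(x_0, 2r)}|\nabla^m u(s, y)|^2\,dy\,ds.
\]
Raising to the $p/2$-th power and integrating in $x_0$, Fubini's theorem bounds the right-hand side by $S^p$ because each cylinder $(t_0/2, t_0) \times B(x_0, 2r)$ fits under the tent over $x_0$ at scale $\sim r$; this yields the desired $L^p$ bound, but with the $x_0$-dependent polynomial $Q_{x_0, t_0}$.

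The remaining task, and the main obstacle, is to replace $Q_{x_0, t_0}$ by a single polynomial $P$ independent of $x_0$ and $t_0$. For this I compare $Q_{x_0, t_0}$ with $Q_{x_0', t_0}$ on overlapping balls: the difference lies in the finite-dimensional space $\pol$, so its coefficients are controlled by its $L^2$-norm on any fixed ball, which in turn is estimated by the tent-space-controlled $L^2$-averages of $u(t_0) - Q_{\cdot, t_0}$ on the union. Telescoping this comparison along a sequence of overlapping balls with centers extending to infinity and exploiting that $S < \infty$ makes the accumulated errors $L^p$-summable, one obtains $Q_{x_0, t_0} \to P_{t_0}$ in $\pol$ as $|x_0| \to \infty$. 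The continuity $u \in \mathscr{C}((0, \infty); L^2_{loc}(\BBR^n))$, inherited from $\nabla^m u \in L^2((\varepsilon, \infty) \times \BBR^n)$ and $\partial_t u \in L^2_{loc}(H^{-m}_{loc})$ via Lemma \ref{lem:absolute continuity}, then forces $P_{t_0}$ to be independent of $t_0$; call it $P$. The bound extends to $t = 0$ using the distributional convergence $u(t) \to u_0$ from Lemma \ref{lem:trace existence}, and uniqueness of $P$ is immediate since $\pol \cap L^p(\BBR^n) = \{0\}$ for $p < \infty$.
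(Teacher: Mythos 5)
Your overall skeleton (Jensen's inequality to reduce $L^p$ control to weighted $L^2$-averages, a Caccioppoli/Poincar\'e local estimate producing a cylinder-dependent polynomial, then a limiting procedure to extract one global $P$, and finally passing to $t=0$ via the distributional trace) is the same as the paper's, but two key steps are not justified as written. First, the local estimate: replacing the time-dependent, slice-optimal Poincar\'e polynomial by a single $Q_{x_0,t_0}$ cannot be "absorbed by norm-equivalence on $\pol$". Finite-dimensionality says nothing about the \emph{time variation} of the slice projections: the inequality $\int_{t_0/2}^{t_0}\|u(s)-Q\|_{L^2(B(x_0,2r))}^2\,ds\lesssim r^{2m}\iint|\nabla^m u|^2$ is a parabolic Poincar\'e inequality which is false for general functions (take $u$ depending on $t$ only) and, for weak solutions, needs the equation a second time. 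This is exactly the paper's "crucial observation": the coefficients $c_\alpha(t)=(u(t,\cdot),\phi_\alpha)$ of the Poincar\'e polynomial are absolutely continuous in $t$ with $c_\alpha'=-(A\nabla^m u,\nabla^m\phi_\alpha)$, whence their oscillation over the time interval is bounded by $\bigl(\iint|\nabla^m u|^2\bigr)^{1/2}$ (cf.\ \eqref{eq:bound on the coefficients1}); only with this does your displayed local bound follow. Your use of Proposition \ref{pro:local_energy} on $u-Q$ handles the passage from the time slice to the cylinder, but not this step.

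Second, and more seriously, your construction of the single polynomial $P$ by letting the \emph{center} $x_0$ go to infinity at the fixed scale $r=t_0^{1/(2m)}$ does not work. The comparison of projections on two overlapping balls at scale $r$ costs $O\bigl(r^{-n/(2mp)}S\bigr)$ per step with no gain as the center moves out, so telescoping along an infinite chain of overlapping balls produces a series you cannot sum: $S<\infty$ only says the conical square function lies in $L^p(\BBR^n)$, which gives no summability along a one-dimensional path of unit-scale steps. Moreover, even granting that the projections become small relative to the putative limit on far-away balls, for $m\ge 2$ smallness of a polynomial of degree $\le m-1$ on $B(x_0,r)$ with $|x_0|\to\infty$ does not control its coefficients near the origin (it can grow like $(|x_0|/r)^{m-1}$ there), so convergence of $Q_{x_0,t_0}$ in $\pol$ does not follow. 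The paper instead fixes the center and doubles the \emph{radius}, with the time tied to the scale ($t\in(r,2r)$): the projection comparison between scales $r$ and $2r$ is $\lesssim r^{-\frac{n}{2mp}}S$, which is geometrically summable over dyadic scales, yielding a limit polynomial as $r\to\infty$ that is automatically independent of the center and of the time (as in Lemma \ref{lem:filter out the pol} and Proposition \ref{prop:structure of L^p_m}); this gives \eqref{eq:04} and hence $\sup_{t\ge0}\|u(t)-P\|_{L^p}\lesssim S$ exactly by your Jensen reduction. As it stands, your argument has a genuine gap at both of these points.
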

		
		\begin{proof} 
			Consider the weight $\omega=\eta^2$ for some non-negative, radially symmetric $\eta \in \mathscr C_c^\infty (B(0,1/3^{2m}))$. We assume $\int \omega =1$ and $0<c=\omega $ on $B(0,1/4^{2m})$.
			We first show for $x_0\in \BBR^n$, $r>0$ and $t\in (r,2r)$
			\begin{equation}\label{eq:01}
			\fint_{B(x_0,\sqrt[2m]{r})}\left|u(t,y)-\mathbb{P}^\omega_{x_0,\sqrt[2m]{r}}(u(t,\cdot))(y)\right|^2\omega\left(\frac{y-x_0}{\sqrt[2m]{r}}\right)dy\lesssim\int_{r/2}^{2r}\fint_{B(x_0,\sqrt[2m]{s})}|\nabla^m u|^2dys, \end{equation}
			where $\mathbb{P}^\omega_{x_0,\sqrt[2m]{r}}$ denotes the corresponding orthogonal projection on $\pol$ from Section \ref{sec:Campanato} with respect to the weighted scalar product. 
			
			By scaling and translation it suffices to consider $x_0=0$ and $r=1$. The minimizing polynomial can be replaced by any other $p\in\pol$, for instance $p=P(1)$, where \[P(t)(x) = \sum_{|\alpha|\leq m-1} (u(t,\cdot),\phi_\alpha)x^\alpha\] is defined for every $t>0$ with functions $\phi_\alpha\in \mathscr D(B(0,1/2))$ as in Lemma \ref{lem:generalized Poincare}. The crucial observation is that the coefficients of $P(t)$ are absolutely continuous over the interval $[1/2,2]$. Indeed, the distributional derivative of $c_\alpha$ is given by
			$-(A\nabla^m u(t,\cdot),\nabla^m\phi_\alpha)_{L^2(B(0,1/2))}$ and thus belongs to $L^1(1/2,2)$. Therefore, for each $0\leq|\alpha|\leq m-1$ and any $t\in (1,2)$, 
			\begin{equation}
			\label{eq:bound on the coefficients1}
			|c_\alpha(t)-c_\alpha(1)|\lesssim \left(\int_{1/2}^2 \int_{B(0,1)}|\nabla^m u|^2dxds\right)^{1/2}
			\end{equation}	
			with constant depending only on $\phi_\alpha$, the ellipticity and dimensions. Estimate \eqref{eq:01} follows now directly from the local energy estimates (cf.\ Proposition \ref{pro:local_energy}), the Poincar\'e inequality from Lemma \ref{lem:generalized Poincare} and \eqref{eq:bound on the coefficients1}.
			As we consider the weighted scalar product, the same arguments show that also the coefficients of $\mathbb{P}^\omega_{x_0,\sqrt[2m]{r}}(u(t,\cdot))$ are absolutely continuous over $[r,2r]$ and we have 
			\begin{equation}\label{eq:02}
			\fint_{B(x_0,\sqrt[2m]{r/4})}\left|u(t,y)-\mathbb{P}^\omega_{x_0,\sqrt[2m]{r}}(u(r,\cdot))(y)\right|^2dy\lesssim\int_{r/2}^{2r}\fint_{B(x_0,\sqrt[2m]{s})}|\nabla^m u|^2dys. \end{equation} By definition, we observe $\mathbb{P}^\omega_{x_0,\sqrt[2m]{r}}(\mathbb{P}^\omega_{x_0,\sqrt[2m]{2r}})=\mathbb{P}^\omega_{x_0,\sqrt[2m]{2r}}$ and the known bounds on minimal polynomials together with \eqref{eq:02} lead to the estimate 
			\begin{equation*}
			\|\mathbb{P}^\omega_{x_0,\sqrt[2m]{r}}(u(r,\cdot))-\mathbb{P}^\omega_{x_0,\sqrt[2m]{2r}}(u(2r,\cdot))\|_{L^\infty(B(0,\sqrt[2m]{r}))}\lesssim \left(\int_{r/2}^{2r}\fint_{B(x_0,\sqrt[2m]{s})}|\nabla^m u|^2dys\right)^{1/2}\lesssim r^{-\frac{n}{2m}\frac{1}{p}}S. \end{equation*}
			Analogously as in Lemma \ref{lem:filter out the pol} and Proposition \ref{prop:structure of L^p_m}, we obtain a limiting polynomial as $r$ tends to $\infty$, which does not depend on the center of the ball. We thus get for any $x_0\in \BBR^n$ and $t >0$  
			\begin{equation}\label{eq:04}
			\fint_{B(x_0,\sqrt[2m]{t/4})}\left|u(t,y)-P(y)\right|^2dy\lesssim\int_{t/2}^{\infty}\fint_{B(x_0,\sqrt[2m]{s})}|\nabla^m u|^2dys .\end{equation} 
			From here, we obtain with Fubini's Theorem and H\"older inequality,
			\begin{align*}
			\|u(t)-P\|_{L^p(\BBR^n)}	
			\leq \left\|\left(\fint_{B(\cdot,\sqrt[2m]{t/4})} |u(t)-P|^2dy\right)^{1/2}\right\|_{L^p(\BBR^n)}\lesssim S.
			\end{align*}
			Thus $\sup_{t>0} \|u(t)-P\|_{L^p(\BBR^n)}\lesssim S.$ As any weak-* limit must coincide with the distributional one, we conclude that $u_0-P \in L^p(\BBR^n)$. The polynomial must clearly be unique, as the only $p$-integrable polynomial is the zero polynomial.
		\end{proof}
		We now turn to the case $p\in(2,\infty]$.
		\begin{lemma} \label{lem:trace p>2}
			Let $p\in (2,\infty]$ $u$ be a global weak solution $u$ to \eqref{eq:the parabolic equation} with $S=\|\nabla^m u \|_{\tentspace}<\infty$. Then $u$ has an $L^2_{loc}(\BBR^n)$ trace $u_0$ at $t=0$ and there exists a unique (up to a constant if $p=\infty$) polynomial $P\in \pol$ such that
			\begin{equation}\label{eq:05}
			\|u_0-P\|_{Y}\lesssim S.
			\end{equation} Here, $Y=L^p(\BBR^n)$ if $p\in(2,\infty)$ or $Y=BMO(\BBR^n)$ if $p=\infty$. 
		\end{lemma}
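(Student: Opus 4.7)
The plan is to mirror the strategy of Lemma \ref{lem:trace integrability} for the case $p\in(1,2]$, but now exploiting the equivalent norm for $\tentspace$ valid only for $p>2$ (and its $L^\infty$ version for $p=\infty$) given by Remark \ref{rem:eqv norm tent space}, and concluding via the Campanato characterizations of $L^p$ and $BMO$ modulo polynomials (Propositions \ref{prop:structure of L^p_m} and \ref{prop:structure of BMO_m}). By Remark \ref{rem:eqv norm tent space}, the tent-type maximal function
\[M(x)\coloneqq\sup_{B\ni x}\left(\int_0^{r_B^{2m}}\fint_B|\nabla^m u|^2\,dy\,dt\right)^{1/2}\]
satisfies $\|M\|_{L^p}\sim S$ if $p<\infty$ and $\|M\|_{L^\infty}\sim S$ if $p=\infty$; in particular $\nabla^m u\in L^2([0,R^{2m}]\times B)$ for every ball $B=B(x_0,R)$.

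First, I establish the existence of the $L^2_{loc}(\BBR^n)$ trace $u_0$ as at the beginning of the proof of Lemma \ref{lem:trace integrability}: using appropriately rescaled test functions $\phi_\alpha^{x_0,R}$ from Lemma \ref{lem:generalized Poincare}, the coefficients $c_\alpha(t):=(u(t,\cdot),\phi_\alpha^{x_0,R})_{L^2}$ of the Poincar\'e polynomial $P^{x_0,R}(t)(y):=\sum_\alpha c_\alpha(t)(y-x_0)^\alpha$ are absolutely continuous in $t$ with derivative bounded via ellipticity by $\|\nabla^m u(t)\|_{L^2(B)}$, so they extend continuously to $t=0$. Combined with the spatial Poincar\'e inequality, this gives $u(t)\to u_0$ in $L^2(B)$ as $t\to 0$.

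The heart of the argument is the local bound, for every $B=B(x_0,R)$,
\[\inf_{P\in\pol}\fint_B|u_0-P|^2\,dy\lesssim\int_0^{R^{2m}}\fint_B|\nabla^m u|^2\,dy\,dt.\]
I take $P=P^{x_0,R}(0)$ and decompose
\[\fint_B|u(t)-P^{x_0,R}(0)|^2\leq 2\fint_B|u(t)-P^{x_0,R}(t)|^2+2\fint_B|P^{x_0,R}(t)-P^{x_0,R}(0)|^2.\]
The first summand is controlled by $CR^{2m}\fint_B|\nabla^m u(t)|^2$ via Poincar\'e, and the second summand is bounded \emph{uniformly} in $t\in[0,R^{2m}]$ by $C\int_0^{R^{2m}}\fint_B|\nabla^m u|^2$ using Cauchy--Schwarz on the time derivatives of $c_\alpha$ together with the correct scaling of $\phi_\alpha^{x_0,R}$. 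The main obstacle is that the first summand is pointwise in $t$, and $\|\nabla^m u(t)\|_{L^2(B)}$ need not stay bounded as $t\to 0$. I resolve this by a Chebyshev/pigeonhole extraction: since $\nabla^m u\in L^2([0,R^{2m}]\times B)$, the set
\[\bigl\{t\in(0,R^{2m}):R^{2m}\fint_B|\nabla^m u(t)|^2>2\textstyle\int_0^{R^{2m}}\fint_B|\nabla^m u|^2\bigr\}\]
has measure at most $R^{2m}/2$, so its complement intersects every neighborhood of $t=0$, yielding a sequence $t_k\to 0$ along which $R^{2m}\fint_B|\nabla^m u(t_k)|^2\lesssim\int_0^{R^{2m}}\fint_B|\nabla^m u|^2$. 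Along this sequence $\fint_B|u(t_k)-P^{x_0,R}(0)|^2\lesssim\int_0^{R^{2m}}\fint_B|\nabla^m u|^2$, and passing to the limit using $L^2(B)$-continuity of $u(t)\to u_0$ gives the local bound.

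Finally, the local bound yields pointwise $u_0^{\#,m}(x)\leq CM(x)$, hence $\|u_0\|_{L^p_m}\lesssim S$ for $p\in(2,\infty)$ and $\|u_0\|_{BMO_m}\lesssim S$ for $p=\infty$. Applying Proposition \ref{prop:structure of L^p_m} (respectively Proposition \ref{prop:structure of BMO_m}) produces a polynomial $P\in\pol$, unique (respectively unique up to a constant), with $u_0-P\in Y$ and $\|u_0-P\|_Y\lesssim S$, completing the proof.
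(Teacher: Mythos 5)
Your outer framework (weighted/unweighted polynomial sharp function, Remark \ref{rem:eqv norm tent space}, and the structure Propositions \ref{prop:structure of L^p_m}, \ref{prop:structure of BMO_m}) matches the paper, but the two analytic steps you supply in place of the paper's argument both have genuine gaps. First, the Chebyshev extraction does not produce times tending to zero: the set where $R^{2m}\fint_B|\nabla^m u(t)|^2$ exceeds twice the time integral has measure at most $R^{2m}/2$, but nothing prevents it from containing a full interval $(0,\epsilon)$ — for instance $\fint_B|\nabla^m u(t)|^2\sim t^{-1}(\log(1/t))^{-2}$ is integrable on $(0,R^{2m})$ yet exceeds any fixed threshold for all small $t$. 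So the complement need not intersect every neighborhood of $t=0$, there may be no sequence $t_k\to 0$ with the claimed pointwise bound, and the limiting step collapses. Nor can you repair this by averaging: the time-averaged bound $\fint_0^{R^{2m}}\fint_B|u(t)-P(0)|^2\,dt\lesssim\int_0^{R^{2m}}\fint_B|\nabla^m u|^2\,dt$ together with continuity of $t\mapsto\fint_B|u(t)-P(0)|^2$ at $t=0$ does not control the value at $t=0$, since the region near $t=0$ has negligible weight in the average. Second, your trace-existence step is also unjustified: continuity of the Poincar\'e coefficients $c_\alpha(t)$ plus the (a.e.-in-$t$) Poincar\'e inequality only bounds $u(t)-P(t)$ by $\|\nabla^m u(t)\|_{L^2(B)}$ at each time, which neither vanishes nor converges as $t\to 0$, so it does not yield $u(t)\to u_0$ in $L^2(B)$.

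Both gaps are closed by bringing in the time derivative through the equation, which is what the paper does. From your coefficient bounds one gets, as in \eqref{eq:poincare used}, that $u-P(0)\in L^2(0,1;H^m(B))$; the weak formulation then gives $\partial_t u\in L^2(0,1;H^{-m})$ locally, and Lemma \ref{lem:absolute continuity} (the Lions embedding $W(a,b;V,V')\hookrightarrow \mathscr C([a,b];H)$) produces the $L^2_{loc}$ trace — this replaces your first step. For the key local bound, the paper evaluates the boundary term by the identity $\int_B|u_0-P(0)|^2\omega\,dy=-\int_0^1\partial_t\bigl((1-t)\int_B|u(t)-P(0)|^2\omega\,dy\bigr)dt$, which equals the time average plus $-2\,\mathrm{Re}\int_0^1(1-t)\langle\partial_t u,(u-P(0))\omega\rangle\,dt$; the latter is estimated through the equation and the Poincar\'e bounds by $\int_0^1\int_B|\nabla^m u|^2$, cf.\ \eqref{eq:easy bound on trace}. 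This quantitative integration by parts in time, rather than any selection of good times, is the missing idea; once you substitute it, the rest of your argument (pointwise domination of $u_0^{\#,m}$ by the tent maximal function and the Campanato structure results) goes through as in the paper.
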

		\begin{proof}
			Our assumptions imply for every $r>0$ and $x_0\in \BBR^n$
			\begin{equation}
			\label{eq:eqv norm2}
			\sup_{R\ge r} \left(\int_0^{R^{2m}} \fint_{B(x_0,R)}\left|\nabla^m u\right|^2dy\right)^{1/2}=:C_{x_0,r}(|\nabla^mu|)\lesssim r^{-\frac{n}{p}} \|\nabla^m u\|_{\tentspace}.
			\end{equation}
			Indeed, recall Remark \ref{rem:eqv norm tent space} and restrict the integration domain to the ball $B(x_0,r)$.
			In particular, $\nabla^m u \in L^2(0,T;L^2(B(x_0,R)))$ for any $T,\,R>0$ and $x_0\in \BBR^n$. 
			
			To obtain the $L^2_{loc}(\BBR^n)$ trace of $u$ at $t=0$ it suffices to show $u\in L^2(0,1;H^m(B(x_0,1)))$ for any $x_0\in\BBR^n$. Indeed, Lemma \ref{lem:absolute continuity} implies as at the beginning of the proof of Proposition \ref{pro:local_energy} that $u\in \mathscr C([0,1];L^2(B(x_0,1/2)))$.

			By translation, it is enough to assume $x_0=0$. We adapt the arguments from Lemma \ref{lem:trace integrability}. Lemma \ref{lem:generalized Poincare} implies for almost every $t\in (0,1)$ and any $k\in \{0,\dots,m\}$
			\begin{equation}\label{eq:Poincare in use}
			\|\nabla^k (u(t,\cdot) -P(t))\|_{L^2(B(0,1))} \leq C \|\nabla^mu(t,\cdot)\|_{L^2(B(0,1))}
			\end{equation} with the polynomial $P(t)(x) = \sum_{|\alpha|\leq m-1} (u(t,\cdot),\phi_\alpha)x^\alpha$. As $\nabla^m u \in L^2(0,1;L^2(B(0,1)))$, we deduce that the coefficients of $P(t)$ are absolutely continuous over $[0,1]$. For each $0\leq|\alpha|\leq m-1$, there exist $c_\alpha(0)\in \BBC$ such that for any $t\in (0,1)$, 
				\begin{equation}
				\label{eq:bound on the coefficients}
				|c_\alpha(t)-c_\alpha(0)|\leq \Lambda \int_0^t \int_{B(0,1/2)} |\nabla^m u||\nabla^m\phi_\alpha|dxds \lesssim \left( \int_0^1 \int_{B(0,1)} |\nabla^m u|^2dxds\right)^{1/2}
				\end{equation}	
				with constant depending only on $\phi_\alpha$, the ellipticity and dimensions.
				Define the polynomial $P(0)$ by $P(0)(x) = \sum_{0\leq|\alpha|\leq m-1}c_\alpha(0) x^\alpha$. Combining \eqref{eq:Poincare in use} and \eqref{eq:bound on the coefficients} gives
				\begin{align}\label{eq:poincare used}
				\sum_{k=0}^{m}\|\nabla^k (u -P(0))\|_{L^2(0,1;B(0,1))}&\lesssim C_{0,1}(|\nabla^m u|).
				\end{align}
		
			We continue with showing that the trace $u_0$ lies in the desired space. Consider a weight $\omega$ as in Lemma \ref{lem:trace integrability}. It suffices to show
				\[\|(u_0)^{\#,m}_\omega\|_{L^p}\lesssim \|\nabla^m u \|_{\tentspace},\]
			where we introduced the weighted sharp function 
				\[f^{\#,m}_\omega(x)=\sup_{B(x_0,r)\ni x} \inf_{P\in \mathcal P_{m-1}}\left( \frac{1}{r^n}\int_{B(x_0,r)}\left|f(y)-P(y)\right|^2\omega\left(\frac{y-x_0}{r}\right)dy\right)^{1/2}.\] It is of course enough to prove the pointwise inequality, which after translation and rescaling reads
				\begin{align*}
				\inf_{P\in \mathcal P_{m-1}}\left( \int_{B(0,1)}\left|u_0(y)-P(y)\right|^2\omega(y)dy\right)^{1/2}\lesssim C_{0,1}(|\nabla^m u|).
				\end{align*} We show 
				\begin{align}
				\label{eq:easy bound on trace}
				\left( \int_{B(0,1)}\left|u_0(y)-P(0)(y)\right|^2\omega(y)dy\right)^{1/2}\lesssim C_{0,1}(|\nabla^m u|)
				\end{align} for the polynomial $P(0)\in \pol$ obtained above.
				We have by \eqref{eq:poincare used} $ (u-P(0))\eta \in L^2(0,1;H^m_0(B(0,1)))$ and, using $\nabla^m u\in L^2(0,1;L^2(B(0,1)))$ and the equation, also $\partial_t( u-P(0))\eta\in L^2(0,1;H^{-m}(B(0,1))).$ 
				Thus, by Lemma \ref{lem:absolute continuity} the map $$[0,1]\ni t \to \|(u(t,\cdot)-P(0))\eta\|_{L^2(B(0,1))}^2$$ is absolutely continuous. The function $\phi(t)=1-t$ is $\mathscr C^1$ on $[0,1]$, so we may estimate
				\begin{align*}
				\int_{B(0,1)}\left|u_0-P(0)\right|^2\omega dy&=-\int_0^1 \partial_t\left( (1-t)\int_{B(0,1)}\left|u(t,y)-P(0)(y)\right|^2\omega(y)dy\right)dt\\
				&=\int_0^1 \int_{B(0,1)}\left|u(t,y)-P(0)(y)\right|^2\omega(y)dydt\\&-2\text{Re} \int_0^1  (1-t)\langle\partial_tu(t,\cdot),(u(t,\cdot)-P(0))\omega\rangle_{H^{-m}(B(0,1)),H^m_0(B(0,1))} dt.
				\end{align*} 
				
				The first summand is bounded by \eqref{eq:poincare used}. For the second one, we approximate $(1-t)$ by a sequence of smooth compactly supported functions on $(0,1)$, use the equation \eqref{eq:the parabolic equation} and pass to the limit to obtain 
				\begin{align*}
				\bigg|\int_0^1  (1-t)\langle\partial_tu,(u-P(0))\omega\rangle dt\bigg|
				&\lesssim \sum_{k=0}^{m} \int_0^1 \int_{B(0,1)} |\nabla^m u||\nabla^k (u-P(0))||\nabla^{m-k} \omega|dxdt 
				\\&\hspace{-0.1cm}\stackrel{\eqref{eq:poincare used}}{\lesssim} \int_0^1 \int_{B(0,1)} |\nabla^m u|^2 dxdt .	
				\end{align*}
				This finishes the proof.\qedhere
		\end{proof}
		In Proposition \ref{pro:p>2 pol existence} we prove the converse inequality to \eqref{eq:05}. If $p<2$, this type of estimate will be shown to hold under some extra assumptions on the operator $L$.
		
	\section{Energy well-posedness and the propagators}
		\setcounter{theorem}{0} \setcounter{equation}{0}
		\label{sec:energy solutions}
	We demonstrate that in the energy case, Lemma \ref{lem:trace integrability} easily leads to the uniqueness of solutions.
	\begin{theorem}
		\label{thm:global_well}
		The Cauchy problem \eqref{eq:Cauchy problem} is well-posed for $Y=L^2(\BBR^n)$ and $$X=\{u\in \mathscr D'(\BBR^{n+1}_+)\mid\nabla^mu\in L^2(\BBR^{n+1}_+)\}.$$ 
		For any $u_0\in L^2(\BBR^n)$ and $T>0$, the unique global weak solution $u\in X$ with trace $u_0$ satisfies $$u\in\mathscr C_0([0,\infty); L^2(\BBR^n))\cap L^2(0,T; H^m(\BBR^n))$$ and $\|u(t,\cdot)\|_{L^2}$ is decreasing. We have the global estimates
		\begin{align}
		\label{eq:global energy sol}
		\|u_0\|_{L^2}=\|u\|_{L^\infty(L^2)}\leq\sqrt{2\Lambda}\|\nabla^m u\|_{L^2(L^2)}\leq \sqrt{\frac{\Lambda}{\lambda}}\|u_0\|_{L^2}.
		\end{align}
	\end{theorem}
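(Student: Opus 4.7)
Fubini's theorem identifies $T^{2,2}_m=L^2(\BBR^{n+1}_+)$, so Lemma \ref{lem:trace integrability} applies directly with $p=2$. If $u_1,u_2\in X$ are two solutions with the common trace $u_0$, then $w=u_1-u_2$ is a global weak solution with $\nabla^m w\in L^2(\BBR^{n+1}_+)$ and vanishing distributional trace. Lemma \ref{lem:trace integrability} produces a unique polynomial $P\in\pol$ with $0-P\in L^2(\BBR^n)$; since the only $L^2$ polynomial is $0$, we obtain $P=0$ and $\sup_{t\ge0}\|w(t)\|_{L^2}\lesssim\|\nabla^m w\|_{L^2(L^2)}<\infty$. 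Interpolating between $w\in L^\infty(L^2)$ and $\nabla^m w\in L^2(L^2)$ (Gagliardo--Nirenberg, Lemma \ref{lem:Gagliardo--Nirenberg inequality}) gives $w\in L^2(0,T;H^m)$ for every $T>0$, and the weak formulation forces $\partial_t w\in L^2(0,T;H^{-m})$. Lemma \ref{lem:absolute continuity} then delivers a representative in $\mathscr C([0,T];L^2)$ for which $t\mapsto\|w(t)\|_{L^2}^2$ is absolutely continuous, and this $L^2$-limit at $t=0$ must coincide with the unique distributional trace, namely $0$. Testing the equation against $w$ itself gives
$$\|w(t)\|_{L^2}^2=-2\,\mathrm{Re}\int_0^t\langle A\nabla^m w,\nabla^m w\rangle\,ds\le -2\lambda\int_0^t\|\nabla^m w\|_{L^2}^2\,ds\le 0$$
by the G\aa rding inequality \eqref{eq:lower Garding ellipticity estimate}, forcing $w\equiv 0$.

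\textbf{Existence.} I would construct solutions by a Galerkin scheme on an increasing sequence $V_k$ of finite-dimensional subspaces whose union is dense in $H^m(\BBR^n)$ (built e.g.\ from scaled translates of a single Schwartz bump). For each $k$, the projected ODE has a unique solution $u_k\in\mathscr C^1([0,T];V_k)$ with $u_k(0)$ the $V_k$-projection of $u_0$, and the G\aa rding inequality yields the uniform bound
$$\|u_k(T)\|_{L^2}^2+2\lambda\int_0^T\|\nabla^m u_k\|_{L^2}^2\,dt\le\|u_0\|_{L^2}^2.$$
Weak/weak-$\ast$ compactness in $L^\infty(0,T;L^2)\cap L^2(0,T;H^m)$ extracts a limit $u$; the weak formulation then provides $\partial_t u\in L^2(0,T;H^{-m})$, so Lemma \ref{lem:absolute continuity} gives $u\in\mathscr C([0,T];L^2)$ with $u(0)=u_0$. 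Using the already-proven uniqueness, these local solutions glue to a global $u$ with $\nabla^m u\in L^2(\BBR^{n+1}_+)$.

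\textbf{Quantitative bounds and $\mathscr C_0$-property.} Testing the equation against $u$ (rigorous via Lemma \ref{lem:absolute continuity}) yields the energy identity
$$\|u(t)\|_{L^2}^2+2\,\mathrm{Re}\int_0^t\langle A\nabla^m u,\nabla^m u\rangle\,ds=\|u_0\|_{L^2}^2,$$
which shows monotonicity of $\|u(t)\|_{L^2}$, hence $\|u\|_{L^\infty(L^2)}=\|u_0\|_{L^2}$. Letting $t\to\infty$, the upper ellipticity bound gives $\|u_0\|_{L^2}^2\le 2\Lambda\|\nabla^m u\|_{L^2(L^2)}^2$, while G\aa rding gives $\|\nabla^m u\|_{L^2(L^2)}^2\le(2\lambda)^{-1}\|u_0\|_{L^2}^2$, together producing \eqref{eq:global energy sol}. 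For the $\mathscr C_0$ decay, I apply the uniqueness-bound from the first paragraph to the time-shifted solution $v_s(t,x):=u(s+t,x)$, whose trace at $0$ is $u(s,\cdot)\in L^2$: this yields
$$\sup_{t\ge 0}\|u(s+t)\|_{L^2}\lesssim\|\nabla^m u\|_{L^2(s,\infty;L^2)}\xrightarrow{s\to\infty}0.$$

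\textbf{Main obstacle.} The delicate point is a clean existence proof directly on $\BBR^n$, where one lacks compactness of the embedding $H^m\hookrightarrow L^2$; the Galerkin passage to the limit must be handled via local compactness on balls combined with the uniform $L^\infty(L^2)$ bound, or equivalently via an approximation by Dirichlet problems on $B_R$ with $R\to\infty$ exploiting the same energy estimate. Once existence is settled, the rest reduces to the machinery of Lemmas \ref{lem:trace integrability} and \ref{lem:absolute continuity}.
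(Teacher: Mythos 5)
Your proof is correct and follows essentially the same route as the paper: uniqueness via Lemma \ref{lem:trace integrability} (with $p=2$, using $T^{2,2}_m=L^2(\BBR^{n+1}_+)$) combined with the energy identity made rigorous through Lemma \ref{lem:absolute continuity}, existence via a Galerkin scheme (which the paper also only cites), and the bounds \eqref{eq:global energy sol} from the energy identity together with the ellipticity and G\aa rding bounds. Working with the difference $w=u_1-u_2$ and the time-shift argument for the decay are only cosmetic repackagings of the paper's direct use of inequality \eqref{eq:04}.
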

	\begin{proof} 
		{\par\noindent\textbf{Uniqueness.}\space} Let $u$ be a global weak solution of \eqref{eq:the parabolic equation} with $\nabla^m u \in L^2(L^2)$ and assume that the $L^1_{loc}(\BBR^n)$ trace $u_0$ of $u$ exists and belongs to $L^2(\BBR^n)$. Then $u_0$ equals the distributional trace and the polynomial $P$ from Lemma \ref{lem:trace integrability} is the zero polynomial. We obtain from inequality \eqref{eq:04} for every $t\ge0$
			\begin{equation}
				\|u(t)\|_{L^2(\BBR^n)}^2\lesssim\int_{t/2}^{\infty}\int_{\BBR^n}|\nabla^m u|^2dys.\end{equation} 
	Thus, $u\in L^2(0,T; H^m(\BBR^n))$ for any $T>0$ and the norm $\|u(t)\|_{L^2(\BBR^n)}$ is decreasing and vanishing at infinity. 
	Consequently, we deduce from the weak formulation $\partial_t u\in L^2(a,b;H^{-m}(\BBR^n))$ for any $0\leq a<b<\infty$, so by Lemma \ref{lem:absolute continuity} the map $[a,b] \ni t \to \|u(t)\|_{L^2}^2$ is absolutely continuous with 
	\begin{align*}
	\|u(a)\|_{L^2}^2-\|u(b)\|_{L^2}^2
	&= 2\text{Re}\int_a^b \int_{\BBR^n}A(t,x) \nabla^m u(t,x) \overline{ \nabla^m u(t,x)}dxdt.
	\end{align*} Here, we could treat $u$ as a test function by continuity. The global estimates follow now easily by taking limits and using the ellipticity bounds. Finally, \eqref{eq:global energy sol} provides uniqueness. 
	
		{\par\noindent\textbf{Existence.}\space} 
		Given $u_0\in L^2(\BBR^n)$, a solution with $L^2(\BBR^n)$ trace $u_0$ can be constructed by finite dimensional Galerkin approximations, see for example \cite[Chap.\ XVIII, \S3.1-3]{DL92}. Another constructive proof is based on an approximation of the coefficient matrix $A$ by piecewise constant in time matrices and uses the semigroup theory. This has been done in the second order case in \cite[Theorem 3.11]{AMP15}.
	\end{proof}
	In Theorem \ref{thm:global_well} we could also start from any time $s\ge 0 $ and initial data $u_s \in L^2(\BBR^n)$ to obtain the unique weak solution $u_s(t,\cdot)$ to \eqref{eq:the parabolic equation} on $(s,\infty)\times \BBR^n$ satisfying $\nabla ^m u \in L^2(s,\infty; L^2)$ and $u_s(s,\cdot)=u_s$. This gives rise to the central object of our study, the propagator $\Gamma(t,s)$, defined for $0\leq s\leq t<\infty$ by 
	\[\Gamma(t,s)u_s(x) \coloneqq u_s(t,x), \quad\mbox{whenever} \quad (t,x) \in [s,\infty)\times \BBR^n.\] 
	
	By Theorem \ref{thm:global_well}, the propagators are contractions on $L^2(\BBR^n)$ and it is easily shown by uniqueness of solutions that $\Gamma (t,t)=I$ holds for $t\ge 0$ and $\Gamma(t,s)\Gamma(s,r)=\Gamma(t,r)$ is true on $L^2(\BBR^n)$, whenever $0\leq r\leq s \leq t$. Moreover, for any $s\ge0$ we have
	\begin{equation}
	\label{eq:continuity of prop}
	[s,\infty) \ni t \mapsto \Gamma(t,s) \in \mathscr C_0 ([s,\infty); \mathscr L(L^2)).
	\end{equation}
	\begin{definition}
		\label{def:propagators}
		With the above notation, we call
			\begin{equation}
			\label{eq:propagators family}
			\{\Gamma(t,s) \mid 0\leq s\leq t<\infty\}\subseteq \mathscr L (L^2)\end{equation}
		the family of propagators associated to \eqref{eq:the parabolic equation}.
	\end{definition}
	If the coefficient matrix is given on the entire space $\BBR^{n+1}$, we could define the propagators also for negative times. Similarly, recalling Remark \ref{rem:backwards eq}, we easily obtain existence of propagators for the backward equation \eqref{eq:backwards}, 
		\begin{equation}
	\label{eq:backpropagators family}
	\{\tilde\Gamma(t,T)| \; -\infty< t\leq T\}\subseteq \mathscr L (L^2).\end{equation}
	We set $\tilde\Gamma(t,T) = \Gamma(T-t,0)$ if $t\in (-\infty, T]$, where $\Gamma(t,s)$ is defined as above for the matrix $A(t,x)\coloneqq\tilde A(T-t,x)$ on $(-\infty, T)$ and constant otherwise (we study the backwards equation on $[0,T]$, so the precise extension of $A$ to $t>T$ is irrelevant for later applications). 
	
	As announced before, the adjoints of \eqref{eq:propagators family} on a finite time interval can be expressed by the backwards propagators to \eqref{eq:backwards} for special choice of $\tilde A$.  
	\begin{lemma}\label{lem:backwards_prop}
		Let $T>0$ and fix some coefficient matrix $A$. Consider the associated family \eqref{eq:propagators family} and the backwards propagators \eqref{eq:backpropagators family} for $\tilde A = A^*$ (the conjugate transpose) defined up to time $T>0$. Then it holds 
		\begin{equation*} 
		\tilde \Gamma(t,T)=\Gamma(T,t)^* \quad\mbox{for every} \quad 0\leq t\leq T.
		\end{equation*} Thus for all $h\in L^2(\BBR^n)$, $t\mapsto \Gamma(T,t)^*h$ is strongly continuous from $[0,T]$ into $L^2(\BBR^n)$, and, consequently, $t\mapsto \Gamma(T,t)h$ is weakly continuous from $[0,T]$ into $L^2(\BBR^n)$.  
	\end{lemma}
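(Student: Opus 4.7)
\smallskip
\noindent\textbf{Proof proposal.} The plan is to prove the adjoint identity $\tilde\Gamma(t,T) = \Gamma(T,t)^*$ by showing that, for arbitrary $f,g\in L^2(\BBR^n)$, the pairing $s\mapsto \langle u(s),v(s)\rangle_{L^2}$ is constant on $[t,T]$, where $u(s) := \Gamma(s,t)f$ and $v(s) := \tilde\Gamma(s,T)g$. Evaluating the two endpoints yields $\langle \Gamma(T,t)f,g\rangle = \langle f,\tilde\Gamma(t,T)g\rangle$, and the arbitrariness of $f,g$ gives the claim. The strong continuity of $t\mapsto \Gamma(T,t)^*h$ will then be inherited from the strong continuity \eqref{eq:continuity of prop} applied to the propagator for the coefficients $\tilde A(T-\cdot,\cdot)$, and the weak continuity of $t\mapsto \Gamma(T,t)h$ will follow by testing against an arbitrary $g\in L^2$ and invoking the just-established strong continuity of $t\mapsto \Gamma(T,t)^*g$.

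First, I would justify that both $u$ and $v$ belong to the inhomogeneous Lions space $W(t,T;H^m(\BBR^n),H^{-m}(\BBR^n))$. For $u$, this is immediate from Theorem \ref{thm:global_well} (which gives $\nabla^m u\in L^2(t,T;L^2)$ and $u\in\mathscr C([t,T];L^2)$), together with the weak formulation, which yields $\partial_s u\in L^2(t,T;H^{-m})$. For $v$, by Remark \ref{rem:backwards eq} the function $\tau\mapsto v(T-\tau)$ is the energy solution of the forward equation with coefficients $A'(\tau,x):=A^*(T-\tau,x)$ starting from $g$, so the same regularity applies after the reversal $s\leftrightarrow T-s$.

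Next I would apply Lemma \ref{lem:absolute continuity} with $V=H^m(\BBR^n)$, $H=L^2(\BBR^n)$, $V'=H^{-m}(\BBR^n)$ to conclude that $s\mapsto\langle u(s),v(s)\rangle_{L^2}$ is absolutely continuous on $[t,T]$ with
\[
\frac{d}{ds}\langle u(s),v(s)\rangle_{L^2}=\langle \partial_s u(s),v(s)\rangle_{H^{-m},H^m}+\overline{\langle \partial_s v(s),u(s)\rangle_{H^{-m},H^m}}.
\]
The weak formulation of \eqref{eq:the parabolic equation} identifies the first pairing with $-\langle A(s)\nabla^m u(s),\nabla^m v(s)\rangle_{L^2}$. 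The weak formulation of the backward equation \eqref{eq:backwards} with $\tilde A=A^*$ identifies the second pairing with $\langle A^*(s)\nabla^m v(s),\nabla^m u(s)\rangle_{L^2}$. Taking the complex conjugate of the latter gives exactly $\langle A(s)\nabla^m u(s),\nabla^m v(s)\rangle_{L^2}$, so the two summands cancel and $s\mapsto\langle u(s),v(s)\rangle_{L^2}$ is constant. Evaluating at $s=t$ and $s=T$ proves the identity.

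The main delicate point is precisely the sign-and-conjugation bookkeeping in the two weak formulations: Definition \ref{def:weak_sol} uses $\overline{\partial_t\phi}$ while Definition \ref{def:backwards equation} uses $-\overline{\partial_t\phi}$, so the opposite sign of the time-derivative pairing combines with the Hermitian adjoint structure $\tilde A=A^*$ to produce the cancellation; this is really the only content of the argument. Once the identity $\tilde\Gamma(t,T)=\Gamma(T,t)^*$ is in hand, the continuity statements are immediate: writing $\tilde\Gamma(t,T)=\Gamma'(T-t,0)$ with $\Gamma'$ the propagator associated to the coefficients $A'(\tau,x)=A^*(T-\tau,x)$, the strong continuity of $t\mapsto\Gamma(T,t)^*h=\Gamma'(T-t,0)h$ on $[0,T]$ follows from \eqref{eq:continuity of prop}, and the weak continuity of $t\mapsto\Gamma(T,t)h$ on $[0,T]$ follows by duality since $\langle\Gamma(T,t)h,g\rangle=\langle h,\Gamma(T,t)^*g\rangle$ for every $g\in L^2(\BBR^n)$.
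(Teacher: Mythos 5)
Your proposal is correct and follows exactly the route the paper indicates: it is the ``easy consequence of Lemma \ref{lem:absolute continuity} and the weak formulation'' (the argument of \cite[Proposition 3.17]{AMP15}), namely pairing the forward energy solution with the backward one, checking both lie in $W(t,T;H^m,H^{-m})$, and using the absolute-continuity formula so that the $A$ and $A^*$ terms cancel, after which the continuity statements follow from \eqref{eq:continuity of prop} and duality as you say. Your sign-and-conjugation bookkeeping is the genuine content and is carried out correctly, so nothing is missing.
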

	\begin{proof}
		An easy consequence of Lemma \ref{lem:absolute continuity} and the weak formulation. See \cite[Proposition 3.17]{AMP15}. 
	\end{proof}
	
	The following $L^2$ off-diagonal bounds are a replacement for kernel bounds and will, for example, allow us to extend the family \eqref{eq:propagators family} to a broad class of functions.
	\begin{proposition} \label{prop:off2}
		The family of propagators from \eqref{eq:propagators family} satisfies $L^2$ off-diagonal estimates. That is, there exist constants $c,\; C>0$ depending only on the ellipticity and dimensions, such that for all closed sets $E,\;F\subseteq \BBR^n$, any function $f\in L^2(\BBR^n)$ and $0\leq s<t<\infty$ it holds
		\begin{equation}
		\label{eq:off-diag}
		\|\mathbbm{1}_E\Gamma(t,s)(\mathbbm{1}_F f)\|_{L^2}\leq C\exp\left\{ -c\left(\frac{d(E,F)}{(t-s)^{1/(2m)}}\right)^{\frac{2m}{2m-1}}\right\}\|\mathbbm{1}_F f\|_{L^2}.
		\end{equation}
	\end{proposition}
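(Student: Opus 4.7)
The plan is to adapt Davies' exponential perturbation method. Since the propagators are $L^2$-contractions by Theorem \ref{thm:global_well}, we may assume $d:=d(E,F)\ge 2$; otherwise \eqref{eq:off-diag} is trivial after adjusting $C$. Construct a smooth weight $\phi\colon\BBR^n\to[0,\infty)$ by mollifying $x\mapsto d(x,F)\wedge d$ at unit scale, so that $\phi\le1$ on $F$, $\phi\ge d/2$ on $E$, and $\|\nabla^k\phi\|_{L^\infty}\lesssim1$ for every $1\le k\le m$. Fix $\rho>0$, set $u(t,\cdot)=\Gamma(t,s)(\mathbbm{1}_F f)$ and $v(t,x)=e^{\rho\phi(x)}u(t,x)$. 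Theorem \ref{thm:global_well} and the boundedness of $\phi$ give $v\in\mathscr C([s,\infty);L^2(\BBR^n))$ with $\nabla^m v\in L^2_{loc}([s,\infty);L^2(\BBR^n))$.

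The core step is to establish the differential inequality $\tfrac{d}{dt}\|v(t,\cdot)\|_{L^2}^2\le C\rho^{2m}\|v(t,\cdot)\|_{L^2}^2$. Localizing $e^{2\rho\phi}u$ against a smooth spatial cutoff $\chi_R$ produces an admissible test function; passing $R\to\infty$ with the a priori energy estimates from Proposition \ref{pro:local_energy} and invoking Lemma \ref{lem:absolute continuity} yields
\begin{equation*}
\tfrac{d}{dt}\|v(t,\cdot)\|_{L^2}^2=-2\operatorname{Re}\int_{\BBR^n}A(t,x)\nabla^m u(t,x)\cdot\overline{\nabla^m(e^{2\rho\phi}u)(t,x)}\,dx.
\end{equation*}
Writing $u=e^{-\rho\phi}v$ and $e^{2\rho\phi}u=e^{\rho\phi}v$, the Fa\`a di Bruno formula gives $\nabla^m(e^{\pm\rho\phi}v)=e^{\pm\rho\phi}\bigl(\nabla^m v+E_{\rho,\phi}^\pm(v)\bigr)$, where each component of $E_{\rho,\phi}^\pm(v)$ is a sum of terms $\rho^j\Psi\nabla^{m-j}v$ with $1\le j\le m$ and $\|\Psi\|_{L^\infty}\lesssim1$. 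The real-valuedness of $\phi$ makes the two exponential prefactors cancel, and applying the G{\aa}rding inequality \eqref{eq:lower Garding ellipticity estimate} to the principal part leaves the error
\begin{equation*}
\tfrac{d}{dt}\|v(t,\cdot)\|_{L^2}^2+2\lambda\|\nabla^m v(t,\cdot)\|_{L^2}^2\le C\sum_{\substack{0\le j,k\le m\\(j,k)\ne(0,0)}}\rho^{j+k}\|\nabla^{m-j}v(t,\cdot)\|_{L^2}\|\nabla^{m-k}v(t,\cdot)\|_{L^2}.
\end{equation*}

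Each error term is absorbed by the Gagliardo--Nirenberg inequality $\|\nabla^{m-j}v\|_{L^2}\lesssim\|\nabla^m v\|_{L^2}^{1-j/m}\|v\|_{L^2}^{j/m}$ of Lemma \ref{lem:Gagliardo--Nirenberg inequality}, followed by Young's inequality with exponents $\tfrac{2m}{2m-(j+k)}$ and $\tfrac{2m}{j+k}$ applied to $\|\nabla^m v\|_{L^2}^{2-(j+k)/m}(\rho^m\|v\|_{L^2})^{(j+k)/m}$: a rescaling of Young produces a small factor in front of $\|\nabla^m v\|^2$, and the summation yields
\begin{equation*}
\sum_{(j,k)\ne(0,0)}\rho^{j+k}\|\nabla^{m-j}v\|_{L^2}\|\nabla^{m-k}v\|_{L^2}\le\lambda\|\nabla^m v\|_{L^2}^2+C\rho^{2m}\|v\|_{L^2}^2.
\end{equation*}
Integrating the resulting inequality and using $\|v(s,\cdot)\|_{L^2}\le e^{\rho}\|\mathbbm{1}_F f\|_{L^2}$ gives
\begin{equation*}
\|\mathbbm{1}_E\Gamma(t,s)(\mathbbm{1}_F f)\|_{L^2}\le e^{-\rho d/2+\rho+C\rho^{2m}(t-s)}\|\mathbbm{1}_F f\|_{L^2}.
\end{equation*}
Since $d\ge2$ the linear $\rho$ term is absorbed into $-\rho d/4$, and optimizing in $\rho$ through $\rho=c'\bigl(d/(t-s)\bigr)^{1/(2m-1)}$ turns the exponent into $-c\bigl(d/(t-s)^{1/(2m)}\bigr)^{2m/(2m-1)}$, proving \eqref{eq:off-diag}.

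The main technical obstacle is the rigorous derivation of the differential identity for $\|v(t,\cdot)\|_{L^2}^2$: the would-be test function $e^{2\rho\phi}u$ is not compactly supported, so one must introduce a cutoff $\chi_R$, keep track of the commutator terms it generates (which involve $\nabla^k u$ for $0\le k\le m$ outside $B(0,R)$), and use Proposition \ref{pro:local_energy} to send $R\to\infty$. The bookkeeping of the many Leibniz error terms and the verification that the G{\aa}rding inequality applies to $v(t,\cdot)$ on the full space through a density argument are the remaining delicate points, but are otherwise routine.
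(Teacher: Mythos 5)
Your overall strategy (Davies' conjugation $v=e^{\rho\phi}u$, G{\aa}rding plus Gagliardo--Nirenberg and Young to absorb the commutator terms, Gr\"onwall, then optimization in $\rho$) is exactly the paper's, but there are two linked gaps coming from your choice of weight. First, the reduction ``assume $d\ge 2$, otherwise trivial by contraction'' is false: the estimate \eqref{eq:off-diag} has genuine content whenever $t-s\ll d^{2m}$, no matter how small $d>0$ is, since the exponential factor then tends to zero while contractivity only gives the constant $1$. The regime that is genuinely trivial is $d^{2m}\lesssim t-s$, not $d\le 2$. Second, and more substantively, because you mollify at unit scale you only get $\|\nabla^k\phi\|_{L^\infty}\lesssim 1$, with no compensating negative powers of $d$. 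The commutator then produces terms such as $\rho\,\partial^\alpha\phi\, v$ with $|\alpha|=m$, which after Young contribute $C\rho^{2}\|v\|_{L^2}^2$ (and more generally $C\rho^{p}\|v\|_{L^2}^2$ for all $2\le p\le 2m$) to the differential inequality; your claimed bound $C\rho^{2m}\|v\|_{L^2}^2$ only holds when $\rho\ge 1$. With your choice $\rho\sim (d/(t-s))^{1/(2m-1)}$ one has $\rho<1$ precisely when $t-s\gtrsim d$, and in the nontrivial strip $d\lesssim t-s\lesssim d^{2m}$ the term $\rho^2(t-s)$ dominates the gain $\rho d$; optimizing what your inequality actually gives yields only a Gaussian-type bound $\exp(-c\,d^2/(t-s))$, which for $m\ge 2$ is strictly weaker than the claimed $\exp\{-c(d^{2m}/(t-s))^{1/(2m-1)}\}$ in that range. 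So as written the argument proves \eqref{eq:off-diag} only when $t-s\lesssim d$.

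The paper avoids this by building the weight at scale $d$: $\phi$ is a mollification at scale $d/8$ of a truncated distance function, so $\|\partial^\alpha\phi\|_{L^\infty}\lesssim d^{1-|\alpha|}$, the commutator coefficients carry mixed factors $\kappa^l d^{-s}$ with $l+s=$ (derivative drop), and after choosing $\kappa\sim(d/t)^{1/(2m-1)}$ every extra term $(\kappa^l d^{-s})^{2m/k}t$ is a nonpositive power of $d^{2m}/t$, hence harmless exactly in the content regime $d^{2m}\ge t$; the remaining regime $d^{2m}<t$ is the genuinely trivial one. Your proof can be repaired either by adopting this $d$-adapted weight and bookkeeping, or by first proving the estimate for, say, $d=2$ and $t-s\lesssim 1$ and then invoking parabolic rescaling $(t,x)\mapsto(\lambda^{2m}t,\lambda x)$, which preserves the ellipticity class and the ratio $d/(t-s)^{1/(2m)}$ and whose use is legitimate because the constants are claimed to depend only on ellipticity and dimensions --- but neither repair is in your text. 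The remaining ingredients (using $e^{2\rho\phi}u$ as a test function, which needs no spatial cutoff since $\phi$ is bounded and $u$ is a global energy solution, the cancellation of the exponentials, and the Gr\"onwall step) match the paper and are fine.
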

	\begin{proof}
		Without loss of generality we assume $s=0$ and use the time consistency of the propagators otherwise. Indeed, $\Gamma(t,s)h$ can be expressed as $\Gamma_s(t-s,0)h$ where $\Gamma_s$ arises from the matrix $A(t,x)=A(t+s,x)$, satisfying the same ellipticity bounds as $A$. 
		
		Let $E,\;F$ be two closed sets with $d=d(E,F)>0$. Note that there is nothing to show if $d(E,F)=0$, provided we choose $C\ge 1$. Consider the function $h\colon \BBR^n\to [0,d/2]$ given by $$h(x)=\min \left\{\max\{0;d(x,F)-d/4\};d/2\right\}.$$ 
		
		In particular, $h=0$ on the $d/4$-neighborhood of $F$ and $h=d/2$ on the $d/4$-neighborhood of $E$. Let $\eta\in\testfunctions$ be a non-negative function supported in the unit ball with $\int \eta =1$ and set $\eta_\varepsilon=\varepsilon^{-n}\eta(\cdot/\varepsilon)$ for $\varepsilon >0$. Finally, let us define $\phi \coloneqq h\ast \eta_{d/8}$. Then $\phi$ is smooth, non-negative and satisfies $\phi_{|F}=0$, $\phi_{|E}=d/2$ and $\|\partial^\alpha \phi\|_{L^\infty} \lesssim d^{1-|\alpha|}$ for any multi-index $\alpha$.
		
		Let $\kappa>0$ be a parameter to be specified later and consider the conjugated propagator $$\Gamma^{\kappa\phi}(t,0)\colon L^2(\BBR^n) \ni f\mapsto e^{\kappa\phi}\Gamma (t,0)(e^{-\kappa\phi}f).$$ 
		
		We derive an $L^2$ bound for this operator in dependence on $t, \, d $ and $\kappa$. First notice that $u(t)= e^{-\kappa\phi}\Gamma^{\kappa\phi}(t,0)f=\Gamma (t,0)(e^{-\kappa\phi}f)$ is a global energy solution of \eqref{eq:the parabolic equation}, which belongs to $ L^2(0,T;\sob)$ for any $T>0$ and $\partial_t \Gamma^{\kappa\phi}(t,0)f \in L^2(0,\infty; H^{-m}(\BBR^n))$. Hence, by Lemma \ref{lem:absolute continuity}, we have for almost every $t>0$
		\begin{align*}
			\frac{d}{dt}\|\Gamma^{\kappa\phi}(t,0)f\|_{L^2}^2=\frac{d}{dt}\| e^{\kappa\phi}u(t)\|_{L^2}^2 =	2\text{Re}\langle \partial_t( e^{\kappa\phi} u(t)), e^{\kappa\phi} u(t)\rangle_{ H^{-m}(\BBR^n),\sob}.
		\end{align*}
		
		On the other hand, $A\nabla^m u \in L^2(0,T;L^2)$ for $T>0$, so, by continuity, $ e^{2\kappa\phi} u$ can be used as a test function and we have for almost every $t>0$
		\begin{align*}
		 \frac{d}{dt}\|w(t)\|_{L^2}^2
		 &= - 2\text{Re} \langle A(t,\cdot)\nabla^m (e^{-\kappa\phi} w(t)), \nabla^m\left(e^{\kappa\phi} w(t)\right)\rangle,
		\end{align*}
		where we put $w(t)=e^{\kappa\phi} u(t)=	\Gamma^{\kappa\phi}(t,0)f\in L^2(\BBR^n)$. We further calculate, similarly as in Proposition \ref{pro:local_energy}, using the product rule
		\begin{align*}
			 \frac{d}{dt}\|w(t)\|_{L^2}^2
			 =&- 2\text{Re} \sum_{|\alpha|=|\beta|=m} \int a_{\alpha,\beta}(e^{-\kappa\phi}\partial^\beta w) \left(e^{\kappa\phi}\overline{\partial^\alpha w}+\sum_{\gamma<\alpha}e^{\kappa\phi}\Phi_{\alpha,\gamma}\overline{\partial^{\gamma}w}\right)dx \\
			&-2\text{Re} \sum_{|\alpha|=|\beta|=m} \int a_{\alpha,\beta}\left(\sum_{\xi<\beta}e^{-\kappa\phi}\Phi_{\beta,\xi}\partial^\xi w\right) \left(e^{\kappa\phi}\overline{\partial^\alpha w}+\sum_{\gamma<\alpha}e^{\kappa\phi}\Phi_{\alpha,\gamma}\overline{\partial^{\gamma}w}\right)dx,
			\end{align*}
			where this time $\Phi_{\alpha,\gamma}\in C^\infty(\BBR^n)$ satisfy $\|\Phi_{\alpha,\gamma}\|_{L^\infty}\lesssim \sum_{(l,s)\colon\; l\ge1,\; l+s=|\alpha|-|\gamma|}\kappa^ld^{-s}$. The exponential factors cancel and we use the ellipticity estimates to obtain
			\begin{align*}
			\frac{d}{dt}\|w(t)\|_{L^2}^2
			\leq& -2\lambda \| \nabla^m w(t)\|_{L^2}^2 +  C\|\nabla ^m w(t)\|_{L^2}\sum_{k=0}^{m-1}\left(\sum_{(l,s)\colon\; l\ge1,\; l+s=m-k}\kappa^ld^{-s}\right)\|\nabla ^k w(t)\|_{L^2} \\&+  C\sum_{k=0}^{m-1}\left(\sum_{(l,s)\colon\; l\ge1,\; l+s=m-k}\kappa^{2l}d^{-2s}\right)\|\nabla ^k w(t)\|_{L^2}^2
			\end{align*} for some $C>0$ dependent on ellipticity and dimensions. We estimate the norms of the intermediate derivatives with the Gagliardo--Nirenberg inequality, see Lemma \ref{lem:Gagliardo--Nirenberg inequality}, and use Cauchy's inequality to absorb the highest order factors in the negative term, which we then drop, to arrive at
			\begin{align*}
				\frac{d}{dt}\|w(t)\|_{L^2}^2
					\lesssim \sum_{k=0}^{m-1}\left(\sum_{(l,s)\colon\; l\ge1,\; l+s=m-k}(\kappa^{l}d^{-s})^{\frac{2m}{m-k}}\right)\|w(t)\|_{L^2}^2.
			\end{align*}
			
			Notice that $w(0)=f$, so, by Gr\"onwall's inequality, there exists a constant $C>0$, such that we have for any $\kappa>0$ \[\|\Gamma^{\kappa\phi}(t,0)f\|_{L^2}^2\leq e^{C\left(\kappa^{2m}+\sum_{k=2}^{m}\left(\sum_{l,s\ge1,\; l+s=k}(\kappa^{l}d^{-s})^{2m/k}\right)\right)t}\|f\|_{L^2}^2,\] where we sum over $s\ge 1$ after taking the corresponding factor for $s=0$, namely $\kappa^{2m}$, out from the sum. Let us assume without loss of generality that $f$ is supported in $F$. Then it holds $e^{-\kappa\phi}f=f$ and $\mathbbm{1}_E\Gamma(t,0) f=e^{-\kappa d/2}\Gamma^{\kappa\phi} (t,0)f$, hence, by above,
			\begin{align*}
			\|\mathbbm{1}_E\Gamma(t,0) f\|_{L^2}^2&\leq  e^{-c\kappa d(E,F)}e^{C\left(\kappa^{2m}+\sum_{k=2}^{m}\sum_{l,s\ge1,\; l+s=k}(\kappa^{l}d^{-s})^{2m/k}\right)t}\|f\|_{L^2}^2.
			\end{align*} 
			
			Since the above estimate holds for any $\kappa>0$, we can choose the one for which the expression $-c\kappa d+C\kappa^{2m}t$ attains its minimum. We let $\kappa = \tilde c(d/t)^{1/(2m-1)}$ for an appropriate $\tilde c $. Secondly, we observe that up to a constant $\kappa d= \left(d^{2m}/t\right)^{1/(2m-1)}$ and $\kappa^{2m}t=\left(d^{2m}/t\right)^{1/(2m-1)}$, whence $$(\kappa^{l}d^{-s})^{2m/k}t = (\kappa^{2m}t)^{l/k}(d^{2m}/t)^{-s/k}=\left(\frac{d^{2m}}{t}\right)^{\frac{1}{2m-1}\frac{l}{k}-\frac{s}{k}}.$$ 
			
			Taking the restrictions on parameters $s$ and $l$ into account we see $(1/(2m-1)) l/k-s/k\leq0$, which allows us to conclude that whenever $d^{2m}/t \ge1$, then the off-diagonal estimate \eqref{eq:off-diag} is true for some constants $C,\, c>0$. We eventually enlarge $C\ge 1$, such that $C\exp (-c) \ge 1$ holds and so \eqref{eq:off-diag} remains true if $d^{2m}/t <1$. 
	\end{proof}
	 
	 An application of the Riesz--Thorin interpolation between the result from Proposition \ref{prop:off2} and uniform $\mathscr L (L^p(\BBR^n))$ bounds of the propagators gives us the following.
	 \begin{lemma} \label{lem:off3} Let $1\leq q\leq \infty$ and assume \[\sup\limits_{0\leq s\le t <\infty} \|\Gamma(t,s)\|_{\mathscr L (L^q)}<\infty.\] 
	 	\begin{enumerate}[label={\upshape(\roman*)}]
	 	\item If $q\in [1,2)$, then for all $r\in (q,2]$ there exists a constant $\alpha_r>0$ such that for any closed sets $E,\;F\subseteq \BBR^n$ and $0\leq s<t<\infty$ we have for $f\in L^r(\BBR^n)$
	 	\[\|\mathbbm{1}_E\Gamma(t,s)(\mathbbm{1}_F f)\|_{L^2}\lesssim (t-s)^{-\frac{n}{2m}(\frac{1}{r}-\frac{1}{2})}\exp\left\{-\alpha_r\left(\frac{d(E,F)}{(t-s)^{1/(2m)}}\right)^\frac{2m}{2m-1}\right\}\|\mathbbm{1}_F f\|_{L^r}.\]
		\item If $q\in (2,\infty]$, then for all $r\in (2,q]$ there exists a constant $\alpha_r>0$ such that for any closed sets $E,\;F\subseteq \BBR^n$ and $0\leq s<t<\infty$ we have for $f\in L^2(\BBR^n)$ 
	 	\[\|\mathbbm{1}_E\Gamma(t,s)(\mathbbm{1}_F f)\|_{L^r}\lesssim (t-s)^{-\frac{n}{2m}(\frac{1}{2}-\frac{1}{r})}\exp\left\{-\alpha_r\left( \frac{d(E,F)}{(t-s)^{1/(2m)}}\right)^\frac{2m}{2m-1}\right\}\|\mathbbm{1}_F f\|_{L^2}.\]
	 \end{enumerate}
	 \end{lemma}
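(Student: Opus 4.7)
The strategy is to combine the Riesz--Thorin interpolation theorem (between the hypothesised uniform $L^q$-boundedness and the $L^2$ off-diagonal estimate of Proposition \ref{prop:off2}) with a duality argument linking the two statements.

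First, I would observe that (i) and (ii) are dual to each other. By Lemma \ref{lem:backwards_prop} the adjoint $\Gamma(t,s)^*$ is a backwards propagator for the equation \eqref{eq:backwards} with coefficient matrix $A^*$, which satisfies the same ellipticity bounds as $A$; in particular, uniform $L^q$-boundedness of $\Gamma(t,s)$ for $q \in [1,2)$ corresponds under duality to uniform $L^{q'}$-boundedness of $\Gamma(t,s)^*$ for $q' \in (2, \infty]$. Since
\[
\|\mathbbm{1}_E \Gamma(t,s)(\mathbbm{1}_F f)\|_{L^r} = \sup_{\|g\|_{L^{r'}} \leq 1} \bigl|\langle \mathbbm{1}_F f,\, \Gamma(t,s)^*(\mathbbm{1}_E g)\rangle\bigr|,
\]
an $L^2 \to L^r$ off-diagonal bound on $\Gamma(t,s)$ in case (ii) with parameter $r \in (2, q]$ is the exact dual of the $L^{r'} \to L^2$ bound of case (i) for $\Gamma(t,s)^*$ with parameter $r' \in [q', 2)$, after swapping the roles of $E$ and $F$. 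Thus it suffices to prove only one of the two items; I will focus on (i).

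For (i), fix $r \in (q, 2]$, set $\omega := \bigl(d(E,F)^{2m}/(t-s)\bigr)^{1/(2m-1)}$ and consider $T := \mathbbm{1}_E \Gamma(t,s) \mathbbm{1}_F$. Riesz--Thorin interpolation between the endpoints $\|T\|_{L^2 \to L^2} \leq C e^{-c\omega}$ (from Proposition \ref{prop:off2}) and $\|T\|_{L^q \to L^q} \leq C$ (from the hypothesis, using that $T$ is a restriction of $\Gamma(t,s)$) yields, for each $p \in [q, 2]$ with $\tfrac{1}{p} = \tfrac{\theta}{q} + \tfrac{1-\theta}{2}$, the intermediate off-diagonal bound $\|T\|_{L^p \to L^p} \leq C e^{-(1-\theta)c\,\omega}$. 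Specialising to $p = r$ produces an $L^r \to L^r$ off-diagonal estimate with rate $\alpha_r := (1-\theta)c > 0$.

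To pass from this $L^r \to L^r$ bound to the $L^r \to L^2$ bound with the smoothing prefactor $(t-s)^{-\frac{n}{2m}(\frac{1}{r} - \frac{1}{2})}$, I would partition $E$ into balls $\{B_j\}$ of radius $\ell := (t-s)^{1/(2m)}$, write the $L^2$-norm over $E$ as the $\ell^2$-sum over these balls, and on each ball combine the reverse H\"older inequality of Corollary \ref{cor:rev_hoelder} with the local energy estimate of Proposition \ref{pro:local_energy}. The reverse H\"older, applied at scale $\ell$ to the global weak solution $u = \Gamma(\cdot, s)(\mathbbm{1}_F f)$, trades the time-slice $L^2$-norm on $B_j$ for a cylindrical $L^r$-norm on a parabolic box of scale $\ell$, producing the scaling factor $(t-s)^{-\frac{n}{2m}(\frac{1}{r}-\frac{1}{2})}$; the cylindrical $L^r$-norm is in turn bounded by $\|\mathbbm{1}_F f\|_{L^r}$ via an integrated version of the $L^r \to L^r$ off-diagonal estimate from the previous step, and the dyadic annular grouping of $\{B_j\}$ relative to $F$ preserves the exponential decay in $\omega$ thanks to its super-polynomial suppression. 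The principal technical obstacle is precisely this last step: matching the cylindrical scaling from Corollary \ref{cor:rev_hoelder} with the $\ell^{-m}$ factor from Proposition \ref{pro:local_energy} to produce exactly the target parabolic scaling, while ensuring the summation over $\{B_j\}$ does not degrade the exponential off-diagonal decay.
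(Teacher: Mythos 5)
Your duality reduction of (ii) to (i) via Lemma \ref{lem:backwards_prop} is sound (the adjoint family is again a propagator for a matrix with the same ellipticity constants, and the exponents match, since $\tfrac1{r'}-\tfrac12=\tfrac12-\tfrac1r$), and your exponent bookkeeping in the Riesz--Thorin step is correct. The problem lies in the second half of your argument for (i), where you pass from the $L^r\to L^r$ off-diagonal bound to the $L^r\to L^2$ bound with the smoothing factor. In your scheme every ball $B_j$ of the lattice covering $E$ is estimated by the \emph{full} norm $\|\mathbbm{1}_F f\|_{L^r}$ times $\exp\{-c(d(B_j,F)/\ell)^{2m/(2m-1)}\}$, and you then take the $\ell^2$-sum over $j$. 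That sum only converges if $\sum_j \exp\{-2c(d(B_j,F)/\ell)^{2m/(2m-1)}\}<\infty$, which fails as soon as infinitely many balls of the covering lie at comparable distance from $F$: take $E$ and $F$ to be parallel half-spaces at distance $d$, or, in the limiting case $d(E,F)=0$ (which the lemma must cover, since for $E=F=\BBR^n$ it is precisely the global $L^r\to L^2$ smoothing estimate), the sum runs over all of $\BBR^n$ with weight $1$. So the "dyadic annular grouping of $\{B_j\}$ relative to $F$" does not rescue the argument; the step you yourself flag as the principal obstacle is a genuine gap, not a technicality. It can be repaired, but only by an extra idea you do not supply: decompose $\mathbbm{1}_F f$ into annuli adapted to each $B_j$, use the $L^r\to L^r$ off-diagonal bound per annulus, and exploit the embedding $\ell^r\hookrightarrow\ell^2$ ($r\le2$) together with the bounded overlap of the annuli before summing in $j$.

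The paper (following \cite[Lemmas 4.9 and 4.11]{AMP15}) orders the ingredients differently, and this avoids any weighted summation. One first proves the \emph{global} smoothing bound $\|\Gamma(t,s)f\|_{L^2}\lesssim (t-s)^{-\frac n{2m}(\frac1q-\frac12)}\|f\|_{L^q}$: cover $\BBR^n$ by lattice balls of radius $(t-s)^{1/(2m)}$, control each time-slice $L^2$-norm by a cylindrical $L^q$-average using Proposition \ref{pro:local_energy} and Corollary \ref{cor:rev_hoelder}, and sum using $\ell^q\hookrightarrow\ell^2$, bounded overlap, and the uniform $L^q$ bound on the slices $\Gamma(t'',s)f$ --- here no exponential weights appear, so the sum is harmless. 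Only then does one apply Riesz--Thorin to the fixed operator $\mathbbm{1}_E\Gamma(t,s)\mathbbm{1}_F$, interpolating between this $L^q\to L^2$ bound and the $L^2\to L^2$ off-diagonal bound of Proposition \ref{prop:off2}; the smoothing factor $(t-s)^{-\frac n{2m}(\frac1r-\frac12)}$ and the decay $e^{-\alpha_r(d(E,F)/(t-s)^{1/(2m)})^{2m/(2m-1)}}$ then come out simultaneously, exactly because the exponential enters only through the $L^2$ endpoint. I recommend you restructure your proof along these lines (or add the annular decomposition of $f$ described above) before the argument can be considered complete.
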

	 \begin{proof}
	 	This is a consequence of the reversed H\"older estimates from Corollary \ref{cor:rev_hoelder}, bounds from Proposition \ref{prop:off2} and the assumption. See \cite[Lemmas 4.9 and 4.11]{AMP15} for details. 
	 \end{proof}
	 We finish this section addressing inhomogeneous autonomous equations and presenting a type of Duhamel's principle for the propagators. 
	 \begin{lemma}
	 	\label{lem:duhamel}
	 	Let $L_0=(-1)^{m}\text{div}_m \underline{A}(x)\nabla^m$ be an autonomous  elliptic operator. For $f\in L^2(L^2)$, $h\in L^2(\BBR^n)$ and $t>0$ we define
	 	$$u(t,\cdot) \coloneqq e^{-tL_0}h + \mathcal R_{L_0} f(t,\cdot),$$ where $$ \mathcal R_{L_0}f(t,\cdot)\coloneqq\sum_{|\beta|=m} \int_0^t e^{-(t-s)L_0}\partial^\beta f_\beta(s,\cdot)ds$$ is the $L^2(L^2)=T^{2,2}_m\to X^2_m$ bounded map from Proposition \ref{prop:ext_R}. Then $u(0)=h$ in $ L^2(\BBR^n)$, $\nabla^m u\in T^{2,2}_m$ and it holds for all $\phi \in \testfunctions$, \[\langle u,\partial_t \phi\rangle = \langle \underline A \nabla^mu,\nabla^m \phi \rangle + \langle f, \nabla^m \phi \rangle.\]
	 \end{lemma}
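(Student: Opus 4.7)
My plan is to split $u=v_1+v_2$ with $v_1(t,\cdot)=e^{-tL_0}h$ and $v_2(t,\cdot)=\mathcal R_{L_0}f(t,\cdot)$ and handle the two pieces separately, using that the linearity of the weak formulation allows us to add the results at the end.

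For $v_1$, I would rely on semigroup theory: $v_1\in\mathscr C([0,\infty);L^2(\BBR^n))$ with $v_1(0)=h$, and testing the identity $\partial_t v_1=-L_0 v_1$ against $e^{-tL_0}h$ itself in the $H^m$--$H^{-m}$ duality (using Lemma \ref{lem:absolute continuity} with $V=H^m$, $H=L^2$) yields the classical energy identity
\[
\|h\|_{L^2}^2 = 2\,\mathrm{Re}\int_0^\infty\langle \underline A\nabla^m v_1,\nabla^m v_1\rangle\,dt,
\]
which via G{\aa}rding gives $\nabla^m v_1\in L^2(L^2)=T^{2,2}_m$. The weak formulation for $v_1$ with $f=0$ is then immediate from $\partial_t v_1+L_0 v_1=0$ in $L^2(0,\infty;H^{-m})$ after integrating by parts against $\phi\in\testfunctions$.

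For $v_2$ my approach is to first take $f$ smooth and compactly supported in $(0,\infty)\times\BBR^n$, in which case $g(s):=\sum_{|\beta|=m}\partial^\beta f_\beta(s,\cdot)\in H^{-m}(\BBR^n)$ depends smoothly on $s$, and the Bochner integral defining $v_2(t)$ makes sense in $L^2(\BBR^n)$ (by the $L^2\leftarrow H^{-m}$ smoothing of $e^{-(t-s)L_0}$ for $t>s$). Differentiation under the integral gives, in the $H^{-m}$ sense,
\[
\partial_t v_2(t)=g(t)-L_0 v_2(t),\qquad v_2(0)=0.
\]
Pairing this against $v_2$ itself, integrating in time, and using G{\aa}rding for $L_0$ together with Cauchy–Schwarz on $\langle g,v_2\rangle=-\langle f,\nabla^m v_2\rangle$, I can absorb the highest order term to obtain the energy estimate
\[
\sup_{t>0}\|v_2(t)\|_{L^2}^2+\lambda\int_0^\infty\|\nabla^m v_2\|_{L^2}^2\,dt\lesssim \|f\|_{L^2(L^2)}^2.
\]
Rewriting the identity $\partial_t v_2+L_0 v_2=g$ against $\phi\in\testfunctions$ gives the claimed weak formulation for smooth $f$.

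Finally, I approximate a general $f\in L^2(L^2)$ by smooth compactly supported $f_k$. The uniform energy bound together with the map $\mathcal R_{L_0}\colon T^{2,2}_m\to X^2_m$ from Proposition \ref{prop:ext_R} (applied to $f_k-f$) gives convergence of $v_2^{(k)}$ to $v_2=\mathcal R_{L_0}f$ in both $L^\infty(L^2)$ and in the space of distributions with $\nabla^m\cdot\in L^2(L^2)$, which allows to pass to the limit in the weak formulation and in the initial condition $v_2(0)=0$ in $L^2$. The main technical obstacle will be ensuring that the pairing $\langle g,v_2\rangle=-\langle f,\nabla^m v_2\rangle$ is rigorous without assuming any regularity of $v_2$ beyond $\nabla^m v_2\in L^2$, which I will handle by an additional mollification in the spatial variable and a limiting argument that uses the uniform bounds above; alternatively one can test the integral representation of $v_2$ directly against $\partial_t\phi$ and apply Fubini together with the identity $(\partial_t+L_0^*)(e^{-(t-s)L_0})^*=0$ to identify the two sides.
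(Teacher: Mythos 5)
Your proposal is correct and follows essentially the same route as the paper's (deliberately brief) proof: verify the claim for $f\in\mathscr D(\BBR^{n+1}_+)$, where Duhamel differentiation is legitimate, and then pass to general $f\in L^2(L^2)$ by density, using the $T^{2,2}_m\to X^2_m$ boundedness of $\mathcal R_{L_0}$ from Proposition \ref{prop:ext_R} to identify the limit. The one genuine difference is how the quantitative bound $\|\nabla^m\mathcal R_{L_0}f\|_{L^2(L^2)}\lesssim\|f\|_{L^2(L^2)}$ is obtained: the paper simply quotes it from Proposition \ref{prop:ext_R} (whose $L^2$ statement ultimately rests on the Kato square root estimate \eqref{eq:Kato} and de Simon's maximal regularity), whereas you rederive it for smooth $f$ by testing $\partial_t v_2+L_0v_2=g$, $v_2(0)=0$, against $v_2$ and absorbing via the G{\aa}rding inequality, then propagate it to general $f$ by applying the same estimate to differences $f_k-f_j$. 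This is a legitimate and more self-contained way to get the energy bound (it is the same hidden-coercivity mechanism behind Theorem \ref{thm:global_well}), at the price of the small technical care you already flag: to invoke Lemma \ref{lem:absolute continuity} you need the qualitative a priori information $v_2\in L^2_{loc}(0,\infty;H^m(\BBR^n))$, which for smooth compactly supported $f$ follows from \eqref{eq:Kato} applied under the Duhamel integral. Two cosmetic points: the identity $\langle g,v_2\rangle=-\langle f,\nabla^m v_2\rangle$ carries a factor $(-1)^m$ from integrating by parts $m$ times (harmless, since only the modulus enters Cauchy--Schwarz and the sign bookkeeping must anyway be matched to the stated weak formulation), and for $v_1$ the energy identity with equality uses $\|e^{-tL_0}h\|_{L^2}\to0$ as $t\to\infty$; the inequality version, which is all you need for $\nabla^m v_1\in T^{2,2}_m$, follows directly from monotonicity and G{\aa}rding.
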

	 \begin{proof}
	 	Uniqueness is immediate by Theorem \ref{thm:global_well}, if we look at the difference of two potential solutions. For existence, with Proposition \ref{prop:ext_R} on hand, it is routine to check the claim for $f\in\mathscr D (\BBR^{n+1}_+)$. The quoted result ensures in particular that $\|\nabla^m \mathcal R_{L_0}f \|_{L^2(L^2)}\lesssim \|f\|_{L^2(L^2)}$. For general $f$, we rely on the approximation by test functions. See \cite[Lemma 6.12]{AMP15} for details.
	 \end{proof}
	 \begin{corollary}
	 	\label{cor:duhamel prop} Let $L_0=(-1)^{m}\text{div}_m \underline{A}(x)\nabla^m$ be an autonomous elliptic operator. Then the propagators from \eqref{eq:propagators family} can be represented in $L^2(\BBR^n)$ by 
	 	 \begin{equation*} \label{eq:Duhamel_propagators}
	 	 \Gamma(t,0)h=e^{-tL_0}h
	 	 +\int_0^t e^{-(t-s)L_0}\text{div}_m\left( (A(s,\cdot)-\underline A)\nabla^m \Gamma(s,0)h\right)\, ds,\end{equation*} where $h\in L^2(\BBR^n)$.
	 \end{corollary}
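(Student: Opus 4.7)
The plan is to apply Lemma \ref{lem:duhamel} with the autonomous operator $L_0$, datum $h$, and source $f(s,\cdot) := (A(s,\cdot)-\underline A)\nabla^m \Gamma(s,0)h$, exploiting the elementary identity $L = L_0 + (L - L_0)$ to rewrite the non-autonomous equation as an inhomogeneous equation driven by $L_0$.

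First I would verify that the hypotheses of Lemma \ref{lem:duhamel} are satisfied. Since $h \in L^2(\BBR^n)$, Theorem \ref{thm:global_well} guarantees that $u(t,\cdot) := \Gamma(t,0)h$ lies in $L^2(0,\infty;H^m(\BBR^n))\cap \mathscr C_0([0,\infty);L^2(\BBR^n))$ with $\|\nabla^m u\|_{L^2(L^2)}\lesssim \|h\|_{L^2}$; since both $A$ and $\underline A$ are bounded by \eqref{eq:upper ellipticity estimate}, this gives $f \in L^2(L^2) = T^{2,2}_m$, as needed.

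Next, I would match the weak formulation. Testing the equation $\partial_t u + Lu = 0$ against $\phi \in \testfunctions$ as in Definition \ref{def:weak_sol}, and adding and subtracting $\underline A \nabla^m u$ in the resulting bilinear form, yields
\[\langle u, \partial_t \phi\rangle = \langle A\nabla^m u, \nabla^m \phi\rangle = \langle \underline A \nabla^m u, \nabla^m \phi\rangle + \langle f, \nabla^m \phi\rangle,\]
which is precisely the weak identity appearing in Lemma \ref{lem:duhamel}. Combined with $u(0)=h$ in $L^2(\BBR^n)$, this shows that both $u$ itself and the function $v(t,\cdot) := e^{-tL_0}h + \mathcal R_{L_0}f(t,\cdot)$ produced by Lemma \ref{lem:duhamel} are energy solutions of \eqref{eq:the parabolic equation} with $L^2$ trace $h$. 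The uniqueness assertion of Theorem \ref{thm:global_well} then forces $u(t) = v(t)$ in $L^2(\BBR^n)$ for every $t\ge 0$. Unfolding the definition of $\mathcal R_{L_0}$ and recognizing $\sum_{|\beta|=m}\partial^\beta f_\beta(s,\cdot)$ as $\text{div}_m\bigl((A(s,\cdot)-\underline A)\nabla^m\Gamma(s,0)h\bigr)$ yields the claimed representation.

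I do not anticipate a serious obstacle: the argument essentially reduces the corollary to an application of Lemma \ref{lem:duhamel} together with the energy uniqueness from Theorem \ref{thm:global_well}. The only care required is in bookkeeping the sign conventions when translating the pointwise identity $L_0-L=(-1)^m \text{div}_m(\underline A - A)\nabla^m$ into the weak formulation used in Lemma \ref{lem:duhamel}, which is straightforward.
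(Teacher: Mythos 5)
Your proposal is correct and follows essentially the same route as the paper: the paper's proof also consists of applying Lemma \ref{lem:duhamel} with the decomposition $A=(A-\underline A)+\underline A$ and invoking the uniqueness of energy solutions from Theorem \ref{thm:global_well}. Your verification that $f=(A-\underline A)\nabla^m\Gamma(\cdot,0)h$ lies in $L^2(L^2)$ and the matching of the weak formulations merely spell out the details the paper leaves implicit.
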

	 \begin{proof} The statement follows directly from Lemma \ref{lem:duhamel}, the decomposition $A= (A-\underline{A}) +\underline A$ and uniqueness of energy solutions from Theorem \ref{thm:global_well}.
	\end{proof}
	 \section{Existence of weak solutions for rough initial data}
	 \label{sec:existence results}
	 \setcounter{theorem}{0} \setcounter{equation}{0}

	\subsection{Initial data with controlled growth in $L^2_{loc}(\BBR^n)$}
	\label{sec:existence big p}
	Thanks to their off-diagonal decay properties, we can use the propagators to construct global weak solutions to \eqref{eq:the parabolic equation} with rough initial data. Our first result leads to a straightforward extension of those operators to Campanato-type $L^p$ spaces from Section \ref{sec:Campanato}.

	\begin{lemma}\label{lem:extension to polynomial growth}
		Let $f\in L^2_{loc} (\BBR^n)$ be such that for any $x_0\in \BBR^n$ there exist $C>0$ and $N\in \BBN$ with $$ \| f\|_{L^2(B(x_0,r)}\leq Cr^N$$ for all $r\ge 1$. Then $\Gamma(t,0)f$ exists in $ \mathscr C([0,\infty); L^2_{loc}(\BBR^n))$. Precisely, fix some  $B_0=B(x_0,r)$ and let $B_k\coloneqq B(x_0,2^kr)$ for $k\in \BBN$. Then for any $K\subseteq \BBR^n$ compact,  
		the limit $$\lim_{k\to\infty }\mathbbm{1}_K \Gamma(t,0) (\mathbbm{1}_{B_k}f)$$ exists in $L^2(\BBR^n)$ for all times $t\in[0,\infty)$, depends continuously on $t\in[0,\infty)$ and is independent of the initial choice of $x_0\in \BBR^n $ and $ r>0$. 
		
		 Moreover,
		the function $u_f\colon (t,x)\mapsto \Gamma(t,0)f(x)$ is a global weak solution of \eqref{eq:the parabolic equation} and satisfies $\lim_{t\to 0}u_f(t,\cdot)=f$ in $ L^2_{loc}(\BBR^n)$.
	\end{lemma}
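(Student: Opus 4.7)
The strategy is to combine the $L^2$ off-diagonal bounds from Proposition \ref{prop:off2} with the assumed polynomial $L^2$-growth of $f$. Fix $x_0\in\BBR^n$, $r>0$, a compact set $K\subseteq\BBR^n$, and set $B_k=B(x_0,2^kr)$. I aim to show $\{\mathbbm{1}_K\Gamma(t,0)(\mathbbm{1}_{B_k}f)\}_{k}$ is Cauchy in $L^2$ for each $t>0$, uniformly for $t$ in compact subintervals of $[0,\infty)$. For $j<k$, I telescope through dyadic annuli $A_l\coloneqq B_l\setminus B_{l-1}$, so the difference splits as $\sum_{l=j+1}^{k}\mathbbm{1}_K\Gamma(t,0)(\mathbbm{1}_{A_l}f)$. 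Choose $k_0$ so that $K\subseteq B_{k_0-1}$; then for $l\ge k_0$, the distance $d(K,A_l)\gtrsim 2^l r$, and the off-diagonal estimate \eqref{eq:off-diag} combined with the growth hypothesis yields
\[
\|\mathbbm{1}_K\Gamma(t,0)(\mathbbm{1}_{A_l}f)\|_{L^2}\lesssim\exp\!\left(-c\Big(\tfrac{2^lr}{t^{1/(2m)}}\Big)^{\!\frac{2m}{2m-1}}\right)(2^lr)^N.
\]
Since $2m/(2m-1)>1$, this is summable in $l$ and the tail vanishes as $j\to\infty$. This gives the $L^2$-limit, which I call $u_f(t,\cdot)$ on $K$.

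For continuity in $t$, each $t\mapsto \mathbbm{1}_K\Gamma(t,0)(\mathbbm{1}_{B_k}f)$ is continuous on $[0,\infty)$ by \eqref{eq:continuity of prop}. The bound above is uniform for $t\in[0,T]$ (the worst case is $t=T$; the case $t=0$ being trivial since then the expression is simply $\mathbbm{1}_K\mathbbm{1}_{B_k}f=\mathbbm{1}_Kf$ for $k$ large), so the convergence is uniform on $[0,T]$ and the limit lies in $\mathscr C([0,\infty); L^2(K))$. Independence of $x_0$ and $r$ is obtained by the same argument: given another sequence $B'_k$, for $k$ large the symmetric difference $B_k\triangle B'_k$ lies outside a ball of arbitrarily large radius around $K$, so the off-diagonal bound forces the two limits to coincide.

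To check that $u_f$ is a global weak solution, note that each $u_k(t,\cdot)\coloneqq\Gamma(t,0)(\mathbbm{1}_{B_k}f)$ is an energy solution by Theorem \ref{thm:global_well}, hence a weak solution. Proposition \ref{pro:local_energy} bounds $\|\nabla^m u_k\|_{L^2((c,d)\times B)}$ by $L^2$ norms of $u_k$ on a slightly enlarged cylinder, which converge as $k\to\infty$ by the first step; thus $\nabla^m u_k$ is bounded in $L^2$ on every cylinder uniformly in $k$. Weak compactness produces a subsequential weak limit, which must agree with $\nabla^m u_f$ in $\mathscr D'$. Passing to the limit in the weak formulation (both sides are continuous in the appropriate topologies) shows that $u_f$ is a weak solution. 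Finally, the trace assertion $u_f(t,\cdot)\to f$ in $L^2_{loc}$ as $t\to 0$ is built into the construction: for $K\subseteq B_k$, $\Gamma(t,0)(\mathbbm{1}_{B_k}f)\to \mathbbm{1}_{B_k}f$ in $L^2$ as $t\to 0$ by \eqref{eq:continuity of prop}, and this combined with the uniform (in $t\in[0,T]$) convergence in $k$ gives $u_f(t,\cdot)\to f$ in $L^2(K)$.

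The main obstacle is to ensure the \emph{uniformity} in $t\in[0,T]$ of the above estimates, so that continuity of the limit and the weak-solution property pass through. This reduces to checking that the Davies-type exponential in Proposition \ref{prop:off2}, namely $\exp(-c\,(d/t^{1/(2m)})^{2m/(2m-1)})$, beats any polynomial factor $d^N$ as $d\to\infty$ uniformly for $t\in(0,T]$. Since the exponent $2m/(2m-1)$ is strictly greater than one, this super-polynomial decay is precisely what makes the dyadic-annuli argument converge, and thus is the crucial ingredient.
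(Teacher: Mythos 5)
Your argument is correct and follows essentially the same route as the paper, whose (two-line) proof invokes exactly the same ingredients: the $L^2$ off-diagonal bounds of Proposition \ref{prop:off2} played against the polynomial growth of $f$ via a dyadic localization, plus the local energy estimates of Proposition \ref{pro:local_energy} to pass to the limit in the weak formulation. The only cosmetic difference is at the trace: the paper cites dominated convergence together with the off-diagonal bounds, while you deduce $u_f(t,\cdot)\to f$ in $L^2(K)$ from the uniform-in-$t$ convergence on $[0,T]$ and the value at $t=0$, which is an equally valid variant of the same localization argument.
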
 
	\begin{proof} The claim follows from Lemma \ref{prop:off2} and Proposition \ref{pro:local_energy} by localization. The $L^2_{loc}$ convergence to the trace follows by Lemma \ref{prop:off2} and Lebesgue Dominated Convergence.\qedhere
	\end{proof}
	In particular, for any polynomial $P$ and $f\in L^p(\BBR^n)$ with $p\in [2,\infty]$ or $f\in BMO(\BBR^n)$, a global weak solution of the parabolic equation \eqref{eq:the parabolic equation} with $L^2_{loc}(\BBR^n)$ initial data $f+P$ can be obtained with Lemma \ref{lem:extension to polynomial growth} by $$(t,x)\mapsto \Gamma(t,0)(f+P)(x).$$ 
	
	Our next result is the conservation property for polynomials $P\in \pol$, according to which 
	we can rewrite the above solution in the $L^2_{loc}(\BBR^n)$ sense as 
	$$(t,x)\mapsto \Gamma(t,0)(f)(x)+P(x).$$ 
	\begin{proposition}
		\label{pro:conservation_property_polynome}
		Let $0\leq s\leq t<\infty$ and $P\in\pol$. Then $\Gamma(t,s)P=P $ in $L^2_{loc}(\BBR^n)$.
	\end{proposition}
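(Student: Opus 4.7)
The plan is to use duality with the adjoint propagator combined with a cutoff argument exploiting $\nabla^m P = 0$. By the time-shift trick employed in the proof of Proposition~\ref{prop:off2} (the matrix $A(\cdot + s, \cdot)$ satisfies the same ellipticity estimates, so $\Gamma(t, s)$ coincides with the propagator at time $t - s$ of the shifted equation), it suffices to treat $s = 0$ and $t > 0$.

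Fix $\phi \in \mathscr{D}(\BBR^n)$. Using the $L^2_{loc}$-extension from Lemma~\ref{lem:extension to polynomial growth}, the adjoint identity $\Gamma(t, 0)^*\phi = \tilde{\Gamma}(0, t)\phi$ from Lemma~\ref{lem:backwards_prop}, and dominated convergence (justified by the stretched-exponential decay of $\tilde{\Gamma}(0, t)\phi$ away from $\mathrm{supp}\,\phi$ from Proposition~\ref{prop:off2}, which dominates the polynomial growth of $P$ upon dyadic decomposition), I would first derive
\begin{equation*}
\int_{\BBR^n} \Gamma(t, 0)P \cdot \overline{\phi}\,dx \;=\; \int_{\BBR^n} P \cdot \overline{\tilde{\Gamma}(0, t)\phi}\,dx.
\end{equation*}
Since $\phi$ is arbitrary, the claim reduces to showing that the right-hand side equals $\int P\overline{\phi}\,dx$.

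To prove this, I would introduce smooth radial cutoffs $\eta_R$ with $\eta_R \equiv 1$ on $B(0, R)$, $\mathrm{supp}\,\eta_R \subseteq B(0, 2R)$, and $|\nabla^j \eta_R| \lesssim R^{-j}$, and set $\psi_R(s) := \langle P\eta_R, \tilde{\Gamma}(s, t)\phi\rangle_{L^2}$ for $s \in [0, t]$. Since $\tilde{\Gamma}(\cdot, t)\phi \in W(0, t; H^m(\BBR^n), H^{-m}(\BBR^n))$ (the backward analog of Theorem~\ref{thm:global_well} via Remark~\ref{rem:backwards eq}) and $P\eta_R \in H^m(\BBR^n)$, Lemma~\ref{lem:absolute continuity} combined with the weak backward equation yields absolute continuity of $\psi_R$ with
\begin{equation*}
\psi_R'(s) \;=\; \int_{\BBR^n} A^*(s, x)\, \nabla^m \tilde{\Gamma}(s, t)\phi \cdot \overline{\nabla^m(P\eta_R)}\,dx.
\end{equation*}
The decisive observation is that $\nabla^m P = 0$, so the Leibniz expansion of $\nabla^m(P\eta_R)$ contains only cross terms supported in the annulus $A_R := B(0, 2R) \setminus B(0, R)$, on which the bounds $|\nabla^j P| \lesssim R^{m-1-j}$ and $|\nabla^{m-j}\eta_R| \lesssim R^{-(m-j)}$ for $j < m$ give $|\nabla^m(P\eta_R)| \lesssim R^{-1}$, hence $\|\nabla^m(P\eta_R)\|_{L^2(A_R)} \lesssim R^{(n-2)/2}$. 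Dominated convergence yields $\psi_R(t) \to \int P\overline{\phi}$ and $\psi_R(0) \to \int P\overline{\tilde{\Gamma}(0, t)\phi}$ as $R \to \infty$, so the remaining task is to show $\int_0^t |\psi_R'(s)|\,ds \to 0$.

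The main obstacle is bounding $\int_0^t \|\nabla^m \tilde{\Gamma}(s, t)\phi\|_{L^2(A_R)}^2\,ds$ with decay in $R$ strong enough to overcome the $R^{n-2}$ factor from $\|\nabla^m(P\eta_R)\|^2$. I expect this by combining Proposition~\ref{pro:local_energy} with the $L^2$ off-diagonal decay of Proposition~\ref{prop:off2}. Passing to $u(\sigma) := \tilde{\Gamma}(t-\sigma, t)\phi$ via Remark~\ref{rem:backwards eq} (so that $u$ is a global weak solution of an equation of type \eqref{eq:the parabolic equation} with initial datum $u(0) = \phi$), covering $A_R$ by finitely many balls $B(x_i, R/10)$ with enlarged balls $B(x_i, R/2)$ still disjoint from $\mathrm{supp}\,\phi$ for $R$ large, and applying the Caccioppoli-type estimate (i) of the inner Claim in the proof of Proposition~\ref{pro:local_energy} on $[0, t]$ (where the initial-datum term $\|\phi\|_{L^2(B(x_i, R/2))}$ vanishes), one obtains
\begin{equation*}
\int_0^t \|\nabla^m \tilde{\Gamma}(s, t)\phi\|_{L^2(A_R)}^2\,ds \;\lesssim\; \frac{t}{R^{2m}}\,\exp\!\bigl(-c\,(R/t^{1/(2m)})^{2m/(2m-1)}\bigr)\,\|\phi\|_{L^2}^2.
\end{equation*}
Combined with Cauchy--Schwarz this gives $\int_0^t|\psi_R'(s)|\,ds \lesssim t\,R^{(n-2)/2 - m}\exp(-c\,(R/t^{1/(2m)})^{2m/(2m-1)})\|\phi\|_{L^2} \to 0$ as $R \to \infty$, completing the proof.
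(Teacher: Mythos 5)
Your proposal is correct and follows essentially the same route as the paper's proof: duality with the backward/adjoint propagator, a cutoff of the polynomial, the time-derivative identity via Lemma \ref{lem:absolute continuity}, and then Caccioppoli-type local energy estimates combined with the $L^2$ off-diagonal bounds of Proposition \ref{prop:off2} to make the error term vanish as $R\to\infty$. The only (immaterial) difference is bookkeeping: you work with a single annulus $B(0,2R)\setminus B(0,R)$ and let the stretched-exponential decay beat the polynomial factor $R^{(n-2)/2}$, whereas the paper decomposes into dyadic annuli and uses the prefactor $R^{d-m}\to 0$ together with a series bound that is uniform in $R$.
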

	\begin{proof}
		Without loss of generality assume $s=0$. For any $0<t<\infty$ and $P\in\pol$, Lemma \ref{lem:extension to polynomial growth} implies that $\Gamma(t,0)P$ is well-defined in $L^2_{loc}(\BBR^n)$. It is easy to see that on any compact set $K\subseteq \BBR^n$ we have 
		$$ \mathbbm{1}_K\Gamma(t,0) P= \lim_{R\to\infty}\mathbbm{1}_K\Gamma(t,0) (\chi_RP) \quad \mbox{in}\quad L^2(\BBR^n),$$ 
	 where $\chi_R\coloneqq\chi(\cdot/R)$ is a smooth cut-off function with $\mathbbm{1}_{B(0,1)}\leq \chi\leq \mathbbm{1}_{B(0,2)}$. 
		
		We aim to show that for any $K\subseteq \BBR^n$ compact
		$$\mathbbm{1}_KP=\lim_{R\to\infty}\mathbbm{1}_K\Gamma(t,0) (\chi_RP) \quad \mbox{in}\quad L^2(\BBR^n).$$
		\begin{claim}
			For any $h\in \mathscr C_c (\BBR^n)$ it holds 
			$$\langle \Gamma(t,0)P, h\rangle_{L^2}=\lim_{R\to\infty}\langle \Gamma(t,0)(\chi_RP), h\rangle_{L^2}=\langle P, h\rangle_{L^2}.$$
		\end{claim} 
		\begin{claimproof}
			\renewcommand{\qedsymbol}{}
			If we denote by $\tilde \Gamma$ is the propagator associated to the matrix $\tilde A (s,x)$ given by $A^*(t-s,x)$ if $s\in(-\infty,t]$ and $A^*(0,x)$ otherwise, then
			$$\langle \Gamma(t,0)(\chi_RP), h\rangle_{L^2}=\langle \chi_RP, \Gamma(t,0)^*h\rangle_{L^2}=\langle \chi_RP, \tilde\Gamma(t,0)h\rangle_{L^2},$$ according to Lemma \ref{lem:backwards_prop} and the definition of the adjoint propagators. By the energy well-posedness from Theorem \ref{thm:global_well} it holds $$\left( s\mapsto u_h(s,\cdot)\coloneqq\tilde\Gamma(t-s,0)h \right) \in W(0,t;H^m(\BBR^n),H^{-m}(\BBR^n))$$ and also $\chi_RP\in  W(0,t;H^m(\BBR^n),H^{-m}(\BBR^n))$ as it is independent of $t$. Therefore, Lemma \ref{lem:absolute continuity} applies and we can write

			\begin{align}\label{eq:estimate 4}
				\langle \chi_RP, u_h(0)\rangle_{L^2} &=\langle \chi_RP,  u_h(t)\rangle_{L^2}		
				-\int_0^t\overline{\langle \partial_s u_h(s), \chi_RP\rangle}_{ H^{-m}(\BBR^n),\sob} ds. 
			\end{align} We next use $ u_h \in W(0,t;H^m(\BBR^n),H^{-m}(\BBR^n))$ to approximate $\chi_RP$ in $L^2(0,t;\sob)$ by a sequence of test functions $(s,x)\mapsto (\eta_\varepsilon(s)\chi_R(x)P(x))_{\varepsilon>0}$ to arrive at
			\begin{align*}
			I\coloneqq	-\int_0^t\overline{\langle \partial_s u_h(s), \chi_RP\rangle}_{ H^{-m}(\BBR^n),\sob} ds
				&=			
				\int_0^t\int_{B(0,2R)}A^*(t-s,x) \nabla^m u_h(s,x) \overline{ \nabla^m (\chi_RP)(x)}dxds.
				\end{align*} 
			 
			\noindent Assume without loss of generality $\text{supp}\, h \subseteq B(0,1)$. Then, denoting the degree of $P$ by $d$,
			\begin{align*}
			|I|	&\lesssim\int_0^t\int_{B(0,8)}|\nabla^m u_h(s,x)| |\nabla^m (\chi_RP)(x)|dxds + \sum_{k=3}^\infty \int_0^t\int_{B_{k+1}\setminus B_k}|\nabla^m u_h(s,x)| |\nabla^m (\chi_RP)(x)|dxds \\
				&\lesssim_{P,\chi} R^{d-m}\int_0^t\int_{B(0,8)}|\nabla^m u_h(s,x)|dxds + R^{d-m}\sum_{k=3}^\infty  \int_0^t\int_{B_{k+1}\setminus B_k}|\nabla^m u_h(s,x)|dxds. 
			\end{align*}
			Here we put $B_k=B(0,2^k)$. Since $d<m$, the first term tends to zero as $R\to \infty$. We need to show the finiteness of $\sum_{k=3}^\infty  \int_0^t\int_{B_{k+1}\setminus B_k}|\nabla^m u_h|dxds$. For this we use the following refined version of the estimates from Proposition \ref{pro:local_energy} (we keep the same notation). 
			 
			 For any $0<r<R$, $0<\zeta<\xi<\min(R-r,r)$ and the annuli $S(r,\zeta)=B(x_0,r+\zeta)\setminus B(x_0,r-\zeta)$ it holds
			 	 \begin{equation}
			 	 \label{eq:new version}
			 	 \int_{a'}^d \|\nabla ^m u(s,\cdot)\|_{L^2(S(r,\zeta))}^2ds \lesssim  \frac{1}{(\xi-\zeta)^{2m}}\int_{a'}^d \| u(s,\cdot)\|_{L^2(S(r,\xi))}^2ds +\|u(a',\cdot)\|_{L^2(S(r,\xi))}^2
			 	 .\end{equation}	
			Estimate \eqref{eq:new version} is achieved by covering the annulus $S(r,\zeta)$ with finitely many slightly overlapping balls and use the already known estimates.

			Back to our setting, \eqref{eq:new version} and off diagonal estimates for the propagators allow to estimate for $k\ge 3$
			\begin{align*}
				\int_0^t\int_{B_{k+1}\setminus B_k}|\nabla^m u_h|dxds
				&\hspace{-0.15cm}\stackrel{\eqref{eq:new version}}{\lesssim} 2^{\frac{kn}{2}} \left(2^{-2m(k-1)}\int_0^t\int_{S(2^k+2^{k-1},2^{k})}| u_h|^2dxds+\|h\|_{L^2(S(2^k+2^{k-1},2^{k}))}^2\right)^{1/2}\\
				&\lesssim 2^{\frac{k(n-2m)}{2}} \exp\left(-c\left(\frac{2^{2m(k-2)}}{t}\right)^{1/(2m-1)}\right) \|h\|_{L^2},
			\end{align*}
			up to constants depending on $t,n,m,\lambda$ and $\Lambda$. Thus, the whole series converges, $$\sum_{k=3}^\infty  \int_0^t\int_{B_{k+1}\setminus B_k}|\nabla^m u_h|dxds<\infty.$$ 
			Passing to the limit as $R\to \infty$ in \eqref{eq:estimate 4} we conclude with $u_h(t)=h$ that
			$$\vspace{-0.8cm}\lim_{R\to\infty}\langle (\chi_RP), u_h(0)\rangle_{L^2}
			=\lim_{R\to\infty}\langle (\chi_RP),  u_h(t)\rangle_{L^2}=\langle P,  h\rangle_{L^2}.$$
		\end{claimproof}
		
	\end{proof}
	
	If $1<p<2$ then $f\in L^p(\BBR^n)$ does not imply $f\in L^2_{loc}(\BBR^n)$, so the results obtained so far do not ensure the existence of weak solutions with such an initial data. Indeed, as we will see, the theory for the cases $1<p<2$ and $2\leq p \leq \infty$ differs a lot and we so present the existence results in separate sections. 
	\begin{proposition}
		\label{pro:p>2 pol existence}
		Let $f\in BMO(\BBR^n)$ or $p\in [2,\infty)$ and $f\in L^p(\BBR^n)$. For any polynomial $P\in \pol$ the function 
		$$\left(t\mapsto u_{f+P}(t,\cdot)\coloneqq\Gamma(t,0)f+P\right) \in L^2_{loc}(0,\infty;H^m_{loc}(\BBR^n))$$
		defines a global weak solution to \eqref{eq:the parabolic equation}, satisfying   $\lim_{t\to 0}u_{f+P}(t,\cdot)=f+P$ in $ L^2_{loc}(\BBR^n)$. It holds $$\nabla^m u_{f+P}=\nabla^m u_{f} \in \tentspace$$ with $p\in [2,\infty)$ if $f\in L^p(\BBR^n)$, or with $p=\infty$ if $f\in BMO(\BBR^n)$. Further, we have
		\[\|f\|_{Y}\sim \|\nabla^m u_{f}\|_{\tentspace}\] with $Y=L^p(\BBR^n)$ or $Y=BMO(\BBR^n)$ correspondingly.
	\end{proposition}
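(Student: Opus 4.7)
The plan is to first establish the construction and identity for $u_{f+P}$, deduce the lower bound from Lemma \ref{lem:trace p>2}, and finally prove the upper bound $\|\nabla^m u_f\|_{\tentspace}\lesssim\|f\|_Y$ by adapting the Fefferman--Stein Carleson-measure argument. Since $f+P\in L^2_{loc}(\BBR^n)$ has at most polynomial growth of its $L^2$-averages, Lemma \ref{lem:extension to polynomial growth} produces a global weak solution $\Gamma(t,0)(f+P)$ with $L^2_{loc}$ trace $f+P$. The conservation property $\Gamma(t,0)P=P$ from Proposition \ref{pro:conservation_property_polynome}, combined with linearity of the localized limit defining $\Gamma(t,0)$ on $L^2_{loc}$, gives the identity $u_{f+P}(t,\cdot)=\Gamma(t,0)f+P$; in particular $\nabla^m u_{f+P}=\nabla^m u_f$, so the claimed norm equivalence does not depend on $P$.

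For the lower bound I apply Lemma \ref{lem:trace p>2} to $u_{f+P}$, obtaining a polynomial $P'$ (unique up to a constant when $p=\infty$) with $\|(f+P)-P'\|_Y\lesssim\|\nabla^m u_f\|_{\tentspace}$. The difference $P-P'$ lies in $\pol$, while the only element of $L^p(\BBR^n)\cap\pol$ (respectively $BMO(\BBR^n)\cap\pol$ modulo constants) is trivial, so this collapses at once to $\|f\|_Y\lesssim\|\nabla^m u_f\|_{\tentspace}$.

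The upper bound is the main work. Fix $x\in\BBR^n$ and a ball $B=B(x_0,r)\ni x$. Choose $c_B\in\pol$ as the (weighted) orthogonal projection of $f$ onto $\pol$ on a suitable dilate of $B$ (or $c_B=f_{2B}$ when $p=\infty$) and write
\[ f=c_B+g_{\mathrm{near}}+\sum_{k\ge k_0} g_k,\quad g_{\mathrm{near}}=(f-c_B)\mathbbm{1}_{\kappa B},\quad g_k=(f-c_B)\mathbbm{1}_{2^{k+1}B\setminus 2^kB}, \]
with $\kappa$ and $k_0$ chosen so that each $g_k$ is supported at distance at least $2^{k-1}r$ from $B$. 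By Proposition \ref{pro:conservation_property_polynome}, $\nabla^m\Gamma(t,0)c_B=0$. The global energy bound of Theorem \ref{thm:global_well} handles the local piece:
\[\int_0^{r^{2m}}\!\int_B|\nabla^m\Gamma(t,0)g_{\mathrm{near}}|^2\,\lesssim\, \|g_{\mathrm{near}}\|_{L^2}^2 \,\lesssim\, |B|\,(f^{\#,m}(x))^2.\]
For the distant pieces, since each $g_k$ vanishes on a neighborhood of $B$, passing $a'\to 0^+$ in Claim (i) from the proof of Proposition \ref{pro:local_energy} (legitimate by the $L^2$-continuity of $\Gamma(\cdot,0)g_k$ at the origin, granted by Theorem \ref{thm:global_well}) yields
\[\int_0^{r^{2m}}\!\int_B|\nabla^m\Gamma(t,0)g_k|^2 \,\lesssim\, r^{-2m}\!\int_0^{r^{2m}}\!\|\Gamma(s,0)g_k\|_{L^2(2B)}^2\,ds.\]
Combining with Proposition \ref{prop:off2} produces an exponential factor $\exp(-c\,2^{k\cdot 2m/(2m-1)})$, which dominates both $2^{kn}$ and the polynomial factor $(1+k)^2$ arising from the Campanato-type estimate $\|g_k\|_{L^2}^2\lesssim (1+k)^2\,|2^{k+1}B|\,(f^{\#,m}(x))^2$. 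Summing in $k$ and dividing by $|B|$ yields the pointwise bound $\int_0^{r^{2m}}\fint_B|\nabla^m u_f|^2\lesssim (f^{\#,m}(x))^2$. Taking the supremum over balls $B\ni x$, applying Remark \ref{rem:eqv norm tent space} for $2<p<\infty$ (the $T^{\infty,2}_m$ case is immediate, and Fubini handles $p=2$), and using Propositions \ref{prop:structure of L^p_m} and \ref{prop:structure of BMO_m} to identify $\|f^{\#,m}\|_{L^p}\sim\|f\|_{L^p}$ and $\|f^{\#,m}\|_{L^\infty}\sim\|f\|_{BMO}$ closes the argument.

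The hardest point is the absence of direct off-diagonal estimates for $\nabla^m\Gamma(t,0)$. The Caccioppoli-type reduction above converts the gradient bound over $B$ into an $L^2$ bound for $\Gamma(\cdot,0)g_k$ on a slightly larger ball, where Proposition \ref{prop:off2} applies. This is what forces the decomposition to subtract the polynomial $c_B$ first: only after this subtraction are the data $g_k$ truly localized away from $B$, so that the off-diagonal decay is nontrivial---precisely the role played by the conservation property.
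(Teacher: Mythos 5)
Your proposal is correct and follows essentially the same route as the paper: construct $u_{f+P}$ via Lemma \ref{lem:extension to polynomial growth} and the conservation property of Proposition \ref{pro:conservation_property_polynome}, get the lower bound from Lemma \ref{lem:trace p>2} (with the energy estimate of Theorem \ref{thm:global_well} covering $p=2$), and prove the upper bound by subtracting the minimizing polynomial on a dilate of $B$, using the global energy bound for the near part and the Caccioppoli-type inequality from the proof of Proposition \ref{pro:local_energy} (with $a'\to0$) plus the off-diagonal decay of Proposition \ref{prop:off2} for the annular pieces. The only nitpick is quantitative: with the $\pol$-projection and $m\ge 2$ the Campanato comparison across scales gives a geometric factor $2^{Nk}$ rather than $(1+k)^2$ (the latter is correct only for constants/BMO), but since the exponential off-diagonal decay dominates any such factor, the summation and hence the argument are unaffected.
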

	\begin{proof}

		Clearly, in both of the considered cases, Lemma \ref{lem:extension to polynomial growth} combined with Proposition \ref{pro:conservation_property_polynome} ensures the validity of the first half of our claim, that is for any polynomial $P\in \pol$, $$u_{f+P}\coloneqq\Gamma(t,0)f+P$$ is a global weak solution to \eqref{eq:the parabolic equation} with $L^2_{loc}(\BBR^n)$ trace at $t=0$ given by $f+P$. 
		
		Note first that $\nabla^m P=0$, so we indeed have $\nabla^m u_{f+P}=\nabla^m u_{f}$. 
	
		Recalling the uniqueness statements from Propositions \ref{prop:structure of BMO_m} and \ref{prop:structure of L^p_m} we only need to show 
		\[\|f^{\#,m}\|_{L^p}\sim \|\nabla^m u_{f}\|_{\tentspace}\] for any $p\in (2,\infty]$, whenever the left hand side is finite. We prove here the bound 
		\begin{equation*}
		\|\nabla^m u_{f}\|_{\tentspace}\lesssim \|f^{\#,m}\|_{L^p}, 
		\end{equation*} as the converse inequality has been proven in Lemma \ref{lem:trace p>2}, without assuming that $u_f$ is given by the propagator.
		With the equivalent norm on $\tentspace$ from Remark \ref{rem:eqv norm tent space}, it will be enough to show that for any ball $B(x_0,r)$ it holds
		\begin{equation}
			\label{eq:estimate 5}
			\left(\int_0^{r^{2m}} \fint _{B(x_0,r)}|\nabla^m u_{f}|^2 dy dt\right)^{1/2} \lesssim C_{x_0,r}(f) \coloneqq\sup_{R\ge r} \inf_{p\in \mathcal P_{m-1}}\left( \fint_{B(x_0,R)}\left|f(y)-p(y)\right|^2dy\right)^{1/2}.
		\end{equation} 
		 
		 Our proof of \eqref{eq:estimate 5} follows the idea of the classical Carleson measure estimates quoted in Proposition \ref{prop:carleson} (i). 
		 Let $B=B(x_0,r)$ and $2B=B(x_0,2r)$. Consider $Q_{2B}=\mathbb{P}_{x_0,2r}(f)$, the minimizing polynomial for $f$ on $2B$. We introduce the decomposition
		 $$f=f_1+f_2+Q_{2B},$$ where $$f_1=\mathbbm{1}_{2B}(f-Q_{2B})\quad \mbox{and}\quad f_2=\mathbbm{1}_{\BBR^n\setminus{2B}}(f-Q_{2B}).$$ Due to the conservation property from Proposition \ref{pro:conservation_property_polynome} it is $\nabla^m \Gamma(t,0)(Q_{2B})=\nabla^m(Q_{2B})=0$ in $L^2_{loc}(\BBR^n)$ and we only need to estimate the parts corresponding to $f_1$ and $f_2$. To this end note that $f_1\in L^2(\BBR^n)$ implies
		 \[|B|^{-1/2}\|\nabla^m \Gamma(t,0)f_1\|_{L^2(L^2)}\lesssim|B|^{-1/2}\|f_1\|_{L^2}\lesssim C_{x_0,r}(f)\] with constants depending only on the ellipticity and the dimension.
		 
		 For $f_2$, let us first recall an inequality from the proof of Proposition \ref{pro:local_energy} (here with the parameters $b=r^{2m}$, $0<a'<b$, $ R=2r$). For any $h\in L^2(\BBR^n)$ supported in $\BBR^n \setminus 2B$ it holds
		 \begin{align*}
		 \int_{a'}^b \|\nabla ^m \Gamma(s,0)h\|_{L^2(B)}^2ds \lesssim \int_{a'}^b\frac{1}{r^{2m}} \| \Gamma(s,0)h\|_{L^2(2B\setminus B)}^2ds +\|\Gamma(a',0)h\|_{L^2(2B)}^2.
		 \end{align*}
		 The continuity in time of energy solutions implies $\lim_{a'\to 0}\|\Gamma(a',0)h\|_{L^2(2B)}=\|h\|_{L^2(2B)}=0$. We can take the limit as $a'$ tends to zero on both sides of the above inequality to obtain 
		 \begin{align*}\label{eq:estim_norm3}
		 \int_{0}^{r^{2m}} \|\nabla ^m \Gamma(s,0)h\|_{L^2(B)}^2ds \lesssim \fint_{0}^{r^{2m}} \| \Gamma(s,0)h\|_{L^2(2B)}^2ds.
		 \end{align*}
		 Setting $h=f_2$ and $B_k\coloneqq B(x_0,2^kr)$ we thus have
		 \begin{align*}
		 \left(\int_0^{r^{2m}}\fint_B|\nabla^m \Gamma(s,0)f_2(y)|^2dyds\right)^{1/2}
		 &\lesssim  \sum_{k\ge1}\left(\fint_0^{r^{2m}}\fint_{2B}|\Gamma(s,0)(f_2\mathbbm{1}_{B_{k+1}\setminus B_{k}})(y)|^2dyds\right)^{1/2}.
		 \end{align*} For $k\in \BBN$ and $s\in(0,r^{2m})$, we estimate with the off-diagonal bounds from Proposition \ref{prop:off2} 
		  \begin{align*}
		 \fint_B|\nabla^m \Gamma(s,0)(f_2\mathbbm{1}_{B_{k+1}\setminus B_{k}})|^2dy
		  &\lesssim 
		  \exp\left(-c\left(\frac{(2^kr)^{2m}}{s}\right)^{1/(2m-1)}\right)|B|^{-1}\|f-Q_{2B}\|_{L^2(B_{k+1})}^2,
		 \end{align*}
		 where we used that $d(B_{k+1}\setminus B_{k},B)=2^kr-r\ge 2^{k-1}r$. Moreover, we have 
		 \begin{align*}
		 |2B|^{-1}\|f-Q_{2B}\|_{L^2(B_{k+1})}^2&\leq 2^{nk}C_{x_0,r}(f)^2 +|2B|^{-1}\|\mathbb{P}_{x_0,2^{k+1}r}(f)-Q_{2B}\|_{L^2(B_{k+1})}^2.
		 \end{align*} Since $\mathbb{P}_{x_0,2^{k+1}r}(f)$ stays invariant under the projection $\mathbb{P}_{x_0,2r}$, we estimate for $y\in B_{k+1}$ 
		  \begin{align*}
		  |\mathbb{P}_{x_0,2^{k+1}r}(f)-Q_{2B}|(y)
		 & \lesssim_m 2^{k(n+m)}C_{x_0,r}(f)
		  \end{align*} with constants independent on $r$. So, there exists $N\in \BBN$ with
		  \begin{align*}
		  |2B|^{-1}\|f-Q_{2B}\|_{L^2(B_{k+1})}^2&\lesssim 2^{Nk}C_{x_0,r}(f)^2.
		  \end{align*}
		  Finally,
		 \begin{align*}
		 \left(\int_0^{r^{2m}}\fint_B|\nabla^m \Gamma(s,0)f_2(y)|^2dyds\right)^{1/2} 
		 &\lesssim 
		 \sum_{k\ge1}\left(\fint_0^{r^{2m}}\exp\left(-c2^{2mk/(2m-1)}\right)2^{Nk}C_{x_0,r}(f)^2ds\right)^{1/2}\\
		 &\lesssim 
		 C_{x_0,r}(f),
		 \end{align*} where we used $s\in (0,r^{2m})$ to dispense with the ratio $(r^{2m}/s)$ in the exponential. This gives \eqref{eq:estimate 5}.
	\end{proof}
	
	The following proposition contains the higher order counterpart of the existence results \cite[Corollaries 5.5 and 7.2]{AMP15} for the Cauchy problem \eqref{eq:Cauchy problem} for $Y=L^p(\BBR^n)$ with $p\in [2,\infty]$ and spaces $X=X_m^p$ and $X=L^\infty(0,\infty;L^p(\BBR^n))$.
	\begin{proposition}
			\label{pro:p>2 existence}
		Let $p\in [2,\infty]$ and $f\in L^p(\BBR^n)$. Consider the solution $u_f(t,\cdot)=\Gamma(t,0)f$ to \eqref{eq:the parabolic equation} obtained in Proposition \ref{pro:p>2 pol existence}. Then the following are true.
		\begin{enumerate}[label={\upshape(\roman*)}]
			\item $u_{f}\in X^p_m(\BBR^n)$ and 
			$\|f\|_{L^p}\sim \|u_f\|_{X^p_m}.$
			\item Under the uniform boundedness assumption 
			\begin{equation} \tag{UBC[p]}\label{ubc1} \sup\limits_{0\leq s\le t <\infty}  \|\Gamma(t,s)\|_{\mathscr L (L^p)}<\infty,\end{equation}  it holds $u_f \in  L^\infty(0,\infty;L^p(\BBR^n))$ and $\|f\|_{L^p}\sim \|u_f\|_{L^\infty(L^p)}.$
		\end{enumerate}
	\end{proposition}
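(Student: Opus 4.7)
For the upper bound $\|u_f\|_{X^p_m}\lesssim\|f\|_{L^p}$ in (i), I would split by the range of $p$. For $p\in(2,\infty]$ the plan is to derive the pointwise estimate $\nontan u_f(x)^2\lesssim \mathcal M_{HL}(|f|^2)(x)$ via annular decomposition: fix $x\in\BBR^n$, $\delta>0$, set $r=\sqrt[2m]{\delta}$, and write $f=\mathbbm 1_{B(x,2r)}f+\sum_{k\geq 1}\mathbbm 1_{B(x,2^{k+1}r)\setminus B(x,2^kr)}f=:\sum_{k\geq 0}f_k$. The $L^2$ off-diagonal bound from Proposition \ref{prop:off2} gives $\|\mathbbm 1_{B(x,r)}\Gamma(t,0)f_k\|_{L^2}^2\lesssim\exp(-c\,2^{2mk/(2m-1)})\|f_k\|_{L^2}^2$ for $k\geq 1$, so summing with the trivial bound for $f_0$ and dividing by $|B(x,r)|$ produces the stated maximal function majorant uniformly in $t\in[\delta/2,\delta]$ and in $\delta>0$. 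Taking the supremum in $\delta$ and applying the $L^{p/2}$-boundedness of $\mathcal M_{HL}$ (or the trivial $L^\infty$ bound when $p=\infty$) then closes this range.

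The endpoint $p=2$ is not accessible by this route, since the argument above only yields weak-type $L^{2,\infty}$. I would handle it separately via the Appendix estimate: Theorem \ref{thm:global_well} guarantees $\nabla^m u_f\in L^2(\BBR^{n+1}_+)=T^{2,2}_m$ with $\|\nabla^m u_f\|_{T^{2,2}_m}\lesssim\|f\|_{L^2}$, and the bound $\|u_f\|_{X^p_m}\lesssim\|\nabla^m u_f\|_{T^{p,2}_m}$ from the Appendix (mentioned in the Introduction and valid for propagator-generated solutions), applied at $p=2$, completes the upper estimate.

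For the reverse inequality $\|f\|_{L^p}\lesssim\|u_f\|_{X^p_m}$ I would use the trace property: Lemma \ref{lem:extension to polynomial growth} yields $u_f\in\mathscr C([0,\infty);L^2_{loc}(\BBR^n))$ with $u_f(0,\cdot)=f$, so at any Lebesgue point $x$ of $|f|^2$ one has $|f(x)|^2=\lim_{\delta\to 0}\fint_{\delta/2}^{\delta}\fint_{B(x,\sqrt[2m]{\delta})}|u_f(t,y)|^2\,dy\,dt\leq\nontan u_f(x)^2$, where $L^2_{loc}$ continuity of $u_f$ at $t=0$ allows one to replace $u_f$ by $f$ inside the average in the limit. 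Hence $|f|\leq\nontan u_f$ a.e., and integrating gives $\|f\|_{L^p}\leq\|u_f\|_{X^p_m}$.

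For (ii) both directions are short: the hypothesis UBC[p] gives $\|u_f(t)\|_{L^p}=\|\Gamma(t,0)f\|_{L^p}\leq C\|f\|_{L^p}$ uniformly in $t\geq 0$, so $u_f\in L^\infty(L^p)$; for the reverse one extracts a weakly (weakly-$*$ if $p=\infty$) convergent subsequence $u_f(t_k)$ in $L^p$ as $t_k\to 0$, identifies its limit with $f$ via the $L^2_{loc}$ trace, and applies lower semicontinuity of the $L^p$ norm. The main obstacle is the $p=2$ case of (i), where the off-diagonal/maximal function argument degenerates at the endpoint $p=2$ and one must instead invoke the tent-space to non-tangential estimate from the Appendix combined with the energy bound from Theorem \ref{thm:global_well}; a secondary point is the careful $\delta\to 0$ passage in the Lebesgue-point computation, which relies on time-continuity of $u_f$ with values in $L^2_{loc}$ up to the boundary.
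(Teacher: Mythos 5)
Your forward estimates match the paper: for $p\in(2,\infty]$ the annular decomposition plus the $L^2$ off-diagonal bounds of Proposition \ref{prop:off2} and the $L^{p/2}$-boundedness of $\mathcal M_{HL}$ is exactly the argument given, and for $p=2$ the paper likewise falls back on the comparability estimate $\|u_f\|_{X^2_m}\lesssim\|\nabla^m u_f\|_{T^{2,2}_m}$ (Proposition \ref{prop:tent_kenig_comp_smallp}) together with the energy bound of Theorem \ref{thm:global_well}. Part (ii) is also treated as you do (the paper dismisses it as immediate).

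The genuine gap is in your reverse inequality $\|f\|_{L^p}\lesssim\|u_f\|_{X^p_m}$. You claim that at a Lebesgue point $x$ of $|f|^2$ one has $|f(x)|^2=\lim_{\delta\to0}\fint_{\delta/2}^{\delta}\fint_{B(x,\sqrt[2m]{\delta})}|u_f(t,y)|^2\,dy\,dt$, justified by the $L^2_{loc}$ continuity of $u_f$ at $t=0$. This does not follow: continuity in $\mathscr C([0,\infty);L^2_{loc})$ gives $\int_{B(x,1)}|u_f(t,y)-f(y)|^2dy\to0$ uniformly for small $t$, but in your average you divide by $|B(x,\sqrt[2m]{\delta})|\sim\delta^{n/(2m)}\to0$, so replacing $u_f(t,\cdot)$ by $f$ inside the shrinking average would require a quantitative rate $o(\delta^{n/(2m)})$ that is not available. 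What you are asserting is in effect an a.e.\ non-tangential (in $L^2$-average) convergence to the trace, which is a nontrivial statement in its own right and is not supplied by Lemma \ref{lem:extension to polynomial growth}. The paper avoids this by a Fatou/duality argument (citing \cite[Lemma 4.6 (ii)]{AMP15}): test $u_f(t,\cdot)$ against a fixed $\phi\in\mathscr C_c(\BBR^n)$, use the distributional convergence $u_f(t,\cdot)\to f$, average in $t$ over $(\delta/2,\delta)$, and dominate $\big|\fint_{\delta/2}^{\delta}\int u_f\,\overline\phi\big|$ by $\int_{\BBR^n}\nontan u_f(x)\,\mathcal M_{HL}\phi(x)\,dx$ via a covering by balls of radius $\sqrt[2m]{\delta}$; the $L^{p'}$-boundedness of $\mathcal M_{HL}$ then yields $\|f\|_{L^p}\lesssim\|\nontan u_f\|_{L^p}$. (For $p=2$ alone your step can be repaired, since Theorem \ref{thm:global_well} gives $u_f\in\mathscr C_0([0,\infty);L^2(\BBR^n))$ and the error term tends to zero after integration in $x$; but for $p>2$ you need the duality route.) The weak-$*$ lower semicontinuity argument you give for (ii) is fine and is precisely the kind of weak-limit reasoning that should replace the pointwise Lebesgue-point step in (i).
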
 
	
	Before we proceed to the proof, we point out that the condition in (ii) is necessary, in the sense that we have for any $p\in[1,\infty]$ $$\esssup\limits_{0\leq s\le t <\infty}  \|\Gamma(t,s)\|_{\mathscr L (L^p)}\leq C<\infty \Longrightarrow \sup\limits_{0\leq s\le t <\infty}  \|\Gamma(t,s)\|_{\mathscr L (L^p)}\leq C<\infty. $$ 

	We argue as follows. Suppose $p\in (1,\infty)$ and let $f,\,g \in \mathscr D(\BBR^n)\subseteq L^2(\BBR^n)$. With $q\in(1,\infty)$ being the dual exponent to $p$, it holds for almost every $0\leq s\leq t<\infty$ by assumption 
	\[\big| \langle \Gamma(t,s)f,g \rangle  \big|\leq C \|f\|_{L^p}\|g\|_{L^{q}}.\] Recall from \eqref{eq:continuity of prop} and Lemma \ref{lem:backwards_prop} that the maps $$[s,\infty)\ni t\mapsto \langle \Gamma(t,s)f,g \rangle \quad \mbox{and}\quad [0,t]\ni s\mapsto \langle \Gamma(t,s)f,g \rangle$$ are continuous. Hence, the previous bound holds for every $0\leq s\leq t<\infty$ and so, by density, the propagator $\Gamma(t,s)\in \mathscr L(L^2)$ admits a unique continuous extension to $L^p(\BBR^n)$. In addition, it holds uniformly in $0\leq s\leq t<\infty$ that $$ \|\Gamma(t,s)\|_{\mathscr L (L^p)}\leq C.$$ 
	
	For $p=1$ the same reasoning works if we consider $f\in \mathscr D(\BBR^n)$ and $g\in L^\infty(\BBR^n)$ with compact support and for $p=\infty$ we just reverse the roles of $f$ and $g$. 
	\begin{proof} Clearly, in (ii) there is nothing to prove. Claim (i) can be seen directly if $p\in(2,\infty]$. Indeed, let $u_f(t,\cdot)=\Gamma(t,0)f$ with $f\in L^p(\BBR^n)$ and $p\in(2,\infty]$. For any $x \in \BBR^n$ and $r>0$ introduce the annuli $S_k(x,r)=B(x,2^{k+1}r)\setminus B(x,2^kr)$ for $k\ge 1$ and $S_0(x,r)= B(x,2r)$. 
		
	Then, for any $\delta>0$, we decompose $$f=f\mathbbm{1}_{S_0(x,\sqrt[2m]{\delta})}+\sum_{k\ge 1}f\mathbbm{1}_{S_k(x,\sqrt[2m]{\delta})}.$$ As we have already seen, the $L^2$ off-diagonal bounds allow to estimate with some $N_{n,m}\in \BBN$
		\begin{align*}
		\left(\fint_{\delta/2}^\delta \fint_{B(x,\sqrt[2m]{\delta})}|\Gamma(t,0)f(y)|^2dydt\right)^{1/2}
		&\lesssim\sum_{k\ge 0} 2^{kN}\exp\left(-c2^\frac{2mk}{2m-1}\right) \left( \fint_{B(x,2^{k+1}\sqrt[2m]{\delta})}|f(y)|^2dy\right)^{1/2}\\&\lesssim 
		\left(\mathcal M_{HL}|f|^2\right)^{1/2}(x). 
		\end{align*}
	
	Thus, since $p>2$, we conclude $\nontan (u_f)\in L^p(\BBR^n)$ with 
	\[\|u_f\|_{X^p_m}=\|\nontan (u_f)\|_{L^p}\lesssim \|f\|_{L^p}.\] In particular it holds for $p\in(2,\infty)$ (we exclude $p=\infty$, for which $ \|\nabla^m u_f\|_{\tentspace}\sim \|f\|_{BMO}$)
	\begin{equation}
	\label{eq:non tan conical estimate}
	\|u_f\|_{X^p_m}\lesssim  \|\nabla^m u_f\|_{\tentspace}.
	\end{equation}The reversed inequality 
	\[\|f\|_{L^p}\lesssim \|u_f\|_{X^p_m}\] can be also proven directly by the off-diagonal bounds combined with a Fatou-type argument, see \cite[Lemma 4.6 (ii)]{AMP15}. This then implies for $p\in(2,\infty]$
		\begin{equation}
		\label{eq:conical non tan estimate}
		\|\nabla^m u_f\|_{\tentspace}\lesssim\|u_f\|_{X^p_m}.
		\end{equation}
	 However, both arguments break down if $p=2$, because they require the $L^{p/2}$ boundedness of the maximal function. The estimates \eqref{eq:non tan conical estimate} and \eqref{eq:conical non tan estimate} remain true though, even if $p=2$ and we state the results in Section \ref{sec:comparability results}.
	\end{proof}

	\subsection{Initial data in $L^p(\BBR^n)$ with $p<2$}
	\label{sec:existence small p}
	\noindent Lemma \ref{pro:p>2 pol existence} does not cover $L^p(\BBR^n)$ initial data if $p<2$. In this case even the existence of solutions in the non-tangential space requires the boundedness assumption on the propagators. For the second order case of the results in this section, see \cite[Corollary 5.10]{AMP15}.  
	\begin{lemma}\label{lem:x^p implies L^p}
		Let $p\in(1,2]$ and $u\in X^p_m$ be a global weak solution to \eqref{eq:the parabolic equation}. Then $u\in L^\infty(0,\infty;L^p)$ and 
		\[\|u\|_{L^\infty(L^p)}\lesssim \|u\|_{X^p_m}.\]
	\end{lemma}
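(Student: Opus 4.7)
The plan is to deduce the lemma from two independent ingredients: a pointwise control of $L^2$-averages of $u(t,\cdot)$ on balls of parabolic radius $\sim t^{1/(2m)}$ by the non-tangential maximal function $\nontan u$, and the elementary observation that for $p\in(1,2]$ one can insert an $L^2$-average into the $L^p$ norm at the cost of Jensen's inequality.

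First I would produce the pointwise estimate
\begin{equation}\label{eq:prop_pointwise}
\left(\fint_{B(x_0,t^{1/(2m)}/2)} |u(t,y)|^2\,dy\right)^{1/2} \lesssim \nontan u(x_0)
\end{equation}
for every $t>0$ and $x_0\in\BBR^n$. This is the heart of the argument, and the natural tool is the Caccioppoli-type inequality from Proposition \ref{pro:local_energy}. Applied with $a=0$, $b=\infty$, $c=t/2$, $d=t$, $r=t^{1/(2m)}/2$ and $R=t^{1/(2m)}$, it gives
\[
\|u(t,\cdot)\|_{L^2(B(x_0,t^{1/(2m)}/2))}^2 \lesssim \frac{1}{t}\int_{t/2}^t \|u(s,\cdot)\|_{L^2(B(x_0,t^{1/(2m)}))}^2\,ds,
\]
since both $1/(R-r)^{2m}$ and $1/(d-c)$ are of order $1/t$. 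Normalizing the integrals as averages and recognizing that the resulting right-hand side is exactly the quantity appearing in the definition of $\nontan u(x_0)$ at scale $\delta=t$ produces \eqref{eq:prop_pointwise}. The crucial observation is the choice of scales so that the time-slice $[t/2,t]$ and the enlarged ball $B(x_0,t^{1/(2m)})$ coincide with one single ``cone generation'' of $\nontan u$, rather than being dilates of it.

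Second, I would bound $\|u(t,\cdot)\|_{L^p}^p$ by integrating \eqref{eq:prop_pointwise}. Set $r_t=t^{1/(2m)}/2$. By Fubini,
\[
\|u(t,\cdot)\|_{L^p}^p = \int_{\BBR^n}\fint_{B(x,r_t)}|u(t,y)|^p\,dy\,dx,
\]
and since $p/2\le1$, Jensen's inequality for the concave map $s\mapsto s^{p/2}$ yields
\[
\fint_{B(x,r_t)}|u(t,y)|^p\,dy \le \left(\fint_{B(x,r_t)}|u(t,y)|^2\,dy\right)^{p/2}.
\]
Inserting \eqref{eq:prop_pointwise} pointwise in $x$ (noting that the roles of $x_0$ and $x$ are interchangeable) and integrating gives
\[
\|u(t,\cdot)\|_{L^p}^p \lesssim \int_{\BBR^n} \nontan u(x)^p\,dx = \|u\|_{X_m^p}^p.
\]
Taking the supremum over $t>0$ (which is legitimate because Proposition \ref{pro:local_energy} also provides $u\in\mathscr C([c,d];L^2_{loc})$, so the inequality holds for every $t$, not just a.e.\ $t$) completes the proof.

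The main obstacle is the first step: choosing the parabolic scales so that the Caccioppoli estimate lands precisely on an admissible generation of the non-tangential cone, without losing a factor of $\log(1/t)$ or forcing a dyadic sum. Once the pointwise bound \eqref{eq:prop_pointwise} is in place, the passage to $L^p$ is routine and uses only that $p\le 2$; the restriction $p>1$ plays no role in this particular argument and simply ensures that $X_m^p$ is a Banach space.
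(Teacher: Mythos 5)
Your proposal is correct and follows essentially the same route as the paper's proof: the paper also combines Fubini with Hölder/Jensen (valid since $p\le 2$) to insert the $L^2$-average over $B(y,\sqrt[2m]{t}/2)$, and then applies the a priori energy estimate of Proposition \ref{pro:local_energy} at exactly the scales you chose ($c=t/2$, $d=t$, $r=\sqrt[2m]{t}/2$, $R=\sqrt[2m]{t}$) to land on one generation of the cone defining $\nontan u$. The only difference is the order of the two steps, which is immaterial.
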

	\begin{proof} Assume  $u\in X^p_m$ is a global weak solution to \eqref{eq:the parabolic equation}. For $t>0$, Fubini's Theorem and H\"older's inequality for $p<2$ give
	\begin{align*}
	\|u(t,\cdot)\|_{L^p}^p
	&\leq \int_{\BBR^n}\left(\fint_{B(y,\sqrt[2m]{t}/2)}|u(t,x)|^2dx\right)^{p/2}dy.
	\end{align*}
	
	We estimate with help of the a priori energy bounds from Proposition \ref{pro:local_energy}
	\[\left(\fint_{B(y,\sqrt[2m]{t}/2)}|u(t,x)|^2dx\right)^{1/2}\lesssim \left(\fint_{t/2}^t\fint_{B(y,\sqrt[2m]{t})}|u(t,x)|^2dx\right)^{1/2}.\]
	Thus, $	\|u(t,\cdot)\|_{L^p}^p\lesssim \|\nontan u \|_{L^p}^p$ holds for every $t>0$. 
	\end{proof}
	Let us observe the following. If \eqref{eq:Cauchy problem} is well-posed for $(L^p(\BBR^n),X^p_m)$, then there exists a continuous solution map $u_f\colon L^p(\BBR^n)\ni f \mapsto u_f\in X^p_m$, which by Lemma \ref{lem:x^p implies L^p} 
 	satisfies \[\|u_f\|_{L^\infty(L^p)}\lesssim \|f\|_{L^p(\BBR^n)}.\]
 	From such $L^\infty(L^p)$ bounds we are able to deduce that the evolution of $u_f$ at positive times must be governed by the propagators, see Corollary \ref{cor:estimate_for_uniqueness} in the next section. Combined with the estimate above, this gives a heuristical explanation, why it is meaningful for $p<2$ to assume a uniform boundedness condition for the propagators. We introduce the uniform boundedness condition as follows
	\begin{equation} \tag{UBC[p]}\label{ubc}\sup\limits_{0\leq s\le t <\infty} \sup\limits_{\substack{h\in \testfunctions, \\  \|h\|_{L^p(\BBR^n)}\leq 1}} \|\Gamma(t,s)h\|_{L^p(\BBR^n)}<\infty.\end{equation} The same reasoning applies to the well-posedness in $X=\{u\in \mathscr D'(\BBR^{n+1})\mid \nabla^m u \in \tentspace\}$, see Lemma  \ref{lem:trace integrability}. 
	
	The following result is the higher order analogue of \cite[Lemma 4.10]{AMP15}.
	\begin{lemma} \label{lem:wp_small}
		Let $p\in [1,2)$ and assume \ref{ubc} holds. Then for all $f\in L^p(\BBR^n)$ the function $u_f\colon (t,x)\mapsto \Gamma(t,0)f$ is a global weak solution of \eqref{eq:the parabolic equation} and $u_f\in   L^\infty(0,\infty;L^p)$.
	
		Moreover, for all $r\in (p,2)$, it holds 
		\[\|u_f\|_{L^\infty(L^r)}\lesssim\|u_f\|_{X^r_m}\lesssim \|f\|_{L^r}.\]
	\end{lemma}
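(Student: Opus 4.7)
By the discussion preceding the lemma, \ref{ubc} uniquely extends each $\Gamma(t,s)$ to a bounded operator on $L^p(\BBR^n)$ with $\sup_{s\leq t}\|\Gamma(t,s)\|_{\mathscr L(L^p)}<\infty$, and Riesz--Thorin interpolation against the $L^2$-contraction of Theorem \ref{thm:global_well} delivers uniform $L^r$-bounds for all $r\in[p,2]$. Defining $u_f(t,\cdot)\coloneqq\Gamma(t,0)f$ for $f\in L^p$ immediately gives $\|u_f\|_{L^\infty(L^p)}\lesssim\|f\|_{L^p}$.

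To establish the weak-solution property, I would approximate $f$ in $L^p$ by $f_n\in\mathscr D(\BBR^n)\subseteq L^2\cap L^p$. Each $u_{f_n}=\Gamma(\cdot,0)f_n$ is a global weak solution by Theorem \ref{thm:global_well}, and $u_{f_n}\to u_f$ in $L^\infty(L^p)$. The hard part is that for $p<2$ this convergence alone is insufficient to pass to the limit in the $L^2_{loc}$-valued weak formulation. The key device is the reverse H\"older inequality of Corollary \ref{cor:rev_hoelder}, applied to the weak solution $u_{f_n}-u_{f_m}$ on any parabolic cylinder $B_r(t_0,x_0)$ satisfying $(4r)^{2m}<t_0$: together with H\"older's inequality (valid as $p\geq 1$) it yields $\|u_{f_n}-u_{f_m}\|_{L^2_{loc}}\lesssim\|u_{f_n}-u_{f_m}\|_{L^\infty(L^p)}\to 0$. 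Proposition \ref{pro:local_energy} then upgrades this to $L^2_{loc}$-convergence of $\nabla^m u_{f_n}$, permitting passage to the limit on both sides of the weak formulation and identifying $u_f$ as a global weak solution.

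For the non-tangential estimate with $r\in(p,2)$, fix an auxiliary exponent $r'\in(p,r)$ and for each $x\in\BBR^n$ and $\delta>0$ decompose $f=\sum_{k\geq 0}f\mathbbm{1}_{S_k}$ into annuli $S_k\coloneqq B(x,2^{k+1}\sqrt[2m]{\delta})\setminus B(x,2^k\sqrt[2m]{\delta})$ (with $S_0\coloneqq B(x,2\sqrt[2m]{\delta})$). Apply the $L^{r'}\to L^2$ off-diagonal estimate from Lemma \ref{lem:off3}(i) to each $\Gamma(t,0)(f\mathbbm{1}_{S_k})$; bookkeeping of the volume factors shows that the exponential decay $\exp(-\alpha_{r'}2^{2mk/(2m-1)})$ dominates the polynomial growth $2^{kn/r'}$ coming from the annular $L^{r'}$-norm, so that summation in $k$ gives
\[
\nontan u_f(x)\lesssim\bigl(\mathcal M_{HL}|f|^{r'}(x)\bigr)^{1/r'}.
\]
Since $r/r'>1$, the Hardy--Littlewood maximal inequality on $L^{r/r'}$ produces $\|u_f\|_{X^r_m}\lesssim\|f\|_{L^r}$, while the remaining inequality $\|u_f\|_{L^\infty(L^r)}\lesssim\|u_f\|_{X^r_m}$ is exactly Lemma \ref{lem:x^p implies L^p}, completing the plan.
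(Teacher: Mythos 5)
Your proposal is correct and follows essentially the same route as the paper: uniform $L^p$ bounds from \ref{ubc} plus density of $L^2\cap L^p$, the local energy estimates of Proposition \ref{pro:local_energy} combined with a covering argument and the reverse H\"older inequality of Corollary \ref{cor:rev_hoelder} to get $L^2_{loc}$ control (and hence convergence of $\nabla^m u_{f_n}$ and the weak-solution property in the limit), and finally the annular decomposition with the $L^\kappa\to L^2$ off-diagonal bounds of Lemma \ref{lem:off3}(i) and the Hardy--Littlewood maximal function for the non-tangential estimate. The only cosmetic difference is that you run the reverse-H\"older argument on differences $u_{f_n}-u_{f_m}$, while the paper first establishes the a priori $L^2_{loc}$ bound $\lesssim\|f\|_{L^p}$ for $f\in L^2\cap L^p$ and then extends by density.
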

	\begin{proof}
		Let $f\in L^p(\BBR^n)$. By \ref{ubc}, $u_f $ is well-defined in $L^\infty(0,\infty;L^p(\BBR^n))$. We first need to show $u_f\in  L^2_{loc}(0,\infty;H^m_{loc}(\BBR^n))$. 
		Suppose first $f\in L^2(\BBR^n)\cap L^p(\BBR^n)$. We then have for any $\tau>0$ and $x\in \BBR^n$ by Proposition \ref{pro:local_energy}
		\[\|\Gamma(\tau,0)f\|_{L^2(B(x,\sqrt[2m]{\tau}/2))}^2\lesssim \fint_{\tau/2}^\tau \fint_{B(x,\sqrt[2m]{\tau})} |u_f(s,y)|^2dyds.\]
		
		By Vitali's Covering Lemma, there is a finite number $K_{n,m}$ such that for any $\tau>0$ there exist points $\{\nu_i\mid i\in \BBN, \,1\leq i\leq K \}\in B(0,\sqrt[2m]{\tau})$, such that 
		\[B(x,\sqrt[2m]{\tau})\subseteq \bigcup_{i=1}^KB(x+\nu_i,\sqrt[2m]{\tau/2^{4m+1}}).\]
		The rescaling factor is chosen so that the new radii $r=\sqrt[2m]{\tau/2^{4m+1}}$ satisfy $(4r)^{2m}\leq \tau/2$. Hence, for any $k=0,\dots,4m-1$ and $t_k=\tau/2+2^kr^{2m}$ the balls $\{B_r(t,x+\nu_i)\}_{1\leq i\leq K}  $ satisfy the assumptions of the reversed H\"older estimates of Corollary \ref{cor:rev_hoelder}. Hence, we obtain
		\[\|\Gamma(\tau,0)f\|_{L^2(B(x,\sqrt[2m]{\tau}/2))}\lesssim \sum_{i=1}^K\sum_{k=0}^{4m-1} \left(\fint_{B_{4r}(t_k,x+\nu_i)} |u_f(s,y)|^pdyds\right)^{1/p}\lesssim_{\tau} \|f\|_{L^p}.\]
		
		Thus, by density of $L^2(\BBR^n)\cap L^p(\BBR^n)$ in $ L^p(\BBR^n)$ with respect to the $L^p$-norm, $\Gamma(t,0)f$ can be defined for every $f\in L^p(\BBR^n)$ and any $t>0$ as an object in $L^2_{loc}(\BBR^n)$ and satisfies 
		\[\|\mathbbm{1}_{B(0,R)}\Gamma(t,0)f\|_{L^2}\lesssim_{R,t}\|f\|_{L^p}\] for any $R>0$, $t>0$. From here, a routine application of Proposition \ref{pro:local_energy} shows 
		\begin{equation}\label{eq:bound der}\|\nabla^k \Gamma(t,0)f\|_{L^2(K)}\lesssim_{K} \|f\|_{L^p}\end{equation}
		on any compact $K\subseteq \BBR^{n+1}_+$ and integer $0\leq k\leq m$. 
		\noindent We now show that $u_f$ satisfies \eqref{eq:the parabolic equation} in the sense of distributions on $\BBR^{n+1}_+$. Let $\varepsilon>0$ and $f_\varepsilon\in L^p(\BBR^n)\cap L^2(\BBR^n)$ with $\|f-f_\varepsilon\|_{L^p} <\varepsilon$. Then $u_{f_\varepsilon}$ is a global solution of \eqref{eq:the parabolic equation} and we estimate for $\phi\in\mathscr D(\BBR^{n+1}_+)$
		\begin{align*}
		&\left| - \int_0^\infty \int_{\BBR^n}u_f\overline{\partial _t \phi}dydt + \int_0^\infty \int _{\BBR^n} A\nabla^m u_f \overline {\nabla^m \phi}\right|dydt
		\\&\leq \int_0^\infty \int_{\BBR^n}|u_f-u_{f_\varepsilon}||\partial _t \phi|dydt + \Lambda\int_0^\infty \int _{\BBR^n} |\nabla^m (u_f -u_{f_\varepsilon})||\nabla^m \phi|dydt
		\\& \lesssim\|u_f-u_{f_\varepsilon}\|_{L^\infty(L^p)}+\|\nabla^m (u_f-u_{f_\varepsilon})\|_{L^2(\text{supp}(\nabla^m \phi))} \stackrel{\eqref{eq:bound der}}{\lesssim}\|f-f_\varepsilon\|_{L^p}<\varepsilon.
		\end{align*}
		Finally, let $r\in(p,2)$. For $x\in \BBR^n$, $\delta>0$ and $\kappa \in(p,r)$, we have with Lemma \ref{lem:off3} and the decomposition $f=f\mathbbm{1}_{S_0(x,\sqrt[2m]{\delta})}+\sum_{k\ge 1}f\mathbbm{1}_{S_k(x,\sqrt[2m]{\delta})}$, for some $N=N_{n,m}\in \BBN$
		\begin{align*}
		\left(\fint_{\delta/2}^\delta \fint_{B(x,\sqrt[2m]{\delta})}|\Gamma(t,0)f(y)|^2dydt\right)^{1/2}
		&\lesssim\sum_{k\ge 0} 2^{kN}\exp\left(-c2^\frac{2mk}{2m-1}\right) \left( \fint_{B(x,2^{k+1}\sqrt[2m]{\delta})}|f(y)|^\kappa dy\right)^{1/\kappa}\\&\lesssim 
		\left(\mathcal M_{HL}|f|^\kappa\right)^{1/\kappa}(x). 
		\end{align*} By the $L^{r/\kappa}(\BBR^n)$ boundedness of the Hardy--Littlewood maximal function, we conclude 
		\[\|u_f\|_{X^r_m}\lesssim \|f\|_{L^r}.\qedhere\] 
	\end{proof}
		\subsection{Comparability of $\|\nabla^m u\|_{\tentspace}$ and $\|u\|_{X^p_m}$ - results}
		\label{sec:comparability results}
		For reference, we collect here statements on the validity of 
		\begin{equation}
		\label{eq:comparability}
		\|u\|_{X^p_m}\sim \|\nabla^m u\|_{\tentspace}
		\end{equation} for global weak solutions $u$ of \eqref{eq:the parabolic equation}. All proofs were moved to the Appendix (cf.\ Section \ref{sec:comparability proofs}) being technical adaptations of the estimates from \cite[\S7]{AMP15} to the higher order case. 
		\begin{proposition}
			\label{prop:tent_kenig_comp_smallp}
			Let $p\in (\frac{n}{n+m},\infty)$ and $f\in L^2(\BBR^n)$. Suppose $u_f(t,\cdot)=\Gamma(t,0)f$ is such that $\nabla^m u_f \in \tentspace$. Then $u\in X^p_m$ and
			\[ \|u_f\|_{X^p_m}\lesssim\|\nabla^m u_f \|_{\tentspace}.\]
		\end{proposition}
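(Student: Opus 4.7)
The plan is to adapt the strategy of \cite[\S7]{AMP15}, which establishes a square-function-to-maximal-function comparison in the spirit of classical Hardy space theory. I would begin by applying the Duhamel representation from Corollary \ref{cor:duhamel prop} with a fixed autonomous elliptic operator $L_0 = (-1)^m \mathrm{div}_m \underline{A} \nabla^m$ (for instance, $L_0 = (-1)^m \Delta^m$), writing
\begin{equation*}
\Gamma(t,0)f = e^{-tL_0}f + \mathcal{R}_{L_0}\bigl((A-\underline{A})\nabla^m \Gamma(\cdot,0)f\bigr)(t,\cdot).
\end{equation*}
Since $A - \underline{A} \in L^\infty$ and $\nabla^m u_f \in \tentspace$ by hypothesis, the source $(A-\underline{A})\nabla^m u_f$ lies in $\tentspace$, and the problem reduces to controlling the $X^p_m$-norms of both summands in terms of $\|\nabla^m u_f\|_{\tentspace}$.

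The first key step is to extend the operator $\mathcal{R}_{L_0}$ from $T^{2,2}_m \to X^2_m$ (Proposition \ref{prop:ext_R}) to a bounded map $\tentspace \to X^p_m$ for all $p \in (n/(n+m), \infty)$. For $p \leq 2$ I would use the atomic decomposition of tent spaces: given a $\tentspace$-atom $a$ supported in a tent $\widehat B$ over a ball $B$, the off-diagonal $L^2$--$L^r$ estimates for $(e^{-tL_0})_{t>0}$ recalled in Section \ref{sec:aut}, combined with an annular decomposition around $B$, deliver an $L^p$-bound for $\nontan(\mathcal{R}_{L_0}a)$ of the form $|B|^{1/p-1/2}\|a\|_{L^2}$. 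The lower threshold $p > n/(n+m)$ emerges from the Sobolev exponent mapping that governs the admissible range for the polyharmonic semigroup. For $p \geq 2$, interpolation with a $T^{\infty,2}_m$ Carleson-type estimate (obtained via duality against $T^{p',2}_m$) gives the same conclusion.

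For the autonomous summand $e^{-tL_0}f$, classical Hardy space theory for the polyharmonic semigroup (i.e.\ the equivalence of the maximal function and square function characterizations) yields $\|e^{-tL_0}f\|_{X^p_m} \sim \|\nabla^m e^{-tL_0}f\|_{\tentspace}$ throughout the stated $p$-range. The right-hand side is in turn controlled via the identity $\nabla^m e^{-tL_0}f = \nabla^m u_f - \nabla^m \mathcal{R}_{L_0}\bigl((A-\underline{A})\nabla^m u_f\bigr)$ together with the $\tentspace$-boundedness of $\nabla^m \mathcal{R}_{L_0}$, which follows from the Kato-square-root bound \eqref{eq:Kato} and the off-diagonal decay of Proposition \ref{prop:off2}. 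Combining the two estimates closes the argument.

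The main obstacle is the atomic decomposition step for $p \in (n/(n+m), 2]$: one must keep precise track of the off-diagonal behaviour of the higher-order derivatives $\nabla^k e^{-tL_0}$ on annular shells, stay within the admissible Sobolev range of Section \ref{sec:aut} all the way down to the edge exponent $n/(n+m)$, and absorb the polynomial residues that Poincaré-type inequalities of order $m$ inevitably introduce when comparing $L^2$-averages of $u_f$ and $\nabla^m u_f$. These technical complications are amplified relative to the second-order case in \cite[\S7]{AMP15}, but the overall architecture — Duhamel splitting, tent-space atoms, and Hardy-space characterization for an autonomous frozen operator — carries over intact.
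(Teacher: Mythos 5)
Your proposal is correct and follows essentially the same route as the paper: a Duhamel splitting around a frozen autonomous operator (Corollary \ref{cor:duhamel prop}), the $\tentspace\to X^p_m$ boundedness of $\mathcal R_{L_0}$, the $\tentspace$-boundedness of $\nabla^m\mathcal R_{L_0}=\tilde{\mathcal M}_{L_0}$, and the maximal-versus-square-function Hardy space characterization for the autonomous piece, which are exactly the ingredients of Propositions \ref{prop:ext_R} and \ref{prop:ext_M} and of the Claim in the paper's proof. The only slips are bookkeeping: Proposition \ref{prop:ext_R} already gives the $\mathcal R_{L_0}$ bound for \emph{all} $p\in(0,\infty]$, so the restriction $p>\frac{n}{n+m}$ actually enters through the $\tentspace$-boundedness of $\tilde{\mathcal M}_{L_0}$ (Proposition \ref{prop:ext_M} with $q=1$ for the polyharmonic operator), and that boundedness rests on the tent-space singular integral machinery of \cite{AKMP12} together with the semigroup off-diagonal bounds of Section \ref{sec:aut}, not merely on \eqref{eq:Kato} and Proposition \ref{prop:off2} (the latter concerns the non-autonomous propagators).
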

		
		The proof of Proposition \ref{prop:tent_kenig_comp_smallp} requires three ingredients. Namely, we begin with the representation formula from Corollary \ref{cor:duhamel prop}, argue that the claim is true for $L_0=(-1)^m\Delta^m$ and use the properties of the integral operator $\mathcal R_{L_0}$ from Proposition \ref{prop:ext_R}.
		
		The reversed inequality holds without the assumption on the form of the solution. For $p<2$ our methods require the stronger ellipticity assumption \eqref{eq:strong lower ellipticity bounds}.
		\begin{proposition}
			\label{prop:comp_tent_kenig_smallp}
			Assume $p\in [1,2)$ and that $L$ satisfies the strong ellipticity bounds \eqref{eq:strong lower ellipticity bounds}. If $u\in X^p_m$ is a global weak solution to \eqref{eq:the parabolic equation}, then $\nabla^m u \in \tentspace$ and 
			\[ \|\nabla^m u \|_{\tentspace}\lesssim \|u\|_{X^p_m}.\]
		\end{proposition}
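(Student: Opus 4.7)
The plan is to adapt the second-order argument of \cite[\S7]{AMP15}. The role of the pointwise ellipticity (\ref{eq:strong lower ellipticity bounds}), in contrast with the integral G{\aa}rding form (\ref{eq:lower Garding ellipticity estimate}), is that it yields the \emph{almost everywhere} lower bound $\lambda|\nabla^m u|^2 \le \mathrm{Re}(A\nabla^m u\cdot\overline{\nabla^m u})$; this is what makes the Caccioppoli estimate localize to the irregular sawtooth regions that a good-$\lambda$ argument needs in the subdiagonal range $p<2$.

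\emph{Step 1: Caccioppoli on a parabolic cylinder.} Fix $(t_0,x_0)\in\BBR^{n+1}_+$ and set $r=\sqrt[2m]{t_0}$. Choose a product cutoff $\eta=\chi(t)\psi(y)$ supported in $(t_0/4,4t_0)\times B(x_0,2r)$ and equal to $1$ on $(t_0/2,2t_0)\times B(x_0,r)$. Testing (\ref{eq:the parabolic equation}) against $u\eta^{2m}$ as in Proposition \ref{pro:local_energy} (justified via Lemma \ref{lem:absolute continuity}), expanding $\nabla^m(u\eta^{2m})$ by Leibniz, absorbing the top-order cross terms by Young's inequality, and controlling the intermediate-derivative terms by the Barton iteration over shrinking annular strips, produces
\[\int_{t_0}^{2t_0}\!\int_{B(x_0,r)}|\nabla^m u|^2 \,dy\,dt \lesssim \frac{1}{t_0}\int_{t_0/4}^{4t_0}\!\int_{B(x_0,2r)}|u|^2 \,dy\,dt,\]
and the same estimate persists on sufficiently regular pieces of sawtooth regions, with constants depending only on the aperture.

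\emph{Step 2: good-$\lambda$ inequality and $L^p$ conclusion.} Write $S(\nabla^m u)$ for the conical square function defining $\|\nabla^m u\|_{\tentspace}$. For every $\lambda>0$ and all sufficiently small $\gamma>0$, the claim is
\[\big|\{S(\nabla^m u)>2\lambda,\;\nontan u \le \gamma\lambda\}\big| \lesssim \gamma^2\,\big|\{S(\nabla^m u)>\lambda\}\big|.\]
The proof Whitney-decomposes $O_\lambda=\{S(\nabla^m u)>\lambda\}$ into cubes $\{Q_j\}$; on each sawtooth over $Q_j$, Step 1 applied scale by scale dominates the localized $L^2$ mass of $\nabla^m u$ by a weighted integral of $|u|^2$ over a slight enlargement, which in turn is $\lesssim \int_{Q_j \cap \{\nontan u \le \gamma\lambda\}} \nontan u(x)^2\,dx \le \gamma^2\lambda^2|Q_j|$ by the very definition of the non-tangential maximal function. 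A Chebyshev bound for the restriction of $S(\nabla^m u)$ to the sawtooth finishes the good-$\lambda$ estimate. Integration in $\lambda$ via the distribution-function identity, together with a small enough choice of $\gamma$, yields $\|S(\nabla^m u)\|_{L^p}\lesssim \|\nontan u\|_{L^p}$ for all $p\in[1,2)$; the standard truncation of the square function addresses any a priori finiteness concern since $u\in X^p_m$.

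\emph{Main obstacle.} The technical heart is the Caccioppoli step on a sawtooth rather than a product cylinder: the Leibniz expansion of $\nabla^m(u\eta^{2m})$ in the higher-order setting produces many intermediate-derivative terms that, unlike the $m=1$ case, cannot be absorbed in a single Young's inequality and must instead be handled by the iteration over shrinking annular strips used by Barton and invoked in Proposition \ref{pro:local_energy}. Once the Caccioppoli is established with constants independent of the Whitney scale, the remainder---Whitney decomposition, sawtooth summation, good-$\lambda$, and integration in $\lambda$---is a formal adaptation of the second-order argument with the parabolic scaling $t\sim r^{2m}$ tracked consistently throughout.
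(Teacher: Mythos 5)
Your overall architecture (level sets, sawtooth regions, a good-$\lambda$/distributional argument, and the heuristic that the pointwise bound \eqref{eq:strong lower ellipticity bounds} is what permits localization to sawtooth regions) is of the same family as the paper's proof, but the step that carries the whole argument is missing, and the substitute you propose fails. In Step 2 you assert that applying the cylinder Caccioppoli estimate of Step 1 ``scale by scale'' over the sawtooth above a Whitney cube bounds the local $L^2$ mass of $\nabla^m u$ by a weighted integral of $|u|^2$, which is then $\lesssim \int_{Q_j}\nontan u^2\,dx$ ``by the very definition of the non-tangential maximal function.'' This last passage is false: summing Caccioppoli over Whitney boxes of the sawtooth produces $\iint |u(t,y)|^2\,\frac{dy\,dt}{t}$ over an enlarged sawtooth, i.e.\ the local averages of $|u|^2$ integrated over \emph{all} dyadic time scales against $dt/t$, whereas $\nontan u(x)^2$ only controls the \emph{supremum} of those averages; the estimate loses a factor of the number of scales and diverges logarithmically as the lower truncation tends to $0$. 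A telltale sign is that your mechanism would use only the G{\aa}rding inequality \eqref{eq:lower Garding ellipticity estimate} (Proposition \ref{pro:local_energy} needs nothing more), so it could not explain where \eqref{eq:strong lower ellipticity bounds} enters. Likewise, the assertion in Step 1 that the cylinder estimate ``persists on sufficiently regular pieces of sawtooth regions'' is precisely the nontrivial point and is not established.

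What is needed instead, and what the paper does, is to use the equation to convert the bulk term into boundary terms, in analogy with the Green's-formula step in the classical $S\lesssim N$ comparison. One constructs a smooth cutoff $\chi$ adapted to a truncated sawtooth over the density points $E^*$ of $E=\{\nontanbeta u\le\sigma\}$, uses the pointwise ellipticity to insert the weight: $\iint \chi^2\,|s^m\nabla^m u(s^{2m},\cdot)|^2\,\frac{dy\,ds}{s}\lesssim \mathrm{Re}\iint \chi^2 s^{2m}\,A\nabla^m u\cdot\overline{\nabla^m u}\,\frac{dy\,ds}{s}$ (this is exactly where \eqref{eq:strong lower ellipticity bounds} is invoked), and then applies the Leibniz rule and the weak formulation of \eqref{eq:the parabolic equation} with the test function $\chi^2 u$. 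The principal term becomes $\int \partial_t\|\chi u\|_{L^2}^2\,dt=0$ by compact support in time, and what survives is supported on the strips where $\partial_t\chi$ or $\nabla\chi$ are nonzero; there each spatial point sees only $O(1)$ dyadic time scales, so the resulting $|u|^2\,dt/t$ and intermediate-derivative integrals \emph{are} controlled, via Proposition \ref{pro:local_energy}, Fubini and the density property of $E^*$, by $\int_E\nontanbeta u^2$ plus $|B^*|\sigma^2$ coming from a Whitney decomposition of $B^*=\BBR^n\setminus E^*$. This yields only the distributional inequality $g_S(\sigma)\lesssim g_N(\sigma)+\sigma^{-2}\int_0^\sigma t\,g_N(t)\,dt$ rather than your stronger good-$\lambda$ estimate, and the restriction $p<2$ is used when integrating against $\sigma^{p-1}\,d\sigma$. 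Until you replace the scale-by-scale summation by this integration-by-parts on the sawtooth (or an equivalent device), the proof has a genuine gap at its core.
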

		\begin{proposition}
			\label{prop:comp_tent_kenig_bigp}
			Assume that $2\leq p\leq \infty$ and $u\in X^p_m$ is a global weak solution to \eqref{eq:the parabolic equation}. Then 
			\[ \|\nabla^m u \|_{\tentspace}\lesssim \|u\|_{X^p_m}.\]
		\end{proposition}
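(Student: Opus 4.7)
The proof of Proposition \ref{prop:comp_tent_kenig_bigp} splits naturally into the endpoints $p=2$ and $p=\infty$, with the intermediate range recovered by a Fefferman--Stein good-$\lambda$ argument, following \cite[\S7]{AMP15} adapted to the higher order setting.

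\textit{The case $p=2$.} Here $T^{2,2}_m=L^2(\BBR^{n+1}_+)$ by Fubini, so the target reduces to $\|\nabla^m u\|_{L^2(\BBR^{n+1}_+)}\lesssim\|u\|_{X^2_m}$. A second application of Fubini to the definition of $\mathcal{N}_m u$ gives, for every $\delta>0$,
\[
\fint_{\delta/2}^\delta \|u(t,\cdot)\|_{L^2(\BBR^n)}^2\,dt=\int_{\BBR^n}\fint_{\delta/2}^\delta\fint_{B(x,\sqrt[2m]{\delta})}|u|^2\,dy\,dt\,dx\leq\|u\|_{X^2_m}^2.
\]
Combining this uniform-in-$\delta$ $L^2$ bound with the $L^2_{\mathrm{loc}}$-continuity of weak solutions in time (Proposition~\ref{pro:local_energy}) and weak compactness in $L^2(\BBR^n)$ extracts an $L^2$-trace $u_0$ at $t=0$, which is necessarily unique as a distributional limit via the weak formulation of the equation, with $\|u_0\|_{L^2}\leq\|u\|_{X^2_m}$. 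The energy identity of Theorem~\ref{thm:global_well} then closes the loop: $\|\nabla^m u\|_{L^2(L^2)}^2\lesssim\|u_0\|_{L^2}^2\leq\|u\|_{X^2_m}^2$.

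\textit{The case $p=\infty$.} The claim is the Carleson measure estimate
\[
\int_0^{r_0^{2m}}\int_B|\nabla^m u|^2\,dy\,dt\lesssim|B|\,M^2,\qquad M:=\|u\|_{X^\infty_m},
\]
for every ball $B=B(x_0,r_0)$. The self-improving global Caccioppoli estimate extracted from the proof of Proposition~\ref{pro:local_energy} (the version that retains the initial boundary term $\|u(a',\cdot)\|_{L^2(B_R)}^2$) applied on the cylinder $(\epsilon,r_0^{2m})\times 2B$ gives
\[
\int_\epsilon^{r_0^{2m}}\int_B|\nabla^m u|^2\lesssim\|u(\epsilon,\cdot)\|_{L^2(2B)}^2+r_0^{-2m}\int_\epsilon^{r_0^{2m}}\int_{2B}|u|^2.
\]
A dyadic-in-time partition $(t_{j+1},t_j]$ with $t_j:=r_0^{2m}/2^j$, combined for each $j$ with a Vitali cover of $2B$ by balls of radius $\sqrt[2m]{t_j}$ centred in $2B$ and the defining bound $\fint_{\delta/2}^\delta\fint_{B(x,\sqrt[2m]{\delta})}|u|^2\leq M^2$ at $\delta=t_j$, telescopes to $\int_0^{r_0^{2m}}\int_{2B}|u|^2\lesssim r_0^{2m}|B|M^2$. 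The pointwise-in-time estimate of Proposition~\ref{pro:local_energy} applied at $d=\epsilon$, $c=\epsilon/2$, together with the same dyadic bound now at scale $\delta=\epsilon$, yields $\|u(\epsilon,\cdot)\|_{L^2(2B)}^2\lesssim|B|M^2$ uniformly in $\epsilon\in(0,r_0^{2m})$. Monotone convergence as $\epsilon\downarrow 0$ concludes.

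\textit{The case $p\in(2,\infty)$.} For each $\lambda>0$, set $E_\lambda:=\{\mathcal{N}_m u>\lambda\}$ and let $\mathcal{R}_\lambda\subseteq\BBR^{n+1}_+$ denote the sawtooth region consisting of those Whitney cylinders whose base lies in $E_\lambda^c$. On $\mathcal{R}_\lambda$ the $L^2$-averages of $u$ are controlled by $\lambda$ by construction, so the $p=\infty$ argument localised to $\mathcal{R}_\lambda$ shows that $|\nabla^m u|^2\mathbbm{1}_{\mathcal{R}_\lambda}\,dy\,dt$ is a Carleson measure of constant $\lesssim\lambda^2$. Integrating the resulting distributional inequality against $\lambda^{p-1}\,d\lambda$ and invoking the $L^{p/2}$-boundedness of the Hardy--Littlewood maximal function (valid since $p>2$) converts this into the desired $\|\nabla^m u\|_{T^{p,2}_m}\lesssim\|\mathcal{N}_m u\|_{L^p}$.

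\textit{Main obstacle.} The crux is the uniform-in-$\epsilon$ estimate $\|u(\epsilon,\cdot)\|_{L^2(2B)}^2\lesssim|B|M^2$ in the Carleson step: Proposition~\ref{pro:local_energy} a priori supplies a factor $\epsilon^{-1}$, but the integrand $\int_{\epsilon/2}^\epsilon\int_{4B}|u|^2$ also scales like $\epsilon$ through the $\mathcal{N}_m u$-cover at scale $\delta=\epsilon$, and the two cancel. This cancellation obviates any a priori trace existence at $t=0$ and is the feature that makes the scheme work at the Carleson scale.
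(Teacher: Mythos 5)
Your $p=\infty$ step is essentially sound and is in fact the paper's own computation in disguise (a Caccioppoli inequality retaining the term at $t=\varepsilon$, a dyadic-in-time covering at the scale of $\nontan$, and the cancellation of the $\varepsilon^{-1}$ factor against the measure of the slab $[\varepsilon,2\varepsilon]$). The genuine gap is the range $2<p<\infty$. The good-$\lambda$/sawtooth scheme you sketch does not deliver the conclusion: localizing the Carleson bound to a sawtooth over $\{\nontan u\le\lambda\}$ and then ``integrating the resulting distributional inequality against $\lambda^{p-1}\,d\lambda$'' produces an inequality of the form $g_S(\sigma)\lesssim g_N(\sigma)+\sigma^{-2}\int_0^\sigma t\,g_N(t)\,dt$, and the $\lambda^{p-1}$-integration of the second term converges only for $p<2$ (this is precisely the computation in the paper's proof of Proposition \ref{prop:comp_tent_kenig_smallp}, where $p<2$ is indispensable); for $p>2$ one would need a genuine good-$\lambda$ inequality, which your sketch never establishes, and your appeal to the $L^{p/2}$-boundedness of $\mathcal M_{HL}$ is left hanging because there is no pointwise estimate in your argument to which it could be applied. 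What is actually needed — and what the paper proves, in one stroke for all $2\le p\le\infty$ — is the local estimate in \emph{averaged} form, $\int_0^{R^{2m}}\int_{B(x_0,R)}|\nabla^m u|^2\,dx\,dt\lesssim\int_{B(x_0,6R)}|\nontan u|^2\,dx$, with the local $L^2$ average of $\nontan u$ on an enlarged ball on the right rather than the global supremum $M$. Your Carleson computation uses the global bound $M^2$ at every scale and therefore only yields the $p=\infty$ statement; to get the averaged version one must keep track of the ball centers via Fubini (as the paper does when estimating the terms $J$ and $\tilde I_0$). Once the averaged local bound is in hand, the case $2<p<\infty$ follows at once from the pointwise inequality $C(|\nabla^m u|)(y)\lesssim(\mathcal M_{HL}(\nontan u)^2)^{1/2}(y)$, the equivalent norm on $\tentspace$ from Remark \ref{rem:eqv norm tent space}, and the $L^{p/2}$-boundedness of $\mathcal M_{HL}$; no level-set decomposition is required.

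Your $p=2$ step also has a gap as written: producing a weak-limit trace $u_0\in L^2(\BBR^n)$ does not entitle you to apply the energy identity of Theorem \ref{thm:global_well} to $u$, because that identity is established for solutions already known to satisfy $\nabla^m u\in L^2(L^2)$ (equivalently, you must first identify $u$ with the propagator solution $\Gamma(\cdot,0)u_0$, which requires the interior representation of Theorem \ref{thm:int_repr}/Corollary \ref{cor:estimate_for_uniqueness} and the limiting procedure of Example \ref{ex:example wp}, not merely the existence of a trace). This can be repaired with the machinery quoted, but the cleaner route — and the one the paper takes — is to observe that the averaged local bound above, with $R\to\infty$, gives $\|\nabla^m u\|_{L^2(L^2)}^2\lesssim\|\nontan u\|_{L^2}^2$ directly, so that $p=2$ needs no trace argument at all.
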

		All of the above estimates were known at least for the second order case and some of them for the higher order autonomous case. In the next section, we contribute further bounds, namely
		\begin{enumerate}[label={\upshape(\roman{*})}]
			\item We can treat arbitrary solutions satisfying $\nabla^m u \in \tentspace$ (and not only those given by the propagators applied to an $L^p(\BBR^n)$ function as in Proposition \ref{prop:tent_kenig_comp_smallp}) and obtain 
			\[\|u_0-P\|_{L^p}\sim \|u-P\|_{X^p_m}\sim\|\nabla^m u \|_{\tentspace}\] if $p\in[2,\infty)$ and 
			\[\|u_0-P\|_{L^p}\lesssim \|u-P\|_{L^\infty(L^p)}\lesssim\|\nabla^m u \|_{\tentspace}\] if $p\in(1,2]$, where the polynomial $P\in\pol$ is unique. 
			\item Under the strong ellipticity assumption \eqref{eq:strong lower ellipticity bounds}, if $p\in (1,2)$ and \ref{ubc} holds, then 	\[\|u_0-P\|_{L^q}\sim \|u-P\|_{L^\infty(L^q)}\sim\|u-P\|_{X^q_m}\sim\|\nabla^m u \|_{T^{q,2}_m}\] is true for all $q\in(p,2)$ and a unique polynomial $P\in\pol$. 
		\end{enumerate}
	\section{Uniqueness results and tent space well-posedness}
	\label{sec:uniqueness results}
	\setcounter{theorem}{0} \setcounter{equation}{0}
	\subsection{Interior uniqueness}
	The following local representation result is the main step towards the uniqueness results and is based on \cite[Theorem 5.1]{AMP15}.
	
	\begin{theorem} \label{thm:int_repr}
		Let $u$ be a local weak solution of \eqref{eq:the parabolic equation} on $(a,b)\times\BBR^n$ and $c>0$ be the constant from \eqref{eq:off-diag}. Assume that for some $\gamma<\frac{c}{2^{4m/(2m-1)}(b-a)^{1/(2m-1)}}$ it holds
		\begin{equation}
		\label{eq:ass_int_repr}
		 M\coloneqq \int_{\BBR^n}\left(\int_a^b\int_{B(x,\sqrt[2m]{b})} |u(t,y)|^2 dy dt \right) ^{1/2}e^{-\gamma|x|^{2m/(2m-1)}}dx< \infty.\end{equation} 
		Then $u(t,\cdot)=\Gamma(t,s)u(s,\cdot)$ for $a<s\leq t <b$ in the sense of 
		\begin{equation}\label{eq:int_repr}
		\int_{\BBR^n}u(s,x)\overline{\Gamma(t,s)^*h(x)}dx=\int_{\BBR^n}u(t,x)\overline{h(x)}dx \;\;\; \text{whenever } h\in \mathscr C _c (\BBR^n).\end{equation}
	\end{theorem}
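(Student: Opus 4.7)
The plan is to test the equation for $u$ against a truncated version of the backwards propagator applied to $h$, differentiate the resulting $L^2$ pairing in $r$, exploit the cancellation between the forward equation for $u$ and the backward equation for $\Gamma(t,r)^*h$, and finally remove the truncation using the off-diagonal estimates of Proposition \ref{prop:off2} together with the growth hypothesis \eqref{eq:ass_int_repr}.

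Fix $a<s\leq t<b$, $h\in\mathscr C_c(\BBR^n)$, and set $w(r,\cdot):=\Gamma(t,r)^*h$ for $r\in[0,t]$. By Lemma \ref{lem:backwards_prop}, $w$ agrees with the unique energy solution of the backwards equation associated to $\tilde A=A^*$ up to time $t$, so $w\in\mathscr C([0,t];L^2(\BBR^n))\cap L^2(0,t;H^m(\BBR^n))$ with $\partial_r w\in L^2(0,t;H^{-m}(\BBR^n))$, and Proposition \ref{prop:off2} (applied to the propagators of this backwards equation) furnishes Gaussian-type decay of $w(r,\cdot)$ away from $\mathrm{supp}\,h$. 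Now pick $\chi_R\in\mathscr C_c^\infty(\BBR^n)$ with $\mathbbm{1}_{B(0,R)}\leq\chi_R\leq\mathbbm{1}_{B(0,2R)}$ and $\|\partial^\alpha\chi_R\|_\infty\lesssim R^{-|\alpha|}$ for $|\alpha|\leq 2m$. Proposition \ref{pro:local_energy} combined with the equation shows that both $\chi_R^{2m}u$ and $\chi_R^{2m}w$ lie in $W(s,t;H^m_0(B(0,2R)),H^{-m}(B(0,2R)))$, so Lemma \ref{lem:absolute continuity} applies and yields
\begin{equation*}
\int_{\BBR^n}\chi_R^{4m}\,u(t)\,\overline{h}\,dx-\int_{\BBR^n}\chi_R^{4m}\,u(s)\,\overline{w(s)}\,dx=\int_s^t\frac{d}{dr}\bigl\langle\chi_R^{2m}u(r),\chi_R^{2m}w(r)\bigr\rangle\,dr.
\end{equation*}
Inserting the weak formulations of the forward equation for $u$ and of the backwards equation for $w$, and expanding $\nabla^m(\chi_R^{4m}\,\cdot)$ by the Leibniz rule, the top-order terms $\int A\nabla^m u\cdot\chi_R^{4m}\overline{\nabla^m w}$ cancel exactly. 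What remains is a finite sum of commutator integrals of the form
\begin{equation*}
\int_s^t\int_{\BBR^n}a_{\alpha,\beta}(r,x)\,\partial^\beta u(r,x)\,\Phi^R_{\alpha,\gamma}(x)\,\overline{\partial^\gamma w(r,x)}\,dx\,dr,
\end{equation*}
(with $|\beta|+|\gamma|<2m$) and their symmetric counterparts in which the roles of $u$ and $w$ are swapped; here $\Phi^R_{\alpha,\gamma}$ is smooth, supported in $B(0,2R)\setminus B(0,R)$, and satisfies $\|\Phi^R_{\alpha,\gamma}\|_\infty\lesssim R^{-(2m-|\beta|-|\gamma|)}$.

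The only substantial task is to show these annular integrals vanish as $R\to\infty$, and this is precisely where the balance between \eqref{eq:ass_int_repr} and Proposition \ref{prop:off2} enters. I would cover the annulus $B(0,2R)\setminus B(0,R)$ by a Whitney-type family of balls $B(x_j,\sqrt[2m]{b})$ with $|x_j|\sim R$, apply Proposition \ref{pro:local_energy} on each slightly dilated parabolic cylinder to bound $\nabla^\beta u$ on $B(x_j,\sqrt[2m]{b})$ in terms of the $L^2$ norm of $u$ over the cylinder $(a,b)\times B(x_j,C\sqrt[2m]{b})$ (with $C$ a fixed dimensional doubling constant), and analogously (via Remark \ref{rem:backwards eq}) bound $\nabla^\gamma w$ by the $L^2$ norm of $w$ over a comparable cylinder. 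The latter is then controlled by Proposition \ref{prop:off2} applied to $\Gamma(t,r)^*h$, giving a factor $C\|h\|_{L^2}\exp\bigl(-c(|x_j|/(t-r)^{1/(2m)})^{2m/(2m-1)}\bigr)$, while the $u$-contributions sum over $j$ to a constant multiple of $M$ once the weight $e^{\gamma|x|^{2m/(2m-1)}}$ from \eqref{eq:ass_int_repr} is absorbed. The combined exponent is of the form $\bigl(2^{4m/(2m-1)}\gamma-c/(b-r)^{1/(2m-1)}\bigr)R^{2m/(2m-1)}$, which stays uniformly negative in $r\in[s,t]$ by the assumed upper bound on $\gamma$; the prefactor $2^{4m/(2m-1)}$ is precisely what the Whitney covering and the cylinder dilations contribute, matching the hypothesis.

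Letting $R\to\infty$, the commutator remainder vanishes. On the left, $\chi_R^{4m}u(t)\overline{h}\to u(t)\overline{h}$ in $L^1$ since $h$ has compact support and $u(t)\in L^2_{loc}(\BBR^n)$ (cf.\ Proposition \ref{pro:local_energy}), while $\chi_R^{4m}u(s)\overline{w(s)}\to u(s)\overline{w(s)}$ in $L^1$ by dominated convergence, the majorant being furnished by \eqref{eq:ass_int_repr} paired with the off-diagonal decay of $w(s,\cdot)$. This yields \eqref{eq:int_repr}. The delicate step throughout is the quantitative tracking of the $2^{4m/(2m-1)}$ factor: one must carefully choose the Whitney scales and the dilation constants in the energy estimates so that the exponential gain from Proposition \ref{prop:off2} beats the exponential growth weight on the nose.
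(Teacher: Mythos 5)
Your proposal is correct in outline and follows essentially the same route as the paper, whose proof simply defers to \cite[Theorem 5.1]{AMP15} and notes that the higher-order local energy estimates of Proposition \ref{pro:local_energy} make that argument go through: pairing $u$ with $w(r)=\Gamma(t,r)^*h$, cutting off, cancelling the top-order terms, and beating the growth weight in \eqref{eq:ass_int_repr} with the off-diagonal decay of Proposition \ref{prop:off2} is precisely that argument. The only step you leave schematic --- the exact bookkeeping producing the admissible range $\gamma<c\,2^{-4m/(2m-1)}(b-a)^{-1/(2m-1)}$ --- is likewise left implicit by the paper through the citation.
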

	\begin{proof}
			For the proof in the second order case, see \cite[Theorem 5.1]{AMP15}. Inspection of the proof reveals that with the local energy estimates from Proposition \ref{pro:local_energy}, the same argumentation applies in the higher order setting.
	\end{proof}
	\begin{remark}
		\label{rem:equivalent 5.1}
		In the setting of Theorem \ref{thm:int_repr} it holds with $\sigma=\frac{2m}{2m-1}$
		\[\int_a^b \int_{\BBR^n}|u(t,y)|e^{- 2^{\sigma-1}\gamma|y|^{2m/(2m-1)}}dydt\lesssim_{b,\gamma} M\lesssim_{b,\gamma} \int_a^b \int_{\BBR^n}|u(t,y)|e^{-2^{1-\sigma} \gamma|y|^{2m/(2m-1)}}dydt.\]
	\end{remark}
	\noindent This follows from the inequality 
	\[|a+b|^\sigma\leq 2^{\sigma-1} (|a|^\sigma +|b|^\sigma) \quad\mbox{for all}\quad a,\,b\in \BBR\] and, for the first estimate, the H\"older inequality or a covering argument combined with Corollary \ref{cor:rev_hoelder} for the second estimate. We carried out this argument in the proof of Lemma \ref{lem:wp_small}. 
	
	Let us present a simple application of Theorem \ref{thm:int_repr} 
	\begin{corollary} \label{cor:estimate_for_uniqueness}
			Let $p\in [1,\infty]$ and $u$ be a global weak solution of \eqref{eq:the parabolic equation} with $u\in L^\infty(L^p)$ or $u\in X_{m}^p$. Then for any $0<s<t<\infty$ 
			\[ u(t,\cdot)=\Gamma(t,s)u(s,\cdot)\] holds in the sense of \eqref{eq:int_repr}. 
	\end{corollary}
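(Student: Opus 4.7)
The plan is to reduce the statement to a direct application of Theorem \ref{thm:int_repr}. Given $0<s<t<\infty$, fix $a\in(0,s)$, $b\in(t,\infty)$ and choose $\gamma>0$ small enough that the smallness condition $\gamma<c/(2^{4m/(2m-1)}(b-a)^{1/(2m-1)})$ holds. Once we verify the hypothesis \eqref{eq:ass_int_repr} of Theorem \ref{thm:int_repr} on $(a,b)$, we obtain $u(t,\cdot)=\Gamma(t,s)u(s,\cdot)$ in the sense of \eqref{eq:int_repr}. Since the weight $e^{-\gamma|x|^{2m/(2m-1)}}$ is integrable (note $2m/(2m-1)>1$), it suffices to bound the quantity $F(x)\coloneqq(\int_a^b\int_{B(x,\sqrt[2m]{b})}|u|^2 dy dr)^{1/2}$ uniformly in $x\in\BBR^n$ (or at worst with sub-Gaussian growth) under each of the two hypotheses.

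For $u\in L^\infty(L^p)$, I would cover the spacetime cylinder $[a,b]\times B(x,\sqrt[2m]{b})$ by finitely many parabolic balls $B_r(t_i,x_i)$ with $r\sim a^{1/(2m)}$ chosen so small that $(4r)^{2m}<t_i$, so that the reverse H\"older estimate of Corollary \ref{cor:rev_hoelder} applies on each of them. Chaining this with Jensen's inequality yields $(\fint_{B_r(t_i,x_i)}|u|^2)^{1/2}\lesssim(\fint_{B_{4r}(t_i,x_i)}|u|^p)^{1/p}$, and the latter is bounded by $|B(x_i,4r)|^{-1/p}\|u\|_{L^\infty(L^p)}$ upon integrating against time. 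The cardinality of the cover depends on $a,b$ and dimensions but, after a translation argument, is independent of $x$, yielding $F(x)\lesssim_{a,b,p}\|u\|_{L^\infty(L^p)}$ uniformly in $x$.

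For $u\in X_m^p$: when $p\in[1,2]$ one reduces to the previous case via Lemma \ref{lem:x^p implies L^p} (whose proof, based on Jensen and Proposition \ref{pro:local_energy}, extends verbatim to $p=1$). When $p\in(2,\infty]$, I would dyadically decompose $[a,b]=\bigcup_{k=1}^K[2^{k-1}a,2^k a]$ with $2^K a\sim b$, and apply the definition of $\nontan u$ at scale $\delta_k=2^k a$ (for which $\sqrt[2m]{\delta_k}\le 2\sqrt[2m]{b}$, so $B(y,\sqrt[2m]{\delta_k})\subseteq B(x,3\sqrt[2m]{b})$ for any $y\in B(x,\sqrt[2m]{b})$). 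A Fubini-type argument then yields
\[\int_{2^{k-1}a}^{2^k a}\int_{B(x,\sqrt[2m]{b})}|u|^2\lesssim 2^{k-1}a\int_{B(x,3\sqrt[2m]{b})}(\nontan u(z))^2 dz,\]
and summing in $k$ (using $\sum_k 2^{k-1}a\lesssim b$) gives $F(x)^2\lesssim_b\int_{B(x,3\sqrt[2m]{b})}(\nontan u)^2$. For $p\ge 2$ this last integral is uniformly bounded in $x$ by $|B|^{1-2/p}\|\nontan u\|_{L^p}^2$, closing the argument. The only non-automatic step is this Fubini-type reduction, but it follows the same template as the Carleson-type estimate in Proposition \ref{pro:p>2 pol existence}; the remainder is routine bookkeeping.
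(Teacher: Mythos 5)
Your proposal is correct: like the paper, you reduce the statement to verifying hypothesis \eqref{eq:ass_int_repr} and then invoke Theorem \ref{thm:int_repr}, and all the individual steps (covering with parabolic balls of radius $\sim a^{1/(2m)}$, Corollary \ref{cor:rev_hoelder} plus Jensen, the Fubini estimate against $(\nontan u)^2$, and the observation that Lemma \ref{lem:x^p implies L^p} extends verbatim to $p=1$) check out. The difference is in how finiteness of $M$ is obtained. The paper's one-line proof goes through Remark \ref{rem:equivalent 5.1}, which (by a covering argument and Corollary \ref{cor:rev_hoelder}) controls $M$ by the weighted quantity $\int_a^b\int_{\BBR^n}|u(t,y)|e^{-c|y|^{2m/(2m-1)}}\,dy\,dt$, and then concludes by H\"older's inequality pairing $u(t,\cdot)\in L^p$ (or $\nontan u\in L^p$ after the covering step) against $e^{-c|\cdot|^{2m/(2m-1)}}\in L^{p'}$; this duality treats all $p\in[1,\infty]$ and both hypotheses in one stroke. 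You instead prove a bound on the inner square function of \eqref{eq:ass_int_repr} that is uniform in $x$ and then only use $L^1$-integrability of the weight, at the price of a case split: reverse H\"older for $u\in L^\infty(L^p)$, the Carleson-type Fubini estimate for $u\in X^p_m$ with $p>2$ (where local square-integrability of $\nontan u$ is available), and the reduction via Lemma \ref{lem:x^p implies L^p} for $X^p_m$ with $p\le 2$. Both routes are sound; the paper's is more uniform in $p$, while yours yields the cruder but perfectly sufficient uniform-in-$x$ control and avoids invoking Remark \ref{rem:equivalent 5.1}.
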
	
	\begin{proof}
		This is an immediate consequence of $e^{-c|x|^{2m/(2m-1)}}\in L^{p'}(\BBR^n)$ for any $c>0$ and $1\leq p'\leq\infty$ given by $\frac{1}{p}+\frac{1}{p'}=1$, a covering argument and Remark \ref{rem:equivalent 5.1}.
	\end{proof}
	We make the following observation. The interior representation result considers only \textit{interior} times and states how the solutions propagate for such times. It is a separate task to identify the trace of the solution and show that the propagation formula can be extended to $s=0$. Let us outline the strategy for proving well-posedness of the Cauchy problem \eqref{eq:Cauchy problem} for $Y=L^p(\BBR^n)$ with $1\leq p\leq \infty$ and a seminormed space $(X,\|\cdot\|_X)$. 
	
	\textbf{Step 1.} Start with  global weak solution $u$ to \eqref{eq:the parabolic equation} and prove the existence of the $L^q_{loc}(\BBR^n)$ trace $u_0$ at $t=0$ for some $q\in[1,\infty)$. For this, use the bound $\|u\|_X<\infty$ and the equation. 
	
	\textbf{Step 2.} Use the control $\|u\|_X<\infty$ to check the assumption \eqref{eq:ass_int_repr} and conclude for any time $t>0$ and sequence of times $(t_n)_{n\in \BBN}\subseteq (0,t)$ with $t_n\to 0$
	the representation
		\begin{equation}\label{eq:5}
		\int_{\BBR^n}u(t_n,x)\overline{\Gamma(t,t_n)^*h(x)}dx=\int_{\BBR^n}u(t,x)\overline{h(x)}dx \;\;\; \text{whenever } h\in \mathscr C _c (\BBR^n).\end{equation}
	
	\textbf{Step 3.} Proceed to the limit as $n$ tends to infinity in \eqref{eq:5} rigorously using the convergence from Step 1. and the properties of the propagators. This gives $u(t,\cdot)=\Gamma(t,0)u_0(\cdot)$.
	
	\textbf{Step 4.} Prove that $u_0\in Y$ and that the unique solution obtained depends continuously on its trace. This amounts to showing 
	\[\|u_0\|_Y\sim \|u\|_X.\]
	\begin{example}
		\label{ex:example wp}
			Let $ X=L^\infty(L^2)$ and $u\in X$ be a global weak solution to \eqref{eq:the parabolic equation}. To obtain the unique representation of $u$ by the propagators, we follow the strategy above. By the continuity results of Proposition \ref{pro:local_energy}
			\begin{equation}\label{eq:eq6}\sup_{t>0}\|u(t,\cdot)\|_{L^2}<\infty.\end{equation}
			Step 1. By weak compactness of $L^2(\BBR^n)$ and \eqref{eq:eq6}, there is a sequence $(t_n)_{n\in \BBN}\subseteq (0,1)$ with $t_n\to 0$ and $u_0\in L^2(\BBR^n)$ with 
			\[u(t_n,\cdot)\rightharpoonup u_0 \quad \mbox{in } L^2(\BBR^n) \mbox{ as } n\to \infty.\]
			Step 2. See Corollary \ref{cor:estimate_for_uniqueness}.
			\newline Step 3. Given \eqref{eq:5}, use the weak convergence from the Step 1 and strong $L^2(\BBR^n)$ continuity of $[0,t]\ni t\mapsto \Gamma(t,t_n)^*h$ from Lemma \ref{lem:backwards_prop}. We obtain $u(t,\cdot)=\Gamma(t,0)u_0$.
			\newline Step 4. Follows immediately from Theorem \ref{thm:global_well}. At the same time this step shows the uniqueness of the trace. 
	\end{example}
	
	Summarizing, for $p=2$ we obtain the following well-posedness results.
	\begin{theorem}\label{thm:full l2}
		For a distribution $u\in \mathscr D'(\BBR^{n+1}_+)$ it is equivalent
		\begin{enumerate}[label={\upshape(\roman*)}]
			\item $u$ is a global weak solution of \eqref{eq:the parabolic equation} and $\nabla^m u\in L^2(L^2)$.
			\item $u$ is a global weak solution of \eqref{eq:the parabolic equation} and $u-P\in L^\infty(L^2)$ for some polynomial $P\in\pol$.
			\item $u$ is a global weak solution of \eqref{eq:the parabolic equation} and $u-P\in X^2_m$ for some polynomial $P\in\pol$.
			\item There are unique $ f\in L^2(\BBR^n)$ and $P\in\pol$, so that $u(t,\cdot)-P=\Gamma(t,0)f$ in $L^2(\BBR^n)$ for $t>0$.
		\end{enumerate}
		In this case, $u-P$ is the energy solution with trace $f$ obtained in Theorem \ref{thm:global_well} and satisfies therein stated bounds. Moreover, 
		$
		\|\nabla^m u\|_{T^{2,2}_m}\sim\|u-P\|_{X^2_m}\sim\|u-P\|_{L^\infty(L^2)}\sim\|f\|_{L^2(\BBR^n)}.
		$
	\end{theorem}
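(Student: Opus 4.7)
My plan is to establish the cycle (iv) $\Rightarrow$ (i), (ii), (iii) together with (i) $\Rightarrow$ (iv), (iii) $\Rightarrow$ (ii), and (ii) $\Rightarrow$ (iv), reading off the quantitative norm equivalences at each step. Starting from (iv), I write $v = u - P = \Gamma(\cdot,0)f$. Since $P$ is trivially a weak solution and $\nabla^m P = 0$, Theorem \ref{thm:global_well} applied to $v$ delivers (i) and (ii) along with $\|f\|_{L^2} = \|v\|_{L^\infty(L^2)}$ and $\|f\|_{L^2} \sim \|\nabla^m v\|_{L^2(L^2)}$. The Fubini identification $T^{2,2}_m = L^2(\BBR^{n+1}_+)$ rewrites this as $\|f\|_{L^2} \sim \|\nabla^m u\|_{T^{2,2}_m}$, and the remaining chain $\|v\|_{X^2_m} \sim \|\nabla^m u\|_{T^{2,2}_m}$ (and in particular (iii)) follows from Propositions \ref{prop:tent_kenig_comp_smallp} (whose propagator hypothesis is granted here) and \ref{prop:comp_tent_kenig_bigp} at $p = 2$.

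For the converses, (i) $\Rightarrow$ (iv) comes straight from Lemma \ref{lem:trace integrability} at $p = 2$: it produces a unique $P \in \pol$ so that the distributional trace $u_0$ of $u$ satisfies $f \coloneqq u_0 - P \in L^2(\BBR^n)$, after which the uniqueness part of Theorem \ref{thm:global_well} identifies $u - P = \Gamma(\cdot,0)f$. The step (iii) $\Rightarrow$ (ii) is a direct application of Lemma \ref{lem:x^p implies L^p} at $p = 2$ to $u - P$. The remaining implication (ii) $\Rightarrow$ (iv) is the one genuine piece of work, which I settle via the strategy of Example \ref{ex:example wp}. With $v = u - P \in L^\infty(L^2)$, weak-$\ast$ compactness yields a sequence $t_n \downarrow 0$ along which $v(t_n,\cdot) \rightharpoonup f$ in $L^2(\BBR^n)$ for some $f \in L^2$. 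The exponentially weighted hypothesis \eqref{eq:ass_int_repr} of Theorem \ref{thm:int_repr} is trivially met under an $L^\infty(L^2)$ bound (cf.\ Remark \ref{rem:equivalent 5.1}), so Corollary \ref{cor:estimate_for_uniqueness} delivers the interior identity $v(t,\cdot) = \Gamma(t,t_n) v(t_n,\cdot)$ tested against $h \in \mathscr C_c(\BBR^n)$. Passing to the limit $n \to \infty$ using the strong $L^2$ continuity of $s \mapsto \Gamma(t,s)^\ast h$ from Lemma \ref{lem:backwards_prop} yields $v(t,\cdot) = \Gamma(t,0)f$.

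The only delicate point is this weak-limit argument, since $f$ is extracted only along a subsequence. Uniqueness of $f$, and hence of the polynomial $P$, is however automatic a posteriori: once $v = \Gamma(\cdot,0)f$ holds, Theorem \ref{thm:global_well} pins $f$ down as the $L^2$ trace of $v$; and if $u = v_1 + P_1 = v_2 + P_2$ were two decompositions with $v_i \in L^\infty(L^2)$ and $P_i \in \pol$, then $P_1 - P_2 = v_2 - v_1 \in L^\infty(L^2)$ would force $P_1 = P_2$ since the zero polynomial is the only $L^2$-integrable one. Assembling all the displayed estimates yields the full norm equivalence stated in the theorem.
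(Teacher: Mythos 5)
Your proposal is correct and follows essentially the same route as the paper: the equivalence of (i), (ii), (iv) via Lemma \ref{lem:trace integrability}, the weak-compactness/propagator-limit argument of Example \ref{ex:example wp} (resting on Corollary \ref{cor:estimate_for_uniqueness} and Lemma \ref{lem:backwards_prop}), Theorem \ref{thm:global_well}, Lemma \ref{lem:x^p implies L^p} for (iii)$\Rightarrow$(ii), and Proposition \ref{prop:tent_kenig_comp_smallp} to reach (iii) from (iv). Your explicit appeal to Proposition \ref{prop:comp_tent_kenig_bigp} at $p=2$ and the a posteriori uniqueness of $P$ and $f$ are just slightly more detailed versions of steps the paper leaves implicit.
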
 
	\begin{proof}
		Points (i), (ii) an (iv) are equivalent by Lemma \ref{lem:trace integrability}, Example \ref{ex:example wp} and Theorem \ref{thm:global_well}. Lemma \ref{lem:x^p implies L^p} proves the implication (iii) to (ii) and (ii) follows from (iv) by Proposition \ref{prop:tent_kenig_comp_smallp}.
	\end{proof}
	\subsection{Tent space solutions $2<p\leq\infty$}
	We now approach the tent space well-posedness for $p>2$. The next result is of course true for $p=2$, if we let $Y=L^2(\BBR^n)+\mathcal P_{m-1}$ and set $\|f+P\|_Y\coloneqq \|f\|_{L^2}$, see Theorem \ref{thm:global_well} or Theorem \ref{thm:full l2}.
	\begin{theorem} \label{thm:wp pol}
		Let $p\in(2,\infty]$. For a distribution $u\in \mathscr D'(\BBR^{n+1}_+)$ it is equivalent
		\begin{enumerate}[label={\upshape(\roman*)}]
			\item $u$ is a global weak solution of \eqref{eq:the parabolic equation} and $\nabla^mu\in \tentspace$.
		
			\item There is a unique $ f\in Y$ such that $u(t,\cdot)=\Gamma(t,0)f$ in $L^2_{loc}(\BBR^n)$ for $t>0$,
		\end{enumerate} where $Y=BMO_m(\BBR^n)$ if $p=\infty$ or $Y=L^p_m(\BBR^n)$ if $p\in (2,\infty)$.
		Moreover, it holds
		$$
		\|\nabla^m u\|_{\tentspace}\sim\|f\|_{Y}.
		$$
	\end{theorem}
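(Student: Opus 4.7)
I would begin with the easier direction (ii)$\Rightarrow$(i). Given $f \in Y$, Propositions \ref{prop:structure of L^p_m} and \ref{prop:structure of BMO_m} decompose $f = g + Q$ with $g \in L^p(\BBR^n)$ or $BMO(\BBR^n)$, $Q \in \pol$, and $\|g\|_{L^p/BMO} \sim \|f\|_Y$. Lemma \ref{lem:extension to polynomial growth} together with the polynomial conservation property (Proposition \ref{pro:conservation_property_polynome}) yields $\Gamma(t,0)f = \Gamma(t,0)g + Q$ in $L^2_{loc}$, which by Proposition \ref{pro:p>2 pol existence} is a global weak solution $u$ with $\nabla^m u = \nabla^m \Gamma(t,0)g \in \tentspace$ and $\|\nabla^m u\|_{\tentspace} \sim \|g\|_{L^p/BMO} \sim \|f\|_Y$. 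This settles (ii)$\Rightarrow$(i) together with one half of the norm equivalence.

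For (i)$\Rightarrow$(ii), Lemma \ref{lem:trace p>2} produces the $L^2_{loc}$-trace $u_0$ of $u$ at $t=0$ and a polynomial $P \in \pol$ such that $u_0 - P$ lies in $L^p(\BBR^n)$ or $BMO(\BBR^n)$ with $\|u_0-P\|_{L^p/BMO} \lesssim \|\nabla^m u\|_{\tentspace}$. In particular $u_0 \in Y$, and the natural candidate is $f := u_0$. Following the scheme of Example \ref{ex:example wp}, I would combine this trace information with the interior representation formula of Theorem \ref{thm:int_repr}. To apply the latter on a short interval $(0,b)$ I have to verify the integrability condition \eqref{eq:ass_int_repr}. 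This will follow from the uniform bound
\[\sup_{x_0 \in \BBR^n} \int_0^1 \int_{B(x_0,1)} |u - P(0)|^2 \, dy\, dt \lesssim \|\nabla^m u\|_{\tentspace}^2\]
obtained during the proof of Lemma \ref{lem:trace p>2} (via the translation-invariant Poincar\'e/Lions argument) applied at every centre $x_0$, together with the fact that the polynomial $P(0)$ has growth dominated by $e^{\gamma|x|^{2m/(2m-1)}}$ for any $\gamma>0$. Remark \ref{rem:equivalent 5.1} then yields \eqref{eq:ass_int_repr} for $\gamma$ small enough, so Theorem \ref{thm:int_repr} gives, for $0 < s \leq t < b$ and $h \in \testfunctions$, the identity $\langle u(t),h\rangle = \langle u(s), \Gamma(t,s)^* h\rangle$.

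The final step, which I expect to be the main obstacle, is letting $s \to 0$ in this identity. By $L^2$-continuity of the backward propagator (Lemma \ref{lem:backwards_prop}), $\Gamma(t,s)^*h \to \Gamma(t,0)^*h$ strongly in $L^2(\BBR^n)$, and by Proposition \ref{prop:off2} applied to the adjoint/backward propagator, each $\Gamma(t,s)^*h$ enjoys sub-Gaussian spatial decay of the form $e^{-c(|\cdot|/t^{1/(2m)})^{2m/(2m-1)}}$ away from $\mathrm{supp}\,h$. On the other side, the trace information $u_0 \in Y$ together with the local energy estimates of Proposition \ref{pro:local_energy} provides polynomial-growth control of $u(s,\cdot)$ in $L^2_{loc}$, uniform in $s$ near $0$. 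A dominated-convergence argument, pairing the (sub-)polynomial growth of $u(s,\cdot)$ against the sub-Gaussian decay of $\Gamma(t,s)^*h$, then yields $\langle u(t),h\rangle = \langle u_0, \Gamma(t,0)^*h\rangle = \langle \Gamma(t,0)u_0, h\rangle$, where the right-hand side is understood in $L^2_{loc}$ via Lemma \ref{lem:extension to polynomial growth}. Density of $\testfunctions$ promotes this to $u(t,\cdot) = \Gamma(t,0)f$ in $L^2_{loc}$. Uniqueness of $f$ is immediate, since $f = u_0$ is the (unique) $L^2_{loc}$-trace of $u$; the reverse norm bound $\|f\|_Y \lesssim \|\nabla^m u\|_{\tentspace}$ is exactly Lemma \ref{lem:trace p>2}, completing the norm equivalence. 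The delicate point throughout is to match the spatial growth of $u(s,\cdot)$ against the off-diagonal decay of $\Gamma(t,s)^*h$ uniformly as $s\to 0$, which is where the full force of the $\tentspace$-condition is used.
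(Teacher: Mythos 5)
Your proposal follows essentially the same route as the paper: (ii)$\Rightarrow$(i) via Proposition \ref{pro:p>2 pol existence} combined with the structure Propositions \ref{prop:structure of L^p_m} and \ref{prop:structure of BMO_m}, and (i)$\Rightarrow$(ii) via the trace Lemma \ref{lem:trace p>2}, verification of \eqref{eq:ass_int_repr} to invoke Theorem \ref{thm:int_repr}, a dominated-convergence passage $s\to 0$ pairing the growth of $u(s,\cdot)$ in $L^2_{loc}$ against the off-diagonal decay of $\Gamma(t,s)^*h$, and the identification of the limit with $\Gamma(t,0)u_0$ through Lemma \ref{lem:extension to polynomial growth}. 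The one loosely stated point — your uniform bound uses local polynomials that actually depend on the centre $x_0$, so one must check their coefficients grow at most polynomially in $|x_0|$ — is settled exactly as you anticipate, by combining the local comparison of $u(t)$ with the projections of the trace and the fact that $u_0\in Y$ controls those projections, which is precisely how the paper argues.
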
 
	\begin{proof}
		We only need to show (i) implies (ii), as the other direction was proven in Proposition \ref{pro:p>2 pol existence}. Let $p\in(2,\infty]$ and suppose $u$ is a global weak solution of \eqref{eq:the parabolic equation} with $S=\|\nabla^m u\|_{\tentspace}<\infty$.
		 
		\textbf{Step 1.} By Lemma \ref{lem:trace p>2}, there exists a unique $L^2_{loc}(\BBR^n)$ trace $f$ of $u$ at $t=0$ an it holds $f\in Y$ as claimed.
\textbf{Step 2.} For the interior uniqueness, recall that with the notation from Lemma \ref{lem:trace p>2}, it holds $C_{x_0,r}(|\nabla^mu|)\lesssim r^{-\frac{n}{p}} S.$	
Arguing as for \eqref{eq:easy bound on trace} in Lemma \ref{lem:trace p>2} (with $\phi(t)=t$), we obtain for $x_0\in \BBR^n$, $r>0$ and $t\in (0,r^{2m})$
\begin{equation}\label{eq:1}\int_{B(x_0,r)}\left|u(t,y)-\mathbb{P}_{x_0,r}^\omega(u(t,\cdot))(y)\right|^2\omega_{x_0,r}(y)dy\lesssim C_{x_0,r}(|\nabla^m u|)^2. \end{equation}

We argue as before that the coefficients $c_\alpha^{x_0,r}(t)$ of $\mathbb{P}_{x_0,r}^\omega(u(t,\cdot))$ are absolutely continuous over $[0,r^{2m}]$ and there are some constants $c_\alpha^{x_0,r}(0)\in \BBC$ with  
\begin{align*}|c_\alpha^{x_0,r}(t)-c_\alpha^{x_0,r}(0)|\lesssim_\omega r^{-m} \int_0^{r^{2m}}\fint_{B(x_0,r)}\big|\nabla^m u(t,x)\big|dxdt\lesssim C_{x_0,r}(|\nabla^m u|).\end{align*}

Recalling the $L^2_{loc}(\BBR^n)$ convergence $u(t,\cdot)\to f$ as $t\to 0$, we see that the coefficients $c_\alpha^{x_0,r}(0)$ correspond to the ones of $\mathbb{P}_{x_0,r}(f)$. Thus we have for $y\in B(x_0,r)$ and $t\in[0,r^{2m}]$,
\begin{equation}\label{eq:2} \left|\mathbb{P}_{x_0,r}(u(t,\cdot))(y)-\mathbb{P}_{x_0,r}(f)(y)\right|\lesssim C_{x_0,r}(|\nabla^m u|).\end{equation}
Furthermore, by Proposition \ref{prop:structure of L^p_m} if $p\in(2,\infty)$ or Proposition \ref{prop:structure of BMO_m} if $p=\infty$, the means $\fint_{B(x_0,r)} |f|\,dx$ grow at most polynomially in $|x_0|$ and so does $\|\mathbb{P}_{x_0,r}(f)\|_{L^\infty(B(x_0,r))}$. Combining this fact with \eqref{eq:1} and \eqref{eq:2} shows that $u$ satisfies the integrability condition \eqref{eq:ass_int_repr} from Theorem \ref{thm:int_repr} on any cylinder $(0,b)\times B(x,\sqrt[2m]{b})$ with $ 0<b<\infty$. Consequently, $u(t,\cdot)=\Gamma(t,s)u(s,\cdot)$ for any $0<s\leq t <\infty$, in the sense that for $h\in \mathscr C_c(\BBR^n)$ 
\begin{equation}\label{eq:int_repr_BMO}
\int_{\BBR^n}u(s,x)\overline{\Gamma(t,s)^*h(x)}dx=\int_{\BBR^n}u(t,x)\overline{h(x)}dx .\end{equation}

		\textbf{Step 3.} We need to show that representation \eqref{eq:int_repr_BMO} holds up to the boundary, that is, for $s=0$. To this end, fix $t>0$ and let $(s_k)_{k\in\BBN}\subseteq (0,t)$ be a sequence converging to zero. 
		By averaging, 
		\begin{equation}\label{eq:LDC}
		\int_{\BBR^n}u(s_k,x)\overline{\Gamma(t,s_k)^*h(x)}dx=\int_{\BBR^n}\fint_{B(x,\sqrt[2m]{t}/2)}u(s_k,y)\overline{\Gamma(t,s_k)^*h(y)}dydx.\end{equation} We will apply the Lebesgue Dominated Convergence Theorem to the sequence $(g_k)_{k\in\BBN}$ with $$g_k(x)\coloneqq\fint_{B(x,\sqrt[2m]{t}/2)}u(s_k,y)\overline{\Gamma(t,s_k)^*h(y)}dy.$$ First of all, we have $\Gamma(t,s_k)^*h\to \Gamma(t,0)^*h$ in $L^2(\BBR^n)$ as $k\to \infty$ (Lemma \ref{lem:backwards_prop}) and additionally the $L^2_{loc}(\BBR^n)$ convergence $u(t,\cdot)\to f$ as $t\to 0$ holds.  Thus, as $k$ tends to infinity
		$$g_k(x)\to \fint_{B(x,\sqrt[2m]{t}/2)}f(y)\overline{\Gamma(t,0)^*h(y)}dy$$ for every $x\in \BBR^n$. Applying $L^2$ off-diagonal bounds and \eqref{eq:1} together with \eqref{eq:2} gives us 
		\begin{align*}
		|g_k(x)|&\leq \sum_{j=1}^\infty \left(\fint_{B(x,\sqrt[2m]{t}/2)}|u(s_k,y)|^2dy\right)^{1/2}\|\mathbbm{1}_{B(x,\sqrt[2m]{t})}\Gamma(t,s_k)^*(\mathbbm{1}_{S_j(x,\sqrt[2m]{t})}h)\|_{L^2}\\
		&\lesssim \sum_{j=1}^\infty \left(\int_{B(x,\sqrt[2m]{t})}|u(s_k,y)|^2\omega_{x,\sqrt[2m]{t}}(y)dyds\right)^{1/2}e^{-\tilde c{2^{2m/(2m-1)j}}\left(\frac{t}{t-s_k}\right)^{1/(2m-1)}}\|\mathbbm{1}_{S_j(x,\sqrt[2m]{t})}h\|_{L^2}\\
		&\lesssim \sum_{j=1}^\infty \left( C_{x,\sqrt[2m]{t}}(|\nabla^mu|)+\|\mathbb{P}_{x,\sqrt[2m]{t}}(f)\|_{L^\infty(B(x,\sqrt[2m]{t}))}\right) e^{-\tilde c{2^{2m/(2m-1)j}}}\|\mathbbm{1}_{S_j(x,\sqrt[2m]{t})}h\|_{L^2}.
		\end{align*}
		Since $$\sup_{x\in\BBR^n}C_{x,\sqrt[2m]{t}}(|\nabla^mu|)\lesssim t^{-{n/(2mp)}}\|\nabla^mu\|_{\tentspace},$$ the term in the brackets grows at most polynomially in $|x|$  and so it is integrable when multiplied by $e^{-\alpha|x|^{\frac{2m}{2m-1}}}$ with arbitrary $\alpha>0 $. So, we can proceed as in the proof of Theorem \ref{thm:int_repr} to see that there exist $N\in \BBN$ and a constant $c>0$ such that for $0<\alpha<ct^{-1/(2m-1)}$
		\[|g_k(x)|\lesssim  |x|^N e^{-\alpha|x|^{2m/(2m-1)}}\|h\|_{L^2}\] uniformly in $k\in \BBN$ (the constant does depend on $t$). Thus, the sequence $(g_k)_{k\in\BBN}$ has an integrable dominant. We pass to the limit as $k$ tends to infinity in \eqref{eq:LDC} and obtain that the function $$x\mapsto \fint_{B(x,\sqrt[2m]{t}/2)}f(y)\overline{\Gamma(t,0)^*h(y)}dy$$ is integrable for every $t>0$ and 
		\begin{equation}\label{eq:after LDC}
		\int_{\BBR^n}\fint_{B(x,\sqrt[2m]{t}/2)}f(y)\overline{\Gamma(t,0)^*h(y)}dydx=\int_{\BBR^n}u(t,x)\overline{h(x)}dx .\end{equation}
		
		If we can show the integrability of $x \mapsto f(x)\overline{\Gamma(t,0)^*h(x)}$ then an application of Fubini's Theorem finishes the proof. For this we argue in the same style as above, since due to $f\in BMO_m(\BBR^n)$ or $f\in L^p_m(\BBR^n)$ if $p\in(2,\infty)$, we can control the averages $(\fint_{B(x,\sqrt[2m]{t})}|f(y)|^2)^{1/2}$ by some polynomial in $|x|$. Thus we can swap the integrals in \eqref{eq:after LDC} and obtain
		\begin{equation} \label{eq:representation pol}
		\int_{\BBR^n}f(x)\overline{\Gamma(t,0)^*h(x)}dx=\int_{\BBR^n}u(t,x)\overline{h(x)}dx .\end{equation}
		Let $B_k\coloneqq B(0,2^k)$ for $k\in \BBN$. Then, by Lebesgue Dominated Convergence, it holds for the left hand side of \eqref{eq:representation pol} 
		\begin{align*}
			\int_{\BBR^n}f(x)\overline{\Gamma(t,0)^*h(x)}dx&=\lim_{k\to \infty}	\int\mathbbm{1}_{B_k}f(x)\overline{\Gamma(t,0)^*h(x)}dx\\
			&=\lim_{k\to \infty}	\int\Gamma(t,0)(\mathbbm{1}_{B_k}f)(x)\overline{h(x)}dx\\
			&=\int\Gamma(t,0)f(x)\overline{h(x)}dx.
		\end{align*} Here we used that $\mathbbm{1}_{B_k}(x)f\in L^2(\BBR^n)$, as well as Lemma \ref{lem:extension to polynomial growth} together with the fact that $\text{supp }h$ is compact. Combined with \eqref{eq:representation pol}, this implies $\Gamma(t,0)f(x)=u(t,x)$ in $L^2_{loc}(\BBR^n)$ for $0<t<\infty$.	
	\end{proof}
	Theorem \ref{thm:wp pol} leads to the following Carleson measure characterization of $BMO(\BBR^n)$.
	\begin{corollary}
		\label{cor:Carleson measure characterisation}
		For $f\in L^2_{loc}(\BBR^n)$ it is equivalent
		\begin{enumerate}[label={\upshape(\roman*)}]
			\item There exists a global weak solution $u$ to \eqref{eq:the parabolic equation}, for which $$ d\mu(x,t)=|t^m\nabla^m u(t^{2m},x)|^2\frac{dxdt}{t}$$ is a Carleson measure and the $L^2_{loc}(\BBR^n)$ trace of $u$ is given by $f$.
			\item There exists a polynomial $P\in \pol$ such that $f-P\in BMO(\BBR^n)$.
		\end{enumerate}
		Moreover, $\|\nabla u\|_{T^{\infty,2}_m}\sim\|f-P\|_{BMO}.$
	\end{corollary}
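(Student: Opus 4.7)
The plan is to reduce the corollary directly to Theorem \ref{thm:wp pol} at the endpoint $p=\infty$, combined with the structural decomposition of $BMO_m(\BBR^n)$ from Proposition \ref{prop:structure of BMO_m}. The only genuinely new step is the translation between the Carleson measure condition on $\mu$ and the tent space norm of $\nabla^m u$.

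The first step will be a direct change of variables. Writing $s=t^{2m}$, so that $ds = 2m\, t^{2m-1}\,dt$ and hence $dt/t = ds/(2ms)$, one computes
\[
|t^m \nabla^m u(t^{2m},y)|^2\,\frac{dy\,dt}{t} \;=\; \frac{1}{2m}\,|\nabla^m u(s,y)|^2\,dy\,ds,
\]
so that for every ball $B=B(x,r)\subseteq\BBR^n$,
\[
\frac{1}{|B(x,r)|}\int_0^{r}\!\!\int_{B(x,r)} d\mu \;=\; \frac{1}{2m}\int_0^{r^{2m}}\!\fint_{B(x,r)}|\nabla^m u(s,y)|^2\,dy\,ds.
\]
Taking the supremum over all balls, this shows $\|\mu\|_{\mathcal C}\sim \|\nabla^m u\|_{T^{\infty,2}_m}^2$. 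In particular, condition (i) is equivalent to saying that $u$ is a global weak solution of \eqref{eq:the parabolic equation} with $\nabla^m u \in T^{\infty,2}_m$ whose $L^2_{loc}(\BBR^n)$ trace equals $f$.

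Next, I would invoke Theorem \ref{thm:wp pol} with $p=\infty$: the above reformulation of (i) is in turn equivalent to the existence of a unique $g\in BMO_m(\BBR^n)$ such that $u(t,\cdot)=\Gamma(t,0)g$ in $L^2_{loc}(\BBR^n)$ for all $t>0$, together with the norm equivalence $\|\nabla^m u\|_{T^{\infty,2}_m}\sim\|g\|_{BMO_m}$. Since by Lemma \ref{lem:extension to polynomial growth} the map $t\mapsto \Gamma(t,0)g$ has $L^2_{loc}(\BBR^n)$ trace equal to $g$, uniqueness of traces forces $g=f$; in particular $f\in BMO_m(\BBR^n)$.

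Finally, I would apply Proposition \ref{prop:structure of BMO_m}: membership of $f$ in $BMO_m(\BBR^n)$ is equivalent to the existence of some $P\in\pol$ with $f-P\in BMO(\BBR^n)$, with the quantitative comparison $\|f\|_{BMO_m}\sim\|f-P\|_{BMO}$. Chaining these equivalences yields both (i)$\Leftrightarrow$(ii) and $\|\nabla^m u\|_{T^{\infty,2}_m}\sim\|f-P\|_{BMO}$. Note that, unlike the decomposition in Proposition \ref{prop:structure of BMO_m} (where $P$ is normalized by $P(0)=0$), here $P$ is only unique up to an additive constant, which is harmless because constants already belong to $BMO(\BBR^n)$. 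I do not anticipate any serious obstacle; the only subtlety is keeping the change-of-variables factor and the polynomial ambiguity straight, but everything else is an assembly of already-proved ingredients.
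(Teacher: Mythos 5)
Your proposal is correct and follows essentially the same route the paper intends: the corollary is stated as a direct consequence of Theorem \ref{thm:wp pol} at $p=\infty$, and your argument just makes explicit the two ingredients implicit there, namely the substitution $s=t^{2m}$ identifying $\|\mu\|_{\mathcal C}$ with $\|\nabla^m u\|_{T^{\infty,2}_m}^2$ and the passage between $BMO_m(\BBR^n)$ and $BMO(\BBR^n)$ modulo $\pol$ via Proposition \ref{prop:structure of BMO_m}. The identification of the trace with the datum $g$ from Theorem \ref{thm:wp pol} via uniqueness of $L^2_{loc}$ traces is handled correctly, so no gaps remain.
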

	By Proposition \ref{prop:comp_tent_kenig_bigp} and Theorem \ref{thm:wp pol} we further see the Cauchy problem \eqref{eq:Cauchy problem} is well-posed for $X=X^p_m$ and $Y=L^p(\BBR^n)$ if $p\in(2,\infty)$. If $p=\infty$, with this method, we only obtain a trace in $BMO(\BBR^n)$. However, copying the slice spaces estimates in the proof of \cite[Theorem 5.4]{AMP15} gives $f\in L^\infty(\BBR^n)$.
	Hence, $X^\infty_m$ is a well-posedness class for $L^\infty(\BBR^n)$.
	
	Under the uniform boundedness assumption on the propagators, another $L^p(\BBR^n)$ well-posedness class is given by $L^\infty(L^p)$. This is true also for $p=\infty$, but we neglect this case below to avoid distinguishing different cases.
	 \begin{theorem}
	 	\label{thm:wp p big}
	 	Let $p\in(2,\infty)$. Suppose that \ref{ubc}
	 	holds. Then for a distribution $u\in \mathscr D'(\BBR^{n+1}_+)$ it is equivalent
	 		\begin{enumerate}[label={\upshape(\roman*)}]
	 			\item $u$ is a global weak solution of \eqref{eq:the parabolic equation} and $\nabla^m u\in \tentspace$.
	 			\item $u$ is a global weak solution of \eqref{eq:the parabolic equation} and $u-P\in L^\infty(L^p)$ for some polynomial $P\in\pol$.
	 			\item $u$ is a global weak solution of \eqref{eq:the parabolic equation} and $u-P\in X^p_m$ for some polynomial $P\in\pol$.
	 			\item There are unique $ f\in L^p(\BBR^n)$ and $P\in\pol$, so that $u(t,\cdot)-P=\Gamma(t,0)f$ in $L^2_{loc}(\BBR^n)$ for $t>0$.
	 		\end{enumerate}
	 		In this case, $u-P$ is the solution with trace $f$ obtained in Proposition \ref{pro:p>2 existence} and it holds 
	 		$$
	 		\|\nabla^m u\|_{\tentspace}\sim\|u-P\|_{X^p_m}\sim\|u-P\|_{L^\infty(L^p)}\sim\|f\|_{L^p(\BBR^n)}.
	 		$$
	 	\end{theorem}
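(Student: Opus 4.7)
The plan is to reduce all four statements to Theorem \ref{thm:wp pol} by splitting an $L^p_m(\BBR^n)$ trace into $L^p + \pol$ via Proposition \ref{prop:structure of L^p_m} and using the conservation property from Proposition \ref{pro:conservation_property_polynome}, combined with the assumption \ref{ubc}. The direction (iv) $\Rightarrow$ (i), (ii), (iii) together with the norm equivalences is immediate: setting $v := \Gamma(\cdot, 0) f$, Proposition \ref{pro:p>2 pol existence} gives $\nabla^m u = \nabla^m v \in \tentspace$ with $\|\nabla^m u\|_{\tentspace} \sim \|f\|_{L^p}$ (using $\|f\|_{L^p_m} \sim \|f\|_{L^p}$ for $f \in L^p$, obtained from $L^{p/2}$-boundedness of the Hardy--Littlewood maximal function and uniqueness in Proposition \ref{prop:structure of L^p_m}), while Proposition \ref{pro:p>2 existence} gives $\|v\|_{L^\infty(L^p)} \sim \|v\|_{X^p_m} \sim \|f\|_{L^p}$ under \ref{ubc}.

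For (i) $\Rightarrow$ (iv), I would apply Theorem \ref{thm:wp pol} to obtain a unique $g \in L^p_m(\BBR^n)$ with $u(t,\cdot) = \Gamma(t, 0)g$ in $L^2_{loc}$, decompose $g = f + P$ uniquely via Proposition \ref{prop:structure of L^p_m}, and use Proposition \ref{pro:conservation_property_polynome} to rewrite the identity as $u(t,\cdot) - P = \Gamma(t,0)f$ in $L^2_{loc}$. The implication (iii) $\Rightarrow$ (iv) then follows by applying Proposition \ref{prop:comp_tent_kenig_bigp} to $v := u - P \in X^p_m$, since $\nabla^m u = \nabla^m v$, which reduces the hypothesis to (i).

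The main obstacle is (ii) $\Rightarrow$ (iv), where only $L^\infty(L^p)$-control of $v := u - P$ is available. By reflexivity of $L^p$ for $p \in (2, \infty)$, there exist $s_n \to 0^+$ and $f \in L^p$ with $v(s_n, \cdot) \rightharpoonup f$ weakly in $L^p$. Corollary \ref{cor:estimate_for_uniqueness} provides, for $h \in \mathscr C_c^\infty(\BBR^n)$ and $t > s_n$,
\[\int_{\BBR^n} v(t,x)\,\overline{h(x)}\,dx = \int_{\BBR^n} v(s_n,x)\,\overline{\Gamma(t,s_n)^* h(x)}\,dx,\]
which I would split as $\int v(s_n)\,\overline{\Gamma(t,0)^*h} + \int v(s_n)\,\overline{(\Gamma(t,s_n)^* - \Gamma(t,0)^*)h}$. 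The first term converges to $\int f\,\overline{\Gamma(t,0)^*h}$ by weak convergence, since $\Gamma(t,0)^*h \in L^{p'}$ under the dualized \ref{ubc}.

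The hardest step is showing that the second term vanishes, equivalently that $\|\Gamma(t,s_n)^*h - \Gamma(t,0)^*h\|_{L^{p'}} \to 0$: Lemma \ref{lem:backwards_prop} only gives $L^2$-convergence, and the uniform $L^{p'}$-bound from dualized \ref{ubc} does not combine with $L^2$-convergence automatically when $p' < 2$. The resolution is that $h$ has compact support, so the $L^2$ off-diagonal bounds of Proposition \ref{prop:off2} (applied to the backward propagators via Lemma \ref{lem:backwards_prop}) yield Gaussian-type decay of $\Gamma(t,s)^*h$ away from $\operatorname{supp} h$, uniformly in $s \in [0,t]$. Summing H\"older estimates over dyadic annuli around $\operatorname{supp} h$ controls the $L^{p'}$-tails, while a single H\"older estimate on a large ball upgrades the interior $L^2$-convergence to $L^{p'}$-convergence. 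This identifies $v(t,\cdot) = \Gamma(t,0)f$ in the sense of \eqref{eq:int_repr}; uniqueness of $(f, P)$ follows since a difference of two decompositions would force $P_1 - P_2 = \Gamma(t,0)(f_2 - f_1) \in \pol \cap L^p = \{0\}$ by \ref{ubc}.
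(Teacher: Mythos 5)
Your proposal is correct and follows essentially the same route as the paper: the equivalence of (i), (iii), (iv) via Theorem \ref{thm:wp pol} (with the $L^p_m = L^p + \pol$ splitting from Proposition \ref{prop:structure of L^p_m} and the conservation property), Propositions \ref{pro:p>2 pol existence}, \ref{pro:p>2 existence} and \ref{prop:comp_tent_kenig_bigp}, plus (iv)$\Rightarrow$(ii) from \ref{ubc}, and for (ii)$\Rightarrow$(iv) the weak-compactness scheme of Example \ref{ex:example wp} whose key step $\|\Gamma(t,s)^*h-\Gamma(t,0)^*h\|_{L^{p'}}\to 0$ is handled, exactly as in the paper, by localization with off-diagonal bounds, H\"older's inequality on bounded sets (since $p'<2$), and the $L^2$ continuity of Lemma \ref{lem:backwards_prop}.
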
 
	 \begin{proof}
	 	Points (i), (iii) and (iv) are equivalent without assuming \ref{ubc}, see Theorem \ref{thm:wp pol}, Propositions \ref{pro:p>2 existence} and \ref{prop:comp_tent_kenig_bigp}. 
	 	Also, (iv) implies (ii) by \ref{ubc}, so we only need to show the converse. We proceed as in Example \ref{ex:example wp}. For the third step, we need to show 
	 	\begin{equation}
	 	\label{eq:eq8}
	 	\|\Gamma(t,s)^*h-\Gamma(t,0)^*h\|_{L^{p'}}\to 0
	 	\end{equation} as $s\to 0$ for every $h\in \mathscr C_c(\BBR^n)$. This is seen by localization and H\"older's inequality combined with off-diagonal estimates and $L^2$ continuity results, cf.\ \cite[Proposition 5.11]{AMP15}.
	\end{proof}
	\begin{remark}\label{rem:continuity big p}
		Let $p\in (2,\infty)$ and suppose \ref{ubc}. Then for all $r\in (2,p)$ the global weak solution $u_f(t,\cdot)=\Gamma(t,0)f$ with $f\in L^r(\BBR^n)$ belongs to $\mathscr C_0([0,\infty);L^r(\BBR^n))$.
	\end{remark}
	\begin{proof}
		We interpolate between $r=2$, where the conclusion is true, and $r=p$, where \ref{ubc} holds. Precisely, let $f\in L^r(\BBR^n)$ and $g\in \mathscr D (\BBR^n)$ be such that
		$\|g-f\|_{L^r}<\varepsilon$. We estimate for any $0\leq s<t<\infty$ and $\theta\in (0,1)$ with $\frac{1}{r}=\frac{\theta}{2}+\frac{1-\theta}{p}$,
		\begin{align*}
		\|\Gamma(t,0)f-\Gamma(s,0)f\|_{L^r}&\leq 	\|\Gamma(t,0)(f-g)-\Gamma(s,0)(f-g)\|_{L^r}+	\|\Gamma(t,0)g-\Gamma(s,0)g\|_{L^r} \\ &\lesssim
		\varepsilon + \|\Gamma(t,0)g-\Gamma(s,0)g\|_{L^p}^{1-\theta}\|\Gamma(t,0)g-\Gamma(s,0)g\|_{L^2}^\theta\\
		&\lesssim \varepsilon + \|g\|_{L^p}^{1-\theta}\|\Gamma(t,0)g-\Gamma(s,0)g\|_{L^2}^\theta.
		\end{align*} The claim follows easily.
	\end{proof}

	\subsection{Tent space solutions $1\leq p<2$}
	As outlined in Section \ref{sec:existence small p}, we need to assume \ref{ubc}. However, even with \ref{ubc}, we could not prove the continuous dependence of the solution on the data in the sense of 
	\[\|\nabla^m u_f \|_{\tentspace}\lesssim \|f\|_{L^p}.\]
	Contrary to the case $p>2$, our only general estimate in this direction is the result of Proposition \ref{prop:comp_tent_kenig_smallp}. On the other hand, the only available bound the non-tangential norm of the solution by the initial data requires enlarging the exponent slightly to $r\in(p,2)$. 
	
	We arrive at the same conclusion, when considering the class 
	$ L^\infty(0,\infty;L^p(\BBR^n))$. Indeed, as in the third step of the proof of Theorem \ref{thm:wp p big}, we need to show 
		\begin{equation*}
		\|\Gamma(t,s)^*h-\Gamma(t,0)^*h\|_{L^{p'}}\to 0
		\end{equation*} as $s\to 0$ for every $h\in \mathscr C_c(\BBR^n)$. Our previous proof was based on the H\"older inequality, since $p>2$, and $L^2$ convergence results for the propagators. This does not apply here, but provided the propagators are uniformly bounded for some $1\leq q<p<2$, we can again use interpolation between $2$, where the convergence is true and $q$, where the uniform bounds hold. We obtain the following.

		\begin{theorem} \label{thm:wp_small_p}
			Let $1\leq p<r<2$. Suppose \ref{ubc} holds. Then for a distribution $u\in \mathscr D'(\BBR^{n+1}_+)$ it is equivalent
			\begin{enumerate}[label={\upshape(\roman*)}]
				\item $u$ is a global weak solution of \eqref{eq:the parabolic equation} in $ L^\infty(0,\infty;L^r(\BBR^n))$.
				\item $u$ is a global weak solution of \eqref{eq:the parabolic equation} in $X^r_{m}$.
				\item There is a unique $ f\in L^r(\BBR^n)$ such that $u(t,\cdot)=\Gamma(t,0)f$ in $L^r(\BBR^n)$ for $t>0$.
			\end{enumerate} In this case $u\in \mathscr  C_0([0,\infty);L^r(\BBR^n))$ and $\|f\|_{L^r}\sim \|u\|_{L^\infty(L^r)}\sim \|u\|_{X^r_m}.$
		\end{theorem}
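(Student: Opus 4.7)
The plan follows the four-step strategy outlined in Example \ref{ex:example wp} and employed in the proof of Theorem \ref{thm:wp p big}. The implications (iii) $\Rightarrow$ (i), (iii) $\Rightarrow$ (ii), together with the associated norm inequalities, are contained in Lemma \ref{lem:wp_small}, while (ii) $\Rightarrow$ (i) is Lemma \ref{lem:x^p implies L^p}. The remaining task is to establish (i) $\Rightarrow$ (iii) as well as the continuity statement.

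For the trace, observe that since $u\in L^\infty(L^r)$ and $1<r<2$, Banach--Alaoglu yields a sequence $s_k\downarrow 0$ and $f\in L^r(\BBR^n)$ with $u(s_k,\cdot)\rightharpoonup f$ weakly in $L^r(\BBR^n)$ and $\|f\|_{L^r}\leq\|u\|_{L^\infty(L^r)}$. Corollary \ref{cor:estimate_for_uniqueness} applies directly to $u\in L^\infty(L^r)$ and supplies, for every $0<s<t<\infty$ and $h\in\mathscr C_c(\BBR^n)$, the interior representation
$$\int_{\BBR^n}u(s,x)\overline{\Gamma(t,s)^*h(x)}\,dx=\int_{\BBR^n}u(t,x)\overline{h(x)}\,dx.$$

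The crux of the argument is to pass to the limit $s=s_k\to 0$ in this identity. Since $u(s_k,\cdot)$ converges to $f$ only weakly in $L^r(\BBR^n)$, one needs strong $L^{r'}(\BBR^n)$ convergence of $\Gamma(t,s_k)^*h$ to $\Gamma(t,0)^*h$, where $r'\in(2,p')$. Lemma \ref{lem:backwards_prop} provides strong $L^2$ convergence, while \ref{ubc}, combined with the density extension procedure used in Lemma \ref{lem:wp_small}, shows by duality that $\Gamma(t,s)^*$ is uniformly bounded on $L^{p'}(\BBR^n)$ across $0\leq s\leq t$. Riesz--Thorin interpolation of the difference $\Gamma(t,s_k)^*h-\Gamma(t,0)^*h$ between the vanishing $L^2$-norm and the uniformly bounded $L^{p'}$-norm then gives convergence in $L^{r'}(\BBR^n)$, which is the required control. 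Passing to the limit yields $u(t,\cdot)=\Gamma(t,0)f$ in $L^r(\BBR^n)$ for every $t>0$, and uniqueness of $f$ follows because any other weak limit would produce the same solution and hence coincide with $f$ after invoking the continuity at $0$ proved below.

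The displayed norm equivalences then follow by applying Lemma \ref{lem:wp_small} and Lemma \ref{lem:x^p implies L^p} to $\Gamma(\cdot,0)f$. The continuity $u\in\mathscr C_0([0,\infty);L^r(\BBR^n))$ is obtained exactly as in Remark \ref{rem:continuity big p}: for $g\in L^r\cap L^2$ dense in $L^r$, $\Gamma(\cdot,0)g\in\mathscr C_0([0,\infty);L^2(\BBR^n))$ by the energy theory, whereas \ref{ubc} yields a uniform $L^p(\BBR^n)$ bound on $\Gamma(t,0)g-\Gamma(s,0)g$; interpolation with exponent $\theta$ satisfying $\frac{1}{r}=\frac{\theta}{2}+\frac{1-\theta}{p}$ transfers the continuity and decay at infinity to $L^r(\BBR^n)$. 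The principal obstacle is the interpolation in Step~3: in contrast to the $p>2$ case of Theorem \ref{thm:wp p big}, H\"older's inequality is no longer available to upgrade $L^2$-convergence of the adjoint propagators to $L^{r'}$-convergence since $r'>2$; the remedy is precisely that \ref{ubc} is assumed at an exponent $p$ strictly smaller than $r$, placing $p'$ above $r'$ so that the $L^2$ endpoint can be interpolated against the $L^{p'}$ boundedness.
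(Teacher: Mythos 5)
Your proposal is correct and follows essentially the same route as the paper: the paper's proof is exactly the combination of Lemma \ref{lem:wp_small}, Lemma \ref{lem:x^p implies L^p}, the four-step scheme of Example \ref{ex:example wp} with Corollary \ref{cor:estimate_for_uniqueness}, and the interpolation of $\Gamma(t,s)^*h-\Gamma(t,0)^*h$ between the vanishing $L^2$ norm (Lemma \ref{lem:backwards_prop}) and the uniform $L^{p'}$ bound furnished by \ref{ubc}, with continuity as in Remark \ref{rem:continuity big p}. Your write-up fills in the same details the paper only sketches, and the key observation that $p<r$ places $p'$ above $r'$ so the interpolation closes is precisely the paper's point.
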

	\begin{proof} See Lemma \ref{lem:wp_small} and Example \ref{ex:example wp}, complemented by the comment above. The continuity result is proven as in Remark \ref{rem:continuity big p}.
	\end{proof}
	\begin{theorem}\label{thm:tent space wp small p}
				Let $1\leq p<r<2$. Assume the strong ellipticity bounds \eqref{eq:strong lower ellipticity bounds} and \ref{ubc}. Then for a distribution $u\in \mathscr D'(\BBR^{n+1}_+)$ it is equivalent
				\begin{enumerate}[label={\upshape(\roman*)}]
						\item $u$ is a global weak solution of \eqref{eq:the parabolic equation} and $\nabla^m u\in T^{r,2}_m.$
						\item There is unique $ f\in L^r(\BBR^n)$ and $P\in\pol$, such that $u(t,\cdot)-P=\Gamma(t,0)f$ in $L^r(\BBR^n)$ for $t>0$.
				\end{enumerate} In this case, all statements of Theorem \ref{thm:wp_small_p} are true for $u-P$ and 
				\[\|f\|_{L^r}\sim \|u-P\|_{L^\infty(L^r)}\sim\|u-P\|_{X^r_m}\sim\|\nabla^m u \|_{T^{r,2}_m}.\] 
	\end{theorem}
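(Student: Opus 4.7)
The plan is to reduce the statement to the already-established Theorem \ref{thm:wp_small_p} by first extracting the obstructing polynomial, and then to use the strong ellipticity assumption \eqref{eq:strong lower ellipticity bounds} only in the direction that converts a tent-space bound on $\nabla^m u$ into a norm bound on $u-P$.

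For the implication (i)$\Rightarrow$(ii), I would begin by invoking Lemma \ref{lem:trace integrability}: since $\|\nabla^m u\|_{T^{r,2}_m}<\infty$ and $r\in(1,2]$, the solution $u$ admits a unique distributional trace $u_0$ and there is a unique polynomial $P\in\pol$ with $u_0-P\in L^r(\BBR^n)$ and $\sup_{t\ge 0}\|u(t,\cdot)-P\|_{L^r}\lesssim\|\nabla^m u\|_{T^{r,2}_m}$. Setting $f:=u_0-P$, the conservation property from Proposition \ref{pro:conservation_property_polynome} ensures that $P$ itself is a global weak solution, hence so is $v:=u-P$, and $v\in L^\infty(0,\infty;L^r(\BBR^n))$ with distributional trace $f\in L^r(\BBR^n)$. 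Now \ref{ubc} is in force, and the exponent $r$ lies in the range $(p,2)$, so Theorem \ref{thm:wp_small_p} applies to $v$ and identifies it as $v(t,\cdot)=\Gamma(t,0)f$ in $L^r(\BBR^n)$.

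For (ii)$\Rightarrow$(i), given $f\in L^r(\BBR^n)$ and $P\in\pol$, Lemma \ref{lem:wp_small} yields that $\Gamma(\cdot,0)f$ is a global weak solution in $X^r_m\cap L^\infty(L^r)$; the conservation property again makes $u=\Gamma(\cdot,0)f+P$ a global weak solution. For the tent space bound, note $\nabla^m u=\nabla^m \Gamma(\cdot,0)f$, and apply Proposition \ref{prop:comp_tent_kenig_smallp} (this is the single step needing the strong ellipticity \eqref{eq:strong lower ellipticity bounds}) to get $\|\nabla^m u\|_{T^{r,2}_m}\lesssim \|\Gamma(\cdot,0)f\|_{X^r_m}\lesssim \|f\|_{L^r}$, the last bound being Lemma \ref{lem:wp_small}.

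Uniqueness of the pair $(f,P)$ is the short remaining point: if $\Gamma(t,0)f_1+P_1=\Gamma(t,0)f_2+P_2$ in $L^r(\BBR^n)$ for every $t>0$, then $\Gamma(t,0)(f_1-f_2)=P_2-P_1\in\pol$, but the left-hand side lies in $L^r(\BBR^n)$ by \ref{ubc}, forcing $P_1=P_2$ and $f_1=f_2$. Finally, the four-way norm equivalence is a direct assembly: $\|f\|_{L^r}\sim\|u-P\|_{L^\infty(L^r)}\sim\|u-P\|_{X^r_m}$ from Theorem \ref{thm:wp_small_p}, the estimate $\|\nabla^m u\|_{T^{r,2}_m}\lesssim\|u-P\|_{X^r_m}$ from Proposition \ref{prop:comp_tent_kenig_smallp}, and the reverse $\|u-P\|_{L^\infty(L^r)}\lesssim\|\nabla^m u\|_{T^{r,2}_m}$ from Lemma \ref{lem:trace integrability}. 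The main conceptual obstacle is really conceptual bookkeeping: making sure that the polynomial extracted from the trace in Lemma \ref{lem:trace integrability} is the \emph{same} polynomial one has to subtract to bring $u$ into the well-posedness class of Theorem \ref{thm:wp_small_p}; this is guaranteed by the uniqueness clauses in both results and by the fact that the only $L^r$-integrable polynomial is $0$.
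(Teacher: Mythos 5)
Your proposal is correct and follows essentially the same route as the paper's (very terse) proof: Lemma \ref{lem:trace integrability} to place $u-P$ in the $L^\infty(L^r)$ well-posedness class of Theorem \ref{thm:wp_small_p} for the direction (i)$\Rightarrow$(ii), and Proposition \ref{prop:comp_tent_kenig_smallp} (the only place strong ellipticity is used) together with Lemma \ref{lem:wp_small} for the converse and the norm equivalences. The only cosmetic remark is that you do not need the conservation property of Proposition \ref{pro:conservation_property_polynome} to see that $u-P$ is a weak solution, since any $P\in\pol$ is trivially a solution ($\partial_t P=0$, $\nabla^m P=0$); otherwise your bookkeeping of the polynomial and the uniqueness of the pair $(f,P)$ matches the intended argument.
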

	\begin{proof}
		By Lemma \ref{lem:trace integrability} (i) implies that statements of Theorem \ref{thm:wp p big} are true for the solution modified by a polynomial, hence (ii) follows. Proposition \ref{prop:comp_tent_kenig_smallp} leads to the converse implication.
	\end{proof}
\section{Uniform boundedness of the propagators and H\"older continuity of solutions}
	\setcounter{theorem}{0} \setcounter{equation}{0}
	\label{sec:bounds on propagators}
The uniform boundedness of propagators plays a peculiar role in the well-posedness theory from Section \ref{sec:uniqueness results}. In the remaining part of this work we study for which operators $L$ and exponents $p\in[1,\infty]$ the \ref{ubc} assumption is true. We begin with a few examples in the spirit of \cite{AMP15}.
	\subsection{Examples}
	\label{sec:bound examples}
 
	\subsubsection{Kernel bounds.} 
	\label{sec:kernel bounds} A sufficient condition for \ref{ubc} to hold is that the propagators satisfy kernel bounds. By this we mean that their Schwartz kernels $k(t,s,\cdot,\cdot)$ are measurable functions on $\BBR^n\times \BBR^n$ with
	\begin{equation}\label{eq:kernelbd}
	|k(t,s,x,y)|\leq c_1(t-s)^{-\frac{n}{2m}}\exp\left(-c_2\left(\frac{|x-y|^{2m}}{(t-s)}\right)^{\frac{1}{2m-1}}\right),\end{equation} 
	for some positive constants $c_1, \,c_2>0$, all $0\leq s<t<\infty$ and almost all $ x,\; y \in \BBR^n$.
	For any $f\in L^2(\BBR^n)$ and  $0\leq s<t<\infty$ we then have the integral representation 
	\[\Gamma(t,s)f=\int k(t,s,x,y)f(y)dy,\]
	which can be extended to hold for all $f\in L^p(\BBR^n)$, $p\in [1,\infty]$. Here are some examples of operators, for which \eqref{eq:kernelbd} holds
	\begin{enumerate}[label=\upshape(\roman{*})]
		\item $L=(-1)^m \Delta^m$ for any $m,\,n\in \BBN_+$ and more general for any $L$ with constant coefficients, see \cite[Proposition 45]{AQ00}.
		\item $L$ an autonomous operator and $n\leq 2m$, see \cite{D95, AT00}.
		\item $L$ an autonomous operator coefficients small in $BMO(\BBR^n)$ norm, \cite[Proposition 47]{AQ00}.
		\item In the autonomous case, condition \eqref{eq:kernelbd} is stable under small $L^\infty(\BBR^n)$-perturbations of the coefficients, see \cite[Proposition 43]{AQ00}.
		\item $m=N=1$ and the (not necessarily autonomous) coefficients of $L$ are real, see \cite{A67}. 
	\end{enumerate}
	For $N\ge 1 $ in the autonomous case see the references in the introduction of \cite{AQ00}. In particular, in all above listed cases all of the derived well-posedness results hold and, for any $p\in (1,\infty)$, the convergence to the trace holds in $L^p(\BBR^n)$.
	\subsubsection{Coefficients in $BV(L^\infty)$ for $p<2$.}
	\label{sec:coeff BV}
	In Sections \ref{sec:coeff BV} and \ref{sec:coeff small norm} we formulate the higher order counterparts of the results of \cite[\S 6]{AMP15}. The methods are identical.  As in loc.\ cit.\ we address here the case $p<2$ only.
		\begin{definition}
			\label{def:class M} We say that $\underline A\in L^\infty(\BBR^{n};\BBC^{NM\times NM})$ belongs to the class $\mathcal M(\Lambda,\lambda, q,M)$ for some $q\in[1,2)$ and $M\colon [2,q')\to (0,\infty)$, if it satisfies the ellipticity estimates with constants $\lambda,\, \Lambda>0$ and for all $s\in[2,q')$ we have 
			\begin{equation}\label{eq:assumption 2} \sup_{t>0}\|\sqrt{t} \nabla^me^{-tL_0^*}\|_{\mathscr L (L^s)}\leq M(s)<\infty,\end{equation} where $L_0$ denotes the autonomous operator arising from $\underline A$.
			\end{definition}
			
			Condition of the form of \eqref{eq:assumption 2} was first introduced in Section \ref{sec:aut} and is substantial in the proof of the boundedness of $\Mmaxreg$ in Proposition \ref{prop:ext_M}. Every operator $L_0$ will satisfy \eqref{eq:assumption 2} for some $q$ and $M$, which can be chosen to depend on the ellipticity constants and dimensions only, see the off-diagonal estimates of Section \ref{sec:aut} and the connection between the special exponents for $L_0$ and $L_0^*$, which we discuss in the proof of Proposition \ref{prop:ext_M}.
			\begin{definition}
				\label{def:BV} We say that $A\colon (0,\infty) \to L^\infty(\BBR^{n};\BBC^{NM\times NM})$ has bounded variation in time, denoted $A\in BV (L^\infty)$, if 
				\[\|A\|_{BV(L^\infty)}\coloneqq\sup\left \{\sum_{k=0}^\infty\|A(t_{k+1},\cdot)-A(t_k,\cdot)\|_{L^\infty}\mid (t_k)_{k\in \BBN}\subseteq [0,\infty) \mbox{ non-decreasing} \right\}<\infty.\] 
				\end{definition}
					\begin{proposition}
						\label{prop:bound BV}
						Suppose $A\in BV (L^\infty)$ admits ellipticity constants $\lambda, \, \Lambda >0$. Fix some $q\in [1,2)$ and $M\colon [2,q')\to (0,\infty)$ be such that for all bounded intervals $I\subseteq (0,\infty)$ it holds
						$$A_I(\cdot)\coloneqq\fint_I A(s,\cdot)ds \in \mathcal M (\Lambda,\lambda, q, M).$$ Then for  $p\in(\max\{1;p_c\},2)$, where $p_c=\max\{\frac{nq}{n+mq};\frac{2n}{n+mq'}\}$ as in Proposition \ref{prop:ext_M}, \ref{ubc} holds and the bound depends on the ellipticity, dimensions, the function $M$, $p, q$ and $\|A\|_{BV(L^\infty)}$.
						\end{proposition}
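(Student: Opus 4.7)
The plan is to mimic the second-order argument of \cite[\S6]{AMP15}, now using our higher-order Duhamel representation from Corollary \ref{cor:duhamel prop} and the $L^p$-boundedness of the maximal regularity operator $\Mmaxreg$ from Proposition \ref{prop:ext_M}. First I would reduce to the piecewise constant in time case: given $A\in BV(L^\infty)$, one approximates $A$ by matrices $A_k$ that are constant on intervals of a dyadic partition of $(0,\infty)$, with $\|A-A_k\|_{L^1(L^\infty)}$ (over any fixed time interval) tending to zero and with the same ellipticity bounds. Since energy solutions depend continuously on the coefficients, the associated propagators $\Gamma_k(t,s)$ converge to $\Gamma(t,s)$ in $\mathscr L(L^2)$, so it suffices to prove uniform $\mathscr L(L^p)$ estimates for $\Gamma_k$ that depend only on the class $\mathcal M(\Lambda,\lambda,q,M)$ of the pieces, on the ellipticity and dimensions, on $p$, and on $\|A\|_{BV(L^\infty)}$.

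Next, on a single interval $I=[s,t]$ on which the approximant equals some constant (in time) matrix $\underline A$, the associated operator $L_0$ generates a semigroup and Corollary \ref{cor:duhamel prop} together with the hypothesis $\underline A\in \mathcal M(\Lambda,\lambda,q,M)$ gives, for $h\in L^p(\BBR^n)\cap L^2(\BBR^n)$,
\begin{equation*}
\Gamma(t,s)h = e^{-(t-s)L_0}h + \int_s^t e^{-(t-r)L_0}\,\mathrm{div}_m\bigl((A(r,\cdot)-\underline A)\nabla^m\Gamma(r,s)h\bigr)\,dr.
\end{equation*}
The semigroup term is bounded on $L^p$ by the $\mathcal M$-class assumption (via duality for the $L^p$-boundedness of $e^{-tL_0}$ in the stated range of $p$). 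For the error term, I would write $\nabla^m\Gamma(r,s)h = L_0^{1/2}L_0^{-1/2}\nabla^m\Gamma(r,s)h$ after solving Kato and recognize the whole integral as an action of the maximal regularity map $\Mmaxreg$ from Proposition \ref{prop:ext_M} applied to $(A-\underline A)\nabla^m\Gamma(\cdot,s)h$, which is controlled in $L^p$ by $\|A(\cdot)-\underline A\|_{L^\infty}$ times $\|\nabla^m\Gamma(\cdot,s)h\|_{L^p(L^p)}$ on $I$. Choosing $\underline A$ to be the average of $A$ on a short subinterval makes $\|A(r)-\underline A\|_{L^\infty}$ small in the $L^1(I;L^\infty)$ sense, controlled by the local total variation of $A$.

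With this localized inequality in hand, the plan is to iterate: partition $[s,t]$ into finitely many subintervals on each of which the variation of $A$ is smaller than a fixed small constant $\varepsilon$ (this is possible since $\|A\|_{BV(L^\infty)}<\infty$ and the number of pieces is bounded by $\|A\|_{BV(L^\infty)}/\varepsilon+1$), and on each piece use the Duhamel identity with $\underline A$ its average. For $\varepsilon$ chosen small (depending on the constant in Proposition \ref{prop:ext_M}), the error term can be absorbed into the left-hand side, giving $\|\Gamma(t,s)\|_{\mathscr L(L^p)}\le C$ on that subinterval. Composing across the subintervals yields a bound depending only on the number of pieces, hence on $\|A\|_{BV(L^\infty)}$, finishing the proof after passing to the limit in the approximation.

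The main obstacle is the absorption step: one must verify that the estimates furnished by Proposition \ref{prop:ext_M} are compatible with the higher-order structure and with the constraint $p>p_c$, and in particular that the $L^p$-boundedness of $\Mmaxreg$ survives the passage through $L_0^{-1/2}\nabla^m=(\nabla^m L_0^{-1/2})^*$-type factors coming from Kato's square root identity \eqref{eq:Kato}. Once this is set up, the Gronwall/iteration mechanism is robust, but making the small-$\varepsilon$ partition argument quantitative (so that the final constant depends only on the quantities listed) is the technical heart of the proof, exactly as in \cite[\S6]{AMP15}.
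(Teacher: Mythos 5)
Your overall skeleton (approximation by piecewise constant in time coefficients, Duhamel via Corollary \ref{cor:duhamel prop}, boundedness of $\Mmaxreg$ from Proposition \ref{prop:ext_M}, and an iteration over time intervals whose number is controlled by $\|A\|_{BV(L^\infty)}$) is the same route the paper takes, following \cite[\S 6]{AMP15}. But there is a genuine gap in how you close the estimate on a single interval. The error term in the Duhamel formula is controlled by $\Mmaxreg$ acting on $(A-\underline A)\nabla^m\Gamma(\cdot,s)h$, and Proposition \ref{prop:ext_M} gives boundedness of $\Mmaxreg$ on the tent space $\tentspace$, not from $L^p(L^p)$ to $L^p$ as you write. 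Consequently the quantity you must bound and absorb is $\|\nabla^m\Gamma(\cdot,s)h\|_{\tentspace}$, and the passage back to an $L^p$ bound on $\Gamma(t,s)h$ goes through $\mathcal R_{L_0}\colon \tentspace\to X^p_m$ (Proposition \ref{prop:ext_R}) and Lemma \ref{lem:x^p implies L^p}. To even start this scheme you need the free-evolution term controlled in the tent-space norm, i.e. the estimate $\|\nabla^m e^{-tL_0}f\|_{\tentspace}\lesssim\|f\|_{L^p}$ for $p$ in the stated range; this is exactly the ``crucial tent space estimate'' of the paper's proof, obtained from \cite[Corollary 3.5 (ii)]{CMY16} together with the vertical square function estimates for higher order operators, and it is not implied by mere uniform $L^p$ boundedness of the semigroup, which is all your proposal invokes. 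Your aside about inserting $L_0^{1/2}L_0^{-1/2}$ via Kato is not needed to recognize the error term as $\Mmaxreg$ applied to $(A-\underline A)\nabla^m\Gamma h$; Kato only enters inside the proof of Proposition \ref{prop:ext_M} itself.

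Two further points need attention. First, the absorption step requires the a priori (qualitative) finiteness of $\|\nabla^m\Gamma(\cdot,s)h\|_{\tentspace}$ for $p<2$, which is not known for the original coefficients; this is precisely why one first reduces to piecewise constant coefficients, where the propagator is an explicit finite composition of semigroups and the finiteness can be established by induction over the constancy intervals (the ``explicit form of energy solutions'' the paper alludes to). Your proposal performs the reduction but never uses this structure where it is actually needed. Second, the approximation step only yields strong convergence of the propagators on $L^2(\BBR^n)$ (enough to pass uniform $L^p$ bounds to the limit by testing against nice functions), not convergence in $\mathscr L(L^2)$ as you assert. Your small-oscillation partition with absorption, rather than the multiplicative $(1+C\delta_k)$ accumulation over jumps used in \cite[Theorem 6.9]{AMP15}, is a legitimate variant and still gives a constant depending only on the admissible quantities, but it only becomes a proof once the tent-space estimate for the semigroup term and the correct functional setting for $\Mmaxreg$ and $\mathcal R_{L_0}$ are put in place.
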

						\begin{proof} 
						After approximation of $A$ with piecewise constant in time matrices, the proof exploits the explicit form of energy solutions in this case. Crucial tent space estimates are derived with help of Proposition \ref{prop:ext_M}. The detailed reasoning is identical as in \cite[Theorem 6.9, Lemma 6.7]{AMP15}.
							We only give references needed to adjust the proof to the higher order setting. Consider $w=e^{-tL_0}f$ for $L_0$ autonomous and $f\in L^p(\BBR^n)\cap L^2(\BBR^n)$. Then estimate 
							\[\|\nabla^mw\|_{\tentspace}\lesssim \|f\|_{L^p}\] is true by \cite[Corollary 3.5 (ii)]{CMY16} for $p\in (q_-(L_0),q_+(L_0))$. This condition is fulfilled by the assumption on $q$, cf.\ the discussion in the proof of Proposition \ref{prop:ext_M}.
							 In this range of exponents, the vertical square function estimates remain true for higher order operators, see \cite[p. 96]{A07}.\qedhere
							\end{proof}
	\subsubsection{Small perturbations for $p<2$.}
	\label{sec:coeff small norm}
	\noindent 
	Let us recall from Corollary \ref{cor:duhamel prop} that for the considered operator $L$ and autonomous $L_0=(-1)^m \mbox{div}_m \underline A(x)\nabla^m$ it holds
	\begin{equation} \label{eq:repr1}\Gamma(t,0)h=e^{-tL_0}h
		+\int_0^t e^{-(t-s)L_0}\text{div}_m (A(s,\cdot)-\underline A)\nabla^m \Gamma(s,0)h\; ds\end{equation}
	on $L^2(\BBR^n)$. If $\|A-\underline A\|_{L^\infty(\BBR^{n+1}_+)}$ is small, then by the boundedness of the maximal integral operators from Appendix \ref{sec:integral ops}, we are able to see that the uniform $L^p(\BBR^n)$ boundedness of the family $(e^{-tL_0})_{t>0}$ is inherited by $(\Gamma(t,0))_{t>0}$.
	
	\begin{proposition} \label{pro:wp_small_p_abstandklein}
		Let $A\in L^\infty(\BBR^{n+1}_+; \BBC^{NM\times NM})$ admit ellipticity constants $\lambda,\,\Lambda>0$. Suppose there exists $\underline A\in \mathcal M (\Lambda,\lambda, q, M)$ with $q\in [1,2)$ and $M\colon [2,q')\to (0,\infty)$, such that for some $p\in(\max\{1;\frac{2n}{n+mq'}\},2)$
		\[\varepsilon \coloneqq \|A-\underline A\|_{L^\infty}<\frac{1}{\|\tilde {\mathcal M}_{L_0}\|_{\mathscr L ({\tentspace})}},\] then \ref{ubc} holds.
	\end{proposition}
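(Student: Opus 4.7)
The plan is to mimic the strategy of \cite[Theorem 6.11]{AMP15} via a Neumann series argument on $T^{p,2}_m$, using the Duhamel representation from Corollary \ref{cor:duhamel prop} with the autonomous reference operator $L_0=(-1)^m\mathrm{div}_m\underline A\nabla^m$. First I would reduce to showing the estimate $\|\Gamma(t,0)h\|_{L^p}\lesssim\|h\|_{L^p}$ uniformly in $t\ge 0$ for $h\in\testfunctions$: the extension to general $s\ge 0$ follows from the time-translation trick exploited in Proposition \ref{prop:off2}, and density plus \eqref{eq:continuity of prop} together with Lemma \ref{lem:backwards_prop} propagates bounds from a dense class to all of $L^p$.

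For such $h$ we apply $\nabla^m$ to the identity of Corollary \ref{cor:duhamel prop} and use that $\nabla^m\mathcal R_{L_0}$ extends to the bounded operator $\Mmaxreg$ on $T^{p,2}_m$ (cf.\ Proposition \ref{prop:ext_M}, whose range of admissible exponents is precisely the hypothesis on $p$) to obtain, in $T^{2,2}_m$,
\begin{equation*}
\nabla^m\Gamma(\cdot,0)h=\nabla^m e^{-(\cdot)L_0}h+\Mmaxreg\bigl((A-\underline A)\nabla^m\Gamma(\cdot,0)h\bigr).
\end{equation*}
The key step is then to take the $T^{p,2}_m$ norm and absorb the self-referential term. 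Since $\|A-\underline A\|_{L^\infty}=\varepsilon$ and $\varepsilon\|\Mmaxreg\|_{\mathscr L(T^{p,2}_m)}<1$, this yields
\begin{equation*}
\|\nabla^m\Gamma(\cdot,0)h\|_{T^{p,2}_m}\leq\frac{1}{1-\varepsilon\|\Mmaxreg\|_{\mathscr L(T^{p,2}_m)}}\,\|\nabla^m e^{-(\cdot)L_0}h\|_{T^{p,2}_m},
\end{equation*}
\emph{provided} the left-hand side is a priori finite. The right-hand side is controlled by $\|h\|_{L^p}$ via the conical square function estimate for the autonomous semigroup associated to $\underline A\in\mathcal M(\Lambda,\lambda,q,M)$: on the assumed range of $p$ one has $p\in(p_-(L_0),q_+(L_0))$ (compare with the discussion of Section \ref{sec:aut} and the proof of Proposition \ref{prop:ext_M}), so the square function bound of \cite[Corollary 3.5]{CMY16} applies just as in the second-order setting. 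Combining this with Proposition \ref{prop:tent_kenig_comp_smallp}, which is available because $h\in L^2(\BBR^n)$, gives $\|\Gamma(\cdot,0)h\|_{X^p_m}\lesssim\|h\|_{L^p}$, and then Lemma \ref{lem:x^p implies L^p} produces the desired uniform $L^p$ bound.

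The main obstacle, as in \cite{AMP15}, is the a priori qualitative finiteness of $\|\nabla^m\Gamma(\cdot,0)h\|_{T^{p,2}_m}$ needed to run the absorption argument; without it the Neumann manipulation is formal. I would handle this by a standard truncation: replace the time interval $(0,\infty)$ by $(0,T)$ and the coefficient $A-\underline A$ by its product with $\mathbbm 1_{(0,T)}$, observe that for $h\in L^2\cap L^p$ the truncated solution satisfies $\nabla^m\Gamma(\cdot,0)h\in L^2(L^2)=T^{2,2}_m$ hence lies in $T^{p,2}_m$ on each compact time window by reverse-H\"older and Gagliardo--Nirenberg (Corollary \ref{cor:rev_hoelder}, Lemma \ref{lem:Gagliardo--Nirenberg inequality}), perform the absorption with constants independent of $T$, and finally let $T\to\infty$ using monotone convergence on the tent space functional from Definition \ref{def:tent}. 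The remaining verifications (measurability, density, closing the estimate under the weak convergence $L^p\cap L^2\to L^p$) are routine and mirror \cite[Proposition 6.10, Theorem 6.11]{AMP15}.
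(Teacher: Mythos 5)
Your overall route is the same as the paper's, which simply adapts \cite[Theorem 6.14]{AMP15}: the Duhamel representation of Corollary \ref{cor:duhamel prop}, the boundedness of $\Mmaxreg$ on $\tentspace$ from Proposition \ref{prop:ext_M} (your hypothesis on $p$ does guarantee $p>p_c$, since $\max\{1;\frac{2n}{n+mq'}\}\ge \frac{nq}{n+mq}$), absorption of the self-referential term using $\varepsilon\|\Mmaxreg\|_{\mathscr L(\tentspace)}<1$, the conical square function bound for the autonomous semigroup as in the proof of Proposition \ref{prop:bound BV}, and then Proposition \ref{prop:tent_kenig_comp_smallp} together with Lemma \ref{lem:x^p implies L^p} to pass from the tent space bound to the uniform $L^p$ bound; the reduction to $s=0$ by time translation is harmless because the translated matrix has the same ellipticity constants and the same $L^\infty$ distance to $\underline A$, so all constants are uniform in $s$.

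The one step I would not accept as written is your justification of the a priori finiteness needed for the absorption. From $\nabla^m\Gamma(\cdot,0)h\in L^2(L^2)=T^{2,2}_m$ you cannot conclude membership in $T^{p,2}_m$ on a compact time window: for $p<2$ the square function must in addition decay in the spatial variable, and neither the reversed H\"older estimates nor Gagliardo--Nirenberg produce such decay. The fix is standard and consistent with your reduction to $h\in\testfunctions$: for compactly supported data the $L^2$ off-diagonal bounds of Proposition \ref{prop:off2}, combined with the Caccioppoli estimates of Proposition \ref{pro:local_energy} (or the annular decomposition used in the proof of Lemma \ref{lem:wp_small}), show that the time-truncated square function of $\nabla^m\Gamma(\cdot,0)h$ decays rapidly in $x$ for bounded times, hence lies in $L^p(\BBR^n)$ for every $p>0$. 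With that in hand, your truncation scheme works: since $\Mmaxreg$ is causal, the Duhamel identity is compatible with truncating the tent norm in time, the absorption yields bounds with constants independent of $T$, and monotone convergence as $T\to\infty$ completes the argument.
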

	\begin{proof} This is an easy adaptation of the proof of \cite[Theorem 6.14]{AMP15}. 
	\end{proof}
	\begin{remark}
	 Similarly to Proposition \ref{pro:wp_small_p_abstandklein} we can treat the case $A\in \mathscr C([0,T]; L^\infty(\BBR^n))$ and $A(s,\cdot) \in\mathcal M (\Lambda,\lambda, q, M)$ for all $s\in [0,T]$, to derive the uniform boundedness of propagators up to time $T$. 
	\end{remark}
Indeed, we partition the interval $[0,T]$, according to the uniform continuity of $A$, and let $A(t_k,\cdot)$ play the role of $\underline A$ in Proposition \ref{pro:wp_small_p_abstandklein} on the corresponding interval $[t_k,t_{k+1})$. The details can be found in \cite[Theorem 6.15]{AMP15}.
	
	\subsection{Uniform boundedness of the propagators for $|p-2|$ small}
	\label{sec:uniform bounds}
	For a homogeneous higher order elliptic operator $L=(-1)^m \text{div}_m A(t,x)\nabla^m$, introduced in Section \ref{sec:ellipticity}, let $I_A$ be the maximal interval of exponents $p\in[1,\infty]$, such that \ref{ubc} holds, that is $\{\Gamma(t,s)| \; 0\leq s\leq t<\infty\}$ is uniformly bounded on $L^p(\BBR^n)$.
Since $2\in I_A$, $I_A\neq \emptyset$. In this section we will prove that also the interior of $I_A$ is non-empty. With methods based on the ideas from \cite{ABES18}, we show the following.
	\begin{theorem} \label{thm:uniform_bdd_p_close_to_2}
		There exists $\varepsilon >0$ depending only on ellipticity and dimensions, such that for all $p\in [1,\infty]$ with 
		$p\in(2-\varepsilon, 2+\varepsilon)$ the family  $\{\Gamma(t,s)| \; 0\leq s\leq t<\infty\}$ is uniformly bounded on $L^p(\BBR^n)$.
	\end{theorem}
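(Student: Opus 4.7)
The plan is to first reduce the uniform $L^p$ boundedness of $\{\Gamma(t,s)\}$ to a pointwise regularity statement about global weak solutions, then establish that pointwise regularity for exponents $p$ slightly larger than $2$ by a self-improvement argument, and finally obtain the range below $2$ by duality through the backwards propagators of Lemma~\ref{lem:backwards_prop}. The overall strategy follows \cite{ABES18}, but the intermediate energy/tent space machinery built in Sections~\ref{sec:energy estimates}--\ref{sec:energy solutions} (in particular Proposition~\ref{pro:local_energy} and Corollary~\ref{cor:rev_hoelder}) supplies the inputs needed for the higher-order case.

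First, I would record the reduction principle: if one can show that, for some $p>2$, every global weak solution $u$ admits a pointwise bound of the form
\[
\Bigl(\fint_{t/2}^{t}\fint_{B(x,\sqrt[2m]{t})}|u(s,y)|^{p}\,dy\,ds\Bigr)^{1/p}\lesssim \Bigl(\fint_{0}^{Ct}\fint_{B(x,C\sqrt[2m]{t})}|u(s,y)|^{2}\,dy\,ds\Bigr)^{1/2}
\]
on every parabolic ball with $C$ and the implicit constant depending only on ellipticity and dimensions, then applying this to $u_f(t,\cdot)=\Gamma(t,0)f$ with $f\in L^2\cap L^p$ and invoking the $L^2\to L^2$ boundedness of each $\Gamma(t,0)$ together with the abstract criterion of Blunck--Kunstmann \cite{BK05} yields the uniform $L^p$ bound $\|\Gamma(t,0)\|_{\mathscr L(L^p)}\le C$; the case of general $\Gamma(t,s)$ follows by time-translation of the coefficients as in the proof of Proposition~\ref{prop:off2}. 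The point of this reduction is that it converts a question about operators into one about the intrinsic regularity of solutions, which is amenable to energy methods.

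The main step is therefore to prove the higher-integrability statement of Theorem~\ref{thm:i5}, namely that any local weak solution $u\in L^{2}_{loc}(0,\infty;H^{m}_{loc})$ lies in $L^{p}_{loc}(0,\infty;W^{m,p}_{loc})$ for some $p>2$ independent of $u$, and moreover is H\"older continuous in time with values in $L^{p}_{loc}$. Following \cite{ABES18} I would pass to the global picture: multiply $u$ by a smooth space-time cutoff $\chi$ supported in a cylinder where $u$ is a local weak solution and consider $v=\chi u\in L^{2}(\BBR;H^{m}(\BBR^n))$ (extended by zero), which solves an inhomogeneous equation on $\BBR^{n+1}$. On the global space one can define the half-time derivative $D_t^{1/2}v$ via the spatial Fourier transform and show, using the hidden coercivity of the bilinear form associated to $L$ together with the higher-order local energy estimates of Proposition~\ref{pro:local_energy}, that $D_t^{1/2}v\in L^{2}(\BBR^{n+1})$ with controlled norm. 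The coercive estimate places the map ``data $\mapsto$ solution'' as a bounded, invertible operator between two scales of energy spaces at the index $p=2$, and \v{S}ne\u{\i}berg's theorem (Lemma in \cite{S74}) then extends invertibility to a full neighborhood $|1/p-1/2|<\delta$, yielding the desired self-improved integrability. The Gagliardo--Nirenberg inequality (Lemma~\ref{lem:Gagliardo--Nirenberg inequality}) is then used on intermediate derivatives to translate this higher Sobolev regularity into pointwise bounds on parabolic Campanato-type seminorms, which by the Campanato embedding of Section~\ref{sec:Campanato} gives the required H\"older continuity in time with values in $L^{p}_{loc}$ and, in particular, the pointwise estimate used in the reduction above.

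The hardest part will be the construction of $v$ and the $L^{2}(\BBR^{n+1})$ control of $D_t^{1/2}v$: for $m>1$ the cutoff argument produces commutator terms of all orders $k=0,\dots,m$ whose absorption relies on a careful iteration scheme of the kind used in Proposition~\ref{pro:local_energy} (following Barton \cite{B16}), and the coercivity must be made quantitative enough so that \v{S}ne\u{\i}berg's constant $\delta$ depends only on $\lambda$, $\Lambda$, $m$, $N$ and $n$. Once this is in place, the pointwise bound feeds into the Blunck--Kunstmann criterion to yield Theorem~\ref{thm:uniform_bdd_p_close_to_2} for some $p_{+}=2+\varepsilon_{+}$. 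Finally, to handle $p\in(2-\varepsilon_{-},2)$, I apply Lemma~\ref{lem:backwards_prop}: on any finite time interval $[0,T]$ the adjoint $\Gamma(T,s)^{*}$ coincides with the propagator $\tilde\Gamma(s,T)$ of the backwards equation with coefficient matrix $A^{*}$, which satisfies the same ellipticity constants and hence the same hypotheses; the $L^{p_{+}}$ bound for the adjoint family dualizes to an $L^{p_{+}'}$ bound for $\{\Gamma(t,s)\}$, and the constants are uniform in $T$ since they depend only on ellipticity and dimensions. Choosing $\varepsilon=\min(\varepsilon_{+},2-p_{+}')$ gives the symmetric window claimed in the theorem.
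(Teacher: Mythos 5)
Your proposal follows essentially the same route as the paper: reduction via the Blunck--Kunstmann criterion \cite{BK05} and the $L^2$ off-diagonal bounds of Proposition~\ref{prop:off2} to a local $L^2\to L^p$ estimate on parabolic cylinders, then the cutoff/half-time-derivative/hidden-coercivity/\v{S}ne\u{\i}berg argument adapted from \cite{ABES18} to the higher-order setting, with duality through the backwards propagators of Lemma~\ref{lem:backwards_prop} for $p<2$. Two small slips, neither fatal: $D_t^{1/2}$ is defined via the Fourier transform in the time variable (not the spatial one), and the reduction requires the bound at the exact time $t$, i.e.\ a supremum in $s$ as in \eqref{eq:goal_uniform_L^p} (which your claimed H\"older continuity in time with values in $L^p_{loc}$ does supply), rather than the time-averaged $L^p$ bound you wrote.
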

	
	Comparing this result with item (ii) in Section \ref{sec:aut}, let us underline that we do not have a quantitative description of $\varepsilon$. 
	\subsubsection{Reduction to the bound on parabolic cylinders.}
	\label{sec:bound reduction}
	We show how the uniform boundedness of the propagators follows from local estimates.

	\textbf{Step 1.} Reduction to the case $p>2$. 
	
	We argue by duality. If $p\in(\max\{1 
		;2-\frac{\varepsilon}{1+\varepsilon}\},2)$, then its dual exponent, determined by $1=\frac{1}{p}+\frac{1}{p'}$, satisfies $p'\in (2,2+\varepsilon)$. Recall from Lemma \ref{lem:backwards_prop} that for $0\leq s<t<\infty$ the adjoint
	$\Gamma(t,s)^*$ equals $\tilde \Gamma(t-s,0)$ on $L^2(\BBR^n)$, where $\tilde \Gamma$ is the propagator associated to the matrix $\tilde A (s,x)$ given by $A^*(t-s,x)$ if $s\in(-\infty,t]$ and $A^*(0,x)$ otherwise. The new matrix satisfies the same ellipticity estimates as $A$, uniformly in $0<t<\infty$. Thus, by density, we obtain $$\sup_{0\leq s<t}\|\Gamma(t,s)\|_{\mathscr L (L^p)}=\sup_{0\leq s<t}\|\tilde \Gamma(t-s,0)\|_{\mathscr L (L^{p'})}<\infty$$, if the claim is true for $p'$. This bound holds uniformly in $t>0$.

	\textbf{Step 2.} Reduction to the case $s=0$.
	
	By uniqueness of energy solutions we know that $\Gamma(t,s)=\tilde \Gamma (t-s,0)$ for $\tilde \Gamma$ the propagator associated to the matrix $\tilde A(t,\cdot)=A(t+s,\cdot)$ for $t>0$. Thus it is enough to obtain bounds of $\{\tilde \Gamma(t,0)\mid 0<t<\infty\}.$

	\textbf{Step 3.} Reduction to $L^2-L^p$ off-diagonal estimates.
	
	In \cite[Proposition 2.1.ii]{BK05} Blunck and Kunstmann showed that an $L^2- L^p$ off-diagonal estimate for any linear operator $R$ on $L^2(\BBR^n)$ of the form
	\begin{equation}\label{eq:BK_off-diagonal}
	\big\|\mathbbm{1}_{B(x,\sqrt[2m]{t})}R\mathbbm{1}_{B(y,\sqrt[2m]{t})}\big\|_{L^2\to L^p} \lesssim t^{\frac{n}{2m}(\frac{1}{p}-\frac{1}{2})} g\left(\frac{d(x,y)}{\sqrt[2m]{t}}\right)
	\end{equation}
	is enough to deduce the bound $\|R\|_{\mathscr{L}(L^p)}\leq C \sum_{k=0}^\infty (k+1)^\lambda g(k)$ with some constants $C,\, \lambda>0$ independent of the operator $R$, the function $g$ and the parameter $t>0$. Here, we assume that $g\colon \BBR_{\ge 0} \to \BBR_{>0}$ is decreasing with $h=-\log g$ convex and $\liminf_{a\to\infty}h(a)/a>0$. We will apply this result with $g(a)\coloneqq e^{-ca^{2m/(2m-1)}}$ to $R=\Gamma(t,0)$. Note also that in \eqref{eq:BK_off-diagonal} we can replace $d(x,y)$ by $d({B(x,\sqrt[2m]{t})},{B(y,\sqrt[2m]{t})})$ on the expense of some additional constants.
	
	\textbf{Step 4:} Reduction to the $L^2- L^p$ bounds on parabolic cylinders.
	
	Recall from \ref{prop:off2} the $L^2$ off-diagonal estimates
	\begin{equation*}
	\big\|\mathbbm{1}_{B(x,\sqrt[2m]{t})}\Gamma(t,0)\mathbbm{1}_{B(y,\sqrt[2m]{t})}\big\|_{L^2\to L^2} \lesssim  g\left(\frac{d({B(x,\sqrt[2m]{t})},{B(y,\sqrt[2m]{t})})}{\sqrt[2m]{t}}\right),
	\end{equation*}
	with $g(a)\coloneqq e^{-ca^{2m/(2m-1)}}$ for some constant $c>0$. By interpolation, to obtain \eqref{eq:BK_off-diagonal}, we only need to show the localized $L^2-L^p$ bound
	\begin{equation}\label{eq:revisited_off-diagonal}
	\big\|\mathbbm{1}_{B(x,\sqrt[2m]{t})}\Gamma(t,0)\big\|_{L^2\to L^p} \lesssim t^{\frac{n}{2m}(\frac{1}{p}-\frac{1}{2})},
	\end{equation} with constants independent of $x\in \BBR^n$ and $t>0$. Since for  $f\in L^2(\BBR^n)$, $\|\Gamma(t,0)f\|_{L^2}\leq \|f\|_{L^2}$ holds uniformly in  $t>0$, inequality \eqref{eq:revisited_off-diagonal} follows from the following estimate
	\begin{equation*}
	\sup_{s\in (t/2,2 t)}\left(\fint_{B(x,\sqrt[2m]{t})}|\Gamma(s,0)f|^pdy\right)^{1/p} \lesssim \left(\fint_{(t/4,4 t)}\fint_{B(x,2\sqrt[2m]{t})}|\Gamma(s,0)f|^2dy\right)^{1/2}.
	\end{equation*} By scaling and translation, we only need to consider the case $t=1$ and $x=0$. Summarizing, if we denote $u(t,x)=\Gamma(t,0)f(x)$, then our goal is to prove the existence of some $\varepsilon >0$ depending on ellipticity and dimensions only, such that for all $2<p<2+\varepsilon$ and $f\in L^2(\BBR^n)$ the following bound is true
	\begin{equation}\label{eq:goal_uniform_L^p}
	\sup_{s\in (1/2,2)}\left(\fint_{B(0,1)}|u(s,y)|^pdy\right)^{1/p} \lesssim \left(\fint_{(1/4,4)}\fint_{B(0,2)}|u(s,y)|^2dy\right)^{1/2}.
	\end{equation} 
	\subsubsection{Outline of the method.}
	\label{sec:idea proof}
	By multiplying $u$ with a cut-off function $\chi \in\mathscr C_c ^\infty (\BBR^{n+1}_+)$ we obtain a function $v\in L^2(\BBR;L^2(\BBR^n))$, which now solves some inhomogeneous equation weakly on the whole space $\BBR^{n+1}$, where we extend $A(t,x)=A(0,x)$, if $t<0$. Local information about $u$ on $\text{supp}\, \chi$ carries over to $v$ and vice versa. It is therefore enough to prove the bound \eqref{eq:goal_uniform_L^p} for $v$. The global setting, that is $v\in L^2(\BBR^{n+1})$, allows us to extract valuable information from the time derivative. Formally, $$\partial_t v = D_t^{1/2}H_t D_t^{1/2}v,$$ where $D_t^{1/2}=\mathcal F^{-1}(|\tau|^{1/2}\mathcal F)$ is the half derivative in $t$ and $H_t=\mathcal F^{-1}(i\tau/|\tau|\mathcal F)$ is the one dimensional Hilbert transform. For the rest of the section, $\mathcal F$ denotes the $n+1$ dimensional Fourier transform.
	
	By standard interpolation, we realize that $v\in L^2(\BBR; H^m(\BBR^n))$ and $\partial_t v \in L^2(\BBR; H^{-m}(\BBR^n))$ imply $D_t^{1/2}v\in L^2(\BBR^{n+1})$.
	Using an abstract result of \v{S}ne\u{\i}berg (see \cite{S74} or \cite[Lemma 5.16]{A07}), we show higher integrability of $v$ and $D^{1/2}_tv$, that is $v,\, D_t^{1/2}v\in L^p(\BBR^{n+1})$ and use the Campanato characterization of the H\"older continuity to conclude. 
	\subsubsection{Abstract results.}
	\label{sec:abstract results}
	In this section we introduce the formal setting for the proof of \eqref{eq:goal_uniform_L^p}. We adapt the definition of energy spaces from \cite{ABES18} to the higher order case.
	
	\begin{definition}\label{def:energy_spaces}
		Let $p\in[1,\infty)$. Let $H^{1/2,p}=H^{1/2,p}(\BBR;L^p(\BBR^n))$ denote the potential space consisting of such $f\in L^p(\BBR^{n+1})$, for which $D_t^{1/2}f\in L^p(\BBR^{n+1})$. We equip this space with norm $$\|f\|_{H^{1/2,p}}\coloneqq(\|f\|_{L^p(\BBR^{n+1})}^p+\|D_t^{1/2}f\|_{L^p(\BBR^{n+1})}^p)^{1/p}.$$
		Further, we define the p-energy space $E_p\coloneqq L^p(\BBR;W^{m,p}(\BBR^m))\cap H^{1/2,p}(\BBR;L^p(\BBR^n))$ and equip it with norm $\|f\|_{E_p}\coloneqq (\|f\|_{L^p(\BBR;W^{m,p})}^p+\|D_t^{1/2}f\|_{H^{1/2,p}})^{1/p}$.
	\end{definition} 

	By the Gagliardo--Nirenberg inequality (Lemma \ref{lem:Gagliardo--Nirenberg inequality}) and ellipticity it holds for $w\in H^m(\BBR^n)$
	\begin{equation}
	\label{eq:coercivity}\text{Re}\int A(t,x)\nabla^m v(x) \overline{\nabla^m v(x)}dx\ge \lambda \|\nabla^m v\|^2_{L^2(\BBR^n)}\ge  \tilde\lambda \|v\|^2_{H^m(\BBR^n)}-\|v\|_{L^2(\BBR^n)}^2\quad\mbox{for a. e. } t\in \BBR,\end{equation}
	where $\tilde \lambda>0$ is some new constant dependent on the order $m$, dimension $n$ and the ellipticity constant $\lambda$. Estimate \eqref{eq:coercivity} together with the Lax-Milgram Lemma implies the following.
	\begin{lemma}
		\label{lem:lax_milgram}
		The operator $\mathscr L\coloneqq \partial_t+(-1)^m\mbox{div}_mA\nabla^m + 2$, initially defined on $\mathscr C^\infty_c(\BBR^{n+1})$ extends to a bounded, invertible operator $E_2\to E_2'$ through the pairing 
		\[\langle \mathscr L u ,v\rangle\coloneqq \int_{\BBR^{n+1}}A\nabla^m u \overline{\nabla^m v} +H_tD_t^{1/2}u\overline{D_t^{1/2}v} +2u\overline{v}dx dt.\] Furthermore, the norm of $\mathscr L$ and the norm of its inverse depend only on the ellipticity and the dimensions.
	\end{lemma}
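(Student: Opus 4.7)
The plan is to apply the generalized Lax--Milgram (Banach--Ne\v{c}as--Babu\v{s}ka) theorem to the sesquilinear form $a(u,v):=\langle \mathscr Lu,v\rangle$ on $E_2\times E_2$, after observing that $\mathscr C_c^\infty(\BBR^{n+1})$ is dense in $E_2$ by a standard mollification argument and that on test functions the pairing agrees with the pointwise action of $\mathscr L$ (upon rewriting $\partial_t = D_t^{1/2}H_tD_t^{1/2}$ and distributing one half derivative). Boundedness is immediate: Cauchy--Schwarz together with $\|A\|_{L^\infty}\le \Lambda$ controls the first and third summands, while for the middle summand one uses that $H_t$ is the Fourier multiplier with symbol $i\,\mathrm{sgn}(\tau)$, hence an $L^2$-isometry commuting with $D_t^{1/2}$ and with spatial derivatives; all three contributions are bounded by $C(\Lambda)\|u\|_{E_2}\|v\|_{E_2}$.

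For the first inf-sup condition, the key trick is to test against the perturbed function $v := u+\delta H_tu$ for a small $\delta>0$ to be chosen in terms of the ellipticity constants. Since $H_t$ has purely imaginary symbol, Plancherel in $t$ yields $\mathrm{Re}\!\int\! H_tD_t^{1/2}u\,\overline{D_t^{1/2}u}\,dxdt = 0$ and $\mathrm{Re}\!\int\! u\,\overline{H_tu}\,dxdt = 0$, so the estimate \eqref{eq:coercivity} combined with the zero-order term $+2u$ in $\mathscr Lu$ gives
\[
\mathrm{Re}\, a(u,u) \ge \tilde\lambda \|u\|_{L^2(\BBR;H^m)}^2 + \|u\|_{L^2}^2.
\]
On the other hand, using $H_tD_t^{1/2}=D_t^{1/2}H_t$ together with the $L^2$-unitarity of $H_t$, one computes $\mathrm{Re}\!\int\! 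H_tD_t^{1/2}u\,\overline{D_t^{1/2}H_tu}\,dxdt = \|D_t^{1/2}u\|_{L^2}^2$, while Cauchy--Schwarz and $\|\nabla^m H_tu\|_{L^2} = \|\nabla^mu\|_{L^2}$ give the off-diagonal bound $|\mathrm{Re}\!\int\!A\nabla^mu\,\overline{\nabla^mH_tu}| \le \Lambda\|\nabla^mu\|_{L^2}^2$. Adding these,
\[
\mathrm{Re}\, a(u,u+\delta H_tu) \ge (\tilde\lambda-\delta\Lambda)\|\nabla^mu\|_{L^2}^2 + \tilde\lambda\|u\|_{L^2(H^{m-1})}^2 + \|u\|_{L^2}^2 + \delta\|D_t^{1/2}u\|_{L^2}^2,
\]
which for $\delta < \tilde\lambda/(2\Lambda)$ is bounded below by $c\|u\|_{E_2}^2$ with $c$ depending only on ellipticity and dimensions. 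Since $\|u+\delta H_tu\|_{E_2} \le (1+\delta)\|u\|_{E_2}$, this gives $\sup_v|a(u,v)|/\|v\|_{E_2} \gtrsim \|u\|_{E_2}$.

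The second (dual) inf-sup condition, namely that $a(u,v)=0$ for every $u\in E_2$ forces $v=0$, is obtained by running the same computation with the test function $u := v-\delta H_tv$. Using $H_t^2=-I$ one finds $\mathrm{Re}\!\int\! H_tD_t^{1/2}H_tv\,\overline{D_t^{1/2}v}\,dxdt = -\|D_t^{1/2}v\|_{L^2}^2$, so the sign absorbs correctly and one again obtains $\mathrm{Re}\,a(v-\delta H_tv,v) \gtrsim \|v\|_{E_2}^2$, forcing $v=0$. Together, the two inf-sup bounds produce the desired isomorphism $\mathscr L\colon E_2 \to E_2'$ with norm and inverse norm controlled only by $\lambda, \Lambda, m, n, N$. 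The single non-obvious step is the choice $v=u+\delta H_tu$: the time derivative contributes nothing to $\mathrm{Re}\,a(u,u)$, so one has to spend a fraction of the G\aa rding coercivity in order to absorb the off-diagonal error $\delta\Lambda\|\nabla^mu\|_{L^2}^2$ that appears when $u$ is replaced by $H_tu$ inside the elliptic form, and this is precisely what the smallness of $\delta$ achieves.
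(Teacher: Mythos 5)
Your argument is correct and is essentially the proof the paper relies on: the cited reference (Lemma 3.2 of \cite{ABES18}) establishes exactly this hidden-coercivity inf--sup argument, testing against $(1+\delta H_t)u$ (resp.\ $(1-\delta H_t)v$) so that the skew-adjointness of $H_t$ kills the time term on the diagonal while the perturbation recovers $\delta\|D_t^{1/2}u\|_{L^2}^2$, with the G{\aa}rding/Gagliardo--Nirenberg bound \eqref{eq:coercivity} absorbing the $\delta\Lambda\|\nabla^m u\|_{L^2}^2$ error for $\delta$ small. No gaps; your constants depend only on ellipticity and dimensions, as required.
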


	\begin{proof} See \cite[Lemma 3.2]{ABES18}.
	\end{proof} 
	We borrow from \cite{ABES18} a parabolic Sobolev embedding theorem.
	\begin{lemma}
		\label{lem:embed}
		Let $1<p<n+2$ and $p^*>p$ be determined by $\frac{1}{p^*}=\frac{1}{p}-\frac{1}{n+2}$. Then $E_p \hookrightarrow L^{p^*}(\BBR^{n+1})$ with 
		\[\|u\|_{ L^{p^*}(\BBR^{n+1})}\lesssim \|\nabla u \|_{ L^{p}(\BBR^{n+1})} +\|D_t^{1/2}u\|_{ L^{p}(\BBR^{n+1})}.\]
	\end{lemma}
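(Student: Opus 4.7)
The plan is to reduce the embedding to an anisotropic (parabolic) Hardy--Littlewood--Sobolev inequality by writing $u$ as a parabolic Riesz potential of order $1$ applied to a function controlled by $\nabla u$ and $D_t^{1/2}u$. The parabolic scaling $(t,x)\mapsto(r^2 t, rx)$ makes both $D_t^{1/2}$ and $\partial_{x_j}$ parabolically homogeneous of degree $1$, and equips $\BBR^{n+1}$ with the homogeneous dimension $n+2$; this accounts for the Sobolev exponent in the statement.

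First I would introduce the parabolic symbol $\Lambda(\xi,\tau)\coloneqq |\xi|^2+|\tau|$, which is smooth away from the origin and parabolically homogeneous of degree $2$. The elementary Fourier identity
\begin{equation*}
1=\frac{|\xi|^2}{\Lambda(\xi,\tau)}+\frac{|\tau|}{\Lambda(\xi,\tau)}=\sum_{j=1}^{n}\frac{i\xi_j}{\Lambda(\xi,\tau)}\cdot(-i\xi_j)+\frac{|\tau|^{1/2}}{\Lambda(\xi,\tau)}\cdot|\tau|^{1/2}
\end{equation*}
allows me (after taking the inverse Fourier transform and working with suitable approximations) to represent
\begin{equation*}
u=\sum_{j=1}^{n}R_j(\partial_{x_j}u)+S(D_t^{1/2}u),
\end{equation*}
where $R_j$ and $S$ are the Fourier multipliers with symbols $m_{R_j}(\xi,\tau)=-i\xi_j/\Lambda(\xi,\tau)$ and $m_S(\xi,\tau)=|\tau|^{1/2}/\Lambda(\xi,\tau)$, both parabolically homogeneous of degree $-1$.

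Next I would establish the pointwise kernel estimate $|K(t,x)|\lesssim(|x|+|t|^{1/2})^{-(n+1)}$ for the convolution kernels $K$ associated to $R_j$ and $S$. This can be done by a subordination/semigroup representation: writing $\Lambda^{-1}=\int_0^\infty e^{-s\Lambda}\,ds$ and using the explicit form of the kernel of $e^{-s(|\xi|^2+|\tau|)}$ (a product of a Gaussian of width $\sqrt{s}$ in $x$ and a Cauchy kernel with parameter $s$ in $t$), and then differentiating by $\partial_{x_j}$ or multiplying by $|\tau|^{1/2}$ under the integral. Alternatively, one can use the parabolic homogeneity and smoothness of the symbol away from the origin combined with the general theory of convolution kernels with parabolically homogeneous symbols. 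The resulting bound is precisely the kernel of the Riesz potential of order $1$ on the parabolic space of homogeneous type $(\BBR^{n+1},d_{\mathrm{par}},dx\,dt)$ with $d_{\mathrm{par}}((t,x),(s,y))=\max\{|t-s|^{1/2},|x-y|\}$ and homogeneous dimension $n+2$.

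Given this kernel bound, the anisotropic Hardy--Littlewood--Sobolev inequality (which in this setting is the classical HLS inequality for the space of homogeneous type above, see e.g.\ the versions in \cite{CMS85} or general results for Riesz potentials on spaces of homogeneous type) yields
\begin{equation*}
\|R_jg\|_{L^{p^*}(\BBR^{n+1})}+\|Sg\|_{L^{p^*}(\BBR^{n+1})}\lesssim\|g\|_{L^{p}(\BBR^{n+1})}
\end{equation*}
whenever $1<p<n+2$ and $\frac{1}{p^*}=\frac{1}{p}-\frac{1}{n+2}$. Applying this to $g=\partial_{x_j}u$ and $g=D_t^{1/2}u$ and summing yields the claimed embedding. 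Finally, the representation formula has to be justified for $u\in E_p$ (and not just for Schwartz functions): this is standard density/approximation, for instance by mollifying in both time and space and truncating, using that $\|\nabla u\|_{L^p}+\|D_t^{1/2}u\|_{L^p}$ passes to the limit.

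The main technical obstacle is the kernel estimate: the symbol $|\tau|^{1/2}/\Lambda$ is not smooth at $\tau=0$, so one cannot invoke a classical Mikhlin multiplier theorem directly. The parabolic semigroup/subordination argument sketched above bypasses this by generating the kernel as an integral of genuinely smooth pieces, at which point the parabolic pointwise decay becomes a direct computation.
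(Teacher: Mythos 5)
Your route is genuinely different from the paper's, which simply embeds $E_p$ into the second order energy space $L^p(\BBR;W^{1,p}(\BBR^n))\cap H^{1/2,p}(\BBR;L^p(\BBR^n))$ and quotes \cite[Lemma 3.4]{ABES18}. Unfortunately there is a genuine gap in your key step: the pointwise bound $|K(t,x)|\lesssim (|x|+|t|^{1/2})^{-(n+1)}$ is false in general for the multipliers $\xi_j/\Lambda$ and $|\tau|^{1/2}/\Lambda$. The point is that these symbols, while parabolically homogeneous, are not smooth away from the origin: they are singular on the whole hyperplane $\{\tau=0\}$, and this produces an additional singularity of the kernels along the time axis $\{x=0\}$, not only at the space-time origin. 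Your own subordination formula shows this. Writing $\Lambda^{-1}=\int_0^\infty e^{-s|\xi|^2}e^{-s|\tau|}ds$ and using $\mathcal F^{-1}_\tau\bigl(|\tau|^{1/2}e^{-s|\tau|}\bigr)(t)=c\,\mathrm{Re}\,(s-it)^{-3/2}$, the kernel of $S$ is $K_S(t,x)=c\int_0^\infty s^{-n/2}e^{-|x|^2/(4s)}\,\mathrm{Re}\,(s-it)^{-3/2}\,ds$. For fixed $t\neq 0$ and $s\ll |t|$ the factor $\mathrm{Re}\,(s-it)^{-3/2}$ is comparable to $|t|^{-3/2}$ with a fixed sign, so there is no cancellation and $|K_S(t,x)|\sim |t|^{-3/2}|x|^{2-n}$ as $x\to 0$ when $n\ge 3$ (a logarithm when $n=2$), which is much larger than the claimed $|t|^{-(n+1)/2}$. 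Similarly $|K_{R_j}(t,x)|\sim |t|^{-2}|x|^{3-n}$ near the time axis, so the claimed bound fails for $R_j$ as soon as $n\ge 4$. For the same reason one cannot appeal to a general theory of kernels of smooth parabolically homogeneous symbols; smoothness off the origin is exactly what is missing, and this defect is intrinsic (it persists if one replaces $|\xi|^2+|\tau|$ by the heat symbol $|\xi|^2+i\tau$, because $D_t^{1/2}$ composed with the heat potential has a heavy tail in negative time).

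The strategy is nonetheless repairable. The correct estimates are of mixed type, for instance $|K_S(t,x)|\lesssim |x|^{2-n}\,(|x|+|t|^{1/2})^{-3}$ and $|K_{R_j}(t,x)|\lesssim |x|^{3-n}\,(|x|+|t|^{1/2})^{-4}$, which reproduce the parabolic Riesz decay $(|x|+|t|^{1/2})^{-(n+1)}$ away from the time axis but carry the extra spatial singularity near it. A direct computation of distribution functions shows that such kernels still belong to the weak Lebesgue space $L^{\frac{n+2}{n+1},\infty}(\BBR^{n+1})$, which is precisely what O'Neil's generalized Young inequality requires to yield the bound $\|K\ast g\|_{L^{p^*}}\lesssim \|g\|_{L^p}$ for $1<p<n+2$ and $\tfrac1{p^*}=\tfrac1p-\tfrac1{n+2}$. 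So you must either replace the pointwise Riesz-kernel claim by these mixed bounds together with the weak-type membership, or bypass the computation altogether as the paper does by reducing to \cite[Lemma 3.4]{ABES18}. The density argument needed to justify the representation formula for general $u\in E_p$ is, as you say, routine.
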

	\begin{proof}
		Note that $E_p\hookrightarrow \tilde E_p\coloneqq L^p(\BBR;W^{1,p}(\BBR^n))\cap H^{1/2,p}(\BBR;L^p(\BBR^n))$, which is the energy space used in \cite{ABES18}. The result follows by \cite[Lemma 3.4]{ABES18}.
	\end{proof}
	\begin{corollary}
		\label{cor:embed}
		Let $1<p<n+2$ and $p^*>p$ be determined by $\frac{1}{p^*}=\frac{1}{p}-\frac{1}{n+2}$. Suppose $v\in E_p$. Then for all integer $0\leq k\leq m$ and $p_k\ge p$ given by $\frac{1}{p_k}=\frac{m-k}{m}\frac{1}{p^*}+\frac{k}{m}\frac{1}{p}$, it holds
		$\nabla^k v \in L^{p_k}(\BBR^{n+1})$ with
		\[\|\nabla^k v\|_{L^{p_k}(\BBR^{n+1})}\lesssim \|v\|_{L^{p^*}(\BBR^{n+1})}^{\frac{m-k}{m}}\|\nabla^m v \|_{L^p(\BBR^{n+1})}^\frac{k}{m}\lesssim \|v\|_{E^p}.\]
	\end{corollary}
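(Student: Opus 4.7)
\textbf{Proof plan for Corollary \ref{cor:embed}.} The statement is essentially a slicewise application of the Gagliardo--Nirenberg inequality combined with H\"older's inequality in the time variable, so the proof should be short and routine. First, by Lemma \ref{lem:embed} the assumption $v\in E_p$ already gives $v\in L^{p^*}(\BBR^{n+1})$ with a norm bound by $\|v\|_{E_p}$, and by definition $\nabla^m v\in L^p(\BBR^{n+1})$. In particular, for almost every $t\in\BBR$ we have $v(t,\cdot)\in L^{p^*}(\BBR^n)$ and $\nabla^m v(t,\cdot)\in L^p(\BBR^n)$, so we are in a position to apply Lemma \ref{lem:Gagliardo--Nirenberg inequality} on each time slice.

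The plan is to use the Gagliardo--Nirenberg inequality with the choice $\theta=k/m$, which corresponds precisely to the exponent identity
\[
\frac{1}{p_k}=\frac{k}{m}\frac{1}{p}+\frac{m-k}{m}\frac{1}{p^*}
\]
defining $p_k$ in the statement (the exceptional case $\theta=1$ in Lemma \ref{lem:Gagliardo--Nirenberg inequality} occurs only at $k=m$, where the claim is trivial). This yields, for almost every $t\in\BBR$,
\[
\|\nabla^k v(t,\cdot)\|_{L^{p_k}(\BBR^n)}\lesssim \|v(t,\cdot)\|_{L^{p^*}(\BBR^n)}^{\frac{m-k}{m}}\|\nabla^m v(t,\cdot)\|_{L^p(\BBR^n)}^{\frac{k}{m}}.
\]

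Next I would raise this inequality to the power $p_k$ and integrate in $t$. The exponents of the two factors after this operation are $p_k(m-k)/m$ and $p_k k/m$, and from the defining identity for $1/p_k$ one checks that $\frac{m p^*}{(m-k)p_k}$ and $\frac{mp}{k p_k}$ are a pair of H\"older conjugate exponents. Applying H\"older's inequality in $t$ with this pair gives
\[
\|\nabla^k v\|_{L^{p_k}(\BBR^{n+1})}\lesssim \|v\|_{L^{p^*}(\BBR^{n+1})}^{\frac{m-k}{m}}\|\nabla^m v\|_{L^p(\BBR^{n+1})}^{\frac{k}{m}},
\]
which is the first asserted estimate. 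The second inequality $\lesssim\|v\|_{E_p}$ then follows from Young's inequality $a^{\theta}b^{1-\theta}\leq \theta a+(1-\theta)b$ applied to the two norms together with the embedding $\|v\|_{L^{p^*}(\BBR^{n+1})}\lesssim\|v\|_{E_p}$ from Lemma \ref{lem:embed} and the trivial bound $\|\nabla^m v\|_{L^p(\BBR^{n+1})}\leq \|v\|_{E_p}$. No step is a genuine obstacle; the only bookkeeping point worth checking carefully is the matching of the two H\"older exponents with the definition of $p_k$, which is guaranteed by the way $\theta=k/m$ is chosen in the Gagliardo--Nirenberg inequality.
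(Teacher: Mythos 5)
Your proposal is correct and follows exactly the paper's argument: slicewise Gagliardo--Nirenberg with $\theta=k/m$, H\"older's inequality in time with the conjugate pair determined by the definition of $p_k$, and Lemma \ref{lem:embed} for the final bound by $\|v\|_{E_p}$. The exponent bookkeeping you flag indeed checks out, so there is nothing to add.
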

	\begin{proof}
		This is an application of the Gagliardo--Nirenberg inequality from Lemma \ref{lem:Gagliardo--Nirenberg inequality} applied slice-wise, followed by the H\"older inequality (in time) and Lemma \ref{lem:embed}.
	\end{proof}
	The interval for $p$ in the last lemma is not optimal, but we are not bothered by this fact since the application of the \v{S}ne\u{\i}berg's Lemma does not give us a quantitative information about the intervals for $p$ we wish to work with.
	
	\begin{lemma}\label{lem:interpolation scales}
		The energy spaces $(E_p)$ and their duals $((E_p)')$ form complex interpolation scales. Precisely, let $\varepsilon>0$ and $1+\varepsilon<p_0\leq p_1<1+\varepsilon^{-1}$. For $\theta\in (0,1)$ the complex interpolation identity is true
		\[[E_{p_0},E_{p_1}]_\theta = E_{p_\theta} \text{\;\;and\;\;} [(E_{p_0})',(E_{p_1})']_\theta = (E_{p_\theta})', \text{\;\; where \;} \frac{1}{p_\theta}=\frac{1}{p_0}+\frac{1}{p_1}\] and the equivalence constants depend only of $\varepsilon$ and the dimensions.
	\end{lemma}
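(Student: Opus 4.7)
The strategy is to realise $E_p$ as a complemented subspace of $L^p(\BBR^{n+1})^N$ via a Fourier-multiplier retraction/coretraction pair, uniformly in $p \in (1+\varepsilon, 1+\varepsilon^{-1})$, and then to deduce the interpolation identity from the standard one $[L^{p_0},L^{p_1}]_\theta = L^{p_\theta}$. Setting $N := 2 + \#\{\alpha \in \BBN^n : |\alpha| = m\}$, the coretraction will be
\[J u := \bigl(u,\ D_t^{1/2} u,\ (\partial^\alpha u)_{|\alpha| = m}\bigr), \qquad J : E_p \to L^p(\BBR^{n+1})^N,\]
which is bounded with equivalent norms by the definition of $E_p$ combined with the Gagliardo--Nirenberg interpolation inequality (Lemma \ref{lem:Gagliardo--Nirenberg inequality}) to handle the intermediate spatial derivatives. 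For the retraction, I write $\sigma(\tau,\xi) := 1 + |\tau| + |\xi|^{2m}$ and set
\[P(f_0, f_1, (g_\alpha)_\alpha) := \mathcal F^{-1}\!\left(\frac{\hat f_0 + |\tau|^{1/2}\hat f_1 + \sum_{|\alpha|=m}\binom{m}{\alpha}(-i\xi)^\alpha \hat g_\alpha}{\sigma(\tau,\xi)}\right).\]
Using the multinomial identity $\sum_{|\alpha|=m}\binom{m}{\alpha}\xi^{2\alpha} = |\xi|^{2m}$ together with $|\tau|^{1/2}\cdot|\tau|^{1/2} = |\tau|$ and $(-i\xi)^\alpha(i\xi)^\alpha = \xi^{2\alpha}$, a direct Fourier-side computation yields $P \circ J = \mathrm{id}_{E_p}$.

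\textbf{Bounded retraction.} The technical heart is to verify that $P : L^p(\BBR^{n+1})^N \to E_p$ is bounded uniformly in $p$ in the prescribed range. This reduces to showing that the Fourier multipliers with symbols $\sigma^{-1}$, $|\tau|^{1/2}\sigma^{-1}$, $(i\xi)^\alpha \sigma^{-1}$ ($|\alpha|=m$), and those obtained from these by further multiplication by $|\tau|^{1/2}$ or $(i\xi)^\beta$ with $|\beta|=m$ (which appear when $D_t^{1/2}(Pf)$ or $\partial^\beta(Pf)$ is computed from the formula above), are $L^p(\BBR^{n+1})$-bounded uniformly in $p\in(1+\varepsilon, 1+\varepsilon^{-1})$. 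This is a consequence of the anisotropic Mihlin--H\"ormander multiplier theorem adapted to the parabolic dilations $(\tau, \xi) \mapsto (s^{2m}\tau, s\xi)$, since each of the listed symbols is parabolic-homogeneous of non-positive degree (away from the origin) and smooth on $\BBR^{n+1}\setminus\{0\}$. The only subtle issue is the lack of smoothness of $|\tau|^{1/2}$ at $\tau=0$, which I dispatch by factorising $|\tau|^{1/2} = H_t \bigl(i\,\mathrm{sgn}(\tau)\,|\tau|^{1/2}\bigr)$: the Hilbert transform $H_t$ is $L^p(\BBR^{n+1})$-bounded by classical Calder\'on--Zygmund theory and Fubini, while $i\,\mathrm{sgn}(\tau)\,|\tau|^{1/2}\sigma^{-1}$ is a genuine parabolic Mihlin symbol.

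\textbf{Interpolation identity and duality.} Given the retraction/coretraction pair $(P,J)$, the abstract retract lemma for complex interpolation combined with $[L^{p_0}, L^{p_1}]_\theta = L^{p_\theta}$ yields
\[[E_{p_0}, E_{p_1}]_\theta \;=\; P\bigl(\bigl[L^{p_0}(\BBR^{n+1})^N, L^{p_1}(\BBR^{n+1})^N\bigr]_\theta\bigr) \;=\; P\bigl(L^{p_\theta}(\BBR^{n+1})^N\bigr) \;=\; E_{p_\theta},\]
with constants depending only on $\varepsilon$ and the dimensions. For the dual scale I would invoke the Bergh--L\"ofstr\"om duality theorem for complex interpolation: density of $E_{p_0}\cap E_{p_1}$ in each $E_{p_i}$ is obtained by simultaneous mollification in $t$ and $x$ combined with parabolic cut-offs, reducing to $\mathscr C_c^\infty(\BBR^{n+1})\subseteq E_{p_0}\cap E_{p_1}$, and the reflexivity of $E_{p_i}$ is inherited from $L^{p_i}(\BBR^{n+1})^N$ via the bounded projection $JP$. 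This delivers $[(E_{p_0})', (E_{p_1})']_\theta = (E_{p_\theta})'$. The main obstacle I foresee is extracting the uniformity of the Mihlin bounds across the open interval $(1+\varepsilon, 1+\varepsilon^{-1})$: the theorem gives a finite bound for each fixed $p\in(1,\infty)$, and standard sharp-constant tracking in the Calder\'on--Zygmund decomposition shows that these constants blow up only as $p\to 1$ or $p\to\infty$; it is precisely this quantitative dependence that forces the open interval assumption in the statement.
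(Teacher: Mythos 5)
Your overall architecture --- realising $E_p$ as a retract of $L^p(\BBR^{n+1})^N$ via $Ju=(u,\,D_t^{1/2}u,\,(\partial^\alpha u)_{|\alpha|=m})$ and a Fourier-multiplier retraction $P$, then invoking the retract lemma, $[L^{p_0},L^{p_1}]_\theta=L^{p_\theta}$, and Calder\'on duality with the density and reflexivity checks you indicate --- is exactly the route the paper has in mind: the paper gives no details and refers to \cite{AEN18}, whose argument runs along these lines, and your algebraic identity $P\circ J=\mathrm{id}$ is correct (note only that the $E_p$-norm of $Pf$ involves $\partial^\beta Pf$ for all $|\beta|\le m$, not just $|\beta|=m$, and that boundedness of $J$ is immediate from the definition of $E_p$; Gagliardo--Nirenberg is not needed there, since the reverse estimate follows from $u=P(Ju)$ once $P$ is bounded).

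The genuine gap is in the step you yourself call the technical heart. The symbols $\sigma^{-1}$, $|\tau|^{1/2}\sigma^{-1}$, $(i\xi)^\beta|\tau|^{1/2}\sigma^{-1}$, \dots with $\sigma=1+|\tau|+|\xi|^{2m}$ are \emph{not} smooth on $\BBR^{n+1}\setminus\{0\}$: through $|\tau|$ and $|\tau|^{1/2}$ they are singular along the whole hyperplane $\{\tau=0\}$, while an anisotropic Mihlin--H\"ormander theorem tolerates a singularity only at the origin. Your Hilbert-transform repair does not remove this: besides the sign slip ($H_t$ has symbol $i\,\mathrm{sgn}\,\tau$, so $(i\,\mathrm{sgn}\,\tau)^2=-1$), the symbol $\mathrm{sgn}(\tau)|\tau|^{1/2}\sigma^{-1}$ is just as non-smooth as $|\tau|^{1/2}\sigma^{-1}$; for the worst case $(i\xi)^\beta|\tau|^{1/2}\sigma^{-1}$ with $|\beta|=m$ one has $|\partial_\tau m|\sim |\xi|^{m}|\tau|^{-1/2}\sigma^{-1}\to\infty$ as $\tau\to0$ at fixed $\xi\ne0$, violating the parabolic Mihlin bound $|\partial_\tau m|\lesssim(1+|\tau|+|\xi|^{2m})^{-1}$, so the theorem you quote simply does not apply to these symbols. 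The fix is standard but different: use a Marcinkiewicz/product-type multiplier theorem, whose hypotheses $|\tau\,\partial_\tau m|+|\xi_j\,\partial_{\xi_j}m|+\dots\lesssim 1$ (one derivative per variable, mixed terms included) these symbols do satisfy --- this is precisely the separate-variable singularity Marcinkiewicz is built for --- or replace $\sigma$ by the smooth, comparable symbol $1+i\tau+|\xi|^{2m}$ and treat the leftover $|\tau|^{1/2}(1+|\tau|)^{-1/2}$-type factor as a one-variable multiplier in $\tau$. Either way the constants are uniform for $p$ in compact subintervals of $(1,\infty)$, which is all the uniformity in $\varepsilon$ you need; with that substitution the rest of your proof goes through.
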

	
	\begin{proof}
		The proof is a simple adaptation of the reasoning from \cite[Lemma 6.1]{AEN18}, so we skip the details.
	\end{proof}
	Lemma \ref{lem:interpolation scales} allows to apply the \v{S}ne\u{\i}berg's result on bounded operators acting on complex interpolation scales, which in our setting translates into
	\begin{proposition}
		\label{prop:invertibility_around_2}
		There exists an $\varepsilon>0$, such that for all $1<p<\infty$ satisfying $|p-2|<\varepsilon$ the operator  $\mathscr L\coloneqq \partial_t+(-1)^m\mbox{div}_mA\nabla^m + 2\colon E_2\to (E_2)'$ extends to a bounded and invertible operator $E_p\to (E_{p'})'$. The inverse agrees with the one for $p=2$ on $(E_2)'\cap (E_{p'})'$. The norm of the inverse and the value of $\varepsilon$ depend only on the ellipticity and the dimensions.
	\end{proposition}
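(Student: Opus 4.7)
The plan is to deduce Proposition 7.8 from an abstract stability result of \v{S}ne\u{\i}berg, leveraging the complex interpolation scales established in Lemma 7.7 and the invertibility at $p=2$ from Lemma 7.4. The asymmetry of $\mathscr L\colon E_p\to (E_{p'})'$ means that the source and target indices move in opposite directions as $p$ varies, so I need to set up two interpolation scales whose parameters are linked.

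First, I would establish that $\mathscr L$ is bounded $E_p\to (E_{p'})'$ for every $p\in(1,\infty)$, with operator norm depending only on ellipticity, dimensions and $p$ staying away from $1$ and $\infty$. Checking the three terms of the bilinear form using H\"older's inequality and the $L^p(\BBR^{n+1})$-boundedness of the Hilbert transform $H_t$ for $1<p<\infty$ yields
\[
|\langle \mathscr L u,v\rangle|\lesssim \|\nabla^m u\|_{L^p}\|\nabla^m v\|_{L^{p'}}+\|D_t^{1/2}u\|_{L^p}\|D_t^{1/2}v\|_{L^{p'}}+\|u\|_{L^p}\|v\|_{L^{p'}}\lesssim \|u\|_{E_p}\|v\|_{E_{p'}},
\]
so that $\mathscr L u\in(E_{p'})'$ with uniform control for $p$ in any compact subset of $(1,\infty)$.

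Next, I would parametrize: fix some $\delta>0$ small, let $\theta\in(0,1)$ and set $p(\theta)=1/\theta$. Defining the source scale $X_\theta\coloneqq E_{p(\theta)}$ and the target scale $Y_\theta\coloneqq (E_{p'(\theta)})'$, Lemma 7.7 shows that both $(X_\theta)$ and $(Y_\theta)$ are complex interpolation scales around $\theta_0=1/2$ (note that the dual exponent $p'(\theta)=1/(1-\theta)$ also varies affinely in $\theta$, so the interpolation formula for the dual spaces gives $[Y_{\theta_0-\eta},Y_{\theta_0+\eta}]_{1/2}=Y_{\theta_0}$). The family $\{\mathscr L\colon X_\theta\to Y_\theta\}$ is uniformly bounded by the previous step, and at $\theta_0=1/2$ the operator is invertible with controlled inverse by Lemma 7.4.

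At this point I would invoke \v{S}ne\u{\i}berg's theorem (in the form stated in \cite{S74}, see also \cite[Lemma 5.16]{A07}) applied to the pair of scales $(X_\theta,Y_\theta)$: there exists $\eta>0$, depending only on the uniform boundedness constant, on $\|\mathscr L^{-1}\|_{(E_2)'\to E_2}$ and on the interpolation constants from Lemma 7.7 (hence only on ellipticity and dimensions), such that $\mathscr L\colon X_\theta\to Y_\theta$ remains invertible for all $|\theta-1/2|<\eta$, with operator norm of the inverse controlled by the same data. Translating back to $p$, this yields the desired $\varepsilon>0$. Agreement of the inverse with the $p=2$ inverse on $(E_2)'\cap(E_{p'})'$ is automatic from \v{S}ne\u{\i}berg's construction, which produces the inverse by a Neumann series perturbation of $\mathscr L^{-1}|_{p=2}$; alternatively, it follows from uniqueness, since any two solutions differ by an element $w\in E_2+E_p$ annihilated by $\mathscr L$ which must vanish by the $p=2$ invertibility applied to a suitable truncation. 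The main technical obstacle is the bookkeeping with the two scales moving simultaneously in opposite directions, which is handled cleanly once the $\theta$-parametrization above is fixed.
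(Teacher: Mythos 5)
Your proposal is correct and follows essentially the same route as the paper, which (via the citation to \cite[Lemma 7.1]{ABES18}) combines the uniform boundedness of $\mathscr L\colon E_p\to (E_{p'})'$, the interpolation scales of Lemma \ref{lem:interpolation scales}, the $p=2$ invertibility from Lemma \ref{lem:lax_milgram}, and \v{S}ne\u{\i}berg's extrapolation theorem. Your explicit $\theta$-parametrization of the two scales and the consistency-of-inverses remark are exactly the bookkeeping carried out in that reference, so there is nothing to add.
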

	\begin{proof}
		Given Lemmas \ref{lem:lax_milgram} and \ref{lem:interpolation scales} the proof is identical as in \cite[Lemma 7.1]{ABES18}. 
	\end{proof}
	\subsubsection{H\"older regularity of solutions.}
		Suppose $u\in L^2_{loc}(0,\infty;H^m_{loc}(\BBR^n))$ is a global weak solution to \eqref{eq:the parabolic equation}. Let $\chi \in\mathscr C_c ^\infty (\BBR^{n+1}_+)$ be supported in the cylinder $ (3/8,4)\times B(0,3/2)$, satisfy $\chi\leq 1$ and $\chi=1$ on $ (1/2,2)\times B(0,1)$. Define 
		$v(t,x)\coloneqq \chi(t,x)u(t,x)$ for $(t,x)\in \BBR^{n+1}$. The following properties for $v$ are direct consequences of the a priori energy estimates for $u$ (see Section \ref{sec:energy estimates}) 
		\begin{enumerate}[label=\upshape{(\roman{*})}]
			\item $ v\in L^2(\BBR; H^{m}(\BBR^n))$.
			\item  $\partial_t v\in L^2(\BBR; H^{-m}(\BBR^n))$.
			\item  $ v\in L^q(\BBR^{n+1})$ for $1\leq q\leq 2+\frac{4m}{n}.$
		\end{enumerate} 
		By (i) and (ii), we have $D_t^{1/2}v\in L^2(\BBR^{n+1})$, see \cite[Chap.\ XVIII, \S1, Theorem 6]{DL92}. Thus, $v\in E_2$. 

	By a repetitive use of the product rule, we deduce that $v=\chi u$ solves
	\begin{equation}\label{eq:inhom_for_v}
	\mathscr L v =  \partial_tv+(-1)^m\mbox{div}_mA\nabla^mv + 2v= 2v+ f 
	+ \mbox{div}_m F+
	\sum_{k=1}^{m-1}\sum_{|\xi|=k}\partial^\xi F_\xi,
	\end{equation}
	in the weak sense, where the involved functions are $f= u \partial_t \chi+c_{m} A\nabla^mu\nabla^m\chi$, $F=(F_\alpha)_{|\alpha|=m}$ with $F_\alpha=\sum_{|\beta|=m}a_{\alpha,\beta}\sum_{\gamma<\beta} c_{\beta,\gamma}\partial^\gamma u\partial ^{\beta-\gamma}\chi$ and the $F_\xi$ satisfy
	\[\|F_{\xi}\|_ {L^2(\BBR^{n+1})} \lesssim \| u\|_{L^2((3/8,4); H^m(B(0,3/2))}\lesssim \| u\|_{L^2((1/4,4)\times B(0,2))},\] see the local energy estimates in Proposition \ref{pro:local_energy}.
	\begin{lemma}\label{lem:some bounds}
		Assume $2<p<\min\{2+\varepsilon, 2+\frac{4}{m(n+2)-2}\}$ for $\varepsilon>0$ from Proposition \ref{prop:invertibility_around_2}. Then the right hand side of equation \eqref{eq:inhom_for_v} belongs to $(E_{p'})'$ and it holds
		\[\bigg\| 2v+ f 
		+ \mbox{div}_m F+
		\sum_{k=1}^{m-1}\sum_{|\xi|=k}\partial^\xi F_\xi\bigg\|_{(E_{p'})'}\lesssim\| u\|_{L^2((1/4,4)\times B(0,2))}\]
		with constant depending on the ellipticity and dimensions. Therefore, by Proposition \ref{prop:invertibility_around_2},
		\[\|v\|_{E_p}=\Bigg\|\mathscr L ^{-1}\left( 2v+ f 
		+ \mbox{div}_m F+
		\sum_{k=1}^{m-1}\sum_{|\xi|=k}\partial^\xi F_\xi\right)\Bigg\|_{E_p}\lesssim \| u\|_{L^2((1/4,4)\times B(0,2))}.\]
	\end{lemma}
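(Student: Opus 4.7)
The plan is to view each term on the right-hand side of \eqref{eq:inhom_for_v} as a bounded linear functional on $E_{p'}$ with operator norm controlled by $\|u\|_{L^2((1/4,4)\times B(0,2))}$; the conclusion then follows immediately from Proposition \ref{prop:invertibility_around_2}. The guiding duality comes from Corollary \ref{cor:embed} applied with exponent $p'$: every $w\in E_{p'}$ satisfies $\partial^\xi w\in L^{p'_k}(\BBR^{n+1})$ for $|\xi|=k\le m$, with $\frac{1}{p'_k}=\frac{1}{p'}-\frac{m-k}{m(n+2)}$. Consequently, a compactly supported $g\in L^{(p'_k)'}$ defines an element of $(E_{p'})'$ via $w\mapsto\int g\,\overline{\partial^\xi w}$, with operator norm $\lesssim\|g\|_{L^{(p'_k)'}}$. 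A short computation gives $\frac{1}{(p'_k)'}=\frac{1}{p}+\frac{m-k}{m(n+2)}$, so $(p'_k)'$ is increasing in $k$, and the hypothesis $p<2+\frac{4}{m(n+2)-2}$ is precisely $(p'_{m-1})'\le 2$; in particular $(p'_k)'\le 2$ for every $k\in\{0,\dots,m-1\}$.

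First I would bound the lower-order terms $2v$, $f$, and $F_\xi$ for $1\le|\xi|\le m-1$ directly from $L^2$ estimates: each of these functions is compactly supported in $(3/8,4)\times B(0,3/2)$ and admits an $L^2$ bound of order $\|u\|_{L^2((1/4,4)\times B(0,2))}$ by Proposition \ref{pro:local_energy} (the bound for the $F_\xi$ being the one already recorded in the excerpt). Since $(p'_k)'\le 2$, H\"older's inequality on the compact support promotes each $L^2$ estimate to the $L^{(p'_k)'}$ estimate required by the preceding duality.

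The delicate term is $\text{div}_m F$: it pairs with $\nabla^m w\in L^{p'}$ and therefore demands $F\in L^p(\BBR^{n+1})$ with $p>2$, an integrability strictly beyond the $L^2$ energy range. The crucial feature of $F$ is that each summand $\partial^\gamma u\,\partial^{\beta-\gamma}\chi$ involves only the sub-maximal derivative $\partial^\gamma u$ with $|\gamma|\le m-1$, so it suffices to establish higher integrability of such intermediate derivatives. I would combine (i) Corollary \ref{cor:rev_hoelder}, which yields $u\in L^{\tilde q}_{loc}$ for some $\tilde q>2+\frac{4m}{n}$, with (ii) Lemma \ref{lem:Gagliardo--Nirenberg inequality} applied slice-wise in $t$ and then H\"older in $t$ with paired exponents, to obtain $\|\nabla^k u\|_{L^{q_k}_{loc}}\lesssim \|u\|_{L^{\tilde q}_{loc}}^{1-k/m}\|\nabla^m u\|_{L^2_{loc}}^{k/m}\lesssim \|u\|_{L^2((1/4,4)\times B(0,2))}$ with $\frac{1}{q_k}=\frac{k}{2m}+\frac{m-k}{m\tilde q}$. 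The inequality $\tilde q>2+\frac{4m}{n}\ge 2+\frac{4}{n}$ then gives $q_{m-1}\ge 2+\frac{4}{m(n+2)-2}$, so the hypothesis on $p$ ensures $p\le q_{m-1}$ and hence $F\in L^p$ with the desired control.

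Summing all the contributions yields the $(E_{p'})'$ bound on the right-hand side of \eqref{eq:inhom_for_v}, and one concludes by applying $\mathscr L^{-1}$ from Proposition \ref{prop:invertibility_around_2}. The main obstacle is precisely the $\text{div}_m F$ term: extracting a genuine $L^p$ bound with $p>2$ cannot be done from $L^2$ information alone, and bridging this gap via the self-improving reverse H\"older estimate of Corollary \ref{cor:rev_hoelder} combined with Gagliardo--Nirenberg interpolation is what dictates the peculiar threshold $2+\frac{4}{m(n+2)-2}$ in the statement, through the sharp matching between the resulting exponent $q_{m-1}$ and the Sobolev-dual exponent $(p'_{m-1})'$.
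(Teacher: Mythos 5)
Your proposal is correct and takes essentially the same route as the paper: the lower-order terms $2v$, $f$, $F_\xi$ are paired through the embedding of Corollary \ref{cor:embed} for $E_{p'}$ using local $L^2$ bounds, and the critical term $\mbox{div}_m F$ is upgraded to $L^p$ with $p>2$ by interpolating the reverse H\"older gain of integrability of $u$ against the $L^2$ control of $\nabla^m u$ — the paper does exactly this, only interpolating at the parabolic Sobolev exponent $2^*=2+\tfrac4n$ (which produces the threshold $2_{m-1}=2+\tfrac{4}{m(n+2)-2}$ on the nose) instead of at the Gehring exponent $\tilde q$, and applying Gagliardo--Nirenberg to the compactly supported products $u\,\partial^{\beta-\gamma}\chi$ rather than to $\nabla^k u$ locally. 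The one detail to tighten is that Lemma \ref{lem:Gagliardo--Nirenberg inequality} is stated on $\BBR^n$, so your slice-wise ``local'' Gagliardo--Nirenberg step should be run after inserting a cutoff (precisely the paper's rewriting of $F_\alpha$ via the product rule), the resulting commutator terms being controlled by the local energy estimates of Proposition \ref{pro:local_energy}.
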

	This lemma holds also for $p<2$ satisfying $|p-2|<\varepsilon$, but since the knowledge of $v\in E^p$ does not seem to be any helpful if $p<2$, we omit this case here.
	\begin{proof}
		Fix $2<p<n+2$. By Corollary \ref{cor:embed} we find that for $\frac{1}{p'^*}=\frac{1}{p'}-\frac{1}{n+2}$ $$(L^{p'^*})'+L^{p}(\BBR;W^{-m,p}(\BBR^m)) \hookrightarrow (E_{p'})'.$$ Clearly, the dual exponent $q$ to ${p'^*}$ satisfying $1=\frac{1}{q}+\frac{1}{p'^*}$, is $q=p_\ast$, where $\frac{1}{p_\ast}=\frac{1}{p}+\frac{1}{n+2}$. 
		
		\noindent Let us list what do we know about the integrability of the involved functions.
		\begin{enumerate}[label=\upshape{(\roman{*})}]
			\item   $f\in  L^p(\BBR^{n+1})$ for $1\leq p\leq 2$ with 
			\[\|f\|_{L^p(\BBR^{n+1})}\lesssim_p \|u\|_{L^2((1/4,4)\times B(0,2))}.\]
			This is H\"older's inequality combined with the energy estimates from Proposition \ref{pro:local_energy}. 
			\item  $ v\in L^q(\BBR^{n+1})$ for $1\leq q\leq 2+\frac{4m}{n}$ with 
			\[\|v\|_{L^q(\BBR^{n+1})}\lesssim_q \|u\|_{L^2((1/4,4)\times B(0,2))}.\]
			This is possible thanks to the reversed H\"older estimates from Corollary \ref{cor:rev_hoelder} and a covering argument.
	
			\item For each $|\alpha|=m$, the function $F_\alpha$ can be rewritten as 
			\[F_\alpha=\sum_{|\beta|=m}a_{\alpha,\beta}\sum_{\gamma<\beta}\tilde c_{\beta,\gamma}\partial^\gamma (u\partial ^{\beta-\gamma}\chi)\]
			and for any $\gamma$ appearing in the sum, $|\gamma|=k<m$, we apply Corollary \ref{cor:embed} in order to get
			\[\hspace{1.2cm} \|\partial^\gamma (u\partial ^{\beta-\gamma}\chi)\|_{L^{2_{k}}(\BBR^{n+1})}\lesssim \|u\partial ^{\beta-\gamma}\chi\|_{L^{2^*}(\BBR^{n+1})}^{\frac{m-k}{m}}\|\nabla^m (u\partial ^{\beta-\gamma}\chi) \|_{L^2(\BBR^{n+1})}^\frac{k}{m}\lesssim \|u\|_{L^2((1/4,4)\times B(0,2))},\]
			where
			$\frac{1}{2_k}=\frac{1}{2}-\frac{m-k}{m}\frac{1}{n+2}\leq \frac{1}{2}- \frac{1}{m}\frac{1}{n+2}<\frac{1}{2}$. Last estimate uses $2^*=2+\frac{4}{n}<2+\frac{4m}{n}$ to bound $\|u\partial ^{\beta-\gamma}\chi\|_{L^{2^*}(\BBR^{n+1})}$ by Proposition \ref{cor:rev_hoelder}. 
			\item Let $\phi \in E_{p'}$. As $q=p'<2$, by Corollary \ref{cor:embed}, for integer $0\leq k \leq m$ and corresponding exponents $q_k$ it holds $\|\nabla^k \phi\|_{L^{q_k}(\BBR^n)}\lesssim \|\phi\|_{E_{p'}}$. Note that $p'< q_{m-1}< \dots <q_1< p'^*$ and so for integer $0\leq k\leq m-1$ we have by H\"older's inequality $\nabla^k \phi \in L^{q_{m-1}}_{loc}(\BBR^{n+1})$ with the bound
			\[\|\nabla^k \phi \|_{ L^{q_{m-1}}([c,d]\times B(0,R))}\lesssim \|\nabla^k \phi \|_{ L^{q_{k}}([c,d]\times B(0,R))}\] with constants depending on the size of the cylinder, as well, as $p'$, $m$ and $k$.
		\end{enumerate}
		
		Keeping those considerations in mind, we will now conclude $$2v+ f 
		+ \mbox{div}_m F\in L^{p_\ast}(\BBR^{n+1})+L^{p}(\BBR;W^{-m,p}(\BBR^m)).$$
		
		First, introduce the restriction $2<p\leq2^*$, which implies $\frac{1}{p_\ast}\ge \frac{1}{2^*}+\frac{1}{n+2}=\frac{1}{2}$, that is $p_\ast\leq 2$. Then $2v+f\in L^{p_\ast}(\BBR^{n+1})$ by (i) and (ii).
		Further, let us add the assumption $2<p\leq2_{m-1}$, that is $\frac{1}{p}\ge\frac{1}{2}-\frac{1}{m}\frac{1}{n+2}$.
		By (iii), (iv) and the fact that $\chi$ is compactly supported, we have for any $\gamma< \beta$, $|\beta|=m$
		\[\|\partial^\gamma (u\partial ^{\beta-\gamma}\chi)\|_{L^{2_{m-1}}(\BBR^{n+1})}\lesssim \|u\|_{L^2((1/4,4)\times B(0,2))}.\] Thus, for each $|\alpha|=m$, the compactly supported distribution $F_\alpha$ satisfies $F_\alpha \in L^{2_{m-1}}(\BBR^{n+1})$. For $p\leq2_{m-1}$ we then have $F_\alpha \in L^{p}(\BBR^{n+1})$, and so $\mbox{div}_m F\in L^{p}(\BBR;W^{-m,p}(\BBR^m))$ with  $$\|\mbox{div}_m F\|_{ L^{p}(\BBR;W^{-m,p}(\BBR^m))}\lesssim\|u\|_{L^2((1/4,4)\times B(0,2))}.$$
		
		Finally, under the assumptions we posed on $p$, it holds $(p')_{m-1}\ge2$. For any multi-index $|\xi|\leq m-1$ and $\psi\in E_{p'}\cap \mathscr D(\BBR^{n+1})$ we then have
		\[|\langle \partial^\xi F_\xi, \psi\rangle_{\mathscr D'(\BBR^{n+1}),\mathscr D'(\BBR^{n+1})}|\lesssim \|F_\xi\|_{L^2(\BBR^{n+1})}\|\nabla^{|\xi|}\psi\|_{L^2(\text{supp}\chi)}\stackrel{(iv)}{\lesssim} \|F_\xi\|_{L^2(\BBR^{n+1})}\|\psi\|_{E_{p'}},\] as $\chi$ is compactly supported. This shows $F_\xi \in (E_{p'})'$ and  
		\[\|F_\xi\|_{(E_{p'})'}\lesssim \|u\|_{L^2((1/4,4)\times B(0,2))}.\]
		This finishes the first part of the claim. Above reasoning shows also 
		\[ \bigg\| 2v+ f 
		+ \mbox{div}_m F+
		\sum_{k=1}^{m-1}\sum_{|\xi|=k}\partial^\xi F_\xi\bigg\|_{(E_{2})'}\lesssim\| u\|_{L^2((1/4,4)\times B(0,2))}.\] Recalling that $v\in E_2$ solves \eqref{eq:inhom_for_v} weakly, we have $v=\mathscr L ^{-1}|_{(E^2)'}$. By Proposition \ref{prop:invertibility_around_2} we thus have $$v=\mathscr L ^{-1}|_{(E_{p'})'}\left( 2v+ f 
		+ \mbox{div}_m F+
		\sum_{k=1}^{m-1}\sum_{|\xi|=k}\partial^\xi F_\xi\right) \in E_p\cap E_2.$$ The norm bound follows the derived estimates.
	\end{proof}
	From here, arguing as in the proof of \cite[Theorem 8.1]{ABES18}, Proposition \ref{prop:invertibility_around_2}, a fractional Poincar\'e inequality {\cite[Lemma 6.4]{ABES18}} and the well-known embedding of Campanato spaces into the space of H\"older continuous functions lead to \eqref{eq:goal_uniform_L^p}.	

Note that we have only been using the local energy estimates on $u$ from Proposition \ref{pro:local_energy} and their consequences, so the proof applies to any global weak solution $u$. We have obtained 
	\begin{theorem} \label{thm:higher integrability of solutions}
		Let $u\in L^2_{loc}(0,\infty; H^m_{loc}(\BBR^n))$ be a global weak solution of \eqref{eq:the parabolic equation} and let $$2<p<\min\left\lbrace2+\varepsilon, 2+\frac{4}{m(n+2)-2}\right\rbrace$$ for $\varepsilon>0$ from Proposition \ref{prop:invertibility_around_2}. Then it holds $u\in L^\infty_{loc}(0,\infty;L^p_{loc}(\BBR^n))\cap \mathscr C ^\alpha_{loc}(0,\infty; L^p_{loc}(\BBR^n))$ with $\alpha=1/p-1/2$. We have the estimates
		\begin{equation*}
		\sup_{s\in (t/2,2 t)}\left(\fint_{B(x,\sqrt[2m]{t})}|u(s,y)|^pdy\right)^{1/p} \lesssim \left(\fint_{(t/4,4 t)}\fint_{B(x,2\sqrt[2m]{t})}|u(s,y)|^2dy\right)^{1/2}
		\end{equation*}
		and 
		\begin{equation*}
		\sup_{s, s'\in (t/2,2 t)}{t^\alpha}\left(\fint_{B(x,\sqrt[2m]{t})} \frac{|u(s',y)-u(s,y)|^p}{|s'-s|^{\alpha p}}\right)^{1/p} \lesssim \left(\fint_{(t/4,4 t)}\fint_{B(x,2\sqrt[2m]{t})}|u(s,y)|^2dy\right)^{1/2}
		\end{equation*} for all $(t,x) \in \BBR^{n+1}_+$. Moreover, we have the $p$-integrability of the derivatives, in particular, it holds $\nabla^m u \in L^p_{loc}(\BBR^{n+1}_+)$ with 
		\begin{equation*}
		\left(\fint_{(t/2,2 t)}\fint_{B(x,\sqrt[2m]{t})}|\sqrt{s}\nabla^mu(s,y)|^pdy\right)^{1/p} \lesssim \left(\fint_{(t/4,4 t)}\fint_{B(x,2\sqrt[2m]{t})}|u(s,y)|^2dy\right)^{1/2}.
		\end{equation*}
		All constants depend only on the ellipticity and the dimensions.
	\end{theorem}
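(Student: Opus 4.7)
The strategy is to execute the reduction of Section~\ref{sec:bound reduction}: by translation in time and the parabolic rescaling $\tilde u(s,y) = u(ts, \sqrt[2m]{t}\,y)$, which maps global weak solutions of \eqref{eq:the parabolic equation} to global weak solutions of an equation on the unit scale with the same ellipticity constants, it suffices to prove all three claimed estimates at the reference point $(t,x)=(1,0)$. Concretely, it suffices to bound the indicated quantities on $(1/2,2)\times B(0,1)$ by the $L^2$-average of $u$ on $(1/4,4)\times B(0,2)$; the powers of $t$ and $\sqrt[2m]{t}$ in the statement then track the parabolic homogeneity of each object.

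Fix the cut-off $\chi\in\mathscr{C}_c^\infty(\BBR^{n+1}_+)$ from the paragraph preceding Lemma~\ref{lem:some bounds}, supported in $(3/8,4)\times B(0,3/2)$ and equal to $1$ on $(1/2,2)\times B(0,1)$, and set $v\coloneqq\chi u$. The product-rule computation already performed shows that $v\in E_2$ solves \eqref{eq:inhom_for_v} weakly on $\BBR^{n+1}$, and Lemma~\ref{lem:some bounds} delivers
\[
\|v\|_{E_p}\lesssim\|u\|_{L^2((1/4,4)\times B(0,2))}
\]
for every $p$ in the stated range. Since $v=u$ on $(1/2,2)\times B(0,1)$, the inclusion $\nabla^m v\in L^p(\BBR^{n+1})$ immediately yields the $L^p$-integrability of $\nabla^m u$, the third of the three estimates.

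The continuity-in-time statements are then extracted from $D_t^{1/2}v\in L^p(\BBR^{n+1})$ via the fractional Poincaré inequality {\cite[Lemma~6.4]{ABES18}}, a Morrey--Campanato-type characterization of $H^{1/2,p}$ with exponent $\alpha=1/2-1/p$. Applied slice-wise to $v$ and integrated in space, it yields a uniform bound
\[
\sup_{s,s'\in(1/2,2)}\left(\int_{B(0,1)}\frac{|v(s',y)-v(s,y)|^p}{|s'-s|^{\alpha p}}\,dy\right)^{1/p}\lesssim\|D_t^{1/2}v\|_{L^p(\BBR^{n+1})},
\]
so $s\mapsto v(s,\cdot)\in L^p(B(0,1))$ is $\alpha$-Hölder on $(1/2,2)$. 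Combining this with the integrability $v\in L^p(\BBR;L^p(B(0,1)))$ bounds $\|v(s,\cdot)\|_{L^p(B(0,1))}$ uniformly in $s\in(1/2,2)$, since this norm differs from its time-average by a Hölder-controlled amount while the time-average is dominated by $\|v\|_{L^p(\BBR^{n+1})}$. Undoing the rescaling converts these estimates into the first two claims with their correct scaling factors.

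The main obstacle lies not in the plan itself but in the invocation of Lemma~\ref{lem:some bounds}, which rests on Proposition~\ref{prop:invertibility_around_2}: the crucial non-quantitative ingredient is the Šne\u{\i}berg interpolation, which opens a small neighborhood of $2$ on which $\mathscr L\colon E_p\to(E_{p'})'$ remains invertible. This is the sole reason the theorem's $\varepsilon$ cannot be made explicit, and it also explains why the range of $p$ has to be intersected with the elementary Sobolev-embedding threshold $2+\frac{4}{m(n+2)-2}$ arising in the dual-side bookkeeping of Lemma~\ref{lem:some bounds}.
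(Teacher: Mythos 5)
Your proposal follows the paper's own argument essentially verbatim: reduction by parabolic scaling and translation to the unit cylinder, the cut-off $v=\chi u\in E_2$ solving \eqref{eq:inhom_for_v}, the bound $\|v\|_{E_p}\lesssim\|u\|_{L^2((1/4,4)\times B(0,2))}$ from Lemma \ref{lem:some bounds} (hence Proposition \ref{prop:invertibility_around_2} and \v{S}ne\u{\i}berg), and then the fractional Poincar\'e inequality of \cite{ABES18} together with the Campanato embedding to get the $L^\infty$-in-time and H\"older-in-time bounds, with the gradient estimate read off directly from $v\in E_p$. The only discrepancy is harmless: your exponent $\alpha=1/2-1/p$ is the intended (positive) one, the value $\alpha=1/p-1/2$ in the theorem's statement being a sign typo.
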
 
 \section{Appendix}
 	\setcounter{theorem}{0} \setcounter{equation}{0}
 	\subsection{Integral operators $\Mmaxreg$ and $\mathcal{R}_L$} \label{sec:integral ops}
 		Let $L$ be an autonomous elliptic operator as in Section \ref{sec:aut}. Following \cite[Proposition 2.5]{AMP15}, we see that the maximal integral operator $$\Mmaxreg f(t,\cdot)= \int_0^t \nabla^m e^{-(t-s)L}\mbox{div}_m f(s,\cdot)ds,$$ initially defined as a mapping from $L^1(0,\infty;(H^{2m}(\BBR^n))^{M})$ to $L^\infty_{loc}(L^2)$, extends to a bounded functional on $L^2(L^2)$. This is a non-trivial result, which uses the consequences of the solution to the Kato square root conjecture \eqref{eq:Kato} and de Simon's regularity result \cite{dS64}.
 	We remind that $L^2(L^2)=T^{2,2}_m$ and derive an extension of $\Mmaxreg$ to some of the tent spaces $\tentspace$ if $p\neq 2$. This is used in Section \ref{sec:bound examples}. We closely follow \cite[Proposition 2.8]{AMP15}.
 	\begin{proposition}
		\label{prop:ext_M}
		Let $q=q_+(L^*)'\in[1,2)$ be as introduced in \ref{sec:aut}, that is  $$\sup_{t>0}\|\sqrt{t} \nabla^me^{-tL^*}\|_{\mathscr L (L^s)}<\infty \quad \mbox{for all} \quad 2\leq s<q'.$$ Then $\Mmaxreg$ extends to a bounded operator on $\tentspace$ for all $p\in (p_c,\infty]$, where $$p_c=\max\left\{\frac{nq}{n+mq};\frac{2n}{n+mq'}\right\}\leq \max\left\{1; \frac{2n}{n+2m}\right\}.$$
	\end{proposition}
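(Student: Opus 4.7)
The plan is to mimic the proof of \cite[Proposition 2.8]{AMP15}, exploiting the $L^p$--$L^q$ off-diagonal estimates for the autonomous semigroup $e^{-tL}$ recalled in Section~\ref{sec:aut}. The operator $\Mmaxreg$ is already known to be bounded on $T^{2,2}_m = L^2(L^2)$ by $L^2$ maximal regularity, so $p = 2$ will serve as one endpoint.

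\textbf{The range $p \in (2, \infty]$.} I would first prove a Carleson estimate at $p = \infty$. Given $f \in T^{\infty,2}_m$ and a ball $B = B(x_0, r)$, decompose $f = f_{\mathrm{loc}} + f_{\mathrm{glob}}$ with $f_{\mathrm{loc}} = \mathbbm 1_{(0, (4r)^{2m}) \times 4B} f$. The local piece satisfies $\|f_{\mathrm{loc}}\|_{L^2(L^2)} \lesssim r^{n/2} \|f\|_{T^{\infty,2}_m}$, so the $T^{2,2}_m$ bound on $\Mmaxreg$ immediately controls the Carleson quantity over the cylinder $(0, r^{2m}) \times B$. For $f_{\mathrm{glob}}$, the $L^2$ off-diagonal decay of the integrand $\nabla^m e^{-(t-s)L}\mathrm{div}_m$ (in the autonomous setting this follows by an argument analogous to Proposition~\ref{prop:off2}) on annuli $C_k = 2^{k+2} B \setminus 2^{k+1} B$ produces a geometrically decaying series, and the tails sum. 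Complex interpolation between $T^{\infty,2}_m$ and $T^{2,2}_m$ then yields $T^{p,2}_m$ boundedness for every $p \in (2, \infty)$.

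\textbf{The range $p \in (p_c, 2)$.} Here I would use the atomic decomposition of $T^{p,2}_m$ and check uniform bounds on atoms. For a $T^{p,2}_m$-atom $a$ supported in the tent $\widehat B$ over $B$ with $\|a\|_{L^2(L^2)} \leq |B|^{1/2 - 1/p}$, split $\BBR^n = 4B \cup \bigcup_{k \geq 1} C_k$ as above. On $4B$, the $T^{2,2}_m$ bound together with H\"older's inequality and the atom normalization yields a uniform bound of order one. On $C_k$ for $k \geq 1$, I would use a Duhamel expansion together with the off-diagonal $L^{q'}$--$L^2$ or $L^2$--$L^{q^{*m}}$ bounds for $e^{-tL}\mathrm{div}_m$ coming from the $L^q$ theory of Section~\ref{sec:aut}, combined with Sobolev embedding to recover the tent space average on $C_k$ at the cost of a factor $(2^k r)^{n(1/p - 1/2)}$. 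The exponential off-diagonal factor $\exp(-c (2^k)^{2m/(2m-1)})$ dominates, ensuring summability.

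\textbf{Main obstacle.} The technical crux is the exponent bookkeeping that pins down $p_c$ exactly. The contribution $nq/(n+mq)$ is the ``pre-Sobolev'' of $q$: it forces the index at which $\mathrm{div}_m$ followed by $e^{-tL}$ can absorb an $L^2$ atom while delivering $\nabla^m e^{-tL}$ in $L^q$. The second contribution $2n/(n+mq')$ stems from the dual side, where one estimates a $T^{p',2}_m$ pairing with $\Mmaxreg a$ by testing against $\nabla^m e^{-tL^*}$ on $L^{q'}$. Both mechanisms must be arranged so that the geometric decay on $C_k$ dominates the growth $|2^k B|^{1/p - 1/2}$; verifying that the borderline coincides with $p = p_c$ is the delicate step and the place where the higher-order scaling produces the ``$m$'' corrections absent in \cite{AMP15}.
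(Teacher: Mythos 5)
Your outline for $p\in(2,\infty]$ --- the local/global splitting of $f$ over a Carleson box, exploiting causality of $\Mmaxreg$, the $L^2$ off-diagonal decay of $\nabla^m e^{-(t-s)L}\mathrm{div}_m$ via the factorization $t\nabla^m e^{-tL}\mathrm{div}_m=\sqrt t\,\nabla^m e^{-tL/2}\bigl(\sqrt t\,\nabla^m e^{-tL^*/2}\bigr)^*$, and interpolation with $T^{2,2}_m$ --- is essentially a by-hand version of what the paper obtains by quoting \cite[Proposition 4.2]{AKMP12}, and it is sound in outline (modulo the usual care with complex interpolation at the $T^{\infty,2}$ endpoint). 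The genuine gap is in the range $p<2$, which is precisely where the threshold $p_c$ lives. First, the atomic decomposition of tent spaces exists only for $p\le 1$, so ``uniform bounds on atoms'' cannot by itself reach $\max\{1,p_c\}<p<2$. When $p_c\ge 1$ (for instance $n>2m$ with $q'$ close to $2$, so $p_c=\frac{2n}{n+mq'}>1$) the repair ``prove atom bounds at some $p_0\le 1$ and interpolate up to $2$'' is unavailable, because $p_0$ would lie below the claimed threshold where boundedness has no reason to hold; when $p_c<1$ the repair is possible in principle, but then the atom estimate itself already requires the quantitative kernel bounds you postpone.

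Second, and more importantly, the step you defer as ``the delicate exponent bookkeeping'' is the actual mathematical content of the proposition below $p=2$: one must prove $\sup_{t>0}\bigl\|t^{1+\frac n{2m}(\frac1{\tilde q}-\frac12)}\nabla^m e^{-tL}\mathrm{div}_m\bigr\|_{\mathscr L(L^{\tilde q},L^2)}<\infty$ for every $\tilde q\in(q,2]$, since it is exactly these $L^{\tilde q}\to L^2$ decay bounds (fed into \cite[Theorem 3.1]{AKMP12} with $\beta=0$, or into any direct atom/annuli argument) that generate $p_c=\max\{\frac{nq}{n+mq};\frac{2n}{n+mq'}\}$. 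The paper's proof consists almost entirely of this verification: it writes $t^{1+\frac n{2m}(\frac1{\tilde q}-\frac12)}\nabla^m e^{-tL}\mathrm{div}_m=A_tB_tC_t$ with $A_t=\sqrt t\,\nabla^m e^{-tL/3}$ uniformly $L^2$-bounded (Kato), $B_t=t^{\frac n{2m}(\frac1{\tilde q}-\frac12)}e^{-tL/3}$ bounded $L^{\tilde q}\to L^2$ because $p_-(L)=p_+(L^*)'\le\frac{nq}{n+mq}<\tilde q$ (using $p_+(L^*)\ge (q')^{*m}$ from Section \ref{sec:aut}), and $C_t=\sqrt t\,e^{-tL/3}\mathrm{div}_m$ bounded on $L^{\tilde q}$ by dualizing the standing hypothesis on $\sqrt t\,\nabla^m e^{-tL^*}$. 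Without carrying out this step your argument neither covers the sub-$2$ range nor explains where $p_c$ comes from, so the proposal is incomplete at its crux.
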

	\noindent We believe that the exponent $p_c$ in Proposition \ref{prop:ext_M} is not optimal and through the analogy to the second order case can possibly by taken as the Sobolev exponent $p_c=\frac{nq}{n+mq}$, if one disposes of an improved version of \cite[Theorem 3.1]{AKMP12}.
	\begin{proof}
		We rely on the work \cite{AKMP12} about the boundedness of integral operators on tent spaces, which already contains the case of higher order operators. As in \cite{AMP15}, we want to apply \cite[Theorem 3.1]{AKMP12} with $\beta=0$ for $p\leq 2$ and \cite[Proposition 4.2]{AKMP12} for $p\ge 2$. For this we need to show the $L^r-L^2$ decay of the operator-valued integral kernel
			\[\nabla^m e^{-(t-s)L}\text{div}_m\] of $\Mmaxreg$,
		 which means an off-diagonal $L^r-L^2$ bound with the decay of order $$(t-s)^{-1-\frac{n}{2m}(\frac{1}{r}-\frac{1}{2})}(1+\frac{d(E,F)^{2m}}{t-s})^{-M},$$ for some $M>0$. We first remark that the second factor decays much slower then the exponential function appearing in the off-diagonal estimates and we have $$t\nabla^me^{-tL}\text{\text{div}}_m=\sqrt t\nabla^m e^{-\frac{1}{2}tL} (\sqrt t \nabla^m e^{-\frac{1}{2}tL^*})^*,$$ where each factor has $L^2$ off-diagonal bounds by Section \ref{sec:aut}. This argument provides the necessary conditions for \cite[Proposition 4.2]{AKMP12}. For $p\leq2$ the desired decay follows then by interpolation once we have shown for all $\tilde q\in (q,2]$. \[\sup_{t>0}\|t^{1+\frac{n}{2m}(\frac{1}{\tilde q}-\frac{1}{2})}\nabla^m e^{-tL} \text{\text{div}}_m\|_{\mathscr{L}(L^{\tilde q}, L^2)}<\infty.\] 
		 
		 Let us write  $t^{1+\frac{n}{2m}(\frac{1}{\tilde q}-\frac{1}{2})}\nabla^m e^{-tL} \text{\text{div}}_m= A_tB_tC_t$ with $A_t=\sqrt t\nabla^m e^{-\frac{1}{3}tL}$ uniformly bounded on $L^2(\BBR^n)$ as just mentioned, $B_t= t^{\frac{n}{2m}(\frac{1}{\tilde q}-\frac{1}{2})} e^{-\frac{1}{3}tL}$ and $C_t=\sqrt t e^{-\frac{1}{3}tL}\text{div}_m$. Recall from Section \ref{sec:aut} the relations between the exponents $q_{\pm}(L)$ and $p_{\pm}(L) $ associated to the semigroup. From this we deduce the $L^{\tilde q}-L^2$ boundedness of $B_t$. Indeed, we know $p_+(L^*)\ge \frac{nq'}{n-mq'}$ if $q'<\frac{n}{m}$ and is infinite otherwise. This implies $p_-(L)\leq \frac{nq}{n+mq}<\frac{2n}{n+mq'}$ if $q'<\frac{n}{m}$ and $p_-(L)=1$ otherwise, in both cases $p_-(L)\leq q<\tilde q$, which suffices to conclude the boundedness. Finally $\tilde q>q$ and the assumption give the boundedness of $C_t$ on $L^{\tilde q}$. Putting all this together we obtain the claim. 
	\end{proof}
	\begin{remark}
		\label{rem:sharpness}
		A sufficient condition for the assumption of Proposition \ref{prop:ext_M} to hold with $q'=\infty$ is that the semigroup $(e^{-tL^*})_{t>0}$ has a kernel $(b_t)_{t>0}$ satisfying
		\[\|\nabla^m b_t\|_{L^\infty(\BBR^n)}\leq c t^{-\frac{1}{2}}\] for some constant $c>0$. This follows directly from the Young's inequality
		\[\|g\ast h\|_{L^p}\lesssim \|g\|_{L^p}\|h\|_{L^1}\] for $g\in L^p(\BBR^n)$ and $h\in L^1(\BBR^n)$. 
	\end{remark}
	\begin{example}\label{ex:sharpness}
		The condition from Remark \ref{rem:sharpness} is satisfied for $L=(-1)^m\Delta^m$ for any dimension $n$ and order $m\ge 1$. Indeed, the case $m=1$ is trivial, since we consider the standard Gaussian kernel. The higher order is handled by estimating oscillatory integrals. See \cite[Lemma 2.4]{KL12} for the proof in case $m=2$ and adjust the powers to get the full result. Alternatively see the references in Section \ref{sec:kernel bounds} (i).
	\end{example}
 	\noindent For $f=(f_\beta)_{|\beta|=m}$ such that $f_\beta\in L^1(0,\infty;H^{m}(\BBR^n))$ let us also consider  
	\[ R_{L}f(t,\cdot)\coloneqq\sum_{|\beta|=m} \int_0^t e^{-(t-s)L}\partial^\beta f_\beta(s,\cdot)ds.\] 
	It is immediate that $ \mathcal{R}_L$ defines a continuous map into $L^\infty_{loc}(L^2)$. At least formally we see $$\nabla^m \mathcal{R}_L = \Mmaxreg.$$ This equality can be made rigorous in $\tentspace$ for $p>p_c$. 
	\begin{proposition} \label{prop:ext_R}
		For any $p\in (0,\infty]$ the operator $ \mathcal{R}_L$ extends to a bounded linear map from $\tentspace$ to $X^p_m$ and it holds $\tilde{\mathscr M }_L=\nabla^m \mathcal{R}_L$ on $\tentspace$ if $p_c<p<\infty$.
	\end{proposition}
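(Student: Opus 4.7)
The proposition splits into two parts: (a) boundedness $\mathcal R_L : T^{p,2}_m \to X^p_m$ for every $p \in (0,\infty]$, and (b) the identification $\nabla^m \mathcal R_L = \widetilde{\mathcal M}_L$ on $T^{p,2}_m$ for $p_c < p < \infty$. My plan is to follow the second order blueprint of \cite{AMP15}, adapted to the parabolic scaling $\sqrt[2m]{t}$ and to the presence of an $m$-fold spatial derivative inside the kernel.

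For (a) I would begin at the endpoint $p = 2$. There $T^{2,2}_m = L^2(L^2)$ and, by Lemma \ref{lem:duhamel}, $\mathcal R_L f$ is the energy solution of $\partial_t u + Lu = \sum_{|\beta|=m}\partial^\beta f_\beta$ with zero initial datum; Theorem \ref{thm:global_well} (or, alternatively, duality against $\widetilde{\mathcal M}_{L^*}$, which is $L^2(L^2)$-bounded by the Kato and de Simon ingredients recalled before Proposition \ref{prop:ext_M}) yields $\sup_t \|\mathcal R_L f(t,\cdot)\|_{L^2} \lesssim \|f\|_{L^2(L^2)}$. The passage from this uniform $L^2$-control to the $X^2_m$ norm costs only a reversed Hölder step based on Corollary \ref{cor:rev_hoelder}. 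For small exponents $0 < p \leq 1$, I would invoke the Coifman--Meyer--Stein atomic decomposition: write $f = \sum_k \lambda_k a_k$ with $\sum |\lambda_k|^p \sim \|f\|_{T^{p,2}_m}^p$, each $a_k$ an $L^2$-atom supported in a tent over a ball $B_k = B(x_k,r_k)$ with $\|a_k\|_{L^2(L^2)} \leq |B_k|^{1/2 - 1/p}$, and prove the uniform atomic bound $\|\mathcal N_m(\mathcal R_L a)\|_{L^p} \lesssim 1$. For $x$ in a fixed dilate $\kappa B$, the $p = 2$ estimate combined with the atom's size gives the bound; on the annular rings $C_j(B) = 2^{j+1}B \setminus 2^jB$, the $L^2$ Davies--Gaffney decay of $e^{-(t-s)L}$ recalled in Section \ref{sec:aut}, used after the factorization $\partial^\beta e^{-(t-s)L} = \nabla^m e^{-(t-s)L/2} \circ e^{-(t-s)L/2}$ familiar from Proposition \ref{prop:ext_M}, produces exponential factors of the form $\exp\bigl(-c (2^{jm}r^m/(t-s)^{1/2})^{2m/(2m-1)}\bigr)$ that are summable in $j$. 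Intermediate exponents $p \in (1,2)$ come from complex interpolation of tent spaces, and $p > 2$ from duality against $T^{p',2}_m$, using Remark \ref{rem:eqv norm tent space} to identify the dual norm.

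For (b), both $\widetilde{\mathcal M}_L$ (by Proposition \ref{prop:ext_M}) and $\nabla^m \mathcal R_L$ (by part (a) combined with the a priori fact that $\nabla^m u \in T^{p,2}_m$ whenever $u \in X^p_m$ and $u$ solves the relevant inhomogeneous equation, via the non-tangential to tent comparison in the Appendix) act continuously on $T^{p,2}_m$ for $p_c < p < \infty$. On the dense subspace $\mathscr D((0,\infty) \times \mathbb R^n)$ the identity is just differentiation under the integral sign; density of test functions in $T^{p,2}_m$ for $p < \infty$ then propagates the identity to all of $T^{p,2}_m$. The hard part throughout is the atomic estimate for $0 < p \leq 1$: one has to balance the $m$-fold spatial derivative against the semigroup's off-diagonal decay, and only the factorization through $\sqrt{t}\nabla^m e^{-tL/2}$ (whose $L^2$ off-diagonal bounds are the same ones underlying Proposition \ref{prop:ext_M}) makes the annular contributions summable uniformly in the atom.
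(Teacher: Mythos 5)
Your scheme diverges from the paper's route and, as written, has gaps that matter. The paper proves Proposition \ref{prop:ext_R} by running, verbatim in the higher-order setting, the direct argument of \cite[Propositions 2.12 and 2.13]{AMP15}: one estimates the Whitney averages of $\mathcal R_L f$ directly, decomposing $f$ dyadically in space and time relative to the Whitney box, applying the $L^2$ off-diagonal bounds for $e^{-(t-s)L}\mathrm{div}_m$ and summing via Schur's lemma; this single estimate treats every $p\in(0,\infty]$, including $p=\infty$, at once. In your modular scheme the base case $p=2$ is already unsound: the energy bound $\sup_{t}\|\mathcal R_Lf(t,\cdot)\|_{L^2}\lesssim\|f\|_{L^2(L^2)}$ does not control $\|\nontan(\mathcal R_Lf)\|_{L^2}$, since the supremum over Whitney scales cannot be removed by Fubini (at best one dominates it by a vertical square function with measure $dt/t$, which a uniform-in-time bound does not make finite), and Corollary \ref{cor:rev_hoelder} is proved for solutions of the homogeneous equation \eqref{eq:the parabolic equation}, whereas $\mathcal R_Lf$ solves an inhomogeneous one, so the ``reversed H\"older step'' is not available as stated. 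The duality step for $p>2$ also fails structurally: the target $X^p_m$ is a maximal-function space, not a tent space, so boundedness $T^{p,2}_m\to X^p_m$ is not dual to any mapping property of $\mathcal R_{L^*}$ on $T^{p',2}_m$, and the endpoint $p=\infty$, which the proposition includes, is covered neither by duality nor by interpolation.

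For $p\le 1$ the atomic reduction is also not enough as sketched, because the dangerous regime is not the one you address. For an atom supported in $(0,r_B^{2m})\times B$ and $x$ at distance $2^jr_B$ from $B$, the exponential off-diagonal factors only help when the Whitney scale $\delta\lesssim(2^jr_B)^{2m}$; for larger $\delta$ there is no off-diagonal gain, the only decay left is $\|e^{-\tau L}\mathrm{div}_m\|_{L^2\to L^2}\lesssim\tau^{-1/2}$, and the resulting bound on the $j$-th annulus is of order $2^{-j(m+n/2)}|B|^{-1/p}$, whose $p$-th powers sum only for $p>\tfrac{2n}{n+2m}$ (note this threshold is $\ge 1$ when $n\ge 2m$, so the argument would not even cover $(0,1]$ there; even pointwise kernel bounds only take one down to $\tfrac{n}{n+m}$, the exponent appearing in Proposition \ref{prop:tent_kenig_comp_smallp}). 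So ``summable uniformly in the atom for every $p\in(0,1]$'' is precisely the nontrivial claim, and your sketch does not supply it; recall also that uniform atom bounds do not by themselves yield boundedness on $T^{p,2}_m$ without an extra limiting argument. Part (b), by density of test functions and continuity of $\Mmaxreg$ and of $\nabla^m\mathcal R_L$, is in the spirit of what the paper (via \cite[Proposition 2.13]{AMP15}) does, but it presupposes part (a), which is where the work lies.
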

	\begin{proof}
		The proof is a one-to-one copy of the one of \cite[Propositions 2.12 and 2.13]{AMP15}, relying on the $L^2$ off-diagonal estimates from Section \ref{sec:aut} and the Schur's Lemma. We omit the details.
	\end{proof}
	\subsection{Comparability of $\|\nabla^m u\|_{\tentspace}$ and $\|u\|_{X^p_m}$ - proofs}
	\label{sec:comparability proofs}
	We provide here the proof of the statements in Section \ref{sec:comparability results}
	\begin{proof}[Proof of Proposition \ref{prop:tent_kenig_comp_smallp}]
		\begin{claim}
			The statement is true for $L_0=(-1)^m\Delta^m$.
		\end{claim}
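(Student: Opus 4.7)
The plan is to exploit the constant-coefficient translation-invariant structure of $L_0=(-1)^m\Delta^m$. Its semigroup acts by convolution with a kernel $k_t$ enjoying the pointwise bounds of order $2m$ recalled in Section \ref{sec:kernel bounds}(i) and Example \ref{ex:sharpness}, so the Hardy space $\mathcal{H}^p_{L_0}$ adapted to $L_0$ coincides with the classical $\mathcal{H}^p(\BBR^n)$ in the full range $p\in(n/(n+m),\infty)$. I would establish the target estimate via the two-sided chain
\[
\|u_f\|_{X^p_m}\lesssim \|f\|_{\mathcal{H}^p_{L_0}}\sim\|\nabla^m u_f\|_{\tentspace}.
\]

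For the upper bound on the non-tangential maximal function, when $p\in[2,\infty)$ the Gaussian-type kernel bounds yield the pointwise estimate
\[
\nontan(u_f)(x)\lesssim(\mathcal{M}_{HL}|f|^2)^{1/2}(x),
\]
exactly as in the proof of Proposition \ref{pro:p>2 existence}, and the $L^{p/2}$-boundedness of $\mathcal{M}_{HL}$ gives $\|u_f\|_{X^p_m}\lesssim\|f\|_{L^p}$. For $p\in(n/(n+m),2)$ one would instead invoke the atomic decomposition of $\mathcal{H}^p(\BBR^n)$: the sharp pointwise kernel bounds yield correctly scaled non-tangential estimates on each atom, and summing then produces $\|u_f\|_{X^p_m}\lesssim\|f\|_{\mathcal{H}^p}$.

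The equivalence $\|\nabla^m u_f\|_{\tentspace}\sim\|f\|_{\mathcal{H}^p_{L_0}}$ is the standard Littlewood--Paley characterization of the $L_0$-adapted Hardy space. For $p=2$ it reduces to the energy identity $\|\nabla^m u_f\|_{T^{2,2}_m}^2=\tfrac12\|f\|_{L^2}^2$, which follows by integrating $\tfrac{d}{dt}\|u_f\|_{L^2}^2=-2\|\nabla^m u_f\|_{L^2}^2$ from $0$ to $\infty$. For general $p$ one combines a Calder\'on-type reproducing formula for $L_0$, the functional calculus of $L_0$ on $L^2$, the boundedness of the Riesz transforms $\nabla^m L_0^{-1/2}$ on $L^p(\BBR^n)$ for $p\in(1,\infty)$, and the atomic decomposition of $\tentspace$ from Coifman--Meyer--Stein \cite{CMS85}; the full statement for elliptic operators with Gaussian bounds can be extracted from \cite[Chapter 6]{A07}.

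The main obstacle is verifying, for $p$ near the endpoint $n/(n+m)$, that the image of a tent-space atom under the polyharmonic semigroup is a bona-fide $\mathcal{H}^p$-molecule with the correct decay. This amounts to matching the kernel decay of order $2m$ against the moment and decay thresholds of $\mathcal{H}^p$-molecules, which is precisely what imposes the lower bound $p>n/(n+m)$. Since $L_0$ has constant coefficients, the verification reduces to an explicit but somewhat technical computation with the polyharmonic heat kernel and its derivatives.
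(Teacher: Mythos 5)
Your plan lives in the same circle of ideas as the paper --- Hardy space theory for the constant--coefficient operator --- but the mechanics are genuinely different. The paper never passes through classical $\mathcal H^p$: after a change of variables identifying $\|\nabla^m u_f\|_{\tentspace}$ with the conical square function $S_{h,L_0,0}f$, it quotes the maximal--function characterizations of the Hardy spaces adapted to homogeneous higher order operators from \cite{CMY16} (Theorems 1.4, 1.8 and Corollary 3.11), so the only thing actually proved there is the pointwise comparison $\nontan u_f(x)\le\beta^n\mathcal N^\beta_{h,L_0}f(x)$ with $\beta=\sqrt[2m]{2}$ together with a Vitali change--of--aperture argument; this yields the claim for every $p\in(0,\infty)$. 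You instead rebuild the two halves through the identification of the adapted space with classical $\mathcal H^p$ and through atomic/molecular theory. That is heavier, but more self-contained, and it makes explicit where the polyharmonic kernel bounds and the order-$m$ cancellation enter; for the range $p\in(\frac{n}{n+m},\infty)$ needed in Proposition \ref{prop:tent_kenig_comp_smallp} it does give a complete alternative argument, with the references better taken from \cite{CMY16} or \cite{AQ00} than from \cite{A07}, which treats second order operators.

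Two repairs are needed. First, at $p=2$ the route via $\nontan(u_f)\lesssim(\mathcal M_{HL}|f|^2)^{1/2}$ breaks down, exactly as remarked after Proposition \ref{pro:p>2 existence}, since $\mathcal M_{HL}$ is not bounded on $L^1$; but because you have pointwise kernel bounds (not merely $L^2$ off-diagonal decay) you should use the stronger pointwise estimate $\nontan u_f\lesssim\mathcal M_{HL}f$, which settles all $1<p<\infty$ at once --- in particular no atomic decomposition is available or needed for $1<p<2$; atoms only enter for $p\le1$. Second, the ``main obstacle'' you describe is not an intrinsic obstruction: in the Calder\'on reproducing formula you may load arbitrarily much cancellation onto the companion function (for instance $\hat{\tilde\psi}_\alpha(\xi)=c(-i\xi)^\alpha|\xi|^{2(K-1)m}e^{-|\xi|^{2m}}$ is Schwartz with as many vanishing moments as you wish, since $|\xi|^{2m}$ and $|\xi|^{2(K-1)m}$ are polynomials), so the tent-space-atom-to-molecule argument runs for every $p\in(0,1]$. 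This is consistent with the paper, where the constant-coefficient claim holds for all $p\in(0,\infty)$ and the restriction $p>\frac{n}{n+m}$ in Proposition \ref{prop:tent_kenig_comp_smallp} comes from the tent space boundedness of $\tilde{\mathcal M}_{L_0}$ in Proposition \ref{prop:ext_M}, not from the polyharmonic estimate you are proving.
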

		\begin{claimproof}
			Instead of attempting a direct calculation, we remind that the kernel of the heat semigroup $(e^{-tL_0})_{t\ge0}$ satisfies kernel bounds (direct calculation, see Section \ref{sec:kernel bounds} (i) with help of the Fourier transform) and so $p_+(L_0)=\infty$. We note that by a change of variables the condition $\|\nabla^m u_f \|_{\tentspace}<\infty$ for $p\in(0,\infty)$ is exactly
		\[S_{h,L_0,0}f(x)\coloneqq \left(\int_0^\infty \int_{B(x,t)}|(t\nabla)^me^{-t^{2m}L_0}f (y)|^2\frac{dydt}{t^{n+1}}\right)^{1/2} \in L^p(\BBR^n).\] 
		
		The claim follows  then for any $p\in(0,\infty)$ by the equivalent characterizations of Hardy spaces associated to autonomous homogeneous operators from \cite{CMY16}, precisely Theorems 1.8, 1.4 and Corollary 3.11 with $k=1$. For this we only need to show that the $L^p(\BBR^n)$ norm of the non-tangential function $\mathcal N_{h,L_0}f$ used in \cite{CMY16} dominates the one we used in the definition of $X^p_m$ for any $f\in L^2(\BBR^n)$. For $f\in L^2$ and $v=e^{-t(-1)^m\Delta^m}f$ it is defined for every $x\in \BBR^n$
		$$\mathcal N_{h,L_0}f(x)=\left(\sup_{s>0}\fint_{B(x, s)}|v(s^{2m},y)|^2dy\right)^{1/2}.$$ For $\beta>0$ let us also introduce the non-tangential maximal function with changed angle
		$$\mathcal N^\beta_{h,L_0}f(x)=\left(\sup_{s>0}\fint_{B(x,\beta s)}|v(s^{2m},y)|^2dy\right)^{1/2}.$$ 
		
		By Vitali's Covering Lemma we obtain 
		\[\|\mathcal N^\beta_{h,L_0}f\|_{L^p(\BBR^n)}\leq C_{n,\beta}\|\mathcal N_{h,L_0}f\|_{L^p(\BBR^n)},\]
		for $\beta\ge 1$, any $f\in L^2(\BBR^n)$ and $p\in(0,\infty)$ (we provide details of this argument in the proof of Lemma \ref{lem:wp_small}).
		If we replace $s$ in those definitions by $\sqrt[2m]{t}$, we easily estimate
		\[\nontan v(x)=\sup_{\delta>0}\left(\fint_{\delta/2}^{\delta}\fint_{B(x,\sqrt[2m]{\delta})}|v(t,y)|^2dydt\right)^{1/2}\leq \beta^n \mathcal N^{\beta}_{h,L_0}f(x)\] for all $x\in \BBR^n$ with $\beta=\sqrt[2m]{2}$.
		Thus, $\|v\|_{X^{p}_m}\lesssim \|\nabla^m v\|_{T^{p,2}_m}$ holds as desired.
	\end{claimproof}
	
	We turn towards the non-autonomous case. The representation formula from Corollary \ref{cor:duhamel prop} gives
	\begin{equation*} 
	u_f(t,\cdot)=e^{-tL_0}f(\cdot)
	+\int_0^t e^{-(t-s)L_0}\text{div}_m (A(s,\cdot)-\underline A)\nabla^m u_s(t,\cdot)\; ds\end{equation*}
	with $\underline{A}$ being a matrix generating $L_0$. Using that the integral operator $\mathcal R_{L_0}$ is $\tentspace \to X^p_m$ bounded by Proposition \ref{prop:ext_R} for any $p\in(0,\infty]$, we estimate
	\begin{align*}\|u_f\|_{X^{p}_m}&\lesssim\|v\|_{X^p_m}+\|\mathcal {R}_{L_0}\|_{\mathscr L ({\tentspace},X^{p}_m)}\|A-\underline{A}\|_{L^\infty(\BBR^{n+1}_+)}\| \nabla^{m}u_f\|_{\tentspace}\\
	&\lesssim \| \nabla^{m}v\|_{\tentspace}+\| \nabla^{m}u_f\|_{\tentspace}
	\end{align*} with constants depending on ellipticity and dimensions. On the other hand, observe that the representation formula also gives us the bound
	\[\| \nabla^{m}v\|_{\tentspace}\lesssim \|\nabla^{m}u_f\|_{\tentspace}  +\|\mathcal {\tilde{M}}_{L_0}\|_{\mathscr L ({\tentspace})}\|A-\underline{A}\|_{L^\infty}\| \nabla^{m}u_f\|_{\tentspace}.\] Thus, by boundedness of $\mathcal {\tilde{M}}_{L_0}$ on $\tentspace$ when $p\in (\frac{n}{n+m},\infty)$, we conclude $\|u\|_{X^p_m}\lesssim\|\nabla^m u \|_{\tentspace}$. 
	\end{proof}
\begin{proof}[Proof of Proposition \ref{prop:comp_tent_kenig_smallp}]
	Let $p\in [1,2)$ and suppose that the operator $ L$ satisfies the strong ellipticity bounds \eqref{eq:strong lower ellipticity bounds}. Further, let $u\in X^p_m$ be a global weak solution to \eqref{eq:the parabolic equation}. We will show that $\nabla^m u \in \tentspace$ and 
			\[ \|\nabla^m u \|_{\tentspace}\lesssim \|u\|_{X^p_m}.\]
	\noindent We follow the main idea of \cite[Proposition 3.9]{CMY16} with appropriate adjustments, which are basically the same as those met in case $m=1$ by the authors of \cite{AMP15}. Thus, we work with a different non-tangential function as in \cite{CMY16}, construct special cut-off functions and rely on the a priori energy bounds from Proposition \ref{pro:local_energy}. 
	Let $\beta>0$ be a parameter to be determined later and 
	\[\nontanbeta u(x)\coloneqq \sup_{\delta>0}\left(\fint_{\delta^{2m}}^{\beta^{2m}\delta^{2m}}\fint_{B(x,\beta\delta)}|u(t,y)|^2dydt\right)^{1/2}.\] 
	
	By a covering argument we see that $\|\nontan u\|_{L^p}\sim_\beta \|\nontanbeta u\|_{L^p}$ (see the proof of Lemma \ref{lem:wp_small}). Let $\sigma>0$. We will denote
	\[E\coloneqq \left\{ x\in \BBR^n\mid\nontanbeta \leq \sigma \right\}\] and introduce \[E^*\coloneqq \left\{x\in E\mid |B(x,r)\cap E| \ge \frac{1}{2}|B(x,r)| \text{ for all } r>0 \right\}.\] Set also $B\coloneqq \BBR^n \setminus E$ and $B^* \coloneqq \BBR^n \setminus E^*$. For $0<\varepsilon < R <\infty$ we consider the truncated cones 
	\[\Gamma^{\varepsilon, R, \alpha}(x)\coloneqq \{(t,y)\in \BBR^{n+1}_+\mid t\in (\varepsilon, R) \text{ and } |y-x|<\alpha t \}\] 
	and define the saw-tooth region based at $E^*$ by setting $\mathcal R^{\varepsilon,R,\alpha}(E^*)= \cup_{x\in E^*} \Gamma^{\varepsilon, R, \alpha}(x)$. Note that this set is unbounded.
	By a change of variables we know that 
	\[\int_{E^*}\int_0^\infty\fint_{B(x,\frac{\sqrt[2m]{t}}{2})} 
	|\nabla^m u(t,y)|^2dydtdx\sim \int_{E^*}\int_0^\infty\fint_{B(x,\frac{s}{2})} 
	|s^m\nabla^m u(s^{2m},y)|^2\frac{dyds}{s}dx. \] 
	To estimate the second integral we note that by Fubini's Theorem and $\int_{E^*\cap B(y,r)}1/ r^ndx\lesssim_n 1$ we have
	\[\int_{E^*}\int_{2\varepsilon}^R\fint_{B(x,\frac{s}{2})} 
	|s^m\nabla^m u(s^{2m},y)|^2\frac{dyds}{s}dx\lesssim \int_{\mathcal R^{2\varepsilon,R,1/2}(E^*)} 
	|s^m\nabla^m u(s^{2m},y)|^2\frac{dyds}{s}\] for every $\varepsilon>0$ and $R>0$. We will estimate the last expression and let $\varepsilon \to 0$ and $R\to \infty$ eventually. The strategy is as follows. With help of a cut-off function supported in a slightly bigger saw-tooth region, $R^{\varepsilon,2R,1}(E^*)$, we will replace the domain of integration by the entire space $\BBR^{n+1}_+$. In the next step (and this is the only point where we use the stronger assumption) we will use the strong ellipticity condition \eqref{eq:strong lower ellipticity bounds} and the fact that $u$ is a global weak solution to be able to integrate by parts. We will be left with terms containing the time derivative of compactly supported functions, which will either force the corresponding integral to vanish or will give us sufficient decay in time to control the remaining term. The other integrals we will need to deal with will contain derivatives of $u$ of different orders integrated over the difference set $R^{\varepsilon,2R,1}(E^*)\setminus R^{2\varepsilon,R,1/2}(E^*)$. We handle those terms using the local energy estimates from Proposition \ref{pro:local_energy}.
	
	Consider the function $\chi\colon \BBR^{n+1}_+\to [0,1]$ given by 
	$$\chi(t,y)=\left(1-\eta\left(\frac{8 d_t (y,E^*)}{t}\right)\right)\eta\left(\frac{7}{2}\frac{t}{\varepsilon}\right)\left(1-\eta\left(\frac{7}{2}\frac{t}{R}\right)\right),$$ where $\eta\in \mathscr C^\infty(\BBR,[0,1])$ satisfies $\eta\equiv 0 $ on $[0,5]$ and $\eta\equiv 1 $ on $[7,\infty)$. Also, we denoted by $d_t (y,E^*)=\nu_{t/8}\ast  d(\cdot,E^*) (y)$ a smooth modification of the Euclidean distance function $d(y,E^*)$ with some standard mollifier $\nu_\varepsilon (\cdot)=\nu (\cdot/\varepsilon)$. We defined $\chi$ such that \begin{enumerate}[label=\upshape(\roman{*})]
		\item $\text{supp} \chi \in R^{\varepsilon,2R,1}(E^*)$ and $0\leq \chi \leq 1$, 
		\item $\chi \equiv 1$ on $\mathcal R^{2\varepsilon,R,1/2}(E^*),$
		\item $\chi \in \mathscr C^\infty(\BBR^{n+1}_+)$ with 
		\[|\partial_t \chi (t,y)|\lesssim \frac{1}{t} \quad \mbox{and} \quad |\nabla^k \chi (t,y) | \lesssim \frac{1}{t^k} \quad \mbox{for all} \quad k=0,\dots, m.\]
	\end{enumerate}
	\noindent Property (iii) follows from simple calculations and the properties of the support of $\chi$ and $\eta$. We only point out that for every $(t,y) \in R^{\varepsilon,2R,1}(E^*)$ there is $x\in E^*$ with $|x-y|<t$ and so
	\[|\nabla d_t (y,E^*)|\lesssim_\nu \frac{t+t/8}{t/8}\lesssim 1.\]
	Similarly for any multi-index $\alpha\in \BBN^n$, $|\partial^\alpha d_t(y,E^*)|\lesssim t^{1-|\alpha|}$. By the Dominated Convergence Theorem we also calculate 
	\begin{align*}\bigg|\frac{d}{dt} d_t (y,E^*)\bigg|& =\bigg|n \frac{1}{t}d_t (y,E^*) + \frac{8}{t^2}\frac{8^n}{t^n}\int \nabla \nu \left(\frac{8x}{t}\right)\cdot y\, d(y-x,E^*)dy\bigg|\\
	&\lesssim_n 1.\end{align*}
	\begin{claim}
	For $0<\varepsilon \ll R$ it holds $(t,y) \mapsto u(t^{2m},y)\chi^2 (t,y) \in L^2(\varepsilon, 2R; H^m(\BBR^n))$.
	\end{claim}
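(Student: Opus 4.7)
\medskip
\noindent The plan is to establish finiteness of $\|u(t^{2m},\cdot)\chi^2(t,\cdot)\|_{L^2(\varepsilon,2R;H^m(\BBR^n))}$ by handling the $L^2$ and the $\nabla^m$ parts separately (the intermediate derivatives are dealt with in the same way as $\nabla^m$). Expand via Leibniz
\[
\nabla^k(u\chi^2) = \sum_{j=0}^{k}\binom{k}{j}\nabla^j u \cdot \nabla^{k-j}\chi^2,\qquad k=0,\dots,m,
\]
and observe that property (iii) of $\chi$ gives $|\nabla^{k-j}\chi^2|\lesssim t^{-(k-j)}\lesssim\varepsilon^{-(k-j)}$ on the support of $\chi$. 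A standard application of the local energy estimates from Proposition \ref{pro:local_energy}, applied on parabolic cylinders contained in slight enlargements of the truncated cones based at points of $E^*$, controls $\|\nabla^j u(s,\cdot)\|_{L^2(B)}$ by $\|u\|_{L^2(Q)}$ on a larger cylinder $Q$, with $s\sim t^{2m}$ and the appropriate powers of $t$. Thus the whole claim reduces to proving finiteness of
\[
I := \int_\varepsilon^{2R}\int_{\{y:\,d_t(y,E^*)\leq t\}} |u(t^{2m},y)|^2 \, dy\, dt,
\]
after multiplying by powers of $\varepsilon$ and $R$ that one absorbs trivially, since $\varepsilon$ and $R$ are fixed.

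\medskip
\noindent For $I$, Vitali-cover the section at time $t$, $\{y:d_t(y,E^*)\leq t\}$, by balls $\{B(x_{j},\beta t)\}_j$ with $x_j\in E^*$ and finite overlap, and dyadically split $[\varepsilon,2R]$ into intervals of the form $[\delta_k^{2m},\beta^{2m}\delta_k^{2m}]$, with $\delta_{k+1}=\beta\delta_k$ and $\delta_0\sim\varepsilon^{1/(2m)}$; the number $K$ of such intervals is $\lesssim\log(R/\varepsilon)$. Using that each $x_j\in E^*\subseteq E$, the defining bound $\nontanbeta u(x_j)\leq\sigma$ yields
\[
\int_{\delta_k^{2m}}^{\beta^{2m}\delta_k^{2m}}\int_{B(x_j,\beta\delta_k)}|u(s,y)|^2\,dy\,ds\;\lesssim\;(\beta\delta_k)^n\,\delta_k^{2m}\,(\nontanbeta u(x_j))^2\;\leq\;(\beta\delta_k)^n\,\delta_k^{2m}\,\sigma^2.
\]
Summing these bounds in $j$ and $k$ shows that it suffices to control $\sum_{k,j}(\beta\delta_k)^n\delta_k^{2m}\sigma^2$, and one reads this geometric sum after changing Vitali centres into the integral $\int_{E^*}(\dots)\,dx$ at the cost of a dimensional constant and the Vitali overlap bound.

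\medskip
\noindent The main obstacle is precisely this last step: since the Hardy--Littlewood maximal theorem gives only $|B^*|<\infty$, the complementary set $E^*$ is typically of infinite measure, so one cannot just multiply by $|E^*|$. The remedy, which also supplies the correct form of the estimate for the subsequent argument, is the weight trick: on $E^*$ one has $\sigma^2\leq\sigma^{2-p}(\nontanbeta u(x))^p$, so integration in $x$ yields the finite bound $\sigma^{2-p}\|\nontanbeta u\|_{L^p(E^*)}^p\lesssim\sigma^{2-p}\|u\|_{X^p_m}^p$, the last equivalence of norms following from a Vitali covering argument identifying $\|\nontanbeta u\|_{L^p}\sim\|\nontan u\|_{L^p}$ (compare the reduction in the proof of Lemma \ref{lem:wp_small}). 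Putting everything together,
\[
I\;\lesssim\;\sigma^{2-p}\,\|u\|_{X^p_m}^p\,(\beta,\varepsilon,R,\log(R/\varepsilon)\text{-dependent constant})\;<\;\infty,
\]
and combining with the Leibniz decomposition and Proposition \ref{pro:local_energy} produces the asserted membership $u\chi^2\in L^2(\varepsilon,2R;H^m(\BBR^n))$. The bound is far from sharp, but the claim only requires finiteness so that $u\chi^2$ can be used as a test function to justify the forthcoming integration by parts.
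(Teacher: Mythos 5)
Your overall strategy coincides with the paper's: partition $(\varepsilon,2R)$ into finitely many intervals of the form $[\delta_k^{2m},\beta^{2m}\delta_k^{2m}]$, use the local energy estimates of Proposition \ref{pro:local_energy} (in their time-integrated form) to reduce all derivative terms to the finiteness of $\iint |u(t^{2m},y)|^2\,dy\,dt$ over a slightly enlarged saw-tooth region, convert that into an integral of a non-tangential maximal function over the good set, and exploit $p<2$ together with $\nontanbeta u\leq \sigma$ on $E$. The decisive step, however, fails as written. Having already bounded each cylinder integral by $(\beta\delta_k)^n\delta_k^{2m}\sigma^2$ via $\nontanbeta u(x_j)\leq\sigma$ at the Vitali centres, you are left with $\sigma^2$ times a divergent sum (infinitely many balls per scale, since the spatial sections are unbounded when $E^*$ is), and your proposed remedy, ``on $E^*$ one has $\sigma^2\leq\sigma^{2-p}(\nontanbeta u(x))^p$'', is the reverse of the true inequality: on $E^*\subseteq E$ one has $\nontanbeta u(x)\leq\sigma$, hence $\sigma^{2-p}(\nontanbeta u(x))^p\leq\sigma^2$. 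So the inequality you invoke is false off the level set $\{\nontanbeta u=\sigma\}$, and the divergence of the sum is not cured.

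The repair is to perform the H\"older-type splitting \emph{before} discarding the maximal function, which is exactly what the paper does. Keep the cylinder average attached to the centre: the cylinder $[\delta_k^{2m},\beta^{2m}\delta_k^{2m}]\times B(x_j,\beta\delta_k)$ is contained, up to a fixed measure ratio, in the corresponding cylinder of a larger aperture about every $x\in B(x_j,\beta\delta_k)$, and the density property $|E\cap B(x_j,\beta\delta_k)|\geq\tfrac12|B(x_j,\beta\delta_k)|$ (this is where $x_j\in E^*$, not merely $x_j\in E$, is needed) lets you replace the value at the centre by an average of the larger-aperture maximal function squared over $E\cap B(x_j,\beta\delta_k)$; summing with the Vitali overlap gives, per time scale, a bound by $\int_E |\mathcal N_{m,\beta'}u(x)|^2\,dx$ for a fixed enlarged aperture $\beta'$ (equivalently, run the paper's Fubini argument with $1\lesssim t^{-n}|E\cap B(y,2t)|$). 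Since, as in the paper, the aperture defining $E$ is chosen last and large enough, $\mathcal N_{m,\beta'}u\lesssim \nontanbeta u\leq\sigma$ pointwise on $E$, and only now does $p<2$ enter: $\int_E|\mathcal N_{m,\beta'} u|^2\,dx=\int_E|\mathcal N_{m,\beta'} u|^p\,|\mathcal N_{m,\beta'} u|^{2-p}\,dx\lesssim \sigma^{2-p}\|u\|_{X^p_m}^p<\infty$. With this correction the rest of your reduction (Leibniz expansion, $|\nabla^{k}\chi^2|\lesssim t^{-k}\leq\varepsilon^{-k}$, local energy estimates, finitely many time scales) is sound and the claim follows as in the paper.
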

	
	Let us first observe the following. We constructed $\chi$ such that it separates $\mathcal R^{2\varepsilon,R,1/2}(E^*)$ from $\BBR^n\setminus\mathcal R^{\varepsilon,2R,1}(E^*)$, but changing the scalar factors in the definition we obtain the same claim for $\tilde \chi $ separating $\mathcal R^{\varepsilon,2R,1}(E^*)$ and $\BBR^n\setminus\mathcal R^{\varepsilon/2,4R,2}(E^*)$. Thus $\nabla^mu(t^{2m},x)\in L^2(\mathcal R^{\varepsilon,2R,1}(E^*))$. 
	
	By density, $u(t^{2m},y)\chi^2 (t,y)$ can be then used as a test function. Clearly,
		\begin{align*}
		I&=\int_{\mathcal R^{2\varepsilon,R,1/2}(E^*)} 
		|s^m\nabla^m u(s^{2m},y)|^2\frac{dyds}{s}  \leq \int_{\BBR^{n+1}_+} 
		|\chi (s,y) s^m\nabla^m u(s^{2m},y)|^2\frac{dyds}{s},
		\end{align*}
		hence by the strong ellipticity assumption \eqref{eq:strong lower ellipticity bounds}
	\begin{align*}
	I&\lesssim \frac{1}{\lambda} \text{Re} \int_{\BBR^{n+1}_+} s^{2m}
	\chi^2 (s,y) A(s^{2m},y) \nabla^m u(s^{2m},y) \cdot \overline {\nabla^m u(s^{2m},y)}\frac{dyds}{s}.
	\end{align*}
	
	We now use the product rule and then the equation to obtain 
	\begin{align*}
	I & \lesssim_{\lambda,m}  \text{Re} \int_{\BBR^{n+1}_+} 
	A(t,y) \nabla^m u(t,y) \cdot \overline {\nabla^m (\chi^2 (\sqrt[2m]{t},y) u(t,y))}dydt\\		     
	&- \text{Re}\sum_{|\alpha|=|\beta|=m} \sum_{\gamma < \alpha} c_{\gamma, \alpha} \int_{\BBR^{n+1}_+} a_{\alpha,\beta}(t,y) \partial^\beta u(t,y) \;\partial^{\alpha-\gamma}(\chi^2 (\sqrt[2m]{t},y))\;\overline
	{ \partial ^\gamma u(t,y)} dydt  \\  
	&\lesssim_{\Lambda, m, n}  A + B + \sum_{k=0}^{m-1} C_k,
	\end{align*}
	where 
	\begin{align*}
	A&= \bigg|\int_{\BBR_+} \left\langle 
	\partial _t(\chi (t,y)  u(t^{2m},y)), \chi (t,y) u(t^{2m},y)\right\rangle_{H^{-m}(\BBR^n),\sob}dt\bigg|,
	\\
	B&=\bigg|\int_{\BBR^{n+1}_+} 
	\partial _t(\chi (t,y))  u(t^{2m},y) \overline {\chi (t,y) u(t^{2m},y)}dydt\bigg|	,
	\\
	C_k&=\sum_{|\alpha|=|\beta|=m} \sum_{\gamma<\alpha,|\gamma|=k} \int_{\BBR^{n+1}_+} t^{2m} | \partial^\beta u(t^{2m},y)| |\partial^{\alpha-\gamma}(\chi^2 (t,y))|| \partial ^\gamma u(t^{2m},y)| \frac{dydt}{t}.
	\end{align*}
	
	Since $\chi$ is compactly supported in time, we obtain 
	\[ A\sim \int_0^\infty \partial_t \|\chi(t,\cdot) u(t^{2m},\cdot)\|_{L_2}^2=0.\]
	
	By properties (i)-(iii) of $\chi$, $B$ is bounded by
	\[B\lesssim \int_{R^{\varepsilon,2R,1}(E^*)\setminus R^{2\varepsilon,R,1/2}(E^*)}|u(t^{2m},y)|^2 \frac{dydt}{t}.\]
	
	Thus, we need to carefully estimate the integrals close to the boundary of the truncated cones. We have 
	\[R^{\varepsilon,2R,1}(E^*)\setminus R^{2\varepsilon,R,1/2}(E^*)\subseteq \tilde { \mathcal{B}} ^{\varepsilon, R}(E^*)=\tilde { \mathcal{B}} ^{\varepsilon}(E^*)\cup\tilde { \mathcal{B}} ^{ R}(E^*)\cup\tilde { \mathcal{B'}} (E^*),\] where we denote
	 $\tilde { \mathcal{B}} ^{\omega}(E^*)\coloneqq \{ (t,y)\in [0,\infty)\times \BBR^n\mid t\in (\omega,2\omega) \text{ and } d(y,E^*)<t\}$ for $\omega>0$, and  
	 $\tilde { \mathcal{B'}} (E^*)\coloneqq \{ (t,y)\in[0,\infty)\times \BBR^n\mid t\in (\varepsilon,2R) \text{ and } t/2\leq d(y,E^*)<t\}$.
	Note also that H\"older's inequality gives 
		\[ C_0 \lesssim \left(\int_{\tilde { \mathcal{B}} ^{\varepsilon, R}(E^*)}|t^m\nabla^mu(t^{2m},y)|^2 \frac{dydt}{t}\right)^{1/2} \left(\int_{\tilde { \mathcal{B}} ^{\varepsilon, R}(E^*)}|u(t^{2m},y)|^2 \frac{dydt}{t}\right)^{1/2},\] as well as, for $k=1,\dots, m-1$,
		\[ C_k \lesssim \left(\int_{\tilde { \mathcal{B}} ^{\varepsilon, R}(E^*)}|t^m\nabla^mu(t^{2m},y)|^2 \frac{dydt}{t}\right)^{1/2} \left(\int_{\tilde { \mathcal{B}} ^{\varepsilon, R}(E^*)}|t^k \nabla^ku(t^{2m},y)|^2 \frac{dydt}{t}\right)^{1/2}.\]
	
	Because of their similar structures, we estimate $B$ and $C_k$ simultaneously using local energy estimates. First, for any $(t,y)\in \tilde { \mathcal{B}} ^{\omega}(E^*)$ there is an $x\in E^*$ with $|x-y|<t$ and, by definition, $|E\cap B(y,2t)|\ge|E\cap B(x,t)|\ge \frac{1}{2}|B(x,t)| =\frac{1}{2} \omega_n t^n.$
	By Fubini's Theorem and for $\omega = \varepsilon$ or $\omega =R$ we estimate \begin{align*}
	\int_{\tilde { \mathcal{B}} ^{\omega}(E^*)}|u(t^{2m},y)|^2 \frac{dydt}{t}&\lesssim 
	\int_{\tilde { \mathcal{B}} ^{\omega}(E^*)}\left(\int_{E\cap B(y,2t)} t^{-n}dx\right)|u(t^{2m},y)|^2 \frac{dydt}{t}\\
	&\lesssim
	\int_{\omega}^{2\omega}\int_E \fint_{B(x,2t)} |u(t^{2m},y)|^2 \frac{dydxdt}{t}\\	
	&\lesssim
	\int_E \int_{\omega}^{4\omega}\fint_{B(x,4\omega)} |u(t^{2m},y)|^2 \frac{dydt}{t}dx\\	
	&\lesssim
	\int_E\int_{\omega^{2m}}^{4^{2m}\omega^{2m}} \fint_{B(x,4\omega)} |u(s,y)|^2 \frac{dyds}{s}dx\\
	&\lesssim
	\int_E\sup_{\omega >0}\fint_{\omega^{2m}}^{4^{2m}\omega^{2m}} \fint_{B(x,4\omega)} |u(s,y)|^2 dydsdx=\int_E | {\mathcal N}_{m,4}(x)|^2 dx.
	\end{align*} 
	
	Similarly, applying the local energy estimates from Proposition \ref{pro:local_energy} we obtain
	\begin{align*}
	\int_{\tilde { \mathcal{B}} ^{\omega}(E^*)}|t^k\nabla^ku(t^{2m},y)|^2 \frac{dydt}{t}
	&\lesssim
	\int_E\int_{\omega^{2m}}^{2^{2m}\omega^{2m}} \fint_{B(x,4\omega)} |s^{\frac{k}{2m}}\nabla^ku(s,y)|^2 \frac{dyds}{s}dx\\
	&\lesssim \int_E\frac{\omega^{2k}}{\omega^{4m}}\omega^{2m-2k}\int_{\omega^{2m}/2}^{2^{2m}\omega^{2m}} \fint_{B(x,8\omega)} |u(s,y)|^2 dydsdx\\
	&\lesssim
	\int_E\sup_{\omega >0}\fint_{\omega^{2m}/2}^{8^{2m}\omega^{2m}} \fint_{B(x,8\omega)} |u(s,y)|^2 dydsdx\\
	&\lesssim\int_E | {\mathcal N}_{m,8}(x)|^2 dx,
	\end{align*} 
	for any $k=1, \dots, m$. In the last step we used again a covering argument. To estimate the integrands on $\tilde { \mathcal{B'}} (E^*)$, let us consider the Whitney decomposition $\{B(x_k,r_k)\}_k$ of $B^*$. This covering has the properties
	\begin{enumerate}[label=\upshape{(\roman*)}]
		\item $B^*= \cup_k B(x_k,r_k)$;
		\item There are constants $c_1,\; c_2 \in (0,1)$ such that, for all $k$,
		\[c_1d(x_k,E^*)\leq r_k\leq c_2d(x_k,E^*).\]
		\item There is a constant $c_3>0$ such that, for all $x\in B^*$, 
		$\sum_k \mathbbm{1}_{B(x_k,r_k)}(x)\leq c_3.$
	\end{enumerate}
	
	We then estimate, performing a change of variables
	\begin{align*}
	\int_{\tilde { \mathcal{B'}} (E^*)}|u(t^{2m},y)|^2 \frac{dydt}{t}&\lesssim \sum_k \int_{r_k(\frac{1}{c_2}-1)}^{2r_k(\frac{1}{c_1}+1)}\int_{B(x_k,r_k)} |u(t^{2m},y)|^2 \frac{dydt}{t}\\
	&\lesssim \sum_k r_k^n \fint_{r_k^{2m}(\frac{1}{c_2}-1)^{2m}}^{2^{2m}r_k^{2m}(\frac{1}{c_1}+1)^{2m}}\fint_{B(x_k,\frac{c_2}{1-c_2}\sqrt[2m]{s})} |u(s,y)|^2 dyds.
	\end{align*}
	Since $E^*\subseteq E$, we have $d(x_k,E)\leq d(x_k,E^*)\leq \frac{r_k}{c_1}\leq \frac{c_2}{c_1(1-c_2)}\sqrt[2m]{s}$, for any $s \ge r_k^{2m}(\frac{1}{c_2}-1)^{2m}$. Thus there is an $x_k'\in E$ with $B(x_k, \frac{c_2}{(1-c_2)}\sqrt[2m]{s})\subseteq B(x_k',  \frac{c_2}{(1-c_2)}(\frac{1}{c_1}+1)\sqrt[2m]{s})$ and we can deduce 
	\begin{align*}
	\int_{\tilde { \mathcal{B'}} (E^*)}|u(t^{2m},y)|^2 \frac{dydt}{t}
	&\lesssim \sum_k r_k^n \fint_{r_k^{2m}(\frac{1}{c_2}-1)^{2m}}^{2^{2m}r_k^{2m}(\frac{1}{c_1}+1)^{2m}}\fint_{B(x_k',\frac{c_2}{(1-c_2)}(\frac{1}{c_1}+1)\sqrt[2m]{s})} |u(s,y)|^2 dyds \\
	&\lesssim \sum_k r_k^n \sup_{x\in E} |\nontanbeta u(x)|^2 \lesssim |B^*|\sup_{x\in E} |\nontanbeta u(x)|^2
	\end{align*} for some $\beta\ge 8 $ big enough, depending on $m,\,c_1$ and $c_2$ only. To estimate 
	\[\int_{\tilde { \mathcal{B'}} (E^*)}|t^k\nabla^ku(t^{2m},y)|^2 \frac{dydt}{t}\] we only need to apply Proposition \ref{pro:local_energy} and due to the proper scaling we arrive at the same bound. Thus, there is a $\beta>0$, depending on $\sigma$ and in particular independent of $\varepsilon>0$ and $R>0$, so that
	\[\int_{\mathcal R^{2\varepsilon,R,1/2}(E^*)} 
	|s^m\nabla^m u(s^{2m},y)|^2\frac{dyds}{s} \lesssim \int_E |\nontanbeta u(x)|^2 dx + |B^*|\sup_{y\in E} |\nontanbeta u(y)|^2. \] 
	
	We now complete the proof, following \cite[Theorem 7.3]{AMP15}. Taking limits $\varepsilon\to 0 $ and $R\to \infty$ we get
	\[\int_{E^*}\int_0^\infty\fint_{B(x,\frac{\sqrt[2m]{t}}{2})} 
	|\nabla^m u(t,y)|^2dydtdx \lesssim \int_E |\nontanbeta u(x)|^2 dx + |B^*|\sigma^2. \]
	Denoting $g_N(\sigma)=\big|\{x\in \BBR^n\mid\nontanbeta u (x) >\sigma\}\big| $ and $$g_S(\sigma)=\Bigg|\left\{ x\in \BBR^n\bigg| \,\left(\int_0^\infty \fint_{B(x,\frac{\sqrt[2m]{s}}{2})} |\nabla^m u (t,y) |^2 dydt\right)^{1/2}>\sigma\right\}\Bigg|$$ we have $|B^*| \lesssim |B|= g_N(\sigma)$ and $\int_E |\nontanbeta u(x)|^2 dx \leq 2 \int_0^\sigma t g_N(t) dt$. Putting this together,
	\begin{align*}g_S(\sigma)& \lesssim |B^*| + \frac{1}{\sigma^2}\int_{E^*}\int_0^\infty \fint_{B(x,\frac{\sqrt[2m]{t}}{2})}|\nabla^m u(t,y)|^2dydtdx\\
	& \lesssim|B^*| + \frac{1}{\sigma^2}\int_E |\nontanbeta u(x)|^2 dx
	\lesssim g_N(\sigma) + \frac{1}{\sigma^2} \int_0^\sigma t g_N(t) dt.
	\end{align*}
	Not that we have not used $p<2$ yet. Let us conclude
	\[\int_0^\infty \sigma^{p-1} g_S(\sigma) d\sigma \lesssim  \int_0^\infty \sigma^{p-1} g_N(\sigma) d\sigma+\int_0^\infty \sigma^{p-3}\int_0^{\sigma} t g_N(t) dt d\sigma \lesssim \int_0^\infty \sigma^{p-1} g_N(\sigma) d\sigma.
	\] This gives $\|\nabla^m u\|_{\tentspace} \lesssim \|\nontanbeta u\|_{L^p}\lesssim \|u\|_{X^p_m}.$
	We are left with proving the claim from above.
	\begin{claim}
			For $0<\varepsilon \ll R$ it holds $(t,y) \mapsto u(t^{2m},y)\chi^2 (t,y) \in L^2(\varepsilon, 2R; H^m(\BBR^n))$.
	\end{claim}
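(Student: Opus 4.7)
The claim is a technical integrability statement that justifies using $u\chi^2$ as a test function. My approach is to first reduce it, via the Leibniz rule, to showing that $\nabla^k u$ is square-integrable over the support of $\chi^2$ for each $k\in\{0,\dots,m\}$, and then to establish these integrability facts via a Fubini/covering argument (for $k=0$) and by local energy estimates (for $k\geq 1$). Throughout I replace $\chi^2$ by a slightly fatter smooth cut-off $\tilde\chi$ as mentioned in the text, which equals $1$ on $\mathcal{R}^{\varepsilon,2R,1}(E^*)$ and is supported in $\mathcal{R}^{\varepsilon/2,4R,2}(E^*)$, so that I have a safety margin for applying local energy estimates.

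After the change of variables $s=t^{2m}$, which is a bi-Lipschitz diffeomorphism of the interval $[\varepsilon,2R]$ onto $[\varepsilon^{2m},(2R)^{2m}]$, the claim amounts to showing that $u\,\tilde\chi\in L^2(\varepsilon^{2m},(2R)^{2m};H^m(\mathbb{R}^n))$, where (abusing notation) $\tilde\chi$ denotes the transferred cut-off. On the support of $\tilde\chi$, each spatial derivative $|\nabla^l\tilde\chi|$ is bounded by a constant depending on $\varepsilon,R,m$ (property (iii) of $\chi$, transferred to the new variable). By Leibniz it therefore suffices to bound $\int\int_{\mathrm{supp}\,\tilde\chi}|\nabla^k u(s,y)|^2\,dy\,ds$ for $k=0,\dots,m$. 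For $k=0$, I use exactly the covering trick employed later in the proof: for any $(s,y)\in\mathrm{supp}\,\tilde\chi$ one has $d(y,E^*)\lesssim s^{1/(2m)}$, and combining this with the density property of $E^*$ inside $E$ gives $|E\cap B(y,2s^{1/(2m)})|\gtrsim s^{n/(2m)}$; an application of Fubini then yields
\[
\int\int_{\mathrm{supp}\,\tilde\chi}|u(s,y)|^2\,dy\,ds \;\lesssim\; \int_E |\mathcal N_{m,\beta}u(x)|^2\,dx,
\]
for a sufficiently large $\beta$, with the implicit constant depending on $\varepsilon$ and $R$. Since $\mathcal N_{m,\beta}u\le\sigma$ on $E$ and $\mathcal N_{m,\beta}u\in L^p(\mathbb R^n)$ with $p<2$, interpolation between $L^\infty(E)$ and $L^p(E)$ bounds the right-hand side by $\sigma^{2-p}\|\mathcal N_{m,\beta}u\|_{L^p}^p<\infty$. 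For $1\leq k\leq m$, I cover $\mathrm{supp}\,\tilde\chi$ by parabolic balls of fixed small radius (all contained in the slightly larger set where $\tilde\chi$ may still be non-zero but $\chi$ is already fattened out), apply Proposition \ref{pro:local_energy} on each to bound $\|\nabla^k u\|_{L^2(\text{small ball})}$ by $\|u\|_{L^2(\text{larger ball})}$ with constants uniform in the covering, and sum using that the covering has bounded overlap; the case $k=0$ then controls the total sum.

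\textbf{Main obstacle.} The genuine difficulty is that $E^*$, and hence the spatial support of $\chi^2$ at each fixed time, may be unbounded, so one cannot simply invoke local integrability of $u$ on a bounded set. This is resolved by observing that integrability over the spatially unbounded support is ultimately controlled by $\int_E|\mathcal N_{m,\beta}u|^2<\infty$, which is finite precisely because $p<2$ forces $\mathcal N_{m,\beta}u\cdot\mathbbm 1_E\in L^p\cap L^\infty\subseteq L^2(\mathbb R^n)$. The only care needed beyond this is to verify that the covering used for the case $k\geq 1$ stays inside $\mathcal{R}^{\varepsilon/2,4R,2}(E^*)$ so that the local energy estimates are applied to cylinders fully contained in $\mathbb R^{n+1}_+$ with bounded aspect ratio independent of the covering index.
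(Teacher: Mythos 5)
Your proposal is correct and follows essentially the same route as the paper: a covering/Fubini argument using the density of $E^*$ in $E$ to dominate the $k=0$ term by $\int_E|\nontanbeta u|^2\,dx$, the local energy estimates of Proposition \ref{pro:local_energy} for the intermediate and top-order derivatives, and the observation that $p<2$ gives $\int_E|\nontanbeta u|^2\leq \sigma^{2-p}\|\nontanbeta u\|_{L^p}^p<\infty$. The only cosmetic difference is that the paper partitions $[\varepsilon,2R]$ into finitely many intervals $[\delta^{2m},\beta^{2m}\delta^{2m}]$ while you organize the same estimates through a bounded-overlap covering by parabolic balls inside a slightly fattened saw-tooth region, which is the same mechanism.
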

	\begin{claimproof}
	First, partition $[\varepsilon,2R]$ into finitely many intervals $[\delta^{2m},\beta^{2m}\delta^{2m}]$. Then, Fubini's Theorem and a priori energy estimates are applied as in the estimate of $$\int_{\tilde { \mathcal{B}} ^{\omega}(E^*)}|t^k\nabla^ku(t^{2m},y)|^2 \frac{dydt}{t},$$ $k=0,\dots, m$, to see $(t,y) \mapsto u(t^{2m},y)\chi^2 (t,y) \in L^2(\delta^{2m},\beta^{2m}\delta^{2m}; H^m(\BBR^n))$. For the $L^2(L^2)$ norm, as $p<2$, the obtained bound reads $$\int_E |\nontanbeta u (x)|^2 dx=   \int_E |\nontanbeta u (x)|^p  |\nontanbeta u (x)|^{2-p} dx \leq \sigma^{2-p} \int_E |\nontanbeta u (x)|^p  dx<\infty.\vspace{-0.82cm}$$
	\end{claimproof}
	
	\end{proof}
	\begin{proof}[Proof of Proposition \ref{prop:comp_tent_kenig_bigp}]
	Suppose $2\leq p\leq \infty$ and $u\in X^p_m$ is a global weak solution to \eqref{eq:the parabolic equation} with $ \|u\|_{X^p_m}<\infty$. By Remark \ref{rem:eqv norm tent space}, it holds $\|f\|_{\tentspace}\lesssim_p \| C(|f|)\|_{L^p}$, so it suffices to show 
	\begin{equation}\label{eq:goal} C(|\nabla^m u|)(y)=\sup_{B\ni y}\left(\int_0^{r^{2m}}\fint_B|\nabla^m u(t,x)|^2dtdx\right)^{1/2}\lesssim \left(\mathcal M_{HL}(\nontan u)^2\right)^{1/2}(y)\end{equation}  for $y\in \BBR^n$. To achieve this, we show for any ball $B(x_0,R)$
		\begin{align*}
		\int_{0}^{R^{2m}} \int_{B(x_0,R)} |\nabla^m u(t,x)|^2 dxdt \lesssim\int_{B(x_0,6R)} |\nontan (u)(x)|^2dx. 
		\end{align*} This estimate also proves the Proposition in case $p=2$.
	Let us first underline the fact that $u$ is assumed to be a global weak solution, whence it satisfies $u\in L^2_{loc}(0,\infty;H^m_{loc}(\BBR^n))$.
	
	Let $x_0\in \BBR^n$, $\varepsilon>0 $ and $R>0$. We choose $\chi$ to be a smooth cut-off function in time with $\chi\equiv 1$ on $[2\varepsilon,R^{2m}]$, supported in $[\varepsilon, (2R)^{2m} ]$ and satisfying
	\[|\partial_t \chi(t)|\lesssim \frac{1}{\varepsilon} \quad\mbox{if}\quad t\in [\varepsilon, 2\varepsilon]\] and 
	\[|\partial_t \chi(t)|\lesssim  \frac{1}{R^{2m}}\quad\mbox{if}\quad t\in [R^{2m}, (2R)^{2m}].\] Let also $\phi \in \testfunctions$ be such that  $\phi\equiv1 $ on $B(x_0,R)$ and $\phi\equiv 0$ on $\BBR^n\setminus B(x_0,2R)$. We require 
	$\|\partial^\alpha \phi\|_{L^\infty}\lesssim R^{-|\alpha|}$ for all $|\alpha|\leq m$.
	Then $\psi \coloneqq u (\chi\phi)^{2m}\in L^2(\varepsilon,(2R)^{2m};H^m_0(B(x_0,2R))$ is an admissible test function and an application of Lemma \ref{lem:absolute continuity} leads to
	\begin{align*}
	0=&2\text{Re} \int_{\varepsilon}^{(2R)^{2m}}\int_{B(x_0,2R)}  \partial_t(\chi^{2m})\phi^{4m}\chi^{2m} |u|^2dxdt
		\\
		&-2\text{Re} \int_{\varepsilon}^{(2R)^{2m}}\int_{B(x_0,2R)} A\nabla^m u \nabla^m (u\chi^{4m}\phi^{4m})dxdt.
	\end{align*}
	 
	 Here, we used $\partial_t \psi =(\chi\phi)^{2m}\partial_tu +\phi^{2m}u\partial_t\chi^{2m}$ in $L^2(0,\infty;H^{-m}(B(x_0,2R)))$. Let us denote the first summand from above by $J$. The same calculation as done for \eqref{claim1} in Proposition \ref{pro:local_energy} (notice that $\phi$ has proper decay) leads to  
	 \begin{align*}
	 	I&\coloneqq \int_0^\infty \chi^{4m} \int_{\BBR^n} |\nabla^m(u \phi^{2m})|^2  dx dt\\&\lesssim_{\lambda,\Lambda,m,n} |J|+ \sum_{k=0}^{m-1}  \int_0^\infty \frac{\chi^{4m}}{R^{2m-2k}} \int_{B(x_0,2R)\setminus B(x_0,R)}|\nabla^k u|^2 dx dt \\
	 	& =: |J| + \sum_{k=0}^{m-1} I_k,
	 \end{align*} 
	where clearly $I_k=  \int_0^\infty \chi^{4m}/R^{2m-2k} \int_{\BBR^n}|\nabla^k u|^2 dx dt$. 
	
	We first bound $|J|$. Up to constants depending on $m$, it holds by our assumptions on $\chi$
	\begin{align*}
	|J|&\lesssim \bigg|\int_0^\infty \int_{\BBR^n}\partial_t \chi(t) \mathbbm{1}_{B(x_0,2R)}(x)|u(t,x)|^2dxdt\bigg|\\
	&\lesssim \frac{1}{\varepsilon}\int_{\varepsilon}^{2\varepsilon}\int_{\BBR^n}|u(t,x)|^2\mathbbm{1}_{B(x_0,2R)}(x) dx dt +  \frac{1}{R^{2m}}\int_{R^{2m}}^{(2R)^{2m}}\int_{\BBR^n}|u(t,x)|^2\mathbbm{1}_{B(x_0,2R)}(x)dxdt\\
	&= \int_{\BBR^n}\fint_{\varepsilon}^{2\varepsilon}\fint_{B(y,\sqrt[2m]{2\varepsilon})}|u(t,x)|^2\mathbbm{1}_{B(x_0,2R)}(x)dxdtdy \\
	&+  \int_{\BBR^n}\fint_{R^m}^{(2R)^m}\fint_{B(y,2R)}|u(t,x)|^2\mathbbm{1}_{B(x_0,2R)}(x)dxdtdy,
	\end{align*} where the last equality follows by Fubini's Theorem. We realize that, up to a constant,  
	\begin{align*}
	|J|\lesssim \int_{\BBR^n} |\nontan((t,y)\mapsto u(t,y)\mathbbm{1}_{[0,(2R)^{2m}]}(t)\mathbbm{1}_{B(x_0,2R)}(y))(x)|^2dx.
	\end{align*} 
	
	If $p=2$ this is implies $|J|\lesssim \|u\|_{X^2_m}$. If $p\in(2,\infty]$, we make use of the triangular inequality to state that $B(x,\sqrt[2m]{\delta})\cap B(x_0,2R)\neq \emptyset$ for some $\delta<2 (2R)^{2m}$ implies $x\in B(x_0, 6R)$, thus
	\begin{equation} \label{eq:estimate_norm_of_non_tan}
	||\nontan((t,y)\mapsto u(t,y)\mathbbm{1}_{[0,(2R)^{2m}]}(t)\mathbbm{1}_{B(x_0,2R)}(y))||_{L^2(\BBR^n)}^2\lesssim \int_{B(x_0,6R)} |\nontan (u)(x)|^2dx.
	\end{equation} 
	
	We turn to our main estimate. Let us observe that we could have also chosen te support of $\phi$ to be contained in some $B(x_0,\overline{r})$ for $\overline{r} <2R $ and require that $\phi =1$ on $B(x_0,\underline{r})$ for some $\underline{r} <\overline{r}$. This would change the factor $R^{-(2m-2k)} $ in $I_k$ to $(\overline{r}-\underline{r})^{-(2m-2k)} $. We thus have for all $0<\underline{r} <\overline{r}\leq 2R$ the following inequality
	\begin{align*}
	\int_0^\infty  \chi^{4m} \int_{B(x_0,\underline{r})} |\nabla^m u|^2 dxdt
	& \lesssim \sum_{k=0}^{m-1} \frac{1}{(\overline{r}-\underline{r})^{2m-2k}}
	\int_0^\infty \chi^{4m} \int_{B(x_0,\overline{r})\setminus  B(x_0,\underline{r})} |\nabla^k u|^2 dxdt \notag\\&
	+\int_{B(x_0,6R)} |\nontan (u)|^2dx. \notag
	\end{align*}
	
	Regarding the second summand as a constant, we repeat the iteration procedure from Proposition \ref{pro:local_energy} (see the Claim therein) based on the technique by Barton (cf.\ \cite{B16}). As a result we end up with 
	\begin{align}
	\int_0^\infty  \chi^{4m} \int_{B(x_0,R)} |\nabla^m u|^2 dxdt & \lesssim \frac{1}{R^{2m}}
	\int_0^\infty \chi^{4m} \int_{B(x_0,2R)} |u|^2 dxdt
	+\int_{B(x_0,6R)} |\nontan (u)|^2dx, \label{eq:claim10}
	\end{align}
	where the constants depend only on ellipticity and dimensions. Thus, we are left with estimating $\tilde I_0 =\frac{1}{R^{2m}}
	\int_\varepsilon ^{(2R)^{2m}} \int_{B(x_0,2R)} |u|^2 dxdt$. We choose $K=K_\varepsilon \in \BBN$ such that $ 2^{K-1}\varepsilon<(2R)^{2m}\leq 2^{K}\varepsilon$ and average in space to obtain
	\begin{align*}
	\tilde I_0& \leq \frac{1}{R^{2m}} \sum_{k=1}^{K} \int_{2^{k-1}\varepsilon}^{2^{k}\varepsilon} \int_{\BBR^n}  |u(t,x)\mathbbm{1}_{B(x_0,2R)}(x)|^2 dxdt 
	\\&= \frac{1}{R^{2m}} \int_{\BBR^n}\sum_{k=1}^{K} \int_{2^{k-1}\varepsilon}^{2^{k}\varepsilon} \fint_{B(y,\sqrt[2m]{2^k\varepsilon})}  |u(t,x)\mathbbm{1}_{B(x_0,2R)}(x)|^2 dxdtdy  \\
	&\lesssim  \frac{1}{R^{2m}} \int_{\BBR^n}\sum_{k=1}^{K} 2^k\varepsilon \fint_{2^{k-1}\varepsilon}^{2^{k}\varepsilon} \fint_{B(y,\sqrt[2m]{2^k\varepsilon})}  |u(t,x)\mathbbm{1}_{B(x_0,2R)}(x)|^2 dxdtdy 
	\\
	&\lesssim  \frac{1}{R^{2m}}\left(\sum_{k=1}^{K} 2^k\varepsilon\right) \int_{\BBR^n}
	|\nontan((t,y)\mapsto u(t,y)\mathbbm{1}_{[0,(2R)^{2m}]}(t)\mathbbm{1}_{B(x_0,2R)}(y))(x)|^2 dx.
	\end{align*}
	Using \eqref{eq:estimate_norm_of_non_tan} and $$\sum_{k=1}^{K} 2^k\varepsilon\leq 2^{K+1}\varepsilon < 4 (2R)^{2m}$$ 
	we obtain from \eqref{eq:claim10} 
	\begin{align}\label{eq:claim11}
	\int_{2\varepsilon}^{R^{2m}} \int_{B(x_0,R)} |\nabla^m u(t,x)|^2 dxdt \lesssim\int_{B(x_0,6R)} |\nontan (u)(x)|^2dx 
	\end{align} 
	with constants independent on $\varepsilon>0$ and $R>0$. We first let $\varepsilon \to 0$. If $p=2$ then $R\to\infty$ finishes the proof. In other case, we divide both sides of \eqref{eq:claim11} by $R^n$ and obtain to obtain the claimed estimate \eqref{eq:goal}, which finishes the proof by the boundedness of the Hardy--Littlewood maximal function. 
	\end{proof}
\section*{Acknowledgments} I would like to thank my Master's thesis supervisor Herbert Koch, for his support, sharing with me his intuition and the fruitful discussions about the tent space well-posedness. My gratitude belongs also to Pierre Portal, who brought this project to my attention and whom I could always contact with questions. Finally, I owe my gratitude to Pascal Auscher, Moritz Egert, Xavier Ros-Oton, Olli Saari and Christoph Thiele for the conversations we had and their comments. 
	Finally, I would like to acknowledge helpful suggestions made by the anonymous referee.


\begin{thebibliography}{01}
	
		\bibitem{A67} D. G. Aronson, \emph{Bounds for the fundamental solution of a parabolic equation}, Bull. Amer. Math. Soc. \textbf{73} (1967), 890--896,
	 \url{https://doi.org/10.1090/S0002-9904-1967-11830-5}
			
		\bibitem{A07} P. Auscher, \emph{On necessary and sufficient conditions for $L^p$-estimates of Riesz transforms associated to elliptic operators on $\BBR^n$ and related estimates}, Mem. Amer. Math. Soc. \textbf{186} (2007), no. 871	
		
		\bibitem{ABES18}  P. Auscher, S. Bortz, M. Egert, O. Saari, \emph{On regularity of weak solutions to linear parabolic systems with measurable coefficients}, J. Math. Pures Appl. (9) \textbf{121} (2019), 216--243,  \url{https://doi.org/10.1016/j.matpur.2018.08.002}
		
		\bibitem{AEN18} P. Auscher, M. Egert, K. Nystr\"om, \emph{$L^2$ well-posedness of boundary value problems for parabolic systems with measurable coefficients}, J. Eur. Math. Soc. \textbf{22} (9) (2020), 2943--3058,  \url{https://doi.org/10.4171/JEMS/980}
		
		\bibitem{AHMT02} P. Auscher, S. Hofmann, A. McIntosh, Ph. Tchamitchian, \emph{The solution of the	Kato square root problem for higher order elliptic operators and systems on $\BBR^n$}, J. Evol. Equ. \textbf{1} (2001), 361--385, \url{https://doi.org/10.1007/PL00001377}
		
		\bibitem{AKMP12} P. Auscher, C. Kriegler, S. Monniaux, P. Portal, \emph{Singular integral operators on tent spaces}, J. Evol. Equ. \textbf{12} (2012), no. 4., 741--765,  \url{https://doi.org/10.1007/s00028-012-0152-4}
		
		\bibitem{AMP15} P. Auscher, S. Monniaux, P. Portal, \emph{On existence and uniqueness for non-autonomous parabolic Cauchy problems with rough coefficients}, Ann. Scuola Norm. Sup. Pisa Cl. Sci. (5) \textbf{19} (2019), no. 2, 387--471
			
		\bibitem{AQ00} P. Auscher, M. Qafsaoui, \emph{Equivalence between regularity theorems and heat kernel estimates for higher order elliptic operators and systems under divergence form}, J. Funct. Anal. \textbf{177} (2000), no. 2, 310--364,  \url{https://doi.org/10.1006/jfan.2000.3643}
		
		\bibitem{AT00} P. Auscher, Ph. Tchamitchian, \emph{Square root problem for divergence operators and related topics}, Ast\'erisque No. \textbf{249} (1998), viii+172
		
		\bibitem{B16} A. Barton, \emph{Gradient estimates and the fundamental solution for higher-order elliptic systems with rough coefficients}, Manuscripta Math. \textbf{151} (2016), no. 3-4, 375--418,  \url{https://doi.org/10.1007/s00229-016-0839-x}
		
		\bibitem{BCF15} F. Bernicot, T. Coulhon, D. Frey, \emph{Gaussian heat kernel bounds through elliptic Moser iteration}, J. Math. Pures Appl. (9) \textbf{106} (2016), no. 6, 995--1037,  \url{https://doi.org/10.1016/j.matpur.2016.03.019}
		
		\bibitem{BB11} A. Bj\"orn, J. Bj\"orn, \emph{Nonlinear potential theory on metric spaces}, EMS Tracts in Mathematics, vol. 17, European Mathematical Society (EMS), Z\"urich (2011)
		
		\bibitem{BK05} S. Blunck, P. C. Kunstmann, \emph{Generalized Gaussian estimates and the Legendre transform}, J. Operator Theory \textbf{53} (2005), no. 2, 351--365
		
		\bibitem{B08} V. B\"ogelein, \emph{Higher integrability for weak solutions of higher order degenerate parabolic systems}, Ann. Acad. Sci. Fenn. Math. \textbf{33} (2008), no. 2, 387--412
		
		\bibitem{C64} S. Campanato, \emph{Propriet\`a di una famiglia di spazi funzionali}, Ann. Scuola Norm. Sup. Pisa (3) \textbf{18} (1964), 137--160
				
		\bibitem{CMY16} J. Cao, S. Mayboroda, D. Yang, \emph{Maximal function characterizations of Hardy spaces associated to homogeneous higher order elliptic operators}, Forum Math. \textbf{28} (2016), no. 5,  \url{https://doi.org/10.1515/forum-2014-0127}
		
		\bibitem{ChM12} P. Cherrier, A. Milani, \emph{Linear and quasi-linear evolution equations in {H}ilbert spaces}, Graduate Studies in Mathematics (135), American Mathematical Society, Providence (2012)
	
		\bibitem{CMS85} R. R. Coifman, Y. Meyer, E. M. Stein, \emph{Some new function spaces and their applications to harmonic analysis}, J. Funct. Anal. \textbf{62} (1985), no. 2, 304--335,  \url{https://doi.org/10.1016/0022-1236(85)90007-2}
		
		\bibitem{DL92} R. Dautray, J.-L. Lions, \emph{Mathematical analysis and numerical methods for science and technology. Vol. 5}, Springer-Verlag, Berlin (1992)
			
		\bibitem{D95} E. B. Davies, \emph{Uniformly elliptic operators with measurable coefficients}, J. Funct. Anal. \textbf{132} (1995), no. 1, 141--169,  \url{https://doi.org/10.1006/jfan.1995.1103}
		
		\bibitem{dS64} L. de Simon, \emph{Un’applicazione della teoria degli integrali singolari allo studio delle equazioni differenziali lineari astratte del primo ordine}, Rend. Sem. Mat. Univ. Padova \textbf{34} (1964), 205--223
	
		\bibitem{DL54} J. Deny, J.-L. Lions, \emph{Les espaces du type de Beppo Levi}, An. Inst. Fourier, Grenoble 5 (1954),   305--370,  \url{https://doi.org/10.5802/aif.55}
		
		\bibitem{DK11} H. Dong, D. Kim, \emph{On the $L^p$-solvability of higher order parabolic and elliptic systems with BMO coefficients}, Arch. Ration. Mech. Anal. \textbf{199} (2011), no. 3, 889--941,  \url{https://doi.org/10.1007/s00205-010-0345-3}
			
		\bibitem{FS72} C. Fefferman, E. M. Stein, \emph{{$H^{p}$} spaces of several variables}, Acta Math. \textbf{129} (1972), no. 3-4, 137--193,  \url{https://doi.org/10.1007/BF02392215}

		\bibitem{GS82} M. Giaquinta, M. Struwe, \emph{On the partial regularity of weak solutions of nonlinear parabolic systems}, Math. Z. \textbf{179} (1982), no. 4, 437--451, \url{https://doi.org/10.1007/BF01215058}

		\bibitem{G09} L. Grafakos, \emph{Modern Fourier Analysis}, Graduate Texts in Mathematics, vol. 250, Springer, New York (2009)
	
		\bibitem{JTW06} S. Janson, M. Taibleson, G. Weiss, \emph{Elementary characterisations of Morrey-Campanato Spaces}, Harmonic Analysis (Cortoria, 1982), Lecture Notes in Math., Springer, Berlin (1983), 101--114,  \url{https://doi.org/10.1007/BFb0069154}
		
		\bibitem{JN61} F. John, L. Nirenberg, \emph{On functions of bounded mean oscillation}, Comm. Pure Appl. Math. \textbf{14} (1961), 415--426,  \url{https://doi.org/10.1002/cpa.3160140317}
		
		\bibitem{KP93} C. Kenig, J. Pipher, \emph{The Neumann problem for elliptic equations with nonsmooth coefficients}, Invent. Math. \textbf{113} (1993), no. 3, 447--509, \url{https://doi.org/10.1007/BF01244315}
		
		\bibitem{KL12} H. Koch, T. Lamm, \emph{Geometric flows with rough initial data}, Asian J. Math. \textbf{16} (2012), no. 2, 209--235,  \url{https://doi.org/10.4310/AJM.2012.v16.n2.a3}
		
		\bibitem{KL13} H. Koch, T. Lamm, \emph{Parabolic equations with rough data}, Math. Bohem. \textbf{140} (2015), no. 4, 457--477
		
		\bibitem{KT01} H. Koch, D. Tataru, \emph{Well-posedness for the Navier--Stokes equations}, Adv. Math. \textbf{157} (2001), no. 1, 22--35, \url{https://doi.org/10.1006/aima.2000.1937}
		
		\bibitem{M11} V. Maz'ya, \emph{Sobolev spaces with applications to elliptic partial 	differential equations} Fundamental Principles of Mathematical Sciences, vol. 342, Springer, Heidelberg (2011)
		
		\bibitem{M64} J. Moser, \emph{A {H}arnack inequality for parabolic differential equations}, Comm. Pure Appl. Math. \textbf{17} (1964), 101--134
		
		\bibitem{N57} J. Nash, \emph{Parabolic equations}, Proc. Nat. Acad. Sci. U.S.A. \textbf{43} (1957), 754--758, DOI: \url{https://doi.org/10.1073/pnas.43.8.754}
		
		\bibitem{S74} I. \v{S}ne\u{\i}berg, \emph{Spectral properties of linear operators in interpolation families of Banach spaces}, Mat. Issled. \textbf{9} (1974), no. 2(32), 214--229
		
		\bibitem{S65} V. A. Solonnikov, \emph{On boundary value problems for linear parabolic systems of differential equations of general form}, (Russian), Trudy Mat. Inst. Steklov. \textbf{83} (1965);	English translation: Proceedings of the Steklov Institute of Mathematics. No. \textbf{83} (1965): Boundary value problems of mathematical physics. III
		
		\bibitem{S93} E. M. Stein, \emph{Harmonic analysis: real-variable methods, orthogonality ans oscillatory integrals}, Princeton Mathematical Series, vol. 43, Princeton University Press, Princeton, NJ (1993)
		
	\end{thebibliography}
\end{document}